\newtheorem*{rep@theorem}{\rep@title}
\newcommand{\newreptheorem}[2]{%
\newenvironment{rep#1}[1]{%
 \def\rep@title{#2 \ref{##1}}%
 \begin{rep@theorem}}%
 {\end{rep@theorem}}}
\newtheorem{proposition}{Proposition}
\newtheorem{theorem}[proposition]{Theorem}
\newtheorem{lemma}[proposition]{Lemma}
\theoremstyle{remark}
\newtheorem{remark}[proposition]{Remark}
\theoremstyle{definition}
\newtheorem{definition}[proposition]{Definition}
\numberwithin{equation}{section}
\numberwithin{proposition}{section}
\numberwithin{figure}{section}
\numberwithin{table}{section}
\newcommand{\Z}{\mathbb{Z}}
\newcommand{\N}{\mathbb{N}}
\newcommand{\R}{\mathbb{R}}
\newcommand{\inte}[1]{%
  {\kern0pt#1}^{\mathrm{o}}%
}
\newcommand{\E}{\mathbb{E}}
\renewcommand{\P}{\mathbb{P}}
\newcommand{\Zd}{{\mathbb{Z}^d}}
\newcommand{\Rd}{{\mathbb{R}^d}}
\newcommand{\B}{\mathbb{B}}
\newcommand{\ep}{\varepsilon}
\newcommand{\e}{\mathfrak{e}}
\renewcommand{\subset}{\subseteq}
 \newcommand{\cu}{\square}
\DeclareMathOperator{\dist}{dist}
\DeclareMathOperator{\diam}{diam}
\DeclareMathOperator{\tr}{tr}
\DeclareMathOperator{\divg}{div}
\DeclareMathOperator{\supp}{supp}
\newcommand{\var}{\mathbb{V}\mathrm{ar}}
\DeclareMathOperator{\Leb}{Leb}
\DeclareMathOperator{\im}{im}
\renewcommand{\div}{\divg}
\newcommand{\indc}{\mathds{1}}
\begin{document}

\keywords{$\nabla \phi$ model, renormalization}
\subjclass[2010]{}
\date{\today}

\title{Quantitative homogenization of the disordered $\nabla \phi$ model}

\begin{abstract}
We study the $\nabla \phi$ model with uniformly convex Hamiltonian $\mathcal{H} (\phi) := \sum  V(\nabla \phi)$ and prove a quantitative rate of convergence for the finite-volume surface tension as well as a quantitative rate estimate for the $L^2$-norm for the field subject to affine boundary condition. One of our motivations is to develop a new toolbox for studying this problem that does not rely on the Helffer-Sj\"ostrand representation. Instead, we make use of the variational formulation of the partition function, the notion of displacement convexity from the theory of optimal transport, and the recently developed theory of quantitative stochastic homogenization.
\end{abstract}

\author[P. Dario]{Paul Dario}
\address[P. Dario]{CEREMADE, Universit\'e Paris-Dauphine, PSL Research University, Paris, France \& DMA, Ecole normale sup\'erieure, PSL Research University, Paris, France}
\email{paul.dario@ens.fr}

\keywords{}
\subjclass[2010]{}
\date{\today}

\maketitle

\setcounter{tocdepth}{1}
\tableofcontents

\section{Introduction} \label{section1}
Many physical phenomena exhibit a transition between two pure phases, especially at low temperature. The first mathematical model to understand the macroscopic shape of the interface separating the two phases was introduced by Wulff in 1901 in~\cite{wul1901} to describe the shape of a crystal at equilibrium: it characterizes the interfaces as minimizers of the Wulff functional, defined by, for a subset $E\subseteq \Rd$,
\begin{equation} \label{Wulfffunctional}
W \left( E \right) := \int_{\partial E} \sigma \left( \mathbf{n}(x) \right) \, d x,
\end{equation}
where $\mathbf{n}$ is the outward normal to $\partial E$ at $x$, $\sigma$ is a surface tension, under a volume constraint $\mathrm{vol}(E) = v$. The minimizer of the Wulff functional is called the Wulff shape. From a mathematical point of view, the interfaces are macroscopic objects, and one would like to describe them using models from statistical mechanics which are defined on a microscopic level. Many important results in this direction were obtained on various models in the 90s; in~\cite{ACC90}, Alexander, Chayes and Chayes derived a Wulff construction for the two dimensional supercritical Bernoulli bond percolation. In the monograph~\cite{DKS92}, Dobrushin, Koteck\'y and Shlosman, studied the two dimensional ferromagnetic Ising model at low temperature with periodic boundary conditions. These results were later extended to every temperature below the critical one, and we refer to the works of Ioffe~\cite{Io94, Io95}, Schonmann, Shlosman~\cite{SS95} and Pfister, Velenik ~\cite{PfVe97} and by Ioffe and Schonmann in~\cite{IS98}. In dimension $3$, Cerf proved in~\cite{Ce00} a Wulff construction for the supercritical Bernoulli bound percolation. Bodineau in~\cite{Bo99} proved a similar result for the Ising model in any dimension $d \geq 3$ at low temperature. Cerf and Pisztora in~\cite{CePi00} proved a Wulff construction for Ising in dimension larger than $3$ for temperatures below a limit of slab-thresholds.

In this article, we consider the discrete Ginzburg-Landau or $\nabla \phi$-model in dimension $d \geq 2$, which is a model of stochastic interface defined as follows. We assume that the interface is a discrete object which has only one degree of freedom; it is represented by a set of the form $\left\{ (x , \phi(x)) \,:\, x \in \Zd \right\} \subseteq \Zd \times \R$, where $\phi$ is a map from $\Zd$ to $\R$ which encodes the height of the interface. We associate to a configuration $\phi$ an energy computed through the Hamiltonian,
\begin{equation*}
H \left(  \phi \right) := \sum_{|x - y| = 1} V \left( \phi(x) - \phi(y) \right),
\end{equation*}
where $V : \R \rightarrow \R$ is an elastic potential satisfying the properties 
\begin{enumerate}
\item[(i)] $V$ is even: $V(x) = V(-x)$ for each $x \in \R$,
\item[(ii)] $V$ is uniformly convex: there exists an ellipticity parameter $\lambda \in (0,1] $ such that for each $x , y \in \R$,
\begin{equation} \label{chaptire0.elaunifconv}
\lambda |x - y |^2 \leq V(x) + V(y) - 2 V\left(  \frac{x+y}{2} \right) \leq \frac1{\lambda} |x - y |^2.
\end{equation}
\end{enumerate}
To this system, one can associate a Langevin dynamics, given by the stochastic differential equation
\begin{equation*}
d \phi_t (x) = - \sum_{|y - x| = 1} V' \left( \phi_t(x) - \phi_t (y) \right) dt + \sqrt{2} d B_t \left( x \right), ~ x \in \Zd,
\end{equation*}
where $\left( B_t \left( x \right) \right)_{x \in \Zd}$ is a family of independent normalized Brownian motions. This dynamics has an invariant Gibbs measure formally given by  the formula
\begin{equation} \label{intro.formalgibbs}
\frac{1}{Z} \exp \left( - H \left( \phi \right) \right) \prod_x d \phi(x), 
\end{equation}
where the parameter $Z$ is the partition function, chosen such that the measure~\eqref{intro.formalgibbs} is a probability measure.

A typical result one wishes to prove is that a properly rescaled version of the interface approaches, over large scales, a deterministic shape  and to characterize this deterministic object in the spirit of a Wulff construction (see~\cite{BAD96, FS97, DGI00}). A quantity of interest is thus the surface tension of the model which is defined by the following procedure. We first define the finite volume surface tension according to the formula, for every $p \in \Rd$,
\begin{equation*}
\nu \left( Q_r , p \right) := - \frac{1}{r^d} \log \left( \int e^{-H(\phi)} \prod_{x \in Q_r} d \phi(x) \prod_{x \in \partial Q_r} \delta_{ p \cdot x} \left( d \phi (x) \right) \right).
\end{equation*}
In~\cite{FS97}, Funaki and Spohn proved that as the size of the cube tends to infinity, the finite-volume surface tension converges, i.e.,
\begin{equation} \label{qualitatveratevu}
\nu \left( Q_r, p \right)  \underset{ r  \rightarrow \infty}{\longrightarrow} \bar \nu \left( p \right).
\end{equation}
The limit $\bar \nu$ is the surface tension of the model. The proof relies on a subadditivity argument and is thus qualitative. The main result of this article is to obtain an algebraic rate of convergence for the finite-volume surface tension and is stated below.

\begin{theorem}[Quantitative convergence of the finite-volume surface tension] \label{QuantitativeconvergencetothGibbsstate}
There exist a constant $C := C(d , \lambda) < \infty$ and an exponent $\alpha := \alpha(d , \lambda) > 0$ such that for each $p \in \Rd$,
\begin{equation} \label{quantconvnu}
\left| \nu (Q_r, p) - \bar \nu (p) \right| \leq C r^{- \alpha} (1 + |p|^2).
\end{equation}
\end{theorem}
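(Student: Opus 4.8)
The plan is to treat $\nu(Q_r,p)$ as the energy quantity of a renormalization scheme, in close analogy with the theory of quantitative stochastic homogenization for linear elliptic equations, the essential new ingredient being that the relevant convexity is \emph{displacement} convexity in the space of height configurations rather than convexity in a Hilbert space. The first step is to recast the finite-volume surface tension variationally. By the Gibbs variational principle,
\begin{equation*}
\nu(Q_r,p) \;=\; \frac{1}{r^d}\,\inf_{\mu}\Big(\int H(\phi)\,d\mu(\phi) \;+\; \mathrm{Ent}(\mu)\Big),
\end{equation*}
where $\mu$ ranges over probability measures on the affine subspace $\mathcal{A}_{r,p}=\{\phi:\ \phi=p\cdot x \text{ on }\partial Q_r\}$, $\mathrm{Ent}$ is the entropy relative to Lebesgue measure on $\mathcal{A}_{r,p}$, and the unique minimizer is the Gibbs measure itself. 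This exposes the two structural properties of the functional $\mathcal{E}_r(\mu):=\int H\,d\mu+\mathrm{Ent}(\mu)$ that the argument exploits: it is convex along ordinary (linear) interpolations of measures, so one may insert near-optimal competitors to bound $\nu$ from above; and it is \emph{displacement convex} along $W_2$-geodesics in $\R^{Q_r}$ --- the entropy by McCann's theorem, and $\mu\mapsto\int H\,d\mu$ because $H$ is convex on $\R^{Q_r}$ --- with a modulus coming from the uniform convexity \eqref{chaptire0.elaunifconv} of $V$ (the Hessian of $H$ is bounded below by $\lambda$ times the discrete Dirichlet Laplacian on $Q_r$). Since $\mathcal{A}_{r,p}$ is affine, $W_2$-geodesics between measures supported on it stay on it, so midpoints are again admissible competitors.

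Next I would set up the renormalization following the two-scale / subadditive-quantity philosophy. The quantity $\nu(Q_r,p)$ is (essentially) subadditive, which reproves the Funaki--Spohn convergence $\nu(Q_r,p)\to\bar\nu(p)$ and gives the a priori bound $|\nu(Q_r,p)-\bar\nu(p)|\le C(1+|p|^2)$ from \eqref{chaptire0.elaunifconv}; to turn this into an algebraic rate I would introduce the convex-dual quantity $\nu^*(Q_r,q)$ --- the free energy per unit volume of the field tilted by the linear functional $\langle q,\nabla\phi\rangle$ with Neumann-type boundary conditions, which is superadditive and converges to the Legendre transform $\bar\nu^*$ of $\bar\nu$ --- together with the master quantity $\mathcal{J}(Q_r,p,q):=\nu(Q_r,p)+\nu^*(Q_r,q)-p\cdot q\ge 0$. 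The heart of the argument is the quantitative consequence of displacement convexity that replaces the Hilbert-space parallelogram identity of the linear theory: if $\mu_0$ is the Gibbs measure on $Q_{3^{m+1}}$ with affine data $p$ and $\mu_1$ is the (thin-transition-layer) gluing of the Gibbs measures on the subcubes $z+Q_{3^m}$, then the $W_2$-geodesic midpoint $\mu_{1/2}$ is an admissible competitor for $\nu(Q_{3^{m+1}},p)$, whence
\begin{equation*}
\mathcal{E}_{3^{m+1}}(\mu_0)\ \le\ \mathcal{E}_{3^{m+1}}(\mu_{1/2})\ \le\ \tfrac12\mathcal{E}_{3^{m+1}}(\mu_0)+\tfrac12\mathcal{E}_{3^{m+1}}(\mu_1)-c\,\kappa_m\,W_2(\mu_0,\mu_1)^2,
\end{equation*}
so that $\kappa_m\,W_2(\mu_0,\mu_1)^2$ is controlled by the (per-volume) additivity defect of $\nu$ at scale $m$, with $\kappa_m$ the displacement-convexity modulus of $\mathcal{E}_{3^{m+1}}$.

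The transport cost $W_2(\mu_0,\mu_1)^2$ in turn controls the $L^2$ discrepancy between the gradient (slope) field of the single-cube Gibbs measure and the piecewise-constant field assembled from the subcube slopes, so that additivity defects control the flatness of the Gibbs measure at scale $m$; summing the (summable) defects over scales, and using $\nu^*$ and $\mathcal{J}$ to obtain a self-improving bound exactly as in the linear homogenization iteration, yields first a $1/m$ rate and then the algebraic rate $\mathcal{J}(Q_{3^m},p,\nabla\bar\nu(p))\lesssim 3^{-\alpha m}(1+|p|^2)$, hence $|\nu(Q_{3^m},p)-\bar\nu(p)|\lesssim 3^{-\alpha m}(1+|p|^2)$; taking $m$ with $3^m\le r<3^{m+1}$ gives \eqref{quantconvnu}. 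The factor $1+|p|^2$ survives because $\nu(Q_r,p),\bar\nu(p)\le\lambda^{-1}|p|^2+C$ and every estimate above is measured relative to this natural scale.

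The step I expect to be the main obstacle is making the displacement-convexity estimate quantitatively effective. The modulus $\kappa_m$ produced naively by the Hessian of $H$ degrades like the spectral gap of the Dirichlet Laplacian on $Q_{3^m}$, i.e. like $3^{-2m}$, and in flat space the entropy contributes no additional gain; so one must either combine this weak modulus with a Caffarelli-type contraction estimate --- a Gibbs measure with uniformly convex Hamiltonian is more log-concave than a suitable Gaussian, so the Brenier map from that Gaussian is Lipschitz, giving dimension-robust control on transport costs and moments --- or localize the interpolation across the subcube faces so as to recover a usable gain, and then show that the resulting $W_2$-transport cost, including the boundary-layer and entropic corrections created by the gluing, is of the order the iteration can absorb. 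A secondary, more bookkeeping-type difficulty is choosing the boundary conditions and the tilting functional in $\nu^*$ so that the nonnegativity and subadditivity of $\mathcal{J}$, and the precise link between $\mathcal{J}$ and $\nu-\bar\nu$, hold as stated.
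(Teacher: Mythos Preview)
Your overall architecture matches the paper's: Gibbs variational formulation, displacement convexity of the entropy plus uniform convexity of the energy, the dual quantity $\nu^*$ and the master quantity $\mathcal{J}=\nu+\nu^*-p\cdot q$, a two-scale comparison controlled by additivity defects, and an Armstrong--Smart-style iteration. Where you diverge---and create an artificial obstacle for yourself---is in the displacement-convexity step. You write the second-variation inequality as $\mathcal{E}(\mu_0)+\mathcal{E}(\mu_1)-2\mathcal{E}(\mu_{1/2})\ge c\,\kappa_m\,W_2(\mu_0,\mu_1)^2$ with $\kappa_m$ the lower bound on the Hessian of $H$ in the \emph{vertex} variables $\phi$, correctly observe that $\kappa_m\sim 3^{-2m}$ (the Dirichlet spectral gap), and then propose Caffarelli contraction or face-localization to recover a usable gain. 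None of this is needed. Under \emph{any} coupling $(\phi_0,\phi_1)$ one has $\nabla\big(\tfrac{\phi_0+\phi_1}{2}\big)=\tfrac12(\nabla\phi_0+\nabla\phi_1)$ edgewise, so the edge-level uniform convexity~\eqref{chaptire0.elaunifconv} of $V$ yields directly
\begin{equation*}
\int H\,d\mu_0+\int H\,d\mu_1-2\int H\,d\mu_{1/2}\ \ge\ \lambda\,\E\Big[\sum_{e}\big|\nabla\phi_0(e)-\nabla\phi_1(e)\big|^2\Big],
\end{equation*}
while McCann's theorem handles the entropy term with no modulus at all. The additivity defect therefore controls the per-volume $L^2$ \emph{gradient} discrepancy with a constant \emph{independent of the scale}; this is exactly the paper's two-scale comparison (Propositions~\ref{2sccomparisonnu} and~\ref{2sccomparison}). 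The quantity you need downstream is this gradient discrepancy, not $W_2^2$, so there is no spectral-gap loss to repair.

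One further point your sketch underplays: in the paper the quantitative iteration is run on $\nu^*$ rather than on $\nu$. The gradient discrepancy from the two-scale comparison feeds a variance-contraction estimate for the empirical slope $\langle\nabla\psi_{n,q}\rangle_{\cu_n}$ (Proposition~\ref{contslope}); the multiscale Poincar\'e inequality then shows $\psi_{n,q}$ is quantitatively close to an affine function in $L^2$ (Proposition~\ref{p.ustarminimizerareflat}); and a patching construction---which needs this flatness together with an interior Meyers estimate for the Gibbs field (Proposition~\ref{meyersgradphi}) to control the boundary-layer error---produces the convex-duality upper bound (Proposition~\ref{lemma3.3}) that closes the iteration. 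These are substantive analytic ingredients, not bookkeeping, and your phrase ``summing the defects over scales \ldots\ exactly as in the linear homogenization iteration'' undersells what is actually required.
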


The second main result of this article is to deduce a quantitative $L^2$ sublinearity estimate for the field $\phi$ distributed according to the Gibbs measure~\eqref{intro.formalgibbs}, in a cube with affine boundary conditions.

\begin{theorem}[$L^2$ contraction of the Gibbs measure] \label{conhvergencetoaffine}
For each $p\in \Rd$, we let $l_p$ be the affine function of slope $p$, i.e., for each $x \in \Zd$, $l_p(x) = p \cdot x$. We let $\phi_{r,p} : Q_r \mapsto \R$ be a random variable distributed according to the Gibbs measure
\begin{equation*}
\P\left( d \phi \right) := \frac{1}{Z} e^{-H(l_p + \phi)} \prod_{x \in Q_r} d \phi(x) \prod_{x \in \partial Q_r} \delta_{0} \left( d \phi (x) \right),
\end{equation*}
where $Z$ is the partition function chosen such that $\P$ is a probability measure. Then there exist a constant $C := C(d , \lambda) < \infty$ and an exponent $\alpha := \alpha (d, \lambda) > 0$ such that
\begin{equation*}
\frac{1}{r^2}\E \left[  \frac1{r^d} \sum_{x \in Q_r}   \left| \phi_{r,p}(x) \right|^2  \right] \leq C r^{- \alpha} \left( 1 + |p|^2\right).
\end{equation*}
\end{theorem}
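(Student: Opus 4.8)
\emph{Setup.} We take Theorem~\ref{QuantitativeconvergencetothGibbsstate} as given, together with the uniform convexity of the limit surface tension: there is $\lambda' := \lambda'(d,\lambda) > 0$ with
\begin{equation*}
\bar\nu(q) \ \geq\ \bar\nu(p) + \nabla\bar\nu(p)\cdot(q-p) + \lambda'\,|q-p|^2 \qquad \text{for all } p,q \in \Rd,
\end{equation*}
a classical consequence of~\eqref{chaptire0.elaunifconv} (log-concavity of $\phi \mapsto e^{-H(l_p+\phi)}$ jointly in $(p,\phi)$ together with a Brascamp--Lieb bound). The guiding principle is that, after homogenization, the tilted field $l_p + \phi_{r,p}$ is close to the minimizer of the homogenized energy $u \mapsto \int_{Q_r} \bar\nu(p+\nabla u)$ over functions $u$ vanishing on $\partial Q_r$; since $\bar\nu$ is convex and such $u$ have zero average gradient, that minimizer is $u \equiv 0$, so $\phi_{r,p}$ must be quantitatively small, the rate being controlled by Theorem~\ref{QuantitativeconvergencetothGibbsstate}. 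We prove the bound with a possibly smaller exponent $\alpha' := \alpha'(d,\lambda) > 0$, which is all the statement requires.

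\emph{Step 1: the coarse-grained gradient of $\phi_{r,p}$ is small.} Fix a mesoscopic scale $1 \ll \rho \ll r$ and tile $Q_r$, up to a boundary layer of width $O(\rho)$, by subcubes $\{Q_\rho(z)\}_{z\in\mathcal{Z}}$. Given a configuration $\phi$, let $p_z := p + (\nabla\phi)_{Q_\rho(z)}$ be the slope of $l_p + \phi$ averaged over the edges contained in $Q_\rho(z)$; since $\phi_{r,p}$ vanishes on $\partial Q_r$, telescoping along lattice lines gives $\sum_{z\in\mathcal{Z}} \rho^d(p_z - p) = 0$ up to a negligible boundary contribution. The key input is a localization inequality for the free energy: conditioning on $\phi$ restricted to $\bigcup_z \partial Q_\rho(z)$, invoking the Gibbs variational principle, and comparing general boundary data with the affine data of equal average slope — this is exactly where the variational formulation of the partition function and the displacement-convexity estimates enter — one obtains, for some $\gamma := \gamma(d,\lambda) > 0$,
\begin{equation*}
r^d\,\nu(Q_r, p) \ \geq\ \E\!\left[\, \sum_{z\in\mathcal{Z}} \rho^d\, \nu\big(Q_\rho(z), p_z\big)\,\right] - C\, r^d\, \rho^{-\gamma}\big(1+|p|^2\big).
\end{equation*}
Applying Theorem~\ref{QuantitativeconvergencetothGibbsstate} in each subcube, then the uniform convexity of $\bar\nu$, then summing and using $\sum_z \rho^d(p_z-p)=0$ (the quadratic terms $|p_z|^2 \leq 2|p|^2 + 2|p_z-p|^2$ being absorbed for $\rho$ large), we get
\begin{equation*}
r^d\,\nu(Q_r,p) \ \geq\ r^d\,\bar\nu(p) + \lambda'\, \E\!\left[\,\sum_{z\in\mathcal{Z}}\rho^d|p_z-p|^2\,\right] - C\, r^d\, \rho^{-\beta}\big(1+|p|^2\big), \qquad \beta := \min(\gamma,\alpha).
\end{equation*}
Combining this with the matching upper bound $\nu(Q_r,p) \leq \bar\nu(p) + Cr^{-\alpha}(1+|p|^2)$, again from Theorem~\ref{QuantitativeconvergencetothGibbsstate}, yields
\begin{equation} \label{plan.coarsegrad}
\frac{1}{r^d}\, \E\!\left[\,\sum_{z\in\mathcal{Z}}\rho^d|p_z-p|^2\,\right] \ \leq\ C\, \rho^{-\beta}\big(1+|p|^2\big).
\end{equation}

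\emph{Step 2: from the coarse-grained gradient to the field.} Let $\phi_{r,p}^{(\rho)}$ be the function equal, on each $Q_\rho(z)$, to the average of $\phi_{r,p}$ over $Q_\rho(z)$; it vanishes on the coarse boundary layer. The Poincar\'e inequality at scale $\rho$ inside each subcube gives $\|\phi_{r,p} - \phi_{r,p}^{(\rho)}\|_{L^2(Q_r)}^2 \leq C\rho^2 \|\nabla\phi_{r,p}\|_{L^2(Q_r)}^2$, while the discrete Poincar\'e inequality on the coarse lattice $\mathcal{Z}$ (a box of side $\sim r/\rho$), whose nearest-neighbour increments are $O(\rho\,|p_z-p|)$, gives $\|\phi_{r,p}^{(\rho)}\|_{L^2(Q_r)}^2 \leq C r^2 \sum_{z\in\mathcal{Z}} \rho^d |p_z-p|^2$. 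Taking expectations and using~\eqref{plan.coarsegrad} together with the standard energy bound $\E[\frac{1}{r^d}\sum_{e\subseteq Q_r}|\nabla_e\phi_{r,p}|^2] \leq C(1+|p|^2)$ (an integration by parts using~\eqref{chaptire0.elaunifconv}), we obtain
\begin{equation*}
\frac{1}{r^d}\,\E\!\left[\, \sum_{x\in Q_r} |\phi_{r,p}(x)|^2 \,\right] \ \leq\ C\big(\rho^2 + r^2\rho^{-\beta}\big)\big(1+|p|^2\big).
\end{equation*}
Choosing $\rho := \lfloor r^{2/(2+\beta)}\rfloor$, which satisfies $1\ll\rho\ll r$, balances the two terms and gives the theorem with $\alpha' := 2\beta/(2+\beta) > 0$.

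\emph{Main obstacle.} The heart of the argument is the localization inequality of Step~1 — bounding $r^d\nu(Q_r,p)$ from below by the sum $\sum_z \rho^d\nu(Q_\rho(z),p_z)$ of the subcube free energies with an error decaying in $\rho$ uniformly in $r$. One must control the entropy of the coarse boundary data on $\bigcup_z\partial Q_\rho(z)$, the boundary-layer and inter-cube interaction terms, and the dependence of the random slopes $p_z$ on the configuration (through a conditioning and convexity argument); this is where the toolbox announced in the introduction — the variational formula for the partition function, displacement convexity from optimal transport, and the subadditive-quantity method of quantitative stochastic homogenization — is used essentially. Once this inequality is available in the above quantitative form, Steps~1--2 are routine, the only real care being the bookkeeping of the $(1+|p|^2)$ factors.
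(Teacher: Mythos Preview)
Your outline has the right shape, and Step~1 followed by Step~2 mirrors the paper's two-step structure. But there are two genuine gaps, one in each step.

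\textbf{Step 2 is incorrect as stated.} You assert that the coarse nearest-neighbour increments $a_z - a_{z'}$ (where $a_z := (\phi_{r,p})_{Q_\rho(z)}$) are $O(\rho|p_z - p|)$. This is false: take $\phi$ supported in a single cell $Q_\rho(z_0)$, vanishing on $\partial Q_\rho(z_0)$. Then $\langle\nabla\phi\rangle_{Q_\rho(z)} = 0$ for \emph{every} $z$ (including $z_0$, since $\phi \in h^1_0(Q_\rho(z_0))$), so all $p_z = p$, yet $a_{z_0} \neq 0$ in general. Your claimed bound $\|\phi^{(\rho)}\|_{L^2}^2 \leq Cr^2\sum_z\rho^d|p_z - p|^2$ therefore cannot hold. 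The increment $a_z - a_{z'}$ is a weighted average of $\nabla\phi$ over edges in $Q_\rho(z)\cup Q_\rho(z')$ with \emph{tent} weights, not the uniform average defining $p_z$; these decouple precisely on fields with sub-cell structure. This is not a cosmetic issue: it is the reason the paper invokes the \emph{multiscale} Poincar\'e inequality (Proposition~\ref{p.poincmultpoinc}), which controls $\|\phi\|_{L^2}^2$ by $\langle\nabla\phi\rangle_{z+\cu_m}$ at \emph{all} scales $m\leq n$, not just one.

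\textbf{Step 1's localization inequality is unproven and not obviously available.} You correctly flag it as the main obstacle, but your sketch (condition on inter-cell boundaries, compare general boundary data to affine data of equal average slope) is exactly the hard part, and nothing in the paper delivers it in that form. The paper avoids it entirely: instead of a lower bound on $\nu(Q_r,p)$ in terms of $\E[\sum_z\nu(Q_\rho,p_z)]$, it uses the two-scale comparison (Proposition~\ref{2sccomparisonnu}), which via displacement convexity couples $\phi_{n,p}$ to a glued field $\phi = \sum_z\phi_z$ with $\phi_z \in h^1_0(z+\cu_m)$ and bounds $\E\|\nabla\phi - \nabla\phi_{n,p}\|_{L^2}^2$ by $\nu(\cu_m,p) - \nu(\cu_n,p)$. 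Since each $\phi_z$ has zero average gradient, this directly gives $\E|\langle\nabla\phi_{n,p}\rangle_{z+\cu_m}|^2 \leq C(1+|p|^2)3^{-\alpha m}$ for every scale $m$ once Theorem~\ref{QuantitativeconvergencetothGibbsstate} is in hand, feeding straight into multiscale Poincar\'e. This simultaneously closes both of your gaps: it replaces your localization inequality with an already-proven coupling estimate, and it supplies the all-scales control that Step~2 actually needs.
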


These theorems are the first quantitative results for convergence of the surface tension of the $\nabla \phi$-model and should be seen as a first step to develop a quantitative theory on this model. A first reasonable objective would be to extend these results to more general conditions, instead of the affine boundary condition presented in Theorem~\ref{conhvergencetoaffine}. Such a result would prove that the properly rescaled version of the interface $\phi$ converges quantitatively to a deterministic interface, which is a critical point of the Wulff functional involving the surface tension $\bar \nu$.

The result obtained is suboptimal since we only obtain an algebraic rate of convergence for some small exponent $\alpha > 0$. In the specific case of the Gaussian free field, i.e., when $V_e(x) = x^2$, the finite volume surface tension can be computed explicitly and the convergence takes place at the following rate
\begin{equation*}
\left| \nu_{\mathrm{GFF}} (Q_r, p) - \bar \nu_{\mathrm{GFF}}  (p) \right| \sim C r^{-1}.
\end{equation*}
We expect that this rate should also be the optimal rate in the case of uniformly convex potentials considered in this article.

The $\nabla \phi$-model and its large-scale properties have already been studied in several works. A common tool to study this model is the Helffer-Sj\"ostrand PDE representation which originates in the work of Helffer and Sj\"ostrand~\cite{helsjo}. Naddaf and Spencer in~\cite{NS2} proved a central limit theorem for this model by homogenizing the infinite dimensional elliptic PDE obtained from the Helffer-Sj\"ostrand representation. Funaki and Spohn in~\cite{FS97} studied the dynamics of this model and proved that the suitably rescaled version of the dynamic field $\phi_t$ converges toward a deterministic field which is solution to a parabolic PDE involving the surface tension. Deuschel Giacomin and Ioffe in~\cite{DGI00} established the large scale $L^2$ convergence of the interface to some deterministic function, which can be characterized as a minimizer of a Wulff functional, as well as a large deviation principle. These result were later extended by Funaki and Sakagawa in~\cite{FS04}. In 2001, Giacomin, Olla and Spohn established in~\cite{GOS01} a central limit theorem for the Langevin dynamics associated to this model. More recently Miller in~\cite{Mi} proved a central limit theorem for the fluctuation field around a macroscopic tilt.

The Helffer-Sj\"ostrand representation is a very powerful tool, but may also face some limitations. The PDE operator arising in this representation contains a divergence-form part whose coefficients are given by $V''$. In case when $V''$ is singular, or of a varying sign, then it is rather unclear how to proceed (see however \cite{CDM,BS,CD}). Besides the specific results to be proved in this paper, we are interested in developing new tools to study the $\nabla \phi$ model that completely forego any reference to the Helffer-Sj\"ostrand representation. We rely instead on the variational formulation of the free energy, and of the displacement convexity of the associated functional; to the best of our knowledge, it is the first time that tools from optimal transport are being used to study this model.  

The mechanism by which we obtain a rate of convergence is inspired by recent developments in the homogenization of divergence-form operators with random coefficients. The first results in this context date back to the early 1980s, with the results of Kozlov~\cite{Koz79}, Papanicolaou-Varadhan~\cite{PV1} and Yurinski\u{\i}~\cite{Y22} who were able to prove qualitative homogenization for linear elliptic equations under very general assumptions on the coefficient field. These results were later extended by Dal Maso and Modica in~\cite{DM1,DM2} to the nonlinear setting. Obtaining quantitative rates of convergence has been the subject of much recent study over the past few years. Some notable progress were achieved by Gloria, Neukamm and Otto~\cite{GO1, GO2,GNO} and by Armstrong, Kuusi, Mourrat and Smart~\cite{ AKM1, AKM2, armstrong2017quantitative, AS, AM}.

While most of the theory developed to understand stochastic homogenization focuses on linear elliptic equations, the closest analogy with the $\nabla \phi$ interface model is the stochastic homogenization of nonlinear equations. In this setting, the results are more sparse: one can mention the work of Armstrong, Mourrat and Smart~\cite{AM,AS} who quantified the works of Dal Maso and Modica~\cite{DM1,DM2}. More recently, Arsmtrong, Ferguson and Kuusi~\cite{AFK18} were able to adapt part of the theory developed in the linear setting to the nonlinear setting.  

While the theory is usually presented in the case of uniformly elliptic environment, the extension of the theory to the setting degenerate or perforated environment has been a subject of attention (see for instance~\cite{LNO, bella2018liouville, GiMo18, GHV18, AD2}). We hope that ideas and strategies presented in these works could be useful to obtain information on some $\nabla \phi$-models with non-uniformly elliptic potential.


\subsection{Notations and assumptions}
\subsubsection{Notations for the lattice and cubes} In dimension $d \geq 2$, let $\Zd$ be the standard $d$-dimensional hypercubic lattice, $\B_d := \left\{ (x,y)~:~ x,y \in \Zd, |x-y| = 1 \right\}$ the set of unoriented nearest neighbors, or \emph{bonds}, and $E_d$ be the set of oriented nearest neighbors, or \emph{edges}. We denote the canonical basis of $\Rd$ by $\{ \e_1 , \ldots , \e_d \}$. For $x, y \in \Zd$, we write $x\sim y$ if $x$ and $y$ are nearest neighbors. For $x \in \Zd$ and $r > 0$, we denote by $B(x,r) \subseteq \Zd$ the discrete ball of center $x$ and of radius $r$. We usually denote a generic bond by $e$. For a given subset $U$ of $\Zd$, we denote by $\B_d(U)$ the bonds of $U$, i.e,
\begin{equation*}
\B_d (U) := \left\{ (x,y) \in \B_d ~:~ x \in U, \, y \in U  ~\mbox{and}~ x \sim y \right\}.
\end{equation*}
If we wish to talk about edges, we use an arrow to distinguish them from the bonds. We denote by $\overset{\rightarrow}{e}$ a generic edge. Similarly, we denote by $E_d$ the edges of $U$,
\begin{equation*}
E_d(U) := \left\{ \overrightarrow{xy} \in E_d ~:~ x \in U, \, y \in U  ~\mbox{and}~ x \sim y \right\}.
\end{equation*}
We denote by $\partial U$ the discrete boundary of $U$, defined by
\begin{equation*}
\partial U := \left\{ x \in U : \exists y \in \Zd, ~ y\sim x ~\mbox{and}~ y \notin U \right\}
\end{equation*}
and by $\inte U$ the discrete interior of $U$,
\begin{equation*}
\inte U := U \setminus \partial U.
\end{equation*} 
We also denote by $\left| U \right|$ the cardinality of $U$, we refer to this quantity as the (discrete) volume of $U$. For $N \in \N$, we write $N \Zd$ to refer to the set $\{N x \, :\, x \in \Zd \} \subseteq \Zd$.
A cube of $\Zd$ is a set of the form
\begin{equation*}
\Zd \cap \left( z + [0, N]^d \right), ~ z \in \Zd, ~ N \in \N.
\end{equation*}
We define the size of a cube given in the previous display above to be the integer $N +1$. For $n \in \N$, we denote by $\cu_n$ the discrete triadic cube of size $3^n$, 
\begin{equation*}
\cu_n := \left( - \frac{3^n}{2}, \frac{3^n}{2} \right)^d \cap \Zd.
\end{equation*}
We say that a cube $\cu$ is a triadic cube if it can be written
\begin{equation*}
\cu = (z + \cu_n) , ~ \mbox{for some} ~ n \in \N,~ \mbox{and}~z \in 3^n \Zd.
\end{equation*}
Note that two triadic cubes are either disjoint or included in one another. Moreover for each $n \in \N$, the family of triadic cubes of size $3^n$ forms a partition of $\Zd$. A caveat must be mentioned here, the family of triadic cubes $\left( z + \cu_n \right)_{z \in 3^n \Zd}$ forms a partition of $\Zd$ but the family of edges $\left( \B \left( z + \cu_n \right) \right)_{z \in 3^n \Zd}$ does not form a partition of $\B_d$, indeed the edges connecting two triadic cubes are missing, i.e., the edges of the set
\begin{equation*}
\left\{ (x , y) \in \B_d ~:~ \exists z \in 3^n \Zd, \, x \in (z + \cu_n) ~\mbox{and}~ y \notin (z + \cu_n) \right\}.
\end{equation*}
We mention that the volume of a discrete triadic cube of the form $z + \cu_n$ is $3^{dn}$.

\smallskip

Given two integers $m,n \in \N$ with $m < n$, we denote by
\begin{equation} \label{def.mathcalzmn}
\mathcal{Z}_{m,n} := 3^m \Zd \cap \cu_n,
\end{equation}
we also frequently use the shortcut notation
\begin{equation*}
\mathcal{Z}_{n} := \mathcal{Z}_{n,n+1} = 3^n \Zd \cap \cu_{n+1}.
\end{equation*}
These sets have the property that $\left( z + \cu_m \right)_{z \in \mathcal{Z}_{m,n}}$ is a partition of $\cu_m$. In particular $\left( z + \cu_n \right)_{z \in \mathcal{Z}_{n}}$ is a partition of $\cu_{n+1}$.

\subsubsection{Notations for functions} For a bounded subset $U \subseteq \Zd$, and a function $\phi : U \rightarrow \R$, we denote by $(\phi)_U$ its mean defined by the formula
\begin{equation*}
(\phi)_U := \frac1{|U|}\sum_{x \in U} \phi(x).
\end{equation*}
We let $h^1_0 (U)$ and $\mathring h^1 (U)$ be the set of functions from $U$ to $\R$ with value zero on the boundary of $U$ and mean zero respectively, i.e.,
\begin{equation*}
h^1_0 (U) := \left\{ \phi : U \rightarrow \R ~:~ u = 0~ \mbox{on}~ \partial U \right\}
\end{equation*}
and
\begin{equation*}
\mathring h^1 (U) := \left\{ \psi : U \rightarrow \R ~:~ \left( \psi \right)_{U} = 0 \right\}.
\end{equation*} 
These spaces are finite dimensional and their dimension is given by the formulas
\begin{equation*}
\dim h^1_0 (U) = \left| U \setminus \partial U \right| ~\mbox{and}~ \dim \mathring h^1 (U) = |U| -1. 
\end{equation*}
We sometimes need to restrict functions, to this end we introduce the following notation, for any subsets $U,V \subseteq \Zd$ satisfying $V \subseteq U$, and any function $\phi : U \rightarrow \R$, we denote by $\phi_{|V}$ the restriction of $\phi$ to $V$.
A vector field $G$ on $U$ is a function 
\begin{equation*}
G : E_d (U) \rightarrow \R
\end{equation*}
which is antisymmetric, that is, $G\left( \overrightarrow{xy}\right) = - G\left(\overrightarrow{yx}\right)$ for each $x,y \in E_d(U)$. Given a function $ \phi : U \rightarrow \R$, we define its gradient by, for each $\overrightarrow e = \overrightarrow{xy} \in E_d (U)$,
\begin{equation*}
\nabla \phi \left(\overrightarrow e\right) = \phi(y) - \phi(x).
\end{equation*}
The divergence of a vector field $G$ is the function from $U$ to $\R$ defined by, for each $x \in U$,
\begin{equation*}
\div G (x) = \sum_{y \in U, y \sim x} G\left( \overrightarrow{xy} \right).
\end{equation*}
We also define the discrete Laplacian $\Delta$ of a function $\phi : U \rightarrow \R$ by the formula, for each $x \in U$,
\begin{equation*}
\Delta \phi(x) = \sum_{y \in U, y \sim x} \left( \phi(y) - \phi(x)  \right).
\end{equation*}
For $p \in \Rd$, we also denote by $p$ the constant vector field given by
\begin{equation} \label{constantvectfield}
p(x,y) :=p \cdot (x - y).
\end{equation}
Given two vector fields $F$ and $G$, we define their product to be the function defined on the set of unoriented edges by
\begin{equation*}
F \cdot G (x,y) = F(\overrightarrow{xy})  G(\overrightarrow{xy}).
\end{equation*}
This notation is frequently applied when $F$ is a constant vector $q$ and when $G$ is the gradient of a function $\nabla \psi$, so we write, for each $(x,y) \in \B_d$,
\begin{equation*}
q \cdot \nabla \psi (x,y) = q (x,y)  \nabla \psi (x,y) .
\end{equation*}
We also often use the shortcut notation
\begin{equation*}
\sum_{e \subseteq U} ~\mbox{to mean}~ \sum_{e \in \B_d (U)}.
\end{equation*}
If one assumes additionally that $U$ is bounded, then for any vector field $F : E_d (U) \rightarrow \R$, we denote by $\left\langle F\right\rangle_U$ the unique vector in $\Rd$ such that, for each $p \in \Rd$
\begin{equation*}
p \cdot \left\langle F\right\rangle_U = \frac1{|U|} \sum_{e \subseteq U} p \cdot F (e).
\end{equation*}

\subsubsection{Notations for vector spaces and scalar products} 
Let $V$ be a finite dimensional real vector space equipped with a scalar product $(\cdot , \cdot)_V$, this space can be endowed with a canonical Lebesgue measure denoted by $\Leb_V$. This measure is simply denoted by $dx$ when we integrate on $V$, i.e., we write, for any measurable, integrable or non-negative, function $f: V \rightarrow \R$, 
\begin{equation} \label{formuladxLebv}
\int_V f(x) \, dx ~\mbox{to mean}~ \int_V  f(x) \, \Leb_V (dx).
\end{equation}
For any linear subspace $H \subseteq V$, we denote by $H^{\perp}$ the orthogonal complement of $H$. Given $H, K \subseteq V$, we use the notation
\begin{equation*}
V = H \overset{\perp}{\oplus} K ~\mbox{if}~ V = H \oplus K ~\mbox{and}~ \forall (h,k) \in H \times K, \, (h ,k)_V = 0.
\end{equation*}
Note that from the scalar product on $V$, one can define scalar products on $H$ and $H^\perp$ naturally by restricting the scalar product on $V$ to these spaces. Consequently, the spaces $H$ and $H^\perp$ are equipped with Lebesgue measures denoted by $\Leb_H$ and $\Leb_{H^\perp}$. These measures are related to the Lebesgue measure on $V$ by the relation
\begin{equation} \label{decomplebmeasure}
\Leb_V = \Leb_H \otimes \Leb_{H^\perp},
\end{equation}
where the notation $\otimes$ is used to denote the standard product of measures. Note that we used in the previous notation the equality $V = H\overset{\perp}{\oplus} H^{\perp}$ to obtain a canonical isomorphism between the spaces $V$ (on which $\Leb_V$ is defined), and the space $H \times H^\perp$ (on which  $\Leb_H \otimes \Leb_{H^\perp}$ is defined).

Given a bounded subset $U \subseteq \Zd$, we equip any linear space $V$ of functions from $U$ to $\R$ with the standard $L^2$ scalar product, i.e., for any $\phi, \psi \in V$, we define
\begin{equation*}
(\phi , \psi)_V := \sum_{x \in U} \phi(x) \psi(x).
\end{equation*}
This in particular applies to the spaces $h^1_0 (U)$ and $\mathring h^1 (U)$. From now on, we consider that these spaces are equipped with a scalar product and consequently with Lebesgue measure denoted by $\Leb_{h^1_0 (U)}$ and $\Leb_{\mathring h^1 (U)}$, or simply by $dx$ when we use the notation convention~\eqref{formuladxLebv}.

\subsubsection{Notations for measures and random variables}
For any finite dimensional real vector space $V$, we denote by $\mathcal{P}(V)$ the set of probability measures on $V$ equipped with its Borel $\sigma$-algebra denoted by $\mathcal{B}(V)$. For a pair of finite dimensional real vector spaces $V$ and $W$, and a measure $\pi \in \mathcal{P}(V \times W)$, the first marginal of $\pi$ is the probability measure $\mu \in \mathcal{P}(V)$ defined by, for each $A \in \mathcal{B}(V)$,
\begin{equation*}
\mu(A) := \pi \left( A \times W\right),
\end{equation*}
we similarly define the second marginal as a measure in $\mathcal{P}(W)$. Given two probability measures $\mu \in \mathcal{P}(V)$ and $\nu \in \mathcal{P}(W)$, we denote by $\Pi(\mu,\nu)$ the set of probability measures of $\P \left( V \times W \right)$ whose first marginal is $\mu$ and second marginal is $\nu$, i.e.,
\begin{multline*}
\Pi (\mu , \nu) := \left\{ \pi \in \mathcal{P}(V \times W) \, : \, \forall (A , B) \in \left( \mathcal{B}(V),  \mathcal{B}(W) \right), ~ \pi \left( A \times W \right) = \mu (A) \right. \\ \left.  \mbox{and}~ \pi \left( V \times B \right) = \nu (B)  \right\}.
\end{multline*}
We define a coupling between two probability measures $\mu \in  \mathcal{P}(V)$ and $\nu \in  \mathcal{P}(W)$ to be a measure in $\Pi (\mu , \nu)$. For a generic random variable $X$, we denote by $\P_X$ its law. Given two random variables $X$ and $Y$, a coupling between $X$ and $Y$ is a random variable whose law belongs to $\Pi(\P_X , \P_Y)$.

We mention that in this article we do not assume that there is an underlying probability space $(\Omega, \mathcal{F}, \P)$ on which all the random variables are defined. We use the random variables as proxy for their laws to simplify the notations. A consequence of this is that  given two random variables $X$ and $Y$, we have to be careful to first define a coupling between $X$ and $Y$ if one wants to work with the random variables $X + Y$, $XY$ etc.

Given a measurable space $(X , \mathcal{F})$ and two $\sigma$-finite measures $\mu$ and $\nu$ on $X$, we write $\mu \ll \nu$ to mean that $\mu$ is absolutely continuous with respect to $\nu$, and denote by $\frac{d \mu}{d \nu}$ the Radon-Nikodym derivative of $\mu$ with respect to $\nu.$ If we are given a second measurable space $(Y,\mathcal{F}')$ and a measurable map $T : X \rightarrow Y$, we denote by $T_*\mu$ the pushforward of the measure $\mu$ by the map $T$.

\subsubsection{Notations for the $\nabla \phi$ model}
We consider a family of functions $(V_i)_{i = 1 , \ldots , d} \in C^2 (\R)$ satisfying the following assumptions, for each $i \in \{1 , \ldots, d \}$,
\begin{enumerate}
\item \textit{Symmetry:} for each $x \in \R$, $V_i(x) = V_i(-x)$;
\item \textit{Uniform convexity:} there exists $\lambda \in (0,1)$ such that $\lambda < V_i'' < \frac 1\lambda$;
\item \textit{Normalization:} The value of $V_i$ at $0$ is fixed: $V_i(0) = 0$.
\end{enumerate} 
Assumption (2) implies, for each $p_1, p_2 \in \R$,
\begin{equation} \label{unifconvVi}
\lambda \left| p_1 - p_2 \right|^2 \leq V_i \left(p_1\right) +  V_i\left(p_2\right) - 2 V_i\left(\frac{p_1+ p_2}2\right) \leq \frac1\lambda \left| p_1 - p_2 \right|^2.
\end{equation}
From the first and second assumptions, we see that the functions $V_i$ have a unique minimum achieved in $0$.
The third assumption is not necessary and can be easily removed, but thanks to this assumption we have the convenient inequality, for each $x \in \R$,
\begin{equation*}
\lambda |x|^2 \leq V_i(x) \leq \frac{1}{\lambda} |x|^2.
\end{equation*}
For each bond $e \in \B_d$, we define $V_e := V_i$ where $ i $ is the unique integer in $ \{1 , \ldots, d \}$ such that $ e$ can be written $(x , x+\e_i) $ for some $ x \in \Zd.$ We then define the partition function, for each bounded subset $U \subseteq \Zd$ and for each $p \in \Rd$,
\begin{equation} \label{partfctnu}
Z_p \left(U \right) := \int_{h^1_0(U)} \exp \left(  - \sum_{e \subset U} V_e( p(e) + \nabla \phi(e)) \right) \, d\phi,
\end{equation}
where we recall that the notation $p(e)$ denotes the constant vector field introduced in~\eqref{constantvectfield} and $d \phi$ stands for the Lebesgue measure on $h^1_0(U)$. Note that thanks to the symmetry of the functions $V_i$, even if the vector field $p + \nabla \phi$ is defined for edges, the quantity $V_e( p(e) + \nabla \phi(e)) )$ can be defined for bonds. From this, one can define the finite-volume surface tension
\begin{equation} \label{expformnu}
\nu (U , p) := - \frac1{|U|} \ln Z_p \left(U \right)
\end{equation}
and the probability measure on $h^1_0(U)$
\begin{equation*}
\P_{U,p}(d \phi) :=  \frac{\exp \left(  - \sum_{e \subset U} V_e( p(e) + \nabla \phi(e)) \right) d \phi}{Z_p \left(U \right)}.
\end{equation*}
We also denote by $\phi_{U,p}$ a random variable of law $\P_{U,p}$. These objects are frequently used with triadic cubes, we thus define the shortcut notations, for $n \in \N$,
\begin{equation*}
\P_{n,p} : = \P_{\cu_n,p},
\end{equation*}
and by $\phi_{n,p}$ a random variable of law $\P_{n,p}$. We also define the quantity, for each $q \in \Rd$,
\begin{equation} \label{partfctnustar}
Z_q^* \left(U \right) := \int_{\mathring h^1(U)} \exp \left(  - \sum_{e \subset U} \left( V_e( \nabla \psi(e)) - q \cdot \nabla \psi (e) \right)\right) \, d\psi,
\end{equation}
as well as the quantity
\begin{equation} \label{expformnustar}
\nu^* (U , q) := \frac1{|U|} \ln Z_q^* \left(U \right),
\end{equation}
and the probability measure
\begin{equation*}
\P_{U,q}^*(d \psi) :=  \frac{ \exp \left(  - \sum_{e \subset U} \left( V_e( \nabla \psi(e)) - q \cdot \nabla \psi (e) \right) \right) \, d\psi}{Z_q^* \left(U \right)}.
\end{equation*}
We denote by $\psi_{U,p}$ a random variable of law $\P_{U,p}^*$ and we frequently write $\P_{n,q}^*$ and $\psi_{n,p}$ instead of $\P_{\cu_n,q}^*$ and $\psi_{\cu_n,p}$.

\medskip

\subsubsection{Convention for constants and exponents}
Throughout this article, the symbols $c$ and $C$ denote positive constants which may vary from line to line. These constants may depend solely on the parameters $d$, the dimension of the space, and $\lambda$, the ellipticity bound on the second derivative of the function $V_e$. Similarly we use the symbols $\alpha$ and $\beta$ to denote positive exponents which may vary from line to line and depend only on $d$ and $\lambda$. Usually, we use $C$ for large constants (whose value is expected to belong to $[1, \infty)$) and $c$ for small constants (whose value is expected to be in $(0,1]$). The values of the exponents $\alpha$ and $\beta$ are always expected to be small.

 We also frequently write $C := C(d,\lambda) < \infty$ to mean that the constant $C$ depends only on the parameters $d,\lambda$ and that its value is expected to be large. We may also write $C := C(d) < \infty$ or $C := C(\lambda) < \infty$ if the constant $C$ depends only on $d$ (resp. $\lambda$). For small constants or exponents we use the notations $c := c(d , \lambda) > 0$, $\alpha := \alpha(d , \lambda) > 0$, $\beta := \beta(d , \lambda) > 0$.

\subsection{Main result}
The objective of this article is to obtain a quantitative rate of convergence for the finite-volume surface tension $\nu (\cu_n, p)$. We also obtain an algebraic rate of convergence for the quantity $ \nu^*$ and prove that the two surface tensions $\nu$ and $ \nu^*$ are, in the limit, convex dual. This is stated in the following theorem.
\begin{reptheorem}{QuantitativeconvergencetothGibbsstate}[Quantitative convergence of the surface tensions] 
There exist a constant $C := C(d , \lambda) < \infty$ and an exponent $\alpha := \alpha(d , \lambda) > 0$ such that for each $p,q \in \Rd$, there exist two real numbers $ \bar \nu (p)$ and $\bar \nu^* (q)$ such that
\begin{equation} 
\left| \nu (\cu_n, p) - \bar \nu (p) \right| \leq C 3^{-\alpha n} (1 + |p|^2),
\end{equation}
and 
\begin{equation} \label{quantconvnustar}
\left| \nu^* (\cu_n, q) - \bar \nu^* (q) \right| \leq C 3^{-\alpha n} (1 + |q|^2).
\end{equation}
Moreover the functions $p \rightarrow \bar \nu (p)$ and $q \rightarrow \bar \nu^* (q)$ are uniformly convex, i.e., there exists a constant $C := C(d , \lambda) < \infty$ such that for each $p_1 , p_2 \in \Rd$
\begin{equation} \label{unifconvbarnu}
\frac1{C} \left| p_1 - p_2 \right|^2 \leq \bar \nu \left(p_1\right) + \bar \nu \left( p_2 \right) - 2 \bar \nu \left( \frac{p_1 + p_2}2\right) \leq C \left| p_1 - p_2 \right|^2,
\end{equation}
for each $q_1 , q_2 \in \Rd$,
\begin{equation} \label{unifconvbarnustar}
\frac1{C} \left| q_1 - q_2 \right|^2 \leq \bar \nu^* \left(q_1\right) +\bar \nu^* \left( q_2 \right) - 2 \bar \nu^* \left( \frac{q_1 + q_2}2\right) \leq C \left| q_1 - q_2 \right|^2,
\end{equation}
and are dual convex, i.e., for each $q \in \Rd$
\begin{equation} \label{dualconvnunustar}
\bar \nu^* (q) = \sup_{p \in \Rd} \left( - \bar \nu (p) + p \cdot q \right).
\end{equation}
\end{reptheorem}

\begin{remark}
The uniform convexity property~\eqref{unifconvbarnu} is already known and was proved by Deuschel, Giacomin and Ioffe in~\cite[Lemma 3.6]{DGI00}.
\end{remark}

\begin{remark}
To build some intuition on the quantities $\nu$ and $\nu^*$ and see that they are convex dual, one can consider the specific case of the Gaussian free field, i.e., when $V_e (x) := \beta x^2$, for some strictly positive coefficient $\beta$. In this case, the value of the surface tensions can be explicitly computed and one has, for each $p,q \in \Rd$,
\begin{equation*}
\nu \left( \cu_n , p\right) = \beta |p|^2 + C_{n, \mathrm{Dir}} ~\mbox{and}~ \nu^* \left( \cu_n , q\right) = \frac{1}{4\beta} |q|^2 - C_{n, \mathrm{ Neu}},
\end{equation*}
where the constants $ C_{n, \mathrm{Dir}}$ and $C_{n, \mathrm{ Neu}}$ can be computed explicitly in terms of the parameter $\beta$ and of the eigenvalues of the discrete Laplacian on the cube $\cu_n$ with Dirichlet and Neumann boundary conditions, respectively. As the size of the cube $\cu_n$ tends to infinity, these two constants converge to the same constant which we denote by $C := C(\beta , d) < \infty$. From this argument, one deduces the following formula for the surface tensions $\bar \nu$ and $\bar \nu^*$, for each $p,q \in \Rd$,
\begin{equation*}
\bar \nu \left( p \right) :=   \beta |p|^2 + C ~\mbox{and} ~ \bar \nu^* \left( q \right) :=  \frac{1}{4\beta} |q|^2 - C,
\end{equation*}
which shows in particular that the two functions $\bar \nu$ and $\bar \nu^*$ are convex dual.
\end{remark}

\begin{remark}
These results give a quantitative rate of convergence for the surface tension which is suboptimal: we only obtain an algebraic rate of convergence for some small exponent $\alpha > 0$. The reason which justifies this exponent $\alpha$ is the following. The main idea of the proof is to obtain an inequality of the form
\begin{equation} \label{simpleversionconvdual}
 \nu \left( \cu_{n+1}, p \right) - \bar \nu (p) \leq C \left( \nu \left( \cu_{n},p \right) - \nu\left( \cu_{n+1}, p\right) \right),
\end{equation} 
which implies the algebraic rate of convergence thanks to the two following arguments:
\begin{enumerate}
\item the inequality~\eqref{simpleversionconvdual} can be rearranged to obtain
\begin{equation} \label{algebraicratesimpleversion}
\nu \left( \cu_{n+1}, p \right) - \bar \nu (p)  \leq \frac{C}{C+1} \left( \nu \left( \cu_{n} , p \right) -  \bar \nu (p)  \right);
\end{equation}
\item since the sequence $\left( \nu \left( \cu_{n}, p \right)  \right)_{n \in \N}$ is decreasing (see Proposition~\ref{propofnunustar} or~\cite[Lemma II.1]{FS97}), the value $\left( \nu \left( \cu_{n}, p \right)  - \bar \nu (p) \right)$ is non-negative and we deduce from the estimate~\eqref{algebraicratesimpleversion},
\begin{equation*}
\nu \left( \cu_{n}, p \right) - \bar \nu (p) \leq  \left( \frac{C}{C+1} \right)^n  \left( \nu \left( \cu_{0} , p \right) -  \bar \nu (p)  \right).
\end{equation*}
Since the number $\frac{C}{C+1}$ is strictly smaller than $1$, we obtain an algebraic rate of convergence with the exponent $\alpha := \frac{1}{\ln 3} \ln \frac{C+1}{C}$. Since we do not have a good control on the value of the constant $C$ in the proof, the exponent $\alpha$ obtained is not explicit.
\end{enumerate}
The precise statements are more involved and make use of the dual surface tension $\nu^*$. We refer to Section~\ref{section4.3}, and more specifically to the result of Proposition~\ref{lemma3.3}, and to Section~\ref{section4.4gradphi} for the details.
\end{remark}

From this, we deduce that the random variables $\phi_{n,p}$ and $\psi_{n,q}$ are close to affine functions in the expectation of the $L^2$-norm. Before stating the result, we note that~\eqref{unifconvbarnu} and~\eqref{unifconvbarnustar} imply that the functions $p \rightarrow \bar \nu(p)$ and $q \rightarrow \bar \nu^*(q)$ are $C^{1,1}$ and we denote their gradients by $\nabla_p \bar \nu$ and $\nabla_q \bar \nu^*$. 
\begin{reptheorem}{conhvergencetoaffine}[$L^2$ contraction of the Gibbs measure] 
There exist a constant $C := C(d , \lambda) < \infty$ and an exponent $\alpha := \alpha (d, \lambda) > 0$ such that for each $n \in \N$, $p, q \in \Rd$,
\begin{multline*}
\frac{1}{\left(\diam \cu_n\right)^2}\E \left[  \frac1{\left| \cu_n \right|} \sum_{x \in \cu_n} \left(  \left| \phi_{n,p}(x) \right|^2 + \left| \psi_{n,q}(x) -  \nabla_q \bar \nu^* (q ) \cdot x \right|^2 \right) \right] \\ \leq C 3^{- \alpha n} \left( 1 + |p|^2 + |q|^2 \right).
\end{multline*}
\end{reptheorem}

\begin{remark}
While the results of the two theorems are stated for deterministic potentials $V_e$, we expect that similar results, with similar proofs, should hold in the case of random potentials, with a stationary law and strong enough mixing conditions. Indeed the techniques developed in~\cite{armstrong2017quantitative}, which we try to adapt here and which rely on similar subadditive quantities, are designed to work in a random setting and their methods should be applicable to treat the random potential case. This generalization will affect the proofs by adding an additional layer of notations; moreover some concentration inequalities will be required to conclude.
\end{remark}

\begin{remark}
Qualitative versions of the previous statements, and in particular a qualitative version of Theorem~\ref{QuantitativeconvergencetothGibbsstate} can be obtained by a (much softer) subadditivity argument and was established by Funaki and Spohn in~\cite[Lemma II.1]{FS97}. Their result is stated in this article in Proposition~\ref{propofnunustar}. Obtaining quantitative rate of convergence is a different problem since the subadditivity arguments are purely qualitative. To obtain such results, one has to work with the two subadditive quantities $\nu$ and $\nu^*$ together; the idea is to show that they are approximately convex dual, to quantify the defect of convex duality and to deduce a rate of convergence for these quantities.
\end{remark}

\subsection{Strategy of the proof}
The strategy of the proof is to use the ideas from the theory of quantitative stochastic homogenization, and in particular the ideas developed by Armstrong, Kuusi, Mourrat and Smart in~\cite{armstrong2017quantitative} and~\cite{AS} to the setting of the $\nabla \phi$ model. To this end, we introduce the two quantities $\nu$ and $\nu^*$, which are in some sort equivalent to the subadditive quantities with the same notation used in~\cite[Chapters 1 and 2]{armstrong2017quantitative}. 

The idea is then to find a variational formulation for these quantities to rewrite them as a minimization problem of a convex functional, this is done in Section~\ref{varformulanunu*}. This functional involves a term of entropy and it is not a priori clear that it is convex. To solve this issue, we appeal to the theory of optimal transport and more specifically to the notion of displacement convexity to obtain some sort of convexity for the entropy. 

Once this is done, we are able to collect some properties about $\nu$ and $\nu^*$ which match the basic properties of the equivalent quantities in stochastic homogenization, and one can for instance compare Propostion~\ref{propofnunustar} with Lemma 1.1 of~\cite{armstrong2017quantitative}. One can then exploit the convex duality between $\nu$ and $\nu^*$ to obtain a quantitative rate of convergence for these quantities as it is done in Section~\ref{section4}.

\subsection{Outline of the paper}
The rest of the article is organized as follows. In Section~\ref{section2}, we collect some preliminary results which are useful to prove the main theorems. Specifically, we introduce the differential entropy of a measure and state some of its properties, we also record some definitions from the theory of optimal transport and state the main result we need to borrow from this theory, namely the displacement convexity of the entropy, Proposition~\ref{p.disconv}. We also introduce the variational formulation for $\nu$ and $\bar \nu^*$ in Section~\ref{varformulanunu*}. In Section~\ref{Couplinglemmas}, we state and prove a technical lemma which allows to construct suitable coupling between random variables. We then complete Section~\ref{section2} by stating some functional inequalities on the lattice $\Zd$, in particular the multiscale Poincar\'e inequality which is an important ingredient in the proofs of Theorems~\ref{QuantitativeconvergencetothGibbsstate} and~\ref{conhvergencetoaffine}.

In Section~\ref{section3}, we use the tools from Section~\ref{section2} to prove a series of properties on the quantities $\nu$ and $\nu^*$, summarized in Proposition~\ref{propofnunustar}.

In Section~\ref{section4}, we combine the tools collected in Section~\ref{section2} with the results proved in Section~\ref{section3} to first prove that the variance of the random variable $\left( \nabla \psi_{n,q} \right)_{\cu_n}$ contracts, this is done in Lemma~\ref{contslope}. We then deduce from this result and the multiscale Poincar\'e inequality that the random variable $\psi_{n,q}$ is close to an affine function in the $L^2$-norm, this is the subject of Proposition~\ref{p.ustarminimizerareflat}. We then use these results combined with a patching construction, reminiscent to the one performed in~\cite{AS}, to prove Theorems~\ref{QuantitativeconvergencetothGibbsstate} and~\ref{conhvergencetoaffine}.

Appendix~\ref{appendixA} is devoted to the proof of some technical estimates useful in Sections~\ref{section2} and~\ref{section3}.

Appendix~\ref{appendixB} is devoted to the proof of some inequalities from the theory of elliptic equations adapted to the setting of the $\nabla \phi$ model. More precisely, we prove a version of the Caccioppoli inequality, the reverse H\"older inequality and the Meyers estimate for the $\nabla \phi$-model.

\subsection{Acknowledgments} I would like to thank Jean-Christophe Mourrat for helpful discussions and comments.

\section{Preliminaries} \label{section2}

\subsection{The entropy and some of its properties}
In this section, we define one of the main tools used in this article, the differential entropy, we then collect a few properties of this quantity which are used in the rest of the article.

\begin{definition}[Differential entropy] \label{definitionentropy}
Let $V$ be a finite dimensional vector space equipped with a scalar product. Denote by $\mathcal{B}(V)$ the Borel $\sigma$-algebra associated with $V$. Consider the Lebesgue measure on $V$ and denote it by $\mathrm{Leb}$. For each probability measure $\P$ on $V$, we define its entropy according to the formula
\begin{equation*}
H \left( \P \right) := \left\{ \begin{aligned}
\int_V \frac{d \P}{d \mathrm{Leb}}(x) \ln \left( \frac{d \P}{d \mathrm{Leb}}(x) \right) \, dx &~ \mbox{if} ~ \P \ll \mathrm{Leb} ~\mbox{and}~ \frac{d \P}{d \mathrm{Leb}}  \ln \left( \frac{d \P}{d \mathrm{Leb}} \right) \in L^1 \left( V \right), \\
+ \infty &~ \mbox{otherwise} .
\end{aligned} \right.
\end{equation*} 
\end{definition}

\begin{remark}
\begin{itemize}
\item In this article we implicitly extend the function $x \rightarrow x \ln x$ by $0$ at $0$.
\item We emphasize that the usual definition of the differential entropy is stated with the function $x \rightarrow - x \ln x$ instead of the function $x \rightarrow  x \ln x$. Adopting the other sign convention is more meaningful in this article because we want the entropy to be convex in the sense of displacement convexity as it is explained in the following sections.
\end{itemize}
\end{remark}

We now record a few properties about the entropy. We first study how the entropy behaves under translation and affine change of variables. These properties are standard and fairly simple to prove, the details are thus omitted.

\begin{proposition}[Translation and linear change of variables of the entropy] \label{transandchofvar}
Let $X$ be a random variable taking values in a finite dimensional real vector space $V$ equipped with a scalar product. Denote by $\P_X$ the law of $X$. For each $a \in V$, we denote by $\P_{X+a}$ the law of the random variable $X+a$. Then we have
\begin{equation} \label{transentropy}
H\left( \P_{X + a} \right) = H\left( \P_{X} \right).
\end{equation}
Now consider a linear map $L$ from $V$ to $V$. We denote by $\P_{L(X)}$ the law of the random variable $L(X)$. Then we have
\begin{equation} \label{linearmapentropy}
H \left( \P_{L(X)} \right) = H \left( \P_X \right) - \ln \left| \det L \right|.
\end{equation}
In particular if $L$ is non invertible then $\det L = 0$ and $H \left( \P_{L(X)} \right) = \infty$.
\end{proposition}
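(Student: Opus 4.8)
The statement to prove is Proposition~\ref{transandchofvar} on translation and linear change of variables for the differential entropy.

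\medskip

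\textbf{Plan of proof.} The strategy is to reduce everything to the change-of-variables formula for the Lebesgue measure and the defining integral of $H$. First I would handle the translation invariance~\eqref{transentropy}. Writing $\mu := \P_X$ and $T_a : V \to V$ for the map $x \mapsto x + a$, the law $\P_{X+a}$ is the pushforward $(T_a)_* \mu$. Since $T_a$ preserves the Lebesgue measure $\Leb_V$ (translation invariance of Lebesgue measure on a finite-dimensional vector space), we have $\P_{X+a} \ll \Leb_V$ iff $\mu \ll \Leb_V$, and in that case the Radon--Nikodym derivatives satisfy $\frac{d\P_{X+a}}{d\Leb_V}(y) = \frac{d\mu}{d\Leb_V}(y - a)$ for a.e.\ $y$. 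Substituting this into the definition of $H$ and changing variables $y = x + a$ (again using translation invariance of $\Leb_V$) shows the integrand is simply translated, so the two integrals agree, including the case where one side is $+\infty$ (the $L^1$ integrability condition is also translation invariant). This part is essentially immediate.

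\medskip

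For the linear change of variables~\eqref{linearmapentropy}, I would split into two cases according to whether $L$ is invertible. If $L$ is not invertible, then $L(V)$ is a proper subspace of $V$, hence $\Leb_V(L(V)) = 0$; since $\P_{L(X)}$ is supported on $L(V)$, it is singular with respect to $\Leb_V$, so by definition $H(\P_{L(X)}) = +\infty = H(\P_X) - \ln|\det L|$ because $\det L = 0$ makes the right-hand side $+\infty$ as well (with the convention $-\ln 0 = +\infty$; note $H(\P_X) > -\infty$ always, as $H(\P_X)$ is either a real number or $+\infty$). If $L$ is invertible, then $\P_{L(X)} = L_* \mu$, and the pushforward of Lebesgue measure satisfies $L_* \Leb_V = |\det L|^{-1} \Leb_V$, equivalently $\Leb_V(L(A)) = |\det L| \, \Leb_V(A)$. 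Hence $\P_{L(X)} \ll \Leb_V$ iff $\mu \ll \Leb_V$, and when this holds the chain rule for Radon--Nikodym derivatives gives
\begin{equation*}
\frac{d\P_{L(X)}}{d\Leb_V}(y) = \frac{1}{|\det L|} \, \frac{d\mu}{d\Leb_V}(L^{-1} y) \qquad \text{for } \Leb_V\text{-a.e.\ } y \in V.
\end{equation*}
Plugging this into the definition of $H(\P_{L(X)})$, using $f(x) = \frac{d\mu}{d\Leb_V}(x)$ as shorthand, and changing variables $y = Lx$ (so $dy = |\det L| \, dx$), the integral becomes
\begin{equation*}
\int_V \frac{f(x)}{|\det L|} \Bigl( \ln f(x) - \ln |\det L| \Bigr) \, |\det L| \, dx = \int_V f(x) \ln f(x)\, dx - \ln|\det L| \int_V f(x)\, dx,
\end{equation*}
and since $f$ is a probability density the second integral equals $1$, giving exactly $H(\P_X) - \ln|\det L|$. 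One should also check that the $L^1$ integrability condition in the definition transfers correctly under this substitution, which it does since multiplying a density by a positive constant and composing with an invertible linear map preserves $L^1$-membership of $f \ln f$.

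\medskip

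\textbf{Main obstacle.} There is no serious obstacle here; the proposition is genuinely routine, which is why the authors omit the details. The only points requiring a small amount of care are: (i) correctly tracking the $+\infty$ conventions so that the identity holds as an equality in $[-\infty, +\infty]$ (in particular treating the non-invertible case and the absolutely-continuous-but-not-$L^1$ case), and (ii) being careful that the change-of-variables substitution is applied to the Lebesgue measure attached to $V$ via its scalar product, consistent with the convention~\eqref{formuladxLebv}; since the scalar product is fixed throughout and $L$ acts on the same space $V$, the factor $|\det L|$ is the usual determinant and no metric subtleties arise.
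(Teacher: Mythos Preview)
Your proof is correct and complete. The paper itself omits the proof entirely, stating only that ``These properties are standard and fairly simple to prove, the details are thus omitted''; your argument via the change-of-variables formula for the Lebesgue measure, together with the careful case distinction between invertible and non-invertible $L$ and the handling of the $+\infty$ conventions, is exactly the standard verification the authors had in mind.
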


Let $V , W$ be two finite dimensional real vector spaces equipped with scalar products denoted by $\left( \cdot , \cdot \right)_{V}$ and $\left( \cdot , \cdot \right)_{W}$. Denote by $\Leb_V$ and $\Leb_W$ the Lebesgue measures on $V$ and $W$. Consider the space $V \times W$. Define a scalar product on this space by, for each $v,v' \in V$ and each $w , w' \in W$,
\begin{equation} \label{defproductscalarprod}
\left( (v, w) , (v', w') \right)_{V \times W} =  \left(v , v' \right)_{V} +  \left(w , w' \right)_{W}.
\end{equation}
Then the Lebesgue measure on $V \oplus W$ satisfies
\begin{equation*}
\Leb_{V \times W} = \Leb_{V} \otimes \Leb_{W}.
\end{equation*}
The following proposition gives a property about the entropy of a pair of random variables.

\begin{proposition} \label{entropycouplesum}
Let $V , W$ be two finite dimensional real vector spaces equipped with scalar products. Consider the space $V \times W$ equipped with the scalar product defined in~\eqref{defproductscalarprod}. Let $X$ and $Y$ be two random variables valued in respectively $V$ and $W$. Assume that $H \left( \P_X \right) < \infty$, $H \left( \P_Y \right) < \infty$ and that we are given a coupling $(X,Y)$ between $X$ and $Y$. Then we have
\begin{equation*}
H \left( \P_{(X , Y)} \right) \geq H \left( \P_X \right) +  H \left( \P_Y \right),
\end{equation*}
with equality if and only if the random variables $X$ and $Y$ are independent.
\end{proposition}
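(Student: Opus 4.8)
The plan is to prove the statement via the standard disintegration/chain-rule argument for differential entropy, exploiting the sign convention adopted in this paper (recall here $H(\P) = \int f \ln f$, so $H$ is the \emph{negative} of the usual differential entropy, which only flips the direction of the subadditivity inequality).

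First I would reduce to the case where $\P_{(X,Y)}$ is absolutely continuous with respect to $\Leb_{V\times W} = \Leb_V \otimes \Leb_W$. If it is not, then $H(\P_{(X,Y)}) = +\infty$ by definition and the inequality is trivial (the right-hand side is finite by hypothesis); so assume $\P_{(X,Y)} \ll \Leb_{V\times W}$ with density $f(x,y)$. Then the marginals satisfy $\P_X \ll \Leb_V$ and $\P_Y \ll \Leb_W$ with densities $g(x) = \int_W f(x,y)\, dy$ and $h(y) = \int_V f(x,y)\, dx$ respectively. The key computation is to write, using Fubini and the fact that $g$ and $h$ are the marginal densities,
\begin{equation*}
H(\P_{(X,Y)}) - H(\P_X) - H(\P_Y) = \int_{V \times W} f(x,y) \ln \frac{f(x,y)}{g(x) h(y)} \, dx\, dy,
\end{equation*}
which is precisely the relative entropy (Kullback--Leibler divergence) of $\P_{(X,Y)}$ with respect to the product measure $\P_X \otimes \P_Y$. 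One must be slightly careful here that all three terms on the left are well-defined and finite: $H(\P_X), H(\P_Y) < \infty$ by hypothesis, and the integrability needed to split the integral follows from Jensen's inequality applied below (or one argues first that the KL divergence is well-defined in $[0,+\infty]$ and then rearranges).

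The inequality $\int f \ln(f/(gh)) \ge 0$ then follows from Jensen's inequality: writing the integral as $\int_{\{f>0\}} f \ln(f/(gh))$ and applying Jensen to the convex function $u \mapsto u \ln u$ (or equivalently $-\ln$ to the reciprocal) against the probability measure $f\, dx\, dy$, we get $\int f \ln\frac{f}{gh} \ge \left(\int f\right) \ln\left(\int f\right) = 0$ after noting $\int_{\{f>0\}} gh \cdot \frac{f}{gh}\,\mathbf{1}_{\{f>0\}} = 1$ and $\int_{\{f>0\}} \frac{f}{gh}\cdot gh = 1$; the cleanest route is: by Jensen applied to the strictly convex $\varphi(t) = t\ln t$ and the probability measure $\mu(dx\,dy) = g(x)h(y)\,\mathbf{1}_{\{gh>0\}}\,dx\,dy$ (which is a probability measure since $\P_{(X,Y)}\ll \P_X\otimes\P_Y$ forces $f$ to vanish where $gh=0$),
\begin{equation*}
\int \varphi\!\left(\frac{f}{gh}\right) d\mu \ \ge\ \varphi\!\left(\int \frac{f}{gh}\, d\mu\right) = \varphi(1) = 0,
\end{equation*}
and the left side is exactly $\int f\ln\frac{f}{gh}\,dx\,dy$. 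Finally, the equality case: since $\varphi$ is strictly convex, equality in Jensen holds iff $f/(gh)$ is $\mu$-a.e. constant, hence equal to $1$, i.e. $f = gh$ almost everywhere, which is precisely the statement that $X$ and $Y$ are independent. Conversely if $X,Y$ are independent then $f = gh$ and the three entropies add exactly.

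The main obstacle is not conceptual but bookkeeping: one must ensure that the rearrangement $\int f\ln f - \int g\ln g - \int h\ln h = \int f\ln\frac{f}{gh}$ is legitimate, i.e. that no $\infty - \infty$ arises. This is handled by first establishing that the right-hand side (the KL divergence) is a well-defined element of $[0,+\infty]$ via Jensen, and separately knowing $H(\P_X), H(\P_Y)$ are finite; one can then split off $\int f \ln g = \int g \ln g = H(\P_X)$ and $\int f \ln h = H(\P_Y)$ by Fubini, these being finite, to recover $H(\P_{(X,Y)}) = H(\P_X) + H(\P_Y) + \mathrm{KL} \ge H(\P_X) + H(\P_Y)$ (the identity also correctly yielding $H(\P_{(X,Y)}) = +\infty$ when the KL divergence is infinite). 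Given that the paper flags such computations as standard and omits several proofs of this flavor, a concise writeup along these lines suffices.
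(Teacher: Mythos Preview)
Your argument is correct and matches the paper's approach exactly: the paper's proof reads in its entirety ``These estimates can be obtained using the convexity of the function $x \rightarrow x \ln x$ and the Jensen inequality. The proof is standard and the details are omitted.'' Your write-up supplies precisely those omitted details, including the careful handling of the integrability/$\infty-\infty$ bookkeeping and the equality case via strict convexity.
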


\begin{remark}
This inequality states that the entropy of two random variable is minimal when $X$ and $Y$ are independent. This is due to the sign convention adopted in Definition~\ref{definitionentropy}.
\end{remark}

\begin{proof}
These estimates can be obtained using the convexity of the function $x \rightarrow x \ln x$ and the Jensen inequality. The proof is standard and the details are omitted.
\end{proof}

Frequently in this article, the previous proposition is used with the following formulation.

\begin{proposition} \label{entropycouplesum2}
Let $U$ be a finite dimensional vector space equipped with a scalar product and assume that we are given two linear subspaces of $U$, denoted by $V$ and $W$, such that
\begin{equation*}
U = V \overset{\perp}{\oplus} W.
\end{equation*}
Assume moreover that we are given two random variables $X$ and $Y$ taking values respectively in $V$ and $W$. Assume that $H \left( \P_X \right) < \infty$, $H \left( \P_Y \right) < \infty$ and that we are given a coupling $(X,Y)$ between $X$ and $Y$. Then we have
\begin{equation*}
H \left( \P_{X + Y} \right) \geq H \left( \P_X \right) +  H \left( \P_Y \right),
\end{equation*}
where the entropy of $X+Y$ (resp. $X$ and $Y$) is computed with respect to the Lebesgue measure on $U$ (resp. $V$ and $W$). Moreover there is equality in the previous display if and only if $X$ and $Y$ are independent.
\end{proposition}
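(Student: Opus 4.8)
The plan is to deduce Proposition~\ref{entropycouplesum2} directly from Proposition~\ref{entropycouplesum} using the canonical isometry between $U = V \overset{\perp}{\oplus} W$ and the product space $V \times W$. First I would observe that since $U = V \overset{\perp}{\oplus} W$, every vector $u \in U$ decomposes uniquely as $u = v + w$ with $v \in V$, $w \in W$, and $(v,w)_U = 0$; the map $\Phi : V \times W \to U$ given by $\Phi(v,w) = v + w$ is then a linear bijection. Moreover, because the decomposition is orthogonal, $\Phi$ is an isometry: $\|v+w\|_U^2 = \|v\|_V^2 + \|w\|_W^2$, which matches the scalar product on $V \times W$ defined in~\eqref{defproductscalarprod}. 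Consequently $\Phi$ has Jacobian determinant of absolute value $1$, so it pushes forward $\Leb_{V \times W} = \Leb_V \otimes \Leb_W$ to $\Leb_U$, and for any random variable $Z$ on $V \times W$ one has $H(\P_{\Phi(Z)}) = H(\P_Z)$ by Proposition~\ref{transandchofvar} applied with the linear map $\Phi$ (here one should be slightly careful that Proposition~\ref{transandchofvar} is stated for endomorphisms, but the same computation via the change-of-variables formula gives the identity for an isometry between distinct spaces; alternatively identify $V \times W$ with $U$ from the outset).

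Given the coupling $(X,Y)$ between $X$ and $Y$, it is a random variable valued in $V \times W$ with the two prescribed marginals, so Proposition~\ref{entropycouplesum} yields
\begin{equation*}
H\left( \P_{(X,Y)} \right) \geq H\left( \P_X \right) + H\left( \P_Y \right),
\end{equation*}
with equality iff $X$ and $Y$ are independent. Now $X + Y = \Phi(X,Y)$, so applying the invariance $H(\P_{\Phi(X,Y)}) = H(\P_{(X,Y)})$ from the previous paragraph gives
\begin{equation*}
H\left( \P_{X+Y} \right) = H\left( \P_{(X,Y)} \right) \geq H\left( \P_X \right) + H\left( \P_Y \right),
\end{equation*}
and the equality case is inherited verbatim from Proposition~\ref{entropycouplesum}. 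This is exactly the claimed statement, with the entropies of $X+Y$, $X$, $Y$ computed with respect to $\Leb_U$, $\Leb_V$, $\Leb_W$ respectively, as required.

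There is essentially no serious obstacle here: the content is entirely bookkeeping about Lebesgue measures on orthogonally decomposed vector spaces, and the only point demanding a line of care is verifying that the measure-theoretic identifications are compatible — namely that $\Leb_U$ corresponds to $\Leb_V \otimes \Leb_W$ under $\Phi$, which is precisely the relation~\eqref{decomplebmeasure} recorded earlier in the paper (with $H = V$, $H^\perp = W$). Since the paper has already set up this identification, the proof can legitimately be reduced to: "Apply Proposition~\ref{entropycouplesum} to the coupling $(X,Y)$, then use that $X+Y$ has the same law-entropy as $(X,Y)$ under the canonical isometry $V \times W \cong U$, together with~\eqref{decomplebmeasure}." The equality characterization requires no extra argument. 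Given the paper's stated convention of omitting standard details, I would expect the published proof to be two or three sentences along exactly these lines.
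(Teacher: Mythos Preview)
Your proposal is correct and follows exactly the paper's approach: the paper's proof consists of a single sentence invoking Proposition~\ref{entropycouplesum} together with the canonical isometry $\Phi : V \times W \to U$, $(v,w) \mapsto v+w$. Your write-up simply makes explicit the measure-theoretic bookkeeping (the relation~\eqref{decomplebmeasure} and the invariance of entropy under isometries) that the paper leaves implicit.
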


\begin{proof}
This proposition is a consequence of Proposition~\ref{entropycouplesum} and the fact that there exists a canonical isometry between $U$ and $V \times W$ given by
\begin{equation} \label{canonicalbij}
\Phi : \left\{ \begin{aligned}
V \times W & \rightarrow U \\
(v, w) & \mapsto v + w.
\end{aligned} \right.
\end{equation}
\end{proof}

\subsection{Variational formula for the surface tensions $\nu$ and $\nu^*$} \label{varformulanunu*}
One of the important ingredients of this article is to introduce a convex functional $\mathcal{F}_{n,p}$ (resp. $\mathcal{F}_{n,q}^*$) defined on set of the probability measures on the space $h^1_0(\cu_n)$ (resp. $\mathring h^1(\cu_n)$) such that $\P_{n,p}$ (resp. $\P_{n,q}^*$) is the minimizer of $\mathcal{F}_{n,p}$ (resp. $\mathcal{F}_{n,q}^*$). The convexity of $\mathcal{F}_{n,p}$ (resp. $\mathcal{F}_{n,q}^*$) allows to perform a perturbative analysis around its minimizer, i.e., the measure $\P_{n,p}$ (resp. $\P_{n,q}^*$), and to obtain quantitative estimates which turn out to be crucial in the proof of Theorem~\ref{QuantitativeconvergencetothGibbsstate}.

\begin{definition} \label{chapitre444.energyentrpyfunctional}
For each $n \in \N$ and each $p , q \in \Rd$, we define
\begin{equation*}
\mathcal{F}_{n,p} : \left\{ \begin{aligned}
\mathcal{P}\left( h^1_0 (\cu_n) \right) & \rightarrow \R \\
\P & \mapsto \E \left[ \sum_{e \subseteq \cu_n} V_e\left( p \cdot e + \nabla \phi_e \right) \right] + H \left( \P \right),
\end{aligned} \right.
\end{equation*}
where $\phi$ is a random variable of law $\P$. Similarly, we define
\begin{equation*}
\mathcal{F}_{n,q}^* : \left\{ \begin{aligned}
\mathcal{P}\left( \mathring h^1 (\cu_n) \right) & \rightarrow \R \\
\P^* & \mapsto \E \left[ \sum_{e \subseteq \cu_n} \left( V_e\left( \nabla \psi(e) \right) - q \cdot  \nabla \psi(e)\right) \right] + H \left( \P^* \right),
\end{aligned} \right.
\end{equation*}
where $\psi$ is a random variable of law $\P^*$.
\end{definition}

The main property about this functional is stated in the following proposition.

\begin{proposition} \label{varfornuprop}
Let $V$ be a finite dimensional real vector space equipped with a scalar product. We denote by $\mathcal{B}(V)$ be the Borel set associated to $V$. For any measurable function $f : V \rightarrow \R$ bounded from below, one has the formula
\begin{equation} \label{Jformula}
- \log \int_{V} \exp \left( - f (x) \right) \, dx = \inf_{\P \in \mathcal{P} \left( V \right)} \left(  \int_{\P} f(x) \P (dx) + H \left( \P \right) \right),
\end{equation}
where the integral in the left-hand side is computed with respect to the Lebesgue measure on $V$.

As a consequence, one has the following formula, for each $n \in \N$ and each $p \in \Rd$,
\begin{equation} \label{varfornu}
\nu \left( \cu_n , p  \right)  = \inf_{\P \in \mathcal{P} \left( h^1_0 (\cu_n) \right)} \left(  \frac1{|\cu_n|}\E \left[ \sum_{e \subseteq \cu_n} V_e\left( p \cdot e + \nabla \phi(e) \right) \right] + \frac1{|\cu_n|} H \left( \P \right) \right),
\end{equation}
where in the previous formula, $\phi$ is a random variable of law $\P$. Moreover the minimum is attained for the measure $\P_{n,p}$. Similarly, one has, for each $q \in \Rd$,
\begin{equation} \label{varfornustar}
\nu^* \left( \cu_n , q  \right)  = \sup_{\P^* \in \mathcal{P}  \left( \mathring h^1 (\cu_n) \right)} \left( \frac1{|\cu_n|} \E \left[- \sum_{e \subseteq \cu_n} \left( V_e\left(  \nabla \psi(e) \right) - q \cdot \nabla \psi (e) \right) \right] - \frac1{|\cu_n|} H \left( \P^* \right) \right),
\end{equation}
where in the previous formula, $\psi$ is a random variable of law $\P^*$. Moreover the minimum is attained for the measure $\P_{n,q}^*$.
\end{proposition}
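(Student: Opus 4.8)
The core of the statement is the variational identity \eqref{Jformula}, which is the Gibbs variational principle (Donsker--Varadhan duality) for the differential entropy; everything else follows by specializing $V$ and $f$. The plan is therefore to prove \eqref{Jformula} first, and then to check that the constraints defining $\nu$ and $\nu^*$ are captured by an appropriate choice of $V$ and $f$.

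\textbf{Step 1: The abstract identity.} Fix the finite-dimensional vector space $V$ with its Lebesgue measure $\Leb$ and a measurable $f$ bounded below. Set $Z := \int_V e^{-f(x)}\,dx \in (0,+\infty]$. The plan is to first dispatch the degenerate case $Z = +\infty$: then the right-hand side infimum is also $+\infty$ (for any $\P$ with finite entropy, Jensen applied to the convex function $t \mapsto t\ln t$ against the probability measure $\frac{e^{-f}}{Z'}\Leb$ on a large ball, or a direct truncation argument, forces $\int f\,d\P + H(\P) \to +\infty$), so equality holds trivially. When $Z < \infty$, define the probability measure $\mathbf{Q}(dx) := Z^{-1} e^{-f(x)}\,dx$. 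For any $\P \in \mathcal{P}(V)$ with $\P \ll \Leb$, write $g := \frac{d\P}{d\Leb}$; a short computation gives
\begin{equation*}
\int_V f\,d\P + H(\P) = \int_V g(x)\ln\!\left( \frac{g(x)}{e^{-f(x)}} \right) dx = \int_V g(x)\ln\!\left( \frac{g(x)}{Z^{-1}e^{-f(x)}} \right) dx - \ln Z = H(\P \,|\, \mathbf{Q}) - \ln Z,
\end{equation*}
where $H(\P\,|\,\mathbf{Q})$ is the relative entropy. Since $x \mapsto x\ln x$ is convex with value $0$ at $x=1$, Jensen's inequality gives $H(\P\,|\,\mathbf{Q}) \geq 0$ with equality iff $\P = \mathbf{Q}$; hence the infimum on the right-hand side of \eqref{Jformula} equals $-\ln Z$, attained uniquely at $\mathbf{Q}$. (One must also note that $\P \not\ll \Leb$ contributes $+\infty$ by definition of $H$, so such $\P$ are irrelevant to the infimum. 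Some care is needed to justify $H(\P\,|\,\mathbf Q) \ge 0$ when $H(\P)$ is finite but the integrals are only conditionally convergent; this is handled by splitting $V$ into the region $\{g \geq Z^{-1}e^{-f}\}$ and its complement and using that $g\ln g \in L^1$ together with $f$ bounded below.)

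\textbf{Step 2: Specialization to $\nu$.} Apply \eqref{Jformula} with $V = h^1_0(\cu_n)$ (equipped with the $L^2$ scalar product, hence with its canonical Lebesgue measure $d\phi$) and $f(\phi) := \sum_{e \subseteq \cu_n} V_e(p \cdot e + \nabla \phi(e))$. This $f$ is bounded below because each $V_e \geq 0$ by the normalization assumption. Then $\int_{h^1_0(\cu_n)} e^{-f}\,d\phi = Z_p(\cu_n)$ by \eqref{partfctnu}, so \eqref{Jformula} reads $-\ln Z_p(\cu_n) = \inf_{\P} \big( \E[f(\phi)] + H(\P) \big)$; dividing by $|\cu_n|$ and recalling \eqref{expformnu} gives \eqref{varfornu}, with the unique minimizer $\mathbf{Q} = \P_{n,p}$ by construction. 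For \eqref{varfornustar}, apply \eqref{Jformula} with $V = \mathring h^1(\cu_n)$ and $f(\psi) := \sum_{e \subseteq \cu_n}\big(V_e(\nabla\psi(e)) - q\cdot\nabla\psi(e)\big)$; this is bounded below since $V_e(\nabla\psi(e)) - q\cdot\nabla\psi(e) \geq \lambda|\nabla\psi(e)|^2 - |q||\nabla\psi(e)| \geq -|q|^2/(4\lambda)$ edge by edge. Then $\int e^{-f}\,d\psi = Z_q^*(\cu_n)$ by \eqref{partfctnustar}, so $-\ln Z_q^*(\cu_n) = \inf_{\P^*}(\E[f(\psi)] + H(\P^*))$; negating both sides, using $\nu^*(\cu_n,q) = \frac1{|\cu_n|}\ln Z_q^*(\cu_n)$ from \eqref{expformnustar}, and distributing the minus sign to turn $\inf$ into $\sup$ yields \eqref{varfornustar}, with the extremizer $\P^*_{n,q}$.

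\textbf{Main obstacle.} The only genuinely delicate point is the rigorous treatment of the entropy integrals when they are not absolutely convergent --- i.e.\ making the manipulation $\int f\,d\P + H(\P) = H(\P\,|\,\mathbf{Q}) - \ln Z$ legitimate under only the hypotheses "$f$ bounded below" and "$g\ln g \in L^1$", and likewise handling the case $Z = +\infty$. This is standard measure theory (it is the content of the Gibbs variational principle) and the paper explicitly allows us to invoke such facts, but it is where the real content of the proof lies; the specialization in Step 2 is then purely bookkeeping, checking that the normalization assumption $V_i(0)=0$ (hence $V_e \geq \lambda|\cdot|^2 \geq 0$) and the uniform convexity bound are exactly what guarantees $f$ is bounded below in each case.
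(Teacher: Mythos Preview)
Your approach via the relative-entropy identity $\int f\,d\P + H(\P) = H(\P\mid\mathbf Q) - \ln Z$ is correct and is essentially the same as the paper's proof: the paper establishes the lower bound by a direct Jensen computation (which is exactly the nonnegativity of $H(\cdot\mid\mathbf Q)$ unpacked), exhibits $\mathbf Q$ as the minimizer when $Z<\infty$ and $\int|f|e^{-f}<\infty$, and then treats the remaining cases by truncation. Your Step~2 specialization is also fine and matches the paper's.

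There is, however, a genuine sign error in your handling of the degenerate case $Z=+\infty$. You claim the right-hand infimum is $+\infty$, but $-\log(+\infty)=-\infty$, so you must show the infimum is $-\infty$, not $+\infty$. Since $f$ is bounded below, $\int f\,d\P$ is bounded below uniformly in $\P$, so the only way to drive the functional to $-\infty$ is to make $H(\P)$ very negative by spreading $\P$ out; concretely, one takes $\P_n \propto e^{-f}\indc_{\{|x|\le n,\ f\le n\}}$ (exactly the paper's Step~3 construction) and checks that $\int f\,d\P_n + H(\P_n) = -\ln\int e^{-f}\indc_{\{|x|\le n,\ f\le n\}} \to -\infty$. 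Your parenthetical sketch (``Jensen \ldots\ forces $\int f\,d\P + H(\P)\to+\infty$'') is the argument run in the wrong direction. This does not affect the applications to $\nu$ and $\nu^*$, where you correctly verify $f$ is bounded below and (implicitly, via quadratic growth) that $Z<\infty$, but it is a gap in the proof of the general identity~\eqref{Jformula} as stated.
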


\begin{proof}
We first prove~\eqref{Jformula} and decompose the proof into three steps.

\medskip

\textit{Step 1.} Let $\P$ be a probability measure on $V$, we want to show that
\begin{equation} \label{orvarformulageq}
\int_{V} f(x) \P (dx) + H \left( \P \right) \geq - \log \int_{V} \exp \left( - f (x) \right) \, dx.
\end{equation}
First note that if $H(\P) = \infty$, then the term on the left-hand side is equal to infinity and the inequality is satisfied. Thus one can assume $H \left( \P\right) < \infty$, this implies that $\P $ is absolutely continuous with respect to the Lebesgue measure on $V$ and we denote by $h$ its density. In particular, we have
\begin{equation*}
H \left( \P \right) = \int_V h(x) \ln h(x) \, dx.
\end{equation*}
Similarly, one can assume that $\int_V f(x) h(x) \, dx < \infty$ otherwise the estimate~\eqref{orvarformulageq} is automatically verified. 
Using that $h$ is a probability density and the Jensen inequality, one obtains
\begin{equation*}
\exp \left( - \int_V f(x) h(x) \, dx - H \left( \P \right)  \right) \leq \int_V \exp \left(- f(x) - \ln h(x) \right) h(x) \, dx.
\end{equation*}
We then denote
\begin{equation*}
A := \left\{ x \in V ~:~ h(x) > 0 \right\} \in \mathcal{B}(V),
\end{equation*}
so that
\begin{align*}
\exp \left( - \int_V f(x) h(x) \, dx - H \left( \P \right)  \right) & \leq \int_{V} \indc_{A}(x)  \exp \left( - f(x)\right) h(x)^{-1} h(x) \, dx \\
												& \leq \int_{V} \indc_{A}(x)  \exp \left( - f(x)\right) \, dx \\
												& \leq \int_{V} \exp \left( - f(x)\right) \, dx .
\end{align*}
This is precisely~\eqref{orvarformulageq}.

\medskip

\textit{Step 2.} We assume that
\begin{equation} \label{assumptionfstep2}
\int_V \exp \left(- f(x) \right) \, dx < \infty ~ \mbox{and}~ \int_V \left| f(x) \right|  \exp \left(- f(x) \right) \, dx < \infty
\end{equation}
and construct a probability measure $\P \in \mathcal{P} \left( V \right)$ satisfying
\begin{equation} \label{orvarformulageqother}
\int_{\P} f(x) \P (dx) + H \left( \P \right) = - \log \int_{V} \exp \left( - f (x) \right) \, dx.
\end{equation}
In this case, we define 
\begin{equation*}
\P := \frac{\exp \left( - f(x) \right)}{\int_{V} \exp \left( - f (x) \right) \, dx} \, dx.
\end{equation*}
It is clear that $\P$ is absolutely continuous with respect to the Lebesgue measure $\Leb_V$ and, from the assumptions~\eqref{assumptionfstep2}, that $H \left( \P \right) <\infty$. An explicit computation gives
\begin{align*}
\int_{V} f(x) \P(dx) + H(\P) & = \int_{V} f(x) - f(x) - \ln \left( \int_{V} \exp \left( - f (x) \right) \, dx \right) \P(dx) \\
						& = \ln \left( \int_{V} \exp \left( - f (x) \right) \, dx \right).
\end{align*}
This completes the proof of~\eqref{orvarformulageqother}.
\medskip

\textit{Step 3.} In this step, we assume that 
\begin{equation*}
\int_V \exp \left(- f(x) \right) \, dx = \infty  ~\mbox{or}~ \int_V \left| f(x) \right|  \exp \left(- f(x) \right) \, dx = \infty
\end{equation*}
and we construct a sequence of probability measures $\P_n $ such that
\begin{equation*}
\int_V f(x) \P_n (dx) + H \left( \P_n \right)  \underset{n \rightarrow \infty}{\longrightarrow} - \log \int_{V} \exp \left( - f (x) \right) \, dx,
\end{equation*}
where we used the convention
\begin{equation*}
- \log \int_{V} \exp \left( - f (x) \right) \, dx = - \infty ~\mbox{if}~ \int_{V} \exp \left( - f (x) \right) \, dx = \infty.
\end{equation*}
To this end, we define, for each $n \in \N$,
\begin{equation*}
\P_n := \frac{\exp \left( - f(x) \right)\indc_{\{|x| \leq n \mbox{ and } f(x) \leq n    \}} }{\int_{V} \exp \left( - f (x) \right) \indc_{\{|x| \leq n \mbox{ and } f(x) \leq n  \}} \, dx} \, dx.
\end{equation*}
With this definition, we compute
\begin{equation*}
\int_V f(x) \P_n (dx) + H \left( \P_n \right) = - \ln  \left( \int_{V} \exp \left( - f (x) \right) \indc_{\{|x| \leq n \mbox{ and } f(x) \leq n   \}} \, dx \right).
\end{equation*}
Sending $n \rightarrow \infty$ gives the result. This completes the proof of~\eqref{Jformula}.

\medskip

We now deduce the equality~\eqref{varfornu} from the identity~\eqref{Jformula}. We first note that, thanks to the bound $V_e(x) \leq \frac1\lambda |x|^2$, there exists a constant $C := C(d, \lambda) < \infty$ such that, for each $n \in \N$ and each $\mathbf{\phi} \in h^1_0 \left( \cu_n\right)$,
\begin{equation} \label{simplepoincare25}
\sum_{e \subseteq \cu_n} V_e\left( p \cdot e + \nabla \mathbf{\phi}(e) \right) \leq C |p|^2 + C\sum_{x \in \cu_n} \left| \mathbf{\phi} (x) \right|^2.
\end{equation}
Using this inequality, we deduce that one can apply the identity~\eqref{Jformula} with $V = h^1_0 \left( \cu_n \right)$ equipped with the standard $L^2$ scalar product and $f(\mathbf{\phi} ) = \sum_{e \subseteq \cu_n} V_e\left( p \cdot e + \nabla \mathbf{\phi}(e) \right)$. Moreover, using the estimate~\eqref{simplepoincare25}, one has
\begin{equation*}
\int_{h^1_0 \left( \cu_n \right)} \exp \left(- f(\mathbf{\phi}) \right) \, dx < \infty ~ \mbox{and}~ \int_{h^1_0 \left( \cu_n \right)} \left| f(\mathbf{\phi}) \right|  \exp \left(- f(\mathbf{\phi}) \right) \, d\mathbf{\phi} < \infty.
\end{equation*}
Thus applying the result proved in Steps 1 and 2, one obtains
\begin{equation*}
\nu \left( \cu_n , p  \right)  = \inf_{\P \in \mathcal{P} \left( h^1_0 (\cu_n) \right)} \frac1{|\cu_n|}\E \left[ \sum_{e \subseteq \cu_n} V_e\left( p \cdot e + \nabla \phi(e) \right) \right] + \frac1{|\cu_n|} H \left( \P \right)
\end{equation*}
and the minimum is attained for the measure
\begin{equation*}
\P_{n,p}(d \mathbf{\phi}) = \frac{ \exp \left(  - \sum_{e \subset U} V_e( p \cdot e + \nabla \mathbf{\phi}(e)) \right) \, d \mathbf{\phi}}{\int_{h^1_0 \left(\cu_n \right)} \exp \left(  - \sum_{e \subset U} V_e( p \cdot e + \nabla \mathbf{\phi}(e)) \right) \, d \mathbf{\phi}}.
\end{equation*}
The proof of~\eqref{varfornustar} is similar and the details are left to the reader.
\end{proof}

\subsection{Optimal transport and displacement convexity}

In this section, we introduce a few definitions about optimal transport and state one of the main tools of this article, namely the displacement convexity. We first give a definition of the optimal coupling. We refer to~\cite[Proposition 2.1 and Theorem 2.12]{Vi} for this definition.

\begin{definition}\label{def.opttrans}
Let $U$ be a finite dimensional real vector space equipped with a scalar product. We denote by $\left| \cdot \right|$ the norm associated to this scalar product. Let $X$ and $Y$ be two random variables taking values in $U$ and denote their laws by $\P_X$ and $\P_Y$ respectively. Assume additionally that $\P_X$ and $\P_Y$ have a finite second moment, i.e,
\begin{equation} \label{finitesecondmoment}
\E \left[ \left| X \right|^2 \right] < \infty ~\mbox{and}~\E \left[ \left| Y \right|^2 \right] < \infty,
\end{equation}
and that they are absolutely continuous with respect to the Lebesgue measure on $V$. Then the minimization problem
\begin{equation*}
\inf_{\mu \in \Pi \left(\P_X, \P_Y \right)} \int_{U} \left|x - y \right|^2 \, \mu(dx , dy)
\end{equation*}
admits a unique minimizer denoted by $\mu_{(X,Y)}$. This probability measure is called the optimal coupling between $X$ and $Y$.
\end{definition}
For $t \in [0,1]$, we denote by $T_t$ the mapping
\begin{equation*}
T_t := \left\{ \begin{aligned}
U \times U & \rightarrow U \\
(x,y) & \mapsto (1-t) x + t y,
\end{aligned} \right.
\end{equation*}
and for two random variables $X$ and $Y$ taking values in $U$ with finite second moment, we denote by
\begin{equation*}
\mu_t := \left( T_t \right)_* \mu_{(X,Y)}.
\end{equation*}
This is the law of $(1-t) X + t Y$ when the coupling between $X$ and $Y$ is the optimal coupling. The main property we need to use is called the displacement convexity and stated in the following proposition. We refer to \cite[Chapter 5]{Vi} for this theorem but it is due to McCann~\cite{MC}.

\begin{proposition}[Displacement convexity, Theorem 5.15 of~\cite{Vi}] \label{p.disconv}
Let $U$ be a finite dimensional real vector space equipped with a scalar product, let $X$ and $Y$ be two random variables taking values in $U$ with finite second moment, i.e., satisfying~\eqref{finitesecondmoment}, then the function $t \rightarrow H \left( \mu_t \right)$ is convex, i.e., for each $t \in [0,1]$,
\begin{equation*}
H \left( \mu_t \right) \leq (1-t) H \left( \P_X \right) + t H \left( \P_Y \right).
\end{equation*}
\end{proposition}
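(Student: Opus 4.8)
This is a cited result (Theorem~5.15 of~\cite{Vi}, due to McCann~\cite{MC}), so one option is simply to invoke it; the self-contained proof I would give runs the classical Monge--Amp\`ere computation, as follows. First I would identify $U$ with $\R^n$, $n = \dim U$, and reduce to the case where $\P_X = \rho_0\,dx$ and $\P_Y = \rho_1\,dx$ with $H(\P_X)$ and $H(\P_Y)$ both finite (if either measure is not absolutely continuous, or if one entropy is $+\infty$, the right-hand side is $+\infty$ and there is nothing to prove; the finite-second-moment hypothesis~\eqref{finitesecondmoment} rules out the value $-\infty$). Then I would invoke Brenier's theorem to write the optimal coupling of Definition~\ref{def.opttrans} as $\mu_{(X,Y)} = (\mathrm{id},\nabla\varphi)_*\P_X$ for a convex potential $\varphi\colon U\to\R\cup\{+\infty\}$ with $(\nabla\varphi)_*\P_X = \P_Y$, so that
\[
\mu_t \;=\; (S_t)_*\P_X, \qquad S_t \;:=\; (1-t)\,\mathrm{id} + t\,\nabla\varphi \;=\; \nabla\psi_t, \qquad \psi_t(x) \;:=\; \tfrac{1-t}{2}|x|^2 + t\,\varphi(x).
\]
Since $\psi_t$ is convex on $[0,1]$ and strictly convex for $t\in(0,1)$, the map $S_t$ is injective Lebesgue-a.e.; by Aleksandrov's theorem $\psi_t$ is twice differentiable a.e.\ with $DS_t(x) = (1-t)I + t\,D^2\varphi(x)\succeq 0$, and I would use the a.e.\ Monge--Amp\`ere identity $\rho_t(S_t(x))\,\det DS_t(x) = \rho_0(x)$.

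Next I would change variables via $y = S_t(x)$ to obtain
\[
H(\mu_t) \;=\; \int \rho_t\ln\rho_t\,dy \;=\; \int \rho_0(x)\,\ln\!\frac{\rho_0(x)}{\det DS_t(x)}\,dx \;=\; H(\P_X) - \int \rho_0(x)\,\ln\det DS_t(x)\,dx ,
\]
observing that $\ln\det DS_t(x)\ge n\ln(1-t)$ keeps the integrand bounded below and (using that $\mu_t$ has finite second moment) $H(\mu_t)$ finite for $t\in[0,1)$. The key point is then the convexity input: diagonalizing the positive semidefinite matrix $D^2\varphi(x)$ with eigenvalues $\lambda_1(x),\dots,\lambda_n(x)\ge 0$, for fixed $x$
\[
-\ln\det DS_t(x) \;=\; -\sum_{i=1}^{n}\ln\!\big((1-t) + t\,\lambda_i(x)\big)
\]
is a finite sum of convex functions of $t$, hence convex. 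Integrating against $\rho_0\,dx$ makes $t\mapsto H(\mu_t)$ convex on $[0,1]$; since $S_0=\mathrm{id}$ and $S_1=\nabla\varphi$ give $H(\mu_0)=H(\P_X)$ and $H(\mu_1)=H(\P_Y)$, convexity yields $H(\mu_t)\le (1-t)H(\P_X) + t\,H(\P_Y)$.

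The main obstacle is the real-analysis input behind the two almost-everywhere assertions: that the Brenier map $\nabla\varphi$ transports $\rho_0\,dx$ to $\rho_1\,dx$ with a pointwise Monge--Amp\`ere equation valid almost everywhere even though $\varphi$ is merely convex --- so $D^2\varphi$ must be read as the Aleksandrov Hessian, and one must verify that no mass lives on the null set where $\psi_t$ fails to be twice differentiable --- together with the a.e.\ injectivity of $S_t$ needed to push the change of variables back through. That is precisely the substance of McCann's argument (\cite{MC}; see also~\cite[Chapter~5]{Vi}); granting it, the only remaining ingredient is the elementary fact that $A\mapsto\ln\det A$ is concave on symmetric positive definite matrices, which is what is used eigenvalue-by-eigenvalue above.
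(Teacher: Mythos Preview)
The paper does not prove this proposition at all; it is stated as a citation from \cite[Theorem~5.15]{Vi} (originally due to McCann~\cite{MC}) and used as a black box. You correctly recognize this at the outset, and your optional sketch via Brenier's map and the Monge--Amp\`ere change of variables is the standard argument one finds in those references, so there is nothing to compare.
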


\subsection{Coupling lemmas} \label{Couplinglemmas}
Thanks to optimal transport theory and particularly thanks to Definition~\ref{def.opttrans}, we are able to couple two random variables. The next question which arises is to find a way to couple three random variables. Broadly speaking, the question we need to answer is the following: assume that we are given three random variables $X$, $Y$ and $Z$, a coupling between $X$ and $Y$ and another coupling between $Y$ and $Z$, can we find a coupling between $X$, $Y$ and $Z$? This question can be positively answered thanks to the following proposition.

\begin{proposition} \label{couplinglemma}
Let $(E_1, \mathcal{B}_1)$,  $(E_2, \mathcal{B}_2)$, $(E_3, \mathcal{B}_3)$ be three Polish spaces equipped with their Borel $\sigma$-algebras. Assume that we are given three probability measures $\P_X$ on $E_1$, $\P_Y$ on $E_2$ and $\P_Z$ on $E_3$ as well as a coupling $\P_{(X,Y)}$ between $\P_X$, $\P_Y$ and a coupling $\P_{(Y,Z)}$ between $\P_Y$, $\P_Z$, that is to say two measures on $(E_1 \times E_2, \mathcal{B}_1  \otimes \mathcal{B}_2)$ and $(E_2 \times E_3, \mathcal{B}_2  \otimes \mathcal{B}_3)$  satisfying, for each $( B_1, B_2 , B_3) \in \left( \mathcal{B}_1, \mathcal{B}_2, \mathcal{B}_3 \right) $,
\begin{multline*}
\P_{(X,Y)} \left( B_1 \times E_2 \right) = \P_X \left( B_1\right) , ~ \P_{(X,Y)} \left(E \times B_2 \right) = \P_Y \left(B_2\right)  ~ \mbox{and} \\ \P_{(Y,Z)} \left( B_2 \times E \right) = \P_Y \left(B_2 \right), ~ \P_{(Y,Z)} \left( E \times B_3 \right) = \P_Z \left(B_3\right),
\end{multline*}
then there exists a probability measure $\P_{(X,Y,Z)}$ on $(E_1 \times E_2 \times E_3, \mathcal{B}_1  \otimes \mathcal{B}_2 \otimes \mathcal{B}_3)$ such that for each $B_{12} \in  \mathcal{B}_1  \otimes \mathcal{B}_2$ and each $B_{23} \in  \mathcal{B}_2  \otimes \mathcal{B}_3$,
\begin{equation} \label{propPXYZ}
\P_{(X,Y,Z)} \left( B_{12} \times E_3 \right) = \P_{(X,Y)} \left( B_{12} \right) ~\mbox{and}~ \P_{(X,Y,Z)} \left( E_1 \times B_{23} \right) = \P_{(Y,Z)} \left( B_{23} \right).
\end{equation}
\end{proposition}

\begin{remark}
As was mentioned earlier, in this article we think of random variables as laws and we do not assume that there is an underlying probability space $(\Omega, \mathcal{F}, \P)$ on which all the random variables are already defined. For instance, we say that we are given two random variables $(X,Y)$ and $(Y,Z)$ to mean that we are given two measures $\P_{(X,Y)}$ and $\P_{(Y,Z)}$ such that the marginals of $\P_{(X,Y)}$ are $\P_X$ and $\P_Y$ and the marginals of $\P_{(Y,Z)}$ are $\P_Y$ and $\P_Z$ without assuming that there exists an implicit probability space on which $X,Y$ and $Z$ are defined, indeed in that case the statement of the proposition would be trivial.

This convention allows to simplify the notation in the proofs and has the following consequence: when we are given two random variables $X$ and $Y$, we need to be careful to always construct a coupling between $X$ and $Y$ before introducing the random variables $X + Y$, $XY$ or any other display involving both $X$ and $Y$.
\end{remark}

The proof of Proposition~\ref{couplinglemma} relies on the existence of the conditional law which is recalled below.

\begin{proposition}[Theorem 33.3 and Theorem 34.5 of~\cite{bil}] \label{condlaw}
Let $(E, \mathcal{E})$ and $(F , \mathcal{F})$ be two Polish spaces equipped with their Borel $\sigma$-algebras. Assume that we are given two probability measures $\P_1$ and $\P_2$ on $E$ and $F$ respectively. Let $\P_{12}$ be a probability measure on $(E \times F, \mathcal{E} \otimes \mathcal{F})$ whose first and second marginals are $\P_1$ and $\P_2$ respectively, then there exists a mapping $\nu : E_1 \times \mathcal{F} \rightarrow \R_+$ such that
\begin{enumerate}
\item for each $x \in E$, $\nu(x , \cdot)$ is a probability measure on $(F , \mathcal{F})$;
\item for each $A \in \mathcal{F}$, the mapping
\begin{equation*}
\nu \left( \cdot, A \right) \left\{ \begin{aligned}
(E, \mathcal{E}) & \rightarrow \left( \R, \mathcal{B}(\R) \right) \\
x & \mapsto \nu(x, A)
\end{aligned} \right.
\end{equation*}
 is measurable;
\item For each $A_1 \in \mathcal{E}$ and each $A_2 \in \mathcal{F}$,
\begin{equation*}
\P_{12} \left( A_1 \times A_2 \right) = \int_{A_1} \nu \left( x, A_2 \right) \, \P_1 (dx).
\end{equation*}
\end{enumerate}
\end{proposition}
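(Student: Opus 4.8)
The plan is to produce a \emph{regular conditional distribution} of the second coordinate given the first, for which the only feature of $F$ that really matters is that it is a standard Borel space. So the first move is a reduction: by Kuratowski's Borel isomorphism theorem there is a Borel isomorphism $\psi \colon F \to F' \subseteq \R$ onto a Borel subset of $\R$; replacing $\P_{12}$ by its pushforward under $\mathrm{id}_E \times \psi$ (and $\P_2$ by $\psi_* \P_2$, which is then carried by $F'$), it suffices to treat the case $F = \R$, $\mathcal F = \mathcal B(\R)$, and at the end to transport the resulting kernel back through $\psi$. One should keep in mind that the kernel built on $\R$ must be checked to give full mass to $F'$ for $\P_1$-almost every $x$ before transporting; that follows from the integral identity (3) applied to $A_1 = E$, $A_2 = \R \setminus F'$, and the exceptional null set can then be absorbed by redefining $\nu(x,\cdot)$ there to be any fixed probability measure on $F'$.

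For the construction on $\R$ I would work with cumulative distribution functions. For each rational $r$ the set function $A_1 \mapsto \P_{12}(A_1 \times (-\infty,r])$ is a finite measure on $(E,\mathcal E)$ bounded above by $\P_1$, hence absolutely continuous with respect to it, so it has a Radon--Nikodym density which can be taken with values in $[0,1]$; call it $G_r$, so that $\P_{12}(A_1 \times (-\infty,r]) = \int_{A_1} G_r \, d\P_1$ for all $A_1 \in \mathcal E$. Since $\Q$ is countable, after discarding a single $\P_1$-null set we may fix $E_0 \in \mathcal E$ with $\P_1(E_0) = 1$ such that for every $x \in E_0$: $G_r(x) \le G_s(x)$ whenever $r \le s$ are rational, $\lim_{r \to -\infty,\, r \in \Q} G_r(x) = 0$, and $\lim_{r \to +\infty,\, r \in \Q} G_r(x) = 1$ (each an almost-everywhere consequence of the monotonicity and monotone convergence of conditional expectations). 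On $E_0$ define $F(x,t) := \lim_{r \downarrow t,\, r \in \Q} G_r(x)$; by construction $t \mapsto F(x,t)$ is nondecreasing, right-continuous, with the correct limits at $\pm\infty$, so it is a genuine distribution function and determines a Borel probability measure $\nu(x,\cdot)$ on $\R$. Off $E_0$, set $\nu(x,\cdot)$ equal to a fixed probability measure. Property (1) is then immediate.

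Next I would verify (2) and (3). For (2): for fixed $t$ the map $x \mapsto F(x,t)$ is a pointwise limit of the measurable maps $x \mapsto G_{r_n}(x)$ on $E_0$ and is constant off $E_0$, so $x \mapsto \nu(x,(-\infty,t])$ is measurable; the class of $A \in \mathcal B(\R)$ for which $x \mapsto \nu(x,A)$ is measurable is a $\lambda$-system containing the $\pi$-system of half-lines, so Dynkin's $\pi$--$\lambda$ theorem delivers all Borel sets. For (3): fix $A_1 \in \mathcal E$ and take $A_2 = (-\infty,t]$; along rationals $r \downarrow t$ one has $G_r \to F(\cdot,t)$ pointwise on $E_0$ with $0 \le G_r \le 1$, so dominated convergence gives $\int_{A_1} F(x,t)\,\P_1(dx) = \lim_{r\downarrow t} \P_{12}(A_1 \times (-\infty,r])$, and this limit equals $\P_{12}(A_1 \times (-\infty,t])$ because $s \mapsto \P_{12}(A_1 \times (-\infty,s])$ is right-continuous (continuity from above of a finite measure). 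Thus (3) holds for all half-lines $A_2$; fixing $A_1$ once more, the class of $A_2$ satisfying (3) is a $\lambda$-system containing the half-lines, hence all of $\mathcal B(\R)$. Combined with the reduction of the first paragraph, this finishes the proof.

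The part I expect to require the most care is the almost-everywhere bookkeeping in the construction: each $G_r$ is only an equivalence class, so a priori the monotonicity and limit relations, and then the ``it is a probability measure'' property, could each fail on an $x$-set depending on the parameters involved; the resolution is that there are only countably many rational parameters, so the union of all bad sets stays $\P_1$-null, and taking the limit from the right along rationals is precisely what restores right-continuity — and hence the genuine-CDF property — on the good set. The other essential point, already built into the first step, is that regular conditional probabilities can genuinely fail to exist when the target is not standard Borel, so the reduction via Kuratowski's theorem is a necessity rather than a convenience; beyond that, everything is a routine application of Radon--Nikodym and the $\pi$--$\lambda$ theorem.
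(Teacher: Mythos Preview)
Your argument is correct and is essentially the standard textbook construction of a regular conditional distribution. Note, however, that the paper does not actually prove this proposition: it is stated with a reference to Billingsley (Theorems~33.3 and~34.5) and used as a black box in the proof of Proposition~\ref{couplinglemma}. Your write-up follows the same route as the cited source --- reduction to $F=\R$ via a Borel isomorphism, Radon--Nikodym densities $G_r$ indexed by rationals, countable almost-everywhere bookkeeping to obtain a genuine CDF, and a $\pi$--$\lambda$ extension --- so there is nothing to compare against the paper beyond observing that you have reproduced the classical proof it defers to.
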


We can now prove Proposition~\ref{couplinglemma}.

\begin{proof}[Proof of Proposition~\ref{couplinglemma}]
The idea is to apply Proposition~\ref{condlaw} to the two laws $\P_{(X,Y)}$ and $\P_{(Y,Z)}$. This gives the existence of two conditional laws denoted by $\nu_X$ and $\nu_Z$ such that for each $B_1 , B_2 , B_3 \in \left( \mathcal{B}_1, \mathcal{B}_2, \mathcal{B}_3 \right) $,
\begin{equation*}
\P_{(X,Y)} (B_1 \times B_2) = \int_{B_2} \nu_X\left( y , B_1 \right) \P_Y( dy ) ~\mbox{and}~ \P_{(Y,Z)} (B_2 \times B_3) = \int_{B_2} \nu_Z\left( y , B_3 \right) \P_Y( dy ).
\end{equation*}
We can the define for each $B_1 , B_2 , B_3 \in \left( \mathcal{B}_1, \mathcal{B}_2, \mathcal{B}_3 \right)$,
\begin{equation*}
\P_{(X,Y,Z)} \left( B_1 \times B_2 \times B_3 \right) = \int_{B_2} \nu_X\left( y , B_1 \right) \nu_Z\left( y , B_3 \right) \P_Y( dy ).
\end{equation*}
Using standard tools from measure theory, one can then extend $\P_{(X,Y,Z)}$ into a measure on the $\sigma$-algebra $\mathcal{B}_1\otimes \mathcal{B}_2 \otimes \mathcal{B}_3$ and verify that this measure satisfies the property~\eqref{propPXYZ}.
\end{proof}

\subsection{Functional inequalities on the lattice}

In this section, we want to prove some functional inequalities for functions on the lattice $\Zd$, namely the Poincar\'e inequality, and the multiscale Poincar\'e inequality. These inequalities are known on $\Rd$ so the strategy of the proof is to extend functions defined on $\Zd$ to $\Rd$, to apply the inequalities to the extended functions and then show that the inequality obtained for the extended function is enough to prove the inequality for the discrete function.

The second inequality presented, called the multiscale Poincar\'e inequality, is a convenient tool to control the $L^2$-norm of a function by the spatial average of its gradient. It is proved in~\cite[Proposition 1.7 and Lemma 1.8]{armstrong2017quantitative}. The philosophy behind it comes from the theory of stochastic homogenization and roughly states that the usual Poincar\'e inequality can be refined by estimating the $L^2$-norm of a function by the spatial average of the gradient. This inequality is useful when one is dealing with rapidly oscillating functions, which frequently appear in homogenization. Indeed for these functions, the oscillations cancel out in the spatial average of the gradient, as a result these spatial averages are much smaller than the $L^2$-norm of the gradient. The resulting estimate is thus much more precise than the standard Poincar\'e inequality. Before stating the proposition, we recall the definition of the set $\mathcal{Z}_{m,n}$ given in~\eqref{def.mathcalzmn}.

\begin{proposition}[Poincar\'e and multiscale Poincar\'e inequalities]  \label{p.poincmultpoinc}
Let $\cu$ be a cube of $\Zd$ of size $R$ and a function $u : \cu \rightarrow \R$, then one has the inequality, for some constant $C := C(d) < \infty$,
\begin{equation} \label{e.poincWineq}
\sum_{x \in \cu} \left| u(x) - \left( u \right)_{\cu} \right|^2 \leq C R^2 \sum_{e \subseteq \cu} \left| \nabla u (e) \right|^2.
\end{equation}
If one assumes that $u = 0 $ on $\partial \cu$, then one has
\begin{equation} \label{e.poincineq}
\sum_{x \in \cu} \left| u(x)\right|^2 \leq C R^2 \sum_{e \subseteq \cu} \left| \nabla u (e) \right|^2.
\end{equation}
For each $n \in \N$, there exists a constant $C := C(d) < \infty$ such that for each function $u : \cu_n \rightarrow \R$,
\begin{equation} \label{e.multscalepoinc}
\frac{1}{\left| \cu_n\right|} \sum_{x \in \cu_n} \left| u(x) - \left( u \right)_{\cu_n} \right|^2  \leq C \sum_{e \subseteq \cu_n} \left| \nabla u (e) \right|^2 + C 3^n \sum_{k = 1}^n 3^k \left( \frac{1}{\left| \mathcal{Z}_{k,n} \right|} \sum_{y \in \mathcal{Z}_{k,n}} \left|\left\langle \nabla u \right\rangle_{z + \cu_k} \right|^2 \right) .
\end{equation}
If one assume that $u \in h^1_0 (\cu_n)$, then one has
\begin{equation} \label{e.multscalepoinch10}
\frac{1}{\left| \cu_n\right|} \sum_{x \in \cu_n} \left| u(x) \right|^2  \leq C \sum_{e \subseteq \cu_n} \left| \nabla u (e) \right|^2 + C 3^n \sum_{k = 1}^n 3^k \left( \frac{1}{\left|\mathcal{Z}_{k,n} \right|} \sum_{y \in \mathcal{Z}_{k,n}} \left|\left\langle \nabla u \right\rangle_{z + \cu_k} \right|^2 \right) .
\end{equation}
\end{proposition}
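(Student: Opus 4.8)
The plan is to transfer the corresponding continuum inequalities. Given a cube $\cu$ of $\Zd$ of size $R$, write $\cu = \Zd \cap (z + [0,R-1]^d)$ and let $\bar u$ denote the multilinear interpolant of $u$ on the solid cube $z + [0,R-1]^d$, i.e. the function which is affine in each coordinate on every unit subcube and agrees with $u$ at the lattice points. The first step is to record two elementary comparisons, each of which is simply the equivalence of norms on the (finite-dimensional) space of multilinear functions on a single unit cube. First, for every constant $c \in \R$,
\[
\sum_{x \in \cu} |u(x) - c|^2 \le C(d)\, \|\bar u - c\|_{L^2(z+[0,R-1]^d)}^2 ,
\]
because on each unit subcube $Q$ the nonnegative quadratic form sending a tuple of corner values to $\int_Q |(\text{its multilinear interpolant})|^2$ is in fact positive definite (a multilinear function vanishing a.e.\ vanishes at every corner), and every lattice point of $\cu$ is a corner of at least one unit subcube. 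Second,
\[
\|\nabla \bar u\|_{L^2(z+[0,R-1]^d)}^2 \le C(d) \sum_{e \subseteq \cu} |\nabla u(e)|^2 ,
\]
because on each unit subcube $\partial_i \bar u$ is, at every point, a convex combination of the $2^{d-1}$ differences $u(\cdot+\e_i)-u(\cdot)$ along the edges of that subcube in direction $\e_i$, and each bond lies in at most $2^{d-1}$ unit subcubes.

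Granting these, \eqref{e.poincWineq} follows by applying the classical Poincar\'e inequality $\|\bar u - (\bar u)_{z+[0,R-1]^d}\|_{L^2}^2 \le C(d) R^2 \|\nabla \bar u\|_{L^2}^2$ on the Euclidean cube (whose constant scales like the square of the side length) together with the first comparison used with $c = (\bar u)_{z+[0,R-1]^d}$; this choice of $c$ only enlarges the left-hand side, since the discrete mean $(u)_\cu$ minimises $c \mapsto \sum_{x \in \cu} |u(x)-c|^2$. For \eqref{e.poincineq} one notes that $u = 0$ on $\partial \cu$ forces $\bar u = 0$ on $\partial(z+[0,R-1]^d)$ — on each boundary face $\bar u$ is the multilinear interpolant of values of $u$ lying in $\partial \cu$ — so $\bar u$ has zero trace and one may instead invoke $\|\bar u\|_{L^2}^2 \le C(d) R^2 \|\nabla \bar u\|_{L^2}^2$.

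For \eqref{e.multscalepoinc} and \eqref{e.multscalepoinch10}, observe first that in the form stated they are immediate from \eqref{e.poincWineq} and \eqref{e.poincineq}: every term on the right-hand sides is nonnegative, and applying \eqref{e.poincWineq} (resp.\ \eqref{e.poincineq}) to $\cu_n$ and dividing by $|\cu_n| = 3^{dn}$ bounds the left-hand side by $C 3^{(2-d)n} \sum_{e \subseteq \cu_n} |\nabla u(e)|^2 \le C \sum_{e \subseteq \cu_n} |\nabla u(e)|^2$, since $d \ge 2$. The substantive statement — the discrete analogue of \cite[Proposition 1.7 and Lemma 1.8]{armstrong2017quantitative} — is the sharper one in which the crude term $C\sum_{e}|\nabla u(e)|^2$ is replaced by its normalised counterpart $C|\cu_n|^{-1}\sum_{e}|\nabla u(e)|^2$, and I would prove it by the telescoping argument over triadic scales, which runs verbatim on the lattice. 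Writing $S_k u$ for the function on $\cu_n$ equal, at each point, to the average of $u$ over the triadic subcube of size $3^k$ containing it, one has $u - (u)_{\cu_n} = \sum_{k=0}^{n-1}(S_k u - S_{k+1} u)$; after the triangle inequality in the normalised $\ell^2$-norm it suffices to control, on each triadic cube $z + \cu_{k+1}$, the discrepancy between $S_k u$ and $S_{k+1} u$ — the variance of $3^d$ averages of $u$ over the child cubes — by $C 3^{2(k+1)}|\langle \nabla u\rangle_{z+\cu_{k+1}}|^2$ together with the analogous gradient-average quantities at finer scales. A summation by parts (or the path representation $u(x)-u(y)=\sum_{e\in\gamma_{xy}}\nabla u(e)$) expresses a difference of cube-averages in terms of spatial averages of $\nabla u$; unrolling the recursion bounds $\|u-(u)_{\cu_n}\|$ in the normalised $\ell^2$-norm by $C\sum_{k=1}^n 3^k a_k$ with $a_k^2 := |\mathcal{Z}_{k,n}|^{-1}\sum_{z\in\mathcal{Z}_{k,n}}|\langle\nabla u\rangle_{z+\cu_k}|^2$, and Cauchy--Schwarz against $\sum_{k} 3^k \asymp 3^n$ then produces the prefactor $3^n$. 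Alternatively one transfers \cite[Proposition 1.7]{armstrong2017quantitative} from $\bar u$, the only delicate point being that the continuum average $(\nabla \bar u)_{z+\cu_k}$ and the discrete one $\langle \nabla u\rangle_{z+\cu_k}$ differ by a boundary-layer term of relative size $O(3^{-k})$ — both being, up to such an error, the normalised flux of $\nabla u$ through $\partial(z+\cu_k)$ — which stays subdominant once the $n$ scales are summed. This cross-scale bookkeeping is the only genuinely non-routine part of the proof, and for the weaker form actually stated in \eqref{e.multscalepoinc} and \eqref{e.multscalepoinch10} it is entirely bypassed.
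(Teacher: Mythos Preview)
Your argument is correct, and your reading of \eqref{e.multscalepoinc}--\eqref{e.multscalepoinch10} is sharper than the paper's own treatment. For the basic Poincar\'e inequalities \eqref{e.poincWineq}--\eqref{e.poincineq}, you use multilinear interpolation on unit cubes, whereas the paper extends $u$ to a piecewise-constant function and mollifies by a smooth bump supported in $B_{1/2}$; both constructions achieve the same two-sided comparisons (pointwise values controlled by $L^2$, gradient controlled by edge differences), and yours has the minor advantage of preserving the zero trace exactly rather than up to a boundary correction.

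Your key observation on the multiscale inequalities is worth emphasising: as literally stated, the first term on the right of \eqref{e.multscalepoinc} and \eqref{e.multscalepoinch10} is the \emph{unnormalised} sum $C\sum_{e\subseteq\cu_n}|\nabla u(e)|^2$, which by \eqref{e.poincWineq}--\eqref{e.poincineq} already dominates the left-hand side after dividing by $|\cu_n|=3^{dn}$ (using $d\ge 2$), so the multiscale terms are superfluous. The paper does not remark on this; it instead refers to \cite[Proposition~A2]{AD2} and the continuous version in \cite{armstrong2017quantitative}. In fact, when the inequality is later invoked (e.g.\ in the display labelled \eqref{rmultscalepoincare}), the first term appears in its \emph{normalised} form $C|\cu_n|^{-1}\sum_e|\nabla u(e)|^2$, which is the genuine multiscale statement you go on to sketch via the telescoping $u-(u)_{\cu_n}=\sum_k(S_ku-S_{k+1}u)$. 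So you have both proved the proposition as written and identified that the version actually used downstream is the stronger one --- a discrepancy the paper glosses over.
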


\begin{proof}
The idea is to construct a smooth function $\tilde u$ which is close to $u$ by first extending it to be piecewise constant on the cubes $z + \left( - \frac 12, \frac 12 \right)^d$, where $z \in \cu_m$. We then make this function smooth by taking the convolution with a smooth approximation of the identity supported in the ball $B_{1/2}$. It follows that $\tilde u (z) = u(z)$ for each $z \in \cu_m$ and that the following estimate holds: for each $z \in \cu_m$,
\begin{equation*}
\sup_{z +  \left( - \frac 12, \frac 12 \right)^d} \left| \nabla \tilde u (x) \right| \leq C \sum_{y\sim x} \left| u(y) - u (x) \right|.
\end{equation*}
One can then apply the Poincar\'e inequalities to the function $\tilde u$ and verify that this is enough to obtain~\eqref{e.poincWineq} and ~\eqref{e.poincineq}. The proof of~\eqref{e.multscalepoinc} follows the same lines, a proof of this inequality can be found in~\cite[Proposition A2]{AD2}. We note that the version stated here is a slight modification of the one which can be found there but can be deduced from it by applying the Cauchy-Schwarz inequality.

The version of the multiscale Poincar\'e inequality with $0$ boundary condition given in~\eqref{e.multscalepoinch10} cannot be found in~\cite[Proposition A2]{AD2}. Nevertheless the continous version of this inequality is a consequence of~\cite[Proposition 1.7 and Lemma 1.8]{armstrong2017quantitative}. The transposition to the discrete setting is identical to the proof given in~\cite[Proposition A2]{AD2}.
\end{proof}

\section{Subadditive quantities and their basic properties} \label{section3}

The goal of this section is to study the quantities $\nu$ and $\nu^*$ introduced in the previous sections. We prove a series of results about these quantities, which are reminiscent of the basic properties of $\nu$ and $\nu^*$ in stochastic homogenization, see~\cite[Lemma 1.1 and Lemma 2.2]{armstrong2017quantitative}. We first state these properties in the same proposition. Most of them are already known in the literature and in Remark~\ref{4.remar3.3}, we provide references for these results. We then prove the remaining results.
\begin{proposition}[Properties of $\nu$ and $\bar \nu^*$] \label{propofnunustar}
There exists a constant $C := C(d ,\lambda) < \infty$ such that the following properties hold:
\begin{itemize}
\item \textnormal{Subadditivity.} For each $n \in \N$ and each $p \in \Rd$,
\begin{equation}  \label{4.subaddnu}
\nu \left( \cu_{n+1} , p \right) \leq \nu \left( \cu_n  ,p \right) +  C \left( 1 + |p|^2 \right) 3^{-n}.
\end{equation}
For each $q \in \Rd$, 
\begin{equation}  \label{4.subaddnustar}
\nu^* \left( \cu_{n+1} , q \right) \leq \nu^* \left( \cu_n  ,q \right) +  C \left( 1 + |q|^2 \right) 3^{-n}.
\end{equation} \medskip
\item \textnormal{One-sided convex duality.} For each $p , q \in \Rd$ and each $n \in \N$,
\begin{equation}   \label{4.onesided}
\nu (\cu_n,p) + \nu^*(\cu_n,q) \geq p \cdot q - C 3^{-n}.
\end{equation} \medskip
\item \textnormal{Quadratic bounds.} there exists a small constant $c:= c(d , \lambda) > 0$ such that, for each $n \in \N$ and each $p\in \Rd$,
\begin{equation} \label{4.quadbounds}
- C + c |p|^2 \leq \nu \left( \cu_n , p \right) \leq C \left( 1 + |p|^2 \right),
\end{equation}
and for each $q \in \Rd$,
\begin{equation} \label{4.quadboundsnustar}
- C + c |q|^2 \leq \nu^* \left( \cu_n , q \right) \leq C \left( 1 + |q|^2 \right).
\end{equation}
\item \textnormal{Uniform convexity of $\nu$.}  For each $p_0 , p_1 \in \Rd$,
\begin{equation} \label{4.unifconv} 
\frac 1C |p_0 - p_1 |^2 \leq \frac 12 \nu(\cu_n, p_0) + \frac 12 \nu(\cu_n, p_1) -   \nu \left(\cu_n, \frac{p_0 + p_1}{2}\right) \leq C  |p_0 - p_1 |^2.
\end{equation}
\medskip
\item \textnormal{Convexity of $\nu^*$.} The mapping $q \rightarrow \nu^* (\cu_n , q)$ is convex. \medskip
\item \textnormal{$L^2$ bounds for the minimizers.} For each $p \in \Rd$, one has
\begin{equation} \label{4.L2boundnu}
\E \left[ \frac{1}{\left| \cu_n \right|} \sum_{e \subseteq \cu_n} |\nabla \phi_{n,p}(e)|^2 \right] \leq C (1 + |p|^2).
\end{equation}
For each $q \in \Rd$, one has
\begin{equation} \label{4.L2boundnustar}
\E \left[ \frac{1}{\left| \cu_n \right|} \sum_{e \subseteq \cu_n} |\nabla \psi_{n,q}(e)|^2 \right] \leq C (1 + |q|^2).
\end{equation}
\end{itemize}
\end{proposition}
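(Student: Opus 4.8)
The plan is to establish the seven properties one at a time, since most are either standard or follow quickly from the variational formulas of Proposition~\ref{varfornuprop} together with the displacement-convexity tool of Proposition~\ref{p.disconv}. I would begin with the \emph{quadratic bounds}~\eqref{4.quadbounds}, as they are the most elementary: the upper bound follows by testing the infimum in~\eqref{varfornu} against the Dirac mass $\delta_0$ (giving $\nu(\cu_n,p)\le\frac1{|\cu_n|}\sum_{e\subseteq\cu_n}V_e(p\cdot e)+H(\delta_0)$; here $H(\delta_0)=-\infty$ is the wrong sign, so instead one tests against a fixed Gaussian of unit variance, whose entropy is a dimension-dependent constant times $|\cu_n|$ after accounting for the normalization, and uses $V_e(x)\le\lambda^{-1}|x|^2$). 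For the lower bound, one uses $V_e(x)\ge\lambda|x|^2$ together with the discrete Poincaré inequality~\eqref{e.poincineq} to bound the energy from below by $c|p|^2|\cu_n|$ minus the entropy contribution, which is controlled from below by the Gaussian computation (entropy of any measure with given covariance constraint). The same argument, mutatis mutandis using~\eqref{e.poincWineq} instead of~\eqref{e.poincineq}, gives~\eqref{4.quadboundsnustar}. The constant $C$ absorbs the $|\cu_n|$-normalized entropy of the reference Gaussian, which is bounded uniformly in $n$.

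Next I would prove \emph{subadditivity}~\eqref{4.subaddnu}. Partition $\cu_{n+1}$ into the triadic subcubes $(z+\cu_n)_{z\in\mathcal Z_n}$, take independent copies $\phi^{(z)}$ of $\phi_{n,p}$ on each subcube, and glue them into a function on $\cu_{n+1}$ that vanishes on $\partial\cu_{n+1}$ and on the boundaries of the subcubes; call its law $\P$. Testing~\eqref{varfornu} at scale $n+1$ against $\P$, the energy splits as the sum of the subcube energies plus the energy carried by the $O(3^{n(d-1)})$ edges connecting adjacent subcubes (each contributing at most $C(1+|p|^2)$), while the entropy is \emph{additive} over independent components (equality case of Proposition~\ref{entropycouplesum}), plus the entropy of the Diracs enforcing the zero values on the interfaces — but those Diracs live in a complementary subspace and contribute $0$ once one is careful with the decomposition of Lebesgue measure in~\eqref{decomplebmeasure}. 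Dividing by $|\cu_{n+1}|=3^d|\cu_n|\cdot3^{-d}\cdot 3^d$... more simply, $|\cu_{n+1}|=3^d|\cu_n|$ and the error term is $C(1+|p|^2)3^{n(d-1)}/3^{d(n+1)}=C(1+|p|^2)3^{-n-d}$, which is of the claimed order. The dual statement~\eqref{4.subaddnustar} is analogous, using $\mathring h^1$ instead of $h^1_0$ and being careful that the mean-zero constraint on $\cu_{n+1}$ is weaker than the product of mean-zero constraints on the subcubes — this introduces one extra free direction, whose entropy cost is $O(\log 3^{n})/|\cu_{n+1}|$, negligible.

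For \emph{one-sided convex duality}~\eqref{4.onesided}: given $\P$ on $h^1_0(\cu_n)$ and $\P^*$ on $\mathring h^1(\cu_n)$ with laws $\phi,\psi$, construct a coupling $(\phi,\psi)$ via Proposition~\ref{couplinglemma} and use the elementary inequality $V_e(a)+V_e^*(b)\ge a\cdot b$ (Young) applied pointwise with $a=p\cdot e+\nabla\phi(e)$, $b=$ the dual variable, after first rewriting the $\nu^*$ functional's integrand; summing over edges and using that $\sum_e(p\cdot e)\cdot\nabla\psi(e)=\sum_e p\cdot e\,\nabla\psi(e)$ telescopes to $|\cu_n|\,p\cdot\langle\nabla\psi\rangle_{\cu_n}$ while $\sum_e\nabla\phi(e)\cdot q(e)$ vanishes for $\phi\in h^1_0$ against a constant field — combined with a summation by parts — yields, after adding entropies and invoking Proposition~\ref{entropycouplesum}, the bound $\nu(\cu_n,p)+\nu^*(\cu_n,q)\ge p\cdot q - C3^{-n}$, the error coming from the boundary layer where the constant field $q$ does not pair cleanly against the gradient of a function living on $\cu_n$ (a set of $O(3^{n(d-1)})$ edges, normalized by $|\cu_n|$). \emph{Uniform convexity of $\nu$}~\eqref{4.unifconv} is the step I expect to be the main obstacle: the upper bound follows from~\eqref{unifconvVi} by testing the midpoint functional against the half-half mixture, but the \emph{lower} bound requires displacement convexity — one takes the minimizers $\P_{n,p_0},\P_{n,p_1}$, forms the displacement interpolant $\mu_{1/2}$ of $\phi_{n,p_0}$ and $\phi_{n,p_1}$ (optimal coupling, Definition~\ref{def.opttrans}), observes that the \emph{energy} term is strictly displacement-convex by~\eqref{unifconvVi} applied along the geodesic (gaining $\lambda\,\E|\nabla\phi_{n,p_0}-\nabla\phi_{n,p_1}+(p_0-p_1)\cdot e|^2$, then Poincaré to get $c|p_0-p_1|^2$), and the \emph{entropy} term is displacement-convex by Proposition~\ref{p.disconv}; hence $\nu(\cu_n,\frac{p_0+p_1}2)\le\mathcal F_{n,(p_0+p_1)/2}(\mu_{1/2})/|\cu_n|\le\frac12\nu(\cu_n,p_0)+\frac12\nu(\cu_n,p_1)-\frac cC|p_0-p_1|^2$. \emph{Convexity of $\nu^*$} is immediate: $q\mapsto\nu^*(\cu_n,q)$ is a supremum of affine functions of $q$ by~\eqref{varfornustar}. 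Finally the \emph{$L^2$ bounds}~\eqref{4.L2boundnu}--\eqref{4.L2boundnustar}: perturb the minimizer $\P_{n,p}$ by the shift $\phi\mapsto(1-t)\phi$ (or add an independent small Gaussian), use first-order optimality of $\P_{n,p}$ in the convex functional $\mathcal F_{n,p}$ to kill the linear term, and read off $\E[\frac1{|\cu_n|}\sum_e|\nabla\phi_{n,p}(e)|^2]\le C(1+|p|^2)$ directly from $\nu(\cu_n,p)=\mathcal F_{n,p}(\P_{n,p})/|\cu_n|$ combined with the upper quadratic bound~\eqref{4.quadbounds} and a lower bound on the entropy of $\P_{n,p}$ (again the Gaussian comparison), since $\E[\text{energy}]=|\cu_n|\nu(\cu_n,p)-H(\P_{n,p})\le C(1+|p|^2)|\cu_n|+C|\cu_n|$ and the energy dominates $\lambda\sum_e|\nabla\phi(e)|^2-C|p|^2|\cu_n|$; same for $\nu^*$.
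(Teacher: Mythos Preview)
Your proposal is largely sound on the ``easy'' items, but there is a genuine gap in your treatment of the one-sided convex duality~\eqref{4.onesided}, and a smaller but real mistake in your subadditivity argument.

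\textbf{One-sided convex duality.} Your plan is to couple $\phi\in h^1_0(\cu_n)$ with $\psi\in\mathring h^1(\cu_n)$ and invoke a Young--Fenchel inequality $V_e(a)+V_e^*(b)\ge ab$. This does not work: the $\nu^*$ functional involves $V_e(\nabla\psi)-q\cdot\nabla\psi$, not the Legendre dual $V_e^*$, so there is no pointwise Fenchel pairing available. Moreover, the entropies $H(\P_{n,p})$ and $H(\P^*_{n,q})$ live on \emph{different} vector spaces (of different dimensions) and enter with \emph{opposite} signs in the sum $\nu+\nu^*$, so Proposition~\ref{entropycouplesum} cannot combine them. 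The paper's proof (Proposition~\ref{convexdual}) bypasses the variational formulation entirely: it compares the two partition functions directly, via the orthogonal decomposition $\mathring h^1(\cu_n)=\mathring h^1(\partial\cu_n)\oplus\mathring h^1(\inte\cu_n)\oplus\R v$. Integrating out the boundary variables $\psi_1\in\mathring h^1(\partial\cu_n)$ against a Gaussian bound costs at most $c^{3^{(d-1)n}}$ in the partition function, and the remaining integral over $\mathring h^1(\inte\cu_n)\oplus\R v$ matches $Z_p(\cu_n)$ up to a Jacobian factor $\alpha_n\asymp 3^{-(d-1)n/2}$. The general $p$ is then recovered by tilting $V_e\mapsto V_e(p\cdot e+\,\cdot\,)$ and a translation in $\mathring h^1(\cu_n)$. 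This is the main new content of the proposition and is not recoverable by your coupling sketch.

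\textbf{Subadditivity and the Dirac issue.} Your claim that the Diracs on the interface ``contribute $0$'' is incorrect in the paper's sign convention: a Dirac is singular with respect to Lebesgue on the complementary subspace $\R^{\partial_{\mathrm{int}}\cu_{n,n+1}}$, so its entropy is $+\infty$, rendering your test measure useless in~\eqref{varfornu}. The fix (used repeatedly in the paper, e.g.\ Proposition~\ref{2sccomparisonnu}) is to add independent $\mathrm{Unif}[0,1]$ noise on the interface vertices, which has entropy zero and perturbs the energy by $O(3^{-n}(1+|p|^2))$. Alternatively, for subadditivity of $\nu$ alone one can work directly with the partition functions as in Funaki--Spohn, avoiding the variational formula altogether; this is what the paper cites.

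\textbf{Where your approach differs usefully.} Your displacement-convexity argument for the lower bound in~\eqref{4.unifconv} is correct and in the spirit of the paper, though the paper itself simply cites \cite{DGI00}. Your scaling-perturbation argument for the $L^2$ bounds~\eqref{4.L2boundnu}--\eqref{4.L2boundnustar} (differentiate $t\mapsto\mathcal F_{n,p}((1-t)_*\P_{n,p})$ at $t=0$) also works and is more elementary than the exponential-moment bound the paper invokes. For the lower bound in~\eqref{4.quadboundsnustar}, however, ``mutatis mutandis'' is insufficient: the paper obtains it \emph{from} the one-sided duality just proved, by optimizing $\nu^*(\cu_n,q)\ge p\cdot q-\nu(\cu_n,p)-C3^{-n}$ over $p$, which is cleaner than any direct argument.
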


\begin{remark} \label{4.remar3.3}
The reader can find in the literature the proofs of the following results.
\begin{enumerate}
\item The subadditivity of $\nu$ stated in~\eqref{4.subaddnu} is essentially proved by Funaki and Spohn in~\cite[Lemma II.1]{FS97}.
\item In Proposition~\ref{2sccomparison}, we prove a quantitative version of the subadditivity inequality~\eqref{4.subaddnustar} which is strictly stronger than the estimate~\eqref{4.subaddnustar}.
\item The quadratic bounds for $\nu$ stated in~\eqref{4.quadbounds} are elementary and we refer to the monograph of Funaki~\cite[Section 5.2]{Fu05}.
\item The uniform convexity of the finite volume surface tension $\nu$ stated in~\eqref{4.unifconv} was established by Deuschel Giacomin and Ioffe in~\cite[Lemma 3.6]{DGI00}.
\item The convexity of the mapping $q \rightarrow \nu^* \left( \cu_n , q \right)$ is a straightforward application of the Cauchy-Schwarz inequality.
\item The $L^2$ bounds for the minimizers~\eqref{4.L2boundnu} and~\eqref{4.L2boundnu} can be obtained the following way. Using the explicit formula for $\nu$ and $\nu^*$ together with a computation similar to~\cite[Lemma 2.11]{DGI00}, one derives the bounds
\begin{multline*}
\E \left[ \exp \left( \ep \sum_{e \subseteq \cu_n} \left| \nabla \phi_{n,q} \right|^2 \right) \right] \leq \exp \left( C \left| \cu_n \right| (1 + |p|^2) \right) \\ \mbox{and}~\E \left[ \exp \left( \ep \sum_{e \subseteq \cu_n} \left| \nabla \psi_{n,q} \right|^2 \right) \right] \leq \exp \left( C \left| \cu_n \right| (1 + |p|^2) \right),
\end{multline*}
for some $\ep := \ep(d , \lambda) > 0$ and $C := C(d , \lambda) < \infty$. By the Jensen inequality, this implies, and is in fact much stronger than, the desired estimates.
\end{enumerate}
\end{remark}

The two statements of Proposition~\ref{propofnunustar} which remain to be proved are the one-sided convex duality~\eqref{4.onesided} and the quadratic bound~\eqref{4.L2boundnustar} for $\nu^*$. They are established in Propositions~\ref{convexdual} and~\ref{p.quadbound} respectively.

\begin{remark}
From the subadditivity properties and the quadratic bounds, we obtain that for each $p,q \in \Rd$, the quantities $\nu \left( \cu_n,p \right)$ and $\nu^* \left( \cu_n , q \right)$ converge as $n \rightarrow \infty$. Moreover the limit satisfies the convexity and one-sided duality properties. This is summarized in the following proposition. 
\end{remark}

\begin{proposition} \label{def+propofarnu}
For each $p \in \Rd$ and $q \in \Rd$, the quantities $\nu \left( \cu_n, p \right)$ and $\nu^* \left( \cu_n , q \right)$ converge as $n \rightarrow \infty$. We denote by $\bar \nu (p)$ and $\bar \nu^* (q)$ their respective limits. Moreover, there exists a constant $C := C(d, \lambda) > \infty$ such that the following properties hold
\begin{itemize}
\item \textnormal{One-sided convex duality.} For each $p,q \in \Rd$,
\begin{equation*}
\bar \nu (p) + \bar \nu^*(q) \geq p \cdot q.
\end{equation*}
\item \textnormal{Quadratic bounds.} There exists a small constant $c := c(d , \lambda) > 0$ such that, for each $p \in \Rd$,
\begin{equation*}
- C + c |p|^2 \leq \bar \nu \left( p \right) \leq C \left( 1 + |p|^2 \right),
\end{equation*}
and, for each $q \in \Rd$,
\begin{equation*}
- C + c |q|^2 \leq \bar \nu^* \left( q \right) \leq C \left( 1 + |q|^2 \right).
\end{equation*}  \medskip
\item \textnormal{Convexity and uniform convexity.} The mapping $q \rightarrow \bar \nu^* \left( q \right)$ is convex and, for each $p_1 , p_2 \in \Rd$,
\begin{equation*} 
\frac 1C |p_0 - p_1 |^2 \leq \frac 12 \bar \nu(p_0) + \frac 12 \bar \nu( p_1) -   \bar \nu \left(\frac{p_0 + p_1}{2}\right) \leq C  |p_0 - p_1 |^2.
\end{equation*} \medskip
\end{itemize}
\end{proposition}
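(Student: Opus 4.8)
The plan is to obtain the entire statement as a soft consequence of the finite-volume estimates gathered in Proposition~\ref{propofnunustar}, by letting $n\to\infty$; the only item requiring an actual argument is the existence of the limits $\bar\nu$ and $\bar\nu^*$.

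First I would prove that $\nu(\cu_n,p)$ converges. Fix $p\in\Rd$, let $C$ be the constant appearing in~\eqref{4.subaddnu}, and set
\begin{equation*}
b_n := \nu(\cu_n,p) - C(1+|p|^2)\sum_{k=0}^{n-1} 3^{-k}.
\end{equation*}
The subadditivity estimate~\eqref{4.subaddnu} is exactly the statement that $b_{n+1}\le b_n$, so $(b_n)_{n\in\N}$ is non-increasing; on the other hand, the lower quadratic bound in~\eqref{4.quadbounds} gives $\nu(\cu_n,p)\ge -C$, and since $\sum_{k=0}^{n-1}3^{-k}\le \tfrac32$, the sequence $(b_n)$ is bounded below. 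A non-increasing sequence bounded below converges, hence so does $\nu(\cu_n,p)$; I denote its limit by $\bar\nu(p)$. Running the same argument with~\eqref{4.subaddnustar} and~\eqref{4.quadboundsnustar} in place of~\eqref{4.subaddnu} and~\eqref{4.quadbounds} shows that $\nu^*(\cu_n,q)$ converges to a limit $\bar\nu^*(q)$.

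It then remains to pass to the limit in the other items of Proposition~\ref{propofnunustar}, all of which hold with $n$-independent constants. Letting $n\to\infty$ in the one-sided convex duality~\eqref{4.onesided} and using $C3^{-n}\to 0$ yields $\bar\nu(p)+\bar\nu^*(q)\ge p\cdot q$ for all $p,q\in\Rd$. Letting $n\to\infty$ in~\eqref{4.quadbounds} and~\eqref{4.quadboundsnustar} gives the asserted quadratic bounds for $\bar\nu$ and $\bar\nu^*$. Convexity of $q\mapsto\bar\nu^*(q)$ is immediate, being a pointwise limit of the convex functions $q\mapsto\nu^*(\cu_n,q)$. Finally, evaluating the uniform convexity estimate~\eqref{4.unifconv} at $p_0$, $p_1$ and $\tfrac{p_0+p_1}{2}$ (each of the three sequences converges by the first step) and letting $n\to\infty$ preserves both inequalities and gives the uniform convexity of $\bar\nu$.

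There is no genuine obstacle here: this proposition is precisely the elementary half of the renormalization argument, and the only mild subtlety is that~\eqref{4.subaddnu} and~\eqref{4.subaddnustar} express merely \emph{approximate} monotonicity, with corrections of size $C(1+|p|^2)3^{-n}$ that are summable — which is exactly what makes the renormalized sequences $(b_n)$ monotone. I note that summing these corrections from scale $n$ onwards already delivers the one-sided bound $\bar\nu(p)\le\nu(\cu_n,p)+C(1+|p|^2)3^{-n}$; the matching upper bound on $\nu(\cu_n,p)-\bar\nu(p)$, which is the substance of Theorem~\ref{QuantitativeconvergencetothGibbsstate} and rests on quantifying the defect of convex duality between $\nu$ and $\nu^*$, is considerably more delicate and is not part of the present claim.
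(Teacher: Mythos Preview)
Your proof is correct and follows exactly the approach the paper takes: convergence from approximate subadditivity plus the quadratic lower bound, then pass to the limit in each finite-volume inequality of Proposition~\ref{propofnunustar}. The paper's own proof is a single sentence (``sending $n$ to infinity implies the results''), so your explicit construction of the monotone sequence $b_n$ merely spells out what the paper leaves to the reader.
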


\begin{proof}
The properties mentioned in the proposition are valid for the quantities $\nu \left( \cu_n, p \right)$ and $\nu^* \left( \cu_n , q \right)$. Sending $n$ to infinity implies the results. For the surface tension $\nu$, this result was originally obtained by Funaki and Spohn~\cite[Proposition 1.1 (i)]{FS97}.
\end{proof}

\begin{remark}
The previous proposition proves the estimate~\eqref{unifconvbarnu} of Theorem~\ref{QuantitativeconvergencetothGibbsstate}. By the previous proposition, to prove~\eqref{dualconvnunustar}, there remains to show the upper bound
\begin{equation*}
\bar \nu^* \left( q \right) \leq \sup_{p \in \Rd} - \bar \nu \left( p \right) +p \cdot q,
\end{equation*}
since the lower bound follows from the one-sided convex duality. This upper bound will be proved later in the article. The uniform convexity of $\bar \nu^*$ stated in~\eqref{unifconvbarnustar} can then be deduced from~\eqref{dualconvnunustar} and~\eqref{unifconvbarnu}. 
\end{remark}

\subsection{Convex duality: lower bound}
We now turn to the proof of the one-sided convex duality for $\nu$ and $\nu^*$.

\begin{proposition}[One-sided convex duality] \label{convexdual}
There exists a constant $C := C(d , \lambda) < \infty$ such that for each $p , q \in \Rd$ and each $n \in \N$,
\begin{equation*}
\nu (\cu_n,p) + \nu^*(\cu_n,q) \geq p \cdot q - C 3^{-n}.
\end{equation*}
\end{proposition}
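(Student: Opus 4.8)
The plan is to run the argument through the variational formulas of Proposition~\ref{varfornuprop}. Since $\nu(\cu_n,p)$ is attained at the minimizer $\P_{n,p}$ of $\mathcal F_{n,p}$ whereas $\nu^*(\cu_n,q)$ is a \emph{supremum}, every probability measure $\P^*$ on $\mathring h^1(\cu_n)$ produces the lower bound $\nu^*(\cu_n,q)\ge |\cu_n|^{-1}\mathcal F^*_{n,q}(\P^*)$. I would feed in a measure $\P^*$ built from a random variable $\phi$ of law $\P_{n,p}$, designed so that the energy term of $\mathcal F^*_{n,q}$ coincides with that of $\mathcal F_{n,p}$ and so that the linear term $\E[\sum_e q\cdot\nabla\psi(e)]$ equals $(p\cdot q)|\cu_n|$ up to a term of boundary order. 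Adding the two variational identities then gives $\nu(\cu_n,p)+\nu^*(\cu_n,q)\ge p\cdot q - O(|\partial\cu_n|/|\cu_n|)$, and $|\partial\cu_n|\le C3^{n(d-1)}$ is exactly the claimed rate.

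The first guess is the law of $\phi-(\phi)_{\cu_n}+c_p$, where $c_p\in\mathring h^1(\cu_n)$ is the mean-zero affine function whose discrete gradient is the constant vector field $p$; this has gradient $p+\nabla\phi$, so the energy term of $\mathcal F^*_{n,q}$ literally becomes that of $\mathcal F_{n,p}$. The obstruction is that this law is carried by a translate of $W_1:=\{\phi-(\phi)_{\cu_n}:\phi\in h^1_0(\cu_n)\}$, a \emph{proper} subspace of $\mathring h^1(\cu_n)$ of dimension $|\cu_n|-|\partial\cu_n|<|\cu_n|-1=\dim\mathring h^1(\cu_n)$; consequently it is singular with respect to Lebesgue measure on $\mathring h^1(\cu_n)$ and has infinite entropy, so it is not an admissible competitor. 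I would repair this by thickening in the missing directions: let $W_2$ be the orthogonal complement of $W_1$ inside $\mathring h^1(\cu_n)$ (of dimension $|\partial\cu_n|-1$), let $g$ be a standard Gaussian vector on $W_2$ independent of $\phi$, and let $\P^*$ be the law of $\psi:=(\phi-(\phi)_{\cu_n})+c_p+g$. Then $\P^*$ is absolutely continuous on $\mathring h^1(\cu_n)$ with finite entropy, and $\nabla\psi=p+\nabla\phi+\nabla g$.

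The three contributions to $\mathcal F^*_{n,q}(\P^*)$ are then estimated as follows. Energy: a second-order Taylor expansion of each $V_e$ (using $V_e''\le 1/\lambda$), together with $\E[g]=0$ and the independence of $\phi$ and $g$, gives $\E[\sum_e V_e(\nabla\psi(e))]\le \E[\sum_e V_e(p\cdot e+\nabla\phi(e))]+\tfrac1{2\lambda}\E[\sum_e|\nabla g(e)|^2]$, and $\sum_e|\nabla g(e)|^2\le 4d\|g\|^2$ bounds the remainder by $C\dim W_2\le C|\partial\cu_n|$. Linear term: $\E[\sum_e q\cdot\nabla\psi(e)]=\sum_e q(e)p(e)+\E[\sum_e q\cdot\nabla\phi(e)]+\E[\sum_e q\cdot\nabla g(e)]$, where the $g$-summand vanishes since $\E[g]=0$, the $\phi$-summand vanishes by discrete summation by parts (the constant field $q$ is discrete-divergence-free at interior vertices and $\phi\in h^1_0(\cu_n)$ vanishes on $\partial\cu_n$), and a bond count gives $\sum_e q(e)p(e)=(p\cdot q)(|\cu_n|-3^{n(d-1)})$. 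Entropy: by the translation and linear-change-of-variables rules (Proposition~\ref{transandchofvar}) together with additivity across $\mathring h^1(\cu_n)=W_1\overset{\perp}{\oplus}W_2$ and the independence of $\phi$ and $g$ (Proposition~\ref{entropycouplesum2}), $H(\P^*)=H(\P_{n,p})-\ln|\det L|+H(\mathcal N(0,\mathrm{Id}_{W_2}))$, where $L\colon h^1_0(\cu_n)\to W_1$ is $\phi\mapsto\phi-(\phi)_{\cu_n}$ and the determinant is computed in the $L^2$ inner products; a short computation shows $L^*L$ is a rank-one perturbation of the identity with $\det(L^*L)=|\partial\cu_n|/|\cu_n|$, so $\big|\ln|\det L|\big|\le Cn$, while $H(\mathcal N(0,\mathrm{Id}_{W_2}))=-\tfrac12(\dim W_2)\ln(2\pi e)\le 0$.

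Assembling, dividing by $|\cu_n|$, and using $\nu(\cu_n,p)=|\cu_n|^{-1}\E[\sum_e V_e(p\cdot e+\nabla\phi(e))]+|\cu_n|^{-1}H(\P_{n,p})$ with $\phi\sim\P_{n,p}$, one obtains $\nu^*(\cu_n,q)\ge -\nu(\cu_n,p)+(p\cdot q)(1-3^{-n})-C(|\partial\cu_n|+n)|\cu_n|^{-1}$, which is the assertion since $|\partial\cu_n|\le C3^{n(d-1)}$. The crux of the proof — and where I expect the genuine work to lie — is the degeneracy of the naive test measure, which forces the Gaussian-thickening device, followed by the boundary accounting that makes every error genuinely of order $|\partial\cu_n|/|\cu_n|=O(3^{-n})$; the one term that still carries $p$ and $q$, namely $(p\cdot q)3^{-n}$ from the bond count, is of this same boundary order and can be removed by a small further adjustment of the anchor field $c_p$ (for instance the scaling $c_p\mapsto(1-3^{-n})^{-1}c_p$, the resulting change in the energy being controlled via the $L^2$ bound~\eqref{4.L2boundnu}).
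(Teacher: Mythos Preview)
Your argument is correct and complete up to the same level of precision as the paper's own proof; only the final ``scaling fix'' is not quite right, and in fact it is not needed.

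\textbf{Comparison with the paper.} The paper does not go through the variational formulas. Instead it works directly with the partition functions: it decomposes $\mathring h^1(\cu_n)=\mathring h^1(\partial\cu_n)\overset{\perp}{\oplus}\mathring h^1(\inte\cu_n)\overset{\perp}{\oplus}\R v$, integrates out the boundary variables in $Z^*_q(\cu_n)$ via a Gaussian lower bound (gaining a factor $c^{|\partial\cu_n|}$), shows by an explicit change of variable that the remaining integral over $\mathring h^1(\inte\cu_n)\oplus\R v$ equals $Z_0(\cu_n)$ up to a scalar, and finally passes from $p=0$ to general $p$ by the tilting $\tilde V_e:=V_e(p(e)+\cdot)$. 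Your approach is the probabilistic dual of this: instead of integrating out the $|\partial\cu_n|-1$ boundary directions in $\nu^*$, you \emph{add} an independent Gaussian in those directions to the (degenerate) push-forward of $\P_{n,p}$ and use it as a test measure. The boundary cost is the same, and your determinant computation $\det(L^*L)=|\partial\cu_n|/|\cu_n|$ plays the role of the paper's Jacobian $\alpha_n$. Both routes are short; yours is arguably cleaner because it stays inside the variational framework used throughout Section~\ref{section3}.

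\textbf{On the $(p\cdot q)3^{-n}$ term.} Your bond count is right: $\sum_{e\subset\cu_n}q(e)p(e)=(p\cdot q)\,|\cu_n|(1-3^{-n})$, so the raw output is $\nu(\cu_n,p)+\nu^*(\cu_n,q)\ge p\cdot q-(p\cdot q)3^{-n}-C3^{-n}$. The paper's Step~4 makes the \emph{same} identification $\tilde\nu^*(\cu_n,q)=\nu^*(\cu_n,q)-p\cdot q$, which on the same bond count is only true up to a factor $1-3^{-n}$; so the paper's proof as written yields exactly the bound you obtain, and every downstream use in the paper (Propositions~\ref{p.quadbound} and~\ref{def+propofarnu}, and Section~\ref{section4.4gradphi}) tolerates the extra $|p\cdot q|3^{-n}$. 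Your proposed scaling $c_p\mapsto(1-3^{-n})^{-1}c_p$ does \emph{not} remove the $p$-dependence: the first-order energy change $\E\big[\sum_e V_e'(p+\nabla\phi)(\tilde p-p)(e)\big]$ does not vanish (there is no Euler--Lagrange identity for the affine direction $p$), and bounding it via~\eqref{4.L2boundnu} gives an error of order $(1+|p|^2)3^{-n}$ rather than $3^{-n}$. If you want a clean $C$ independent of $p,q$, the right fix is the paper's: run your argument at $p=0$ (where the offending term is absent), then tilt by $\tilde V_e=V_e(p(e)+\cdot)$ and note that your proof only uses $\lambda\le V_e''\le 1/\lambda$, which is preserved.
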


\begin{proof}
We recall the notation $\partial \cu_n$ and $\inte \cu_n$ to denote respectively the boundary and the interior of the cube $\cu_n$. We decompose the space $\mathring h^1(\cu_n)$ into three orthogonal subspaces
\begin{equation} \label{ortdecomp101}
\mathring h^1(\cu_n) = \mathring h^1\left( \partial \cu_n \right) \overset{\perp}{\oplus} \mathring h^1\left(\inte \cu_n \right) \overset{\perp}{\oplus} \R v,
\end{equation}
where we use a slight abuse of notation and denote by
\begin{equation*}
\mathring h^1\left( \partial \cu_n \right) := \left\{ \psi \in \mathring h^1(\cu_n)  ~:~\psi_{|\inte \cu_n} = 0   \right\} ~\mbox{and}~ \mathring h^1\left( \inte \cu_n \right) := \left\{ \psi \in \mathring h^1(\cu_n)  ~:~\psi_{|\partial \cu_n} = 0   \right\},
\end{equation*}
and where $v$ is the function defined by
\begin{equation*}
v =\frac{1}{|\cu_n|} \left(\sqrt{\frac{\left| \partial \cu_n \right|}{\left| \inte \cu_n \right|}}\indc_{ \inte \cu_n } -   \sqrt{\frac{\left| \inte \cu_n \right|}{\left| \partial \cu_n \right|}}   \indc_{ \partial \cu_n } \right),
\end{equation*}
so that it satisfies
\begin{equation*}
\sum_{x \in \cu_n} v(x) = 0 \mbox{ and } \sum_{x \in \cu_n} v(x)^2 = 1.
\end{equation*}
Since it is important in the proof, we note that, for each $n \in \N$,
\begin{equation} \label{dimh01bound}
\dim \,  \mathring h^1\left( \partial \cu_n \right) = \left| \partial \cu_n \right| - 1 \leq C 3^{(d-1)n}.
\end{equation}
We split the proof into 4 steps:
\begin{itemize}
\item in Step 1, we show that, for some constant $C := C(d , \lambda) < \infty$,
\begin{equation} \label{dualstep1}
\nu^*(\cu_n,q)  \geq \frac{1}{\left| \cu_n \right|} \log \left(  \int_{ \mathring h^1\left( \inte \cu_n \right) \oplus \R v} \exp \left(- \sum_{e \subset \cu_n }  V_e(  \nabla \psi (e))\right)  \, d\psi  \right) - C 3^{-n};
\end{equation}
\item in Step 2, we show that, for some constant $C := C(d) < \infty$,
\begin{equation} \label{dualstep22}
\nu \left( \cu_n , 0 \right) \geq - \frac{1}{\left| \cu_n \right|} \log \left(  \int_{ \mathring h^1\left( \inte \cu_n \right) \oplus \R v} \exp \left(- \sum_{e \subset \cu_n }  V_e(  \nabla \psi (e))  \right) \, d\psi \right) - C n3^{-dn};
\end{equation}
\item in Step 3, we combine the results of Steps 1 and 2 and prove that there exists a constant $C := C(d , \lambda) < \infty$ such that
\begin{equation*}
\nu^*(\cu_n,q) + \nu \left( \cu_n , 0 \right)   \geq -C 3^{-n};
\end{equation*}
\item in Step 4, we remove the assumption $p = 0$ and prove that, for each $p,q \in \Rd$,
\begin{equation*}
\nu^*(\cu_n,q) + \nu \left( \cu_n , p \right)   \geq p \cdot q -C 3^{-n}.
\end{equation*}
\end{itemize}

\medskip

\textit{Step 1.} First, by the identity~\eqref{ortdecomp101}, any function $\psi \in \mathring h^1(\cu_n)$ can be uniquely decomposed according to the formula
\begin{equation*}
\psi = \psi_1 + \psi_2 + t v,
\end{equation*}
with $\psi_1 \in \mathring h^1\left( \partial \cu_n \right)  $, $\psi_2 \in  \mathring h^1\left( \inte \cu_n \right)  $ and $t \in \R$. We note that, since each function $\psi_2$ in $\mathring h^1\left( \inte \cu_n \right)$ is equal to $0$ on the boundary of the cube $\cu_n$, we have, for each $q \in \Rd$,
\begin{equation*}
\sum_{e \subset \cu_n} q \cdot \nabla \psi_2 (e) = 0.
\end{equation*}
Since the function $v$ is constant on the boundary $\partial \cu_n$, we also have
\begin{equation*}
\sum_{e \subset \cu_n} q \cdot \nabla v(e) = 0.
\end{equation*}
To prove the inequality~\eqref{dualstep1}, it is sufficient to prove, for each $\psi_2 \in  \mathring h^1\left( \inte \cu_n \right) $ and each $t \in \R$,
\begin{multline} \label{dualstep2} 
\int_{ \mathring h^1\left( \partial \cu_n \right) } \exp \left( -\sum_{e \subset \cu_n } \left(V_e \left(\nabla \left( \psi_1 +  \psi_2 + tv \right) (e)  \right) - q \cdot \nabla \psi_1 (e) \right) \right) \, d\psi_1 \\
							\geq c^{3^{(d-1)n}} \exp \left(- \sum_{e \subset \cu_n } V_e \left( \nabla \left(   \psi_2 + tv \right) (e) \right)  \right),
\end{multline}
for some constant $c := c(d , \lambda) > 0$.
Indeed, the estimate~\eqref{dualstep1} is then obtained by integrating the previous inequality over $\mathring h^1\left( \inte \cu_n \right) \oplus \R v$. To prove~\eqref{dualstep2}, we use the following Taylor expansion
\begin{equation*}
V_e( \nabla \left( \psi_1 +  \psi_2 + tv \right) (e)) \leq  V_e \left( \nabla \left(  \psi_2 + tv \right) (e)\right) + V_e' \left( \nabla \left(  \psi_2 + tv \right) (e)  \right) \nabla \psi_1 (e) +   \frac{1}{2\lambda} \left| \nabla \psi_1 (e) \right|^2.
\end{equation*} 
This implies
\begin{align*}
\lefteqn{ \exp \left(  - \sum_{e \subset \cu_n} \left( V_e \left( \nabla \left( \psi_1 +  \psi_2 + tv \right) (e)\right)- q \cdot \nabla \psi_1 (e)\right) \right)}  \qquad \notag & \\ & 
			\geq \exp \left( - \sum_{e \subset \cu_n} \left(   V_e \left( \nabla \left(  \psi_2 + tv \right) (e)\right) + \left( V_e' \left( \nabla \left(  \psi_2 + tv \right) (e)  \right) - q(e) \right) \nabla \psi_1 (e) \right) \right) \\ & \qquad \times \exp \left(  - \sum_{e \subset \cu_n}  \frac{1}{2\lambda} \left| \nabla \psi_1 (e) \right|^2  \right).
\end{align*}
Using the inequality, for a bond $e = (x,y) \subseteq \cu_n$,
\begin{equation*}
\left| \nabla \psi_1 (e) \right|^2 = \left|  \psi_1 (x) - \psi_1 (y) \right|^2 \leq 2 \left|  \psi_1 (x)\right|^2 + 2 \left|\psi_1 (y) \right|^2,
\end{equation*}
and summing over all the bonds of the cube $\cu_n$ yields
\begin{equation*}
\sum_{e \subset \cu_n}  \left| \nabla \psi_1 (e) \right|^2 \leq 2 d \sum_{x \in \partial \cu_n} \psi_1(x)^2.
\end{equation*}
We then note that, for each $a \in \R$,
\begin{equation*}
\int_{\R} \exp \left( ax - \frac{d}{\lambda} x^2 \right) \, dx \geq \sqrt{ \frac{\lambda \pi}{d}}.
\end{equation*}
With the previous estimate and the equality~\eqref{dimh01bound}, one deduces
\begin{multline*}
\int_{\mathring h^1\left( \partial \cu_n \right)}  \exp \left( - \sum_{e \subset \cu_n}    \left( V_e' \left( \nabla \left(  \psi_2 + tv \right) (e)  \right) \nabla \psi_1 (e)- q \cdot \nabla \psi_1 (e) \right)  \right) \\ \times \exp \left(  \frac{d}{\lambda}  \sum_{x \in \partial \cu_n} \left| \psi_1 (x) \right|^2 \right) d \psi_1 \geq c^{3^{(d-1)n}},
\end{multline*}
for some constant $c := c(d , \lambda) > 0$.
Combining the few previous displays gives
\begin{multline*}
\int_{\mathring h^1\left( \partial \cu_n \right)}  \exp \left(  - \sum_{e \subset \cu_n}V_e \left( \nabla \left( \psi_1 +  \psi_2 + tv \right) (e)  \right)- q \cdot \nabla \psi_1 (e)\right) d \psi_1\\
			\geq c^{3^{(d-1)n}}  \exp \left( - \sum_{e \subset \cu_n}   V_e \left( \nabla \left(  \psi_2 + tv \right) (e) \right) \right).
\end{multline*}
This is precisely~\eqref{dualstep2}.

\medskip

\textit{Step 2.} We denote by 
\begin{equation*}
\tilde{v} :=\frac{1}{\sqrt{\left| \inte \cu_n \right|}}\indc_{\inte \cu_n}
\end{equation*}
so that $\sum_{x \in \cu_n} \tilde{v}(x)^2 = 1$.
Note that the two functions $v$ and $\tilde{v}$ are related by the formula
\begin{align*}
v + \frac{1}{|\cu_n|} \sqrt{\frac{\left| \inte \cu_n \right|}{\left| \partial \cu_n \right|}}  \indc_{\cu_n} & = \frac{1}{|\cu_n|} \left(  \sqrt{\frac{\left| \partial \cu_n \right|}{\left| \inte \cu_n \right|}} +  \sqrt{\frac{\left| \inte \cu_n \right|}{\left| \partial \cu_n \right|}} \right) \indc_{\inte \cu_n} \\
																					& =  \frac{\sqrt{\left| \inte \cu_n \right|}}{|\cu_n|} \left(  \sqrt{\frac{\left| \partial \cu_n \right|}{\left| \inte \cu_n \right|}} +  \sqrt{\frac{\left| \inte \cu_n \right|}{\left| \partial \cu_n \right|}} \right) \tilde{v}.
\end{align*}
To shorten the notation, we denote by
\begin{equation*}
\alpha_n := \frac{\sqrt{\left| \inte \cu_n \right|}}{|\cu_n|} \left(  \sqrt{\frac{\left| \partial \cu_n \right|}{\left| \inte \cu_n \right|}} +  \sqrt{\frac{\left| \inte \cu_n \right|}{\left| \partial \cu_n \right|}} \right).
\end{equation*}
Combining the two previous displays we obtain, for each edge $e \subseteq \cu_n$,
\begin{equation} \label{alphan.eq}
\nabla v (e) = \alpha_n \nabla \tilde{v} (e).
\end{equation} 
Note that, there exist two constants $c := c(d) > 0$ and $C := C(d) < \infty$ such that
\begin{equation} \label{boundalphandual}
c 3^{- \frac{(d-1)n}{2}} \leq \alpha_n \leq C 3^{- \frac{(d-1)n}{2}}.
\end{equation}
We then use the orthogonal decomposition
$
 h^1_0\left(\cu_{n}\right) = \mathring h^1\left( \inte \cu_n \right)  \overset{\perp}{\oplus} \R \tilde{v}
$
and the decomposition of the Lebesgue measure stated in~\eqref{decomplebmeasure} to obtain
\begin{multline*}
\int_{ h^1_0\left(\cu_{n}\right)} \exp \left(  - \sum_{e \subset \cu_{n}} V_e(\nabla \phi(e)) \right) \,   d\phi \\ = \int_\R \int_{  \mathring h^1\left( \inte \cu_n \right)  } \exp \left(  - \sum_{e \subset \cu_{n}} V_e(\nabla \phi(e) + t \nabla \tilde{v}(e)) \right) \,   d\phi dt.
\end{multline*}
Using the identity~\eqref{alphan.eq}, we obtain
\begin{align*}
\lefteqn{ \int_{ h^1_0\left(\cu_{n}\right)} \exp \left(  - \sum_{e \subset \cu_{n}} V_e(\nabla \phi(e)) \right) \,   d\phi } \qquad & \\ & 
					= \int_\R \int_{ \mathring h^1\left( \inte \cu_n \right)  } \exp \left(  - \sum_{e \subset \cu_{n}} V_e\left( \nabla \phi(e) + \frac{t}{\alpha_n} \nabla v(e) \right) \right) \,   d\phi dt \\
					&= \alpha_n \int_\R \int_{ \mathring h^1\left( \inte \cu_n \right)  } \exp \left(  - \sum_{e \subset \cu_{n}} V_e\left(\nabla \phi(e) + t \nabla v(e)\right) \right) \,   d\phi dt \\
					& = \alpha_n   \int_{ \mathring h^1\left( \inte \cu_n \right) \oplus \R v} \exp \left(- \sum_{e \subset \cu_n }  V_e(  \nabla \phi (e)) \right)  \, d\phi.
\end{align*}
Taking the logarithm, dividing by the volume $|\cu_n|$ and using the estimate~\eqref{boundalphandual}, we obtain the inequality~\eqref{dualstep22}.

\medskip

\textit{Step 3.} Combining the main results of Steps 1 and 2 gives
\begin{equation*}
\nu^*(\cu_n,q) + \nu \left( \cu_n , 0 \right)  \geq - C 3^{-n} - C n3^{-dn} \geq  - C 3^{-n}.
\end{equation*}

\medskip

\textit{Step 4.} Let $p \in \Rd $, define $\tilde{V}_e := V_e \left( p ( e) + \cdot \right)$ and denote by
\begin{equation*}
\tilde{\nu}(\cu_n,0) := - \frac 1 {|\cu_n|} \log \int_{ h^1_0(\cu_n)} \exp \left(  - \sum_{e \subset U} \tilde{V}_e(\nabla \phi(e)) \right) \, d\phi,
\end{equation*}
and, for every $q \in \Rd$,
\begin{equation*} 
\tilde{\nu}^*(\cu_n,q) := \frac{1}{|\cu_n|} \log \int_{\mathring h^1(\cu_n)} \exp \left( -\sum_{e \subset \cu_n} \left(\tilde{V}_e(\nabla \phi(e))  - q \cdot \nabla \phi(e)\right) \right) \, d\phi.
\end{equation*}
The functions $\tilde{V}_e$ satisfy the same property of uniform convexity property~\eqref{unifconvVi} as the functions $V_e$, thus one can apply the result of Steps 1, 2 and 3 with these functions. This gives, for every $q \in \Rd$,
\begin{equation*}
\tilde{\nu}(\cu_n,0) + \tilde{\nu}^*(\cu_n,q) \geq - C 3^{-n}.
\end{equation*}
We then note that
\begin{align*}
\tilde{\nu}(\cu_n,0) & = - \frac 1 {|\cu_n|} \log \int_{ h^1_0(\cu_n)} \exp \left(  - \sum_{e \subset U} \tilde{V}_e(\nabla \phi(e)) \right) \, d\phi \\
				& = - \frac 1 {|\cu_n|} \log \int_{ h^1_0(\cu_n)} \exp \left(  - \sum_{e \subset U}V_e(p \cdot e + \nabla \phi(e)) \right) \, d\phi \\
				& = \nu(\cu_n,p).
\end{align*}
Note also that, by translation invariance of the Lebesgue measure on $\mathring h^1(\cu_n)$, one can perform the change of variables $\phi := \phi - l_p$, where $l_p \in \mathring h^1 (\cu_n)$ is the affine function defined by $l_p(x) = p \cdot x$. This gives, for each $q \in \Rd$,
\begin{align*}
\tilde{\nu}^*(\cu_n,q) & =  \frac{1}{|\cu_n|} \log \int_{\mathring h^1(\cu_n)} \exp \left( -\sum_{e \subset \cu_n} \left(\tilde{V}_e(\nabla \phi(e))  - q \cdot \nabla \phi(e)\right) \right) \, d\phi \\
					& = \frac{1}{|\cu_n|} \log \int_{\mathring h^1(\cu_n)} \exp \left( -\sum_{e \subset \cu_n} \left( V_e( p \cdot e + \nabla \phi(e))  - q \cdot \nabla \phi(e)\right) \right) \, d\phi \\
				   	& = \frac{1}{|\cu_n|} \log \int_{\mathring h^1(\cu_n)} \exp \left( -\sum_{e \subset \cu_n} \left(V_e(\nabla \psi (e)) + (q \cdot e) (p \cdot e) -  q \cdot \nabla \psi (e)\right) \right) \, d\psi \\ 
					& = \nu^*(\cu_n,q) - p \cdot q.
\end{align*}
Combining the few previous displays yields, for each $p,q \in \Rd$,
\begin{equation*}
\nu(\cu_n,p) + \nu^*(\cu_n,q) \geq p \cdot q -  C 3^{-n}.
\end{equation*}
The proof of Proposition~\ref{convexdual} is complete.
\end{proof}

\subsection{Quadratic bounds for $\nu^*$}In this section, we prove the quadratic bounds for $\nu^\star$.

\begin{proposition}[Quadratic bounds for $\nu^*$] \label{p.quadbound}
There exist two constants $c := c(d,  \lambda) > 0$ and $C := C (d , \lambda) < \infty$ such that, for each $q \in \Rd$ and each $n \in \N$,
\begin{equation} \label{quadboundnustar} 
-C + c |q|^2 \leq \nu^* (\cu_n , q ) \leq C ( 1 + |q|^2).
\end{equation}
\end{proposition}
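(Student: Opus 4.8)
The plan is to prove the two bounds separately, and in both cases to use the variational formula~\eqref{varfornustar} together with a good choice of test measure. Recall that
\[
\nu^* (\cu_n , q ) = \sup_{\P^* \in \mathcal{P}  ( \mathring h^1 (\cu_n))} \left( \frac1{|\cu_n|} \E \left[- \sum_{e \subseteq \cu_n} \left( V_e( \nabla \psi(e)) - q \cdot \nabla \psi (e) \right) \right] - \frac1{|\cu_n|} H ( \P^* ) \right),
\]
so that \emph{any} admissible choice of $\P^*$ yields a lower bound, while controlling $\nu^*$ from above requires controlling the supremum uniformly.

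For the \textbf{lower bound} $\nu^*(\cu_n,q) \geq -C + c|q|^2$, I would test with a Gaussian measure on $\mathring h^1(\cu_n)$ adapted to the slope $q$. Concretely, one natural candidate is to take $\P^*$ to be (a recentered version of) the law of $\psi = l_{p} + \phi$ restricted to mean zero, where $p$ is chosen so that $\nabla_p \bar\nu$ points roughly in the direction of $q$; alternatively — and more elementarily — one tests with $\P^*$ the law of $l_q/(2/\lambda) + $ a fixed centered Gaussian of bounded entropy, or even simpler, one uses the already-established convex duality inequality~\eqref{4.onesided}: for any $p \in \Rd$,
\[
\nu^*(\cu_n,q) \geq p\cdot q - \nu(\cu_n,p) - C3^{-n} \geq p\cdot q - C(1+|p|^2) - C,
\]
using the upper quadratic bound~\eqref{4.quadbounds} for $\nu$. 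Optimizing over $p$ (taking $p = cq$ for a suitable small $c$) gives $\nu^*(\cu_n,q) \geq c|q|^2 - C$, which is exactly the desired lower bound. This route is clean and avoids any new computation.

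For the \textbf{upper bound} $\nu^*(\cu_n,q) \leq C(1+|q|^2)$, I would argue directly on the partition function $Z_q^*(\cu_n)$ defined in~\eqref{partfctnustar}. Using the lower bound $V_e(x) \geq \lambda |x|^2$ from the normalization assumption, one has, for each $\psi \in \mathring h^1(\cu_n)$,
\[
\sum_{e \subseteq \cu_n} \left( V_e(\nabla\psi(e)) - q\cdot\nabla\psi(e) \right) \geq \sum_{e\subseteq\cu_n} \left( \lambda |\nabla\psi(e)|^2 - q\cdot\nabla\psi(e) \right) \geq \sum_{e\subseteq\cu_n} \left( \frac{\lambda}{2}|\nabla\psi(e)|^2 - \frac{1}{2\lambda}|q|^2 \right),
\]
by Young's inequality, so that
\[
Z_q^*(\cu_n) \leq e^{C|\cu_n| \, |q|^2} \int_{\mathring h^1(\cu_n)} \exp\left( -\frac{\lambda}{2}\sum_{e\subseteq\cu_n} |\nabla\psi(e)|^2 \right) d\psi.
\]
The remaining Gaussian integral is, up to the normalization of the quotient space $\mathring h^1(\cu_n)$, a determinant of (a multiple of) the discrete Laplacian with Neumann-type boundary conditions on $\cu_n$; a crude bound on this determinant — e.g. bounding each of the $|\cu_n|-1$ nonzero eigenvalues of the discrete Laplacian between $c$ and $Cd$ — gives $\log$ of the integral bounded by $C|\cu_n|$. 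Taking logarithms and dividing by $|\cu_n|$ yields $\nu^*(\cu_n,q) \leq C(1+|q|^2)$.

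The \textbf{main obstacle} is the bookkeeping around the quotient space $\mathring h^1(\cu_n)$: one must be careful that the Lebesgue measure used to define $Z_q^*$ is the one induced by the $L^2$ scalar product on $\mathring h^1(\cu_n)$ (as fixed in the Notations section), and that the Gaussian integral is computed with respect to that same measure, so that the resulting determinant is the product of the nonzero eigenvalues of the discrete Laplacian on $\cu_n$ (the zero mode having been quotiented out). Everything else is routine: the eigenvalue bounds for the discrete Laplacian on a cube are standard, and the Young-inequality step is elementary. An alternative that sidesteps even this bookkeeping is to combine the already-proven subadditivity~\eqref{4.subaddnustar} with the value at a single small scale, say $n=0$, where $\nu^*(\cu_0,q)$ is a finite-dimensional integral one can bound by hand; iterating~\eqref{4.subaddnustar} from $n=0$ gives $\nu^*(\cu_n,q) \leq \nu^*(\cu_0,q) + C(1+|q|^2)\sum_{k\geq 0}3^{-k} \leq C(1+|q|^2)$, which is perhaps the shortest path.
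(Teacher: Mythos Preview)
Your proposal is correct and matches the paper's proof essentially line for line. For the lower bound, the paper does exactly what you suggest: invoke the one-sided duality~\eqref{4.onesided}, plug in the upper quadratic bound~\eqref{4.quadbounds} for $\nu$, and optimize by taking $p = q/(2C)$. For the upper bound, the paper takes your second (shorter) route: iterate the subadditivity~\eqref{4.subaddnustar} down to a fixed small scale (the paper uses $\cu_1$ rather than $\cu_0$, which is immaterial) and bound the resulting finite-dimensional integral directly. Your first route for the upper bound (the direct Gaussian estimate on $Z_q^*$) would also work but is not needed.
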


\begin{proof}
We now prove the estimate~\eqref{quadboundnustar}. We start with the upper bound. By the inequality~\eqref{4.subaddnustar}, we have, for each integer $n \in \N$ and each $q \in \R^d$, 
\begin{equation*}
\nu^* (\cu_n , q ) \leq \nu^* (\cu_1 , q ) + C(1 + |q|^2).
\end{equation*}
A straightforward computation gives the bound
\begin{equation*}
\nu^* (\cu_1 , q ) \leq C (1 + |q|^2).
\end{equation*}
Combining the two previous displays gives the upper bound of~\eqref{quadboundnustar}.

We then prove the lower bound, the idea is to use the one-sided convex duality proved in Proposition~\ref{convexdual} combined with the upper bound estimate~\eqref{4.quadbounds}. By Proposition~\ref{convexdual}, for each $p , q \in \Rd$,
\begin{equation*}
\nu (\cu_n,p) + \nu^*(\cu_n,q) \geq p \cdot q - C 3^{-n}.
\end{equation*}
Using~\eqref{4.quadbounds} and the bound $3^{-n} \leq 1$, the previous estimate becomes
\begin{equation*}
\nu^*(\cu_n,q) \geq p \cdot q - C (1 + |p|^2).
\end{equation*}
Choosing $p = q / 2C$ gives
\begin{equation*}
\nu^*(\cu_n,q) \geq \frac{|q|^2}{4C} - C.
\end{equation*}
This is the desired lower bound.
\end{proof}

\subsection{Two-scale comparison}  \label{section2scalesquant}The goal of this section is obtain a quantitative version of the subadditivity for the quantities $\nu$ and $\nu^*$ stated in Proposition~\ref{propofnunustar}. More precisely one wishes to derive a second variation type of statement, following the techniques of the calculus of variations: the objective is to construct, for $m < n$, a coupling between the random variables $\phi_{m,p}$ and $\phi_{n,p}$ (resp. $\psi_{m,p}$ and $\psi_{n,p}$) such that the $L^2$-norm of the gradient of their difference is controlled by the difference $\nu(\cu_m, p) - \nu(\cu_{n}, p)$. 

An interesting consequence of this estimate is that, since the sequence $\nu(\cu_n, p)$ converges, the difference $\nu(\cu_m, p) - \nu(\cu_{n}, p)$ is small when $m$ and $n$ are large: this implies that the gradient of the fields on two different scales are close in the $L^2$-norm.

\medskip

For any pair of integers $m,n \in \N$ with $m < n$, the triadic cube $\cu_{n}$ can be split into $3^{(n-m)d}$ cubes of the form $(z + \cu_m)$, with $z \in \mathcal{Z}_{m,n}$. We denote by $(\phi_z)_{z \in \mathcal{Z}_{m,n}}$ a family of random variables such that
\begin{itemize}
\item for each $z \in \mathcal{Z}_{m,n}$, $\phi_z$ takes values in $ h^1_0 \left( z + \cu_m\right)$ and the law of $\phi_z( \cdot - z)$ is $\P_{m,p}$;
\item the random variables $\phi_z$ are independent.
\end{itemize}

We can then, for each $z \in  \mathcal{Z}_{m,n}$, see $\phi_z$ as a random variable taking value in $ h^1_0 \left(\cu_{n}\right)$, by extending it to be $0$ on $\cu_{n} \setminus \left( z + \cu_m\right)$. We also denote by $\phi := \sum_{z \in  \mathcal{Z}_{m,n}}\phi_z$.

\begin{proposition}[Two-scale comparison for $\nu$] \label{2sccomparisonnu}
Given $m,n \in \N$ with $m<n$ and $p \in \Rd$, we consider the random variable $\phi$ defined in the previous paragraph and taking values in $ h^1_0 (\cu_{n})$. Then there exists a coupling between $\phi$ and $\phi_{n,p}$ and a constant $C := C(d , \lambda) < \infty$ such that,
\begin{equation} \label{e.2sccomparisonnu}
\frac{1}{|\cu_{n}|}\E \left[  \sum_{e \subseteq \cu_{n}} \left| \nabla \phi(e) - \nabla \phi_{n,p} (e)\right|^2  \right] \leq C \left( \nu (\cu_m , q) - \nu (\cu_{n} , q) \right) + C 3^{-\frac m2} (1 + |p|^2).
\end{equation}
\end{proposition}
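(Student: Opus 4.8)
The plan is to exploit the variational characterization of $\nu$ from Proposition~\ref{varfornuprop} together with the displacement convexity of the entropy (Proposition~\ref{p.disconv}). The measure $\P_{n,p}$ minimizes the convex functional $\mathcal{F}_{n,p}$ over $\mathcal{P}(h^1_0(\cu_n))$, while the law of $\phi = \sum_z \phi_z$ — call it $\Q$ — is a natural competitor built by patching the minimizers on the subcubes. First I would compute $\mathcal{F}_{n,p}(\Q)$: since the $\phi_z$ are independent and supported on disjoint subcubes, the entropy splits as $H(\Q) = \sum_{z} H(\P_{m,p})$ (by Proposition~\ref{entropycouplesum2}, with equality by independence), and the energy sum over $\B_d(\cu_n)$ decomposes into the energy sums over the $\B_d(z+\cu_m)$ plus the contribution of the $O(3^{(n-m)(d-1)} 3^{(n-1)})$ edges connecting adjacent subcubes; on each such edge $\nabla\phi = 0$ since $\phi_z \in h^1_0(z+\cu_m)$, so that boundary term is exactly $\sum_{\text{inter-cube } e} V_e(p(e))$, bounded by $C|\mathcal{Z}_{m,n}| 3^{-m}(1+|p|^2) \cdot |\cu_n|^{-1}\cdot|\cu_n| $, i.e.\ contributing $C 3^{-m}(1+|p|^2)$ to $\nu$. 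Dividing by $|\cu_n|$ this yields
\[
\frac{1}{|\cu_n|}\mathcal{F}_{n,p}(\Q) \le \nu(\cu_m,p) + C 3^{-m}(1+|p|^2).
\]

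The heart of the argument is a quantitative minimality (second-variation) estimate: since $\mathcal{F}_{n,p}$ is convex along displacement interpolation and $\P_{n,p}$ is its minimizer, I would take the optimal coupling $\mu_{(\phi_{n,p},\phi)}$ between $\P_{n,p}$ and $\Q$ (Definition~\ref{def.opttrans}; the finite second moments come from the $L^2$ bounds~\eqref{4.L2boundnu} and the analogous bound for $\Q$), form $\mu_t = (T_t)_*\mu_{(\phi_{n,p},\phi)}$, and use that $t\mapsto \mathcal{F}_{n,p}(\mu_t)$ is convex with minimum at $t=0$. Convexity plus the uniform convexity of each $V_e$ should give a bound of the form
\[
\frac{\lambda}{|\cu_n|}\,\E\!\left[\sum_{e\subseteq\cu_n}|\nabla\phi(e)-\nabla\phi_{n,p}(e)|^2\right] \;\le\; C\Big(\tfrac{1}{|\cu_n|}\mathcal{F}_{n,p}(\Q) - \nu(\cu_n,p)\Big),
\]
because the energy part of $\mathcal{F}_{n,p}$ is uniformly convex in the gradient of the height function while the entropy part only helps (it is displacement-convex, hence its second "derivative" along $\mu_t$ is nonnegative and can be discarded). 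Combining this with the previous display and with $\tfrac{1}{|\cu_n|}\mathcal{F}_{n,p}(\Q)-\nu(\cu_n,p) \le \nu(\cu_m,p)-\nu(\cu_n,p) + C3^{-m}(1+|p|^2)$ gives~\eqref{e.2sccomparisonnu}, after absorbing $3^{-m}$ into $3^{-m/2}$.

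I expect the main obstacle to be making the second-variation inequality rigorous at the level of measures rather than functions. One must carefully quantify how much the energy strictly increases when $\nabla\phi \ne \nabla\phi_{n,p}$: the clean statement is that for the uniformly convex function $f(\mathbf{\phi}) = \sum_e V_e(p(e)+\nabla\mathbf{\phi}(e))$ on $h^1_0(\cu_n)$, displacement convexity of $H$ plus convexity of $f$ yields, for the optimal interpolation $\mu_t$,
\[
\mathcal{F}_{n,p}(\mu_0) \le \mathcal{F}_{n,p}(\mu_{1/2}) - \tfrac{1}{2}\big(\tfrac{d}{dt}\big|_{t=0^+} + \text{...}\big),
\]
and one extracts the quadratic gap from the strict convexity of $f$ along the (random) segment $t\mapsto (1-t)\phi_{n,p}+t\phi$. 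This likely needs the elementary fact that $f((x+y)/2) \le \tfrac12 f(x)+\tfrac12 f(y) - \tfrac{\lambda}{4}\sum_e|\nabla x(e)-\nabla y(e)|^2$, integrated against $\mu_{(\phi_{n,p},\phi)}$, together with $H(\mu_{1/2}) \le \tfrac12 H(\P_{n,p}) + \tfrac12 H(\Q)$; then $\mathcal{F}_{n,p}(\mu_{1/2}) \le \tfrac12\mathcal{F}_{n,p}(\P_{n,p}) + \tfrac12\mathcal{F}_{n,p}(\Q) - \tfrac{\lambda}{4}\E[\sum_e|\nabla\phi-\nabla\phi_{n,p}|^2]$, and minimality $\mathcal{F}_{n,p}(\mu_{1/2}) \ge \mathcal{F}_{n,p}(\P_{n,p})$ closes the estimate. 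A secondary technical point is the bookkeeping of the inter-cube edges and verifying the claimed $3^{-m}$ (not merely $3^{-n}$) scale in the error; this is where the surface-to-volume ratio $3^{(n-m)(d-1)+ (n-1)}/3^{nd} \sim 3^{-m}$ enters, and one must also control the energy of the $\phi_z$ across those edges via~\eqref{4.L2boundnu} and a Cauchy–Schwarz argument — but note the statement only asks for $3^{-m/2}$, giving room to be wasteful.
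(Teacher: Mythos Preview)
Your overall architecture is correct and matches the paper's: compute $\mathcal{F}_{n,p}$ on the patched competitor, use displacement convexity of the entropy plus uniform convexity of the energy to extract a quadratic gap via the midpoint $\mu_{1/2}$, and conclude from minimality of $\P_{n,p}$. The midpoint argument you sketch at the end is exactly the paper's Step~2.

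There is, however, one genuine gap. Your claim that $H(\Q) = \sum_z H(\P_{m,p})$ is false as written: the law $\Q$ of $\phi$ is supported on the strict subspace $\bigoplus_{z} h^1_0(z+\cu_m) \subsetneq h^1_0(\cu_n)$, because $\phi$ vanishes identically on the internal boundary set $\partial_{\mathrm{int}}\cu_{m,n} = \big(\bigcup_z \partial(z+\cu_m)\big)\setminus \partial\cu_n$. Hence $\Q$ is not absolutely continuous with respect to Lebesgue measure on $h^1_0(\cu_n)$, and $H(\Q) = +\infty$ in the sense required by the variational formula~\eqref{varfornu}. Proposition~\ref{entropycouplesum2} only gives the splitting when the ambient space is exactly the orthogonal sum of the ranges; here there is a missing factor $\R^{\partial_{\mathrm{int}}\cu_{m,n}}$. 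With $H(\Q)=+\infty$ your second-variation inequality becomes vacuous (and, incidentally, Definition~\ref{def.opttrans} as stated in the paper requires absolute continuity for the optimal coupling).

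The paper fixes this by introducing $\phi' := \phi + X$, where $X$ is an independent field supported on $\partial_{\mathrm{int}}\cu_{m,n}$ with i.i.d.\ $\mathrm{Unif}[0,1]$ coordinates. Then $\P_{\phi'}$ is absolutely continuous on $h^1_0(\cu_n)$, and since the uniform law has zero entropy one recovers exactly $H(\P_{\phi'}) = 3^{(n-m)d} H(\P_{m,p})$ via Proposition~\ref{entropycouplesum2}. The argument you outlined is then run with $\phi'$ in place of $\phi$; two additional short steps (the paper's Steps~3 and~5) compare the energy and the $L^2$ gradient distance for $\phi'$ back to those for $\phi$, using that $|\nabla X|\le 1$ is supported on $O(3^{dn-m})$ edges together with Cauchy--Schwarz and the $L^2$ bound~\eqref{4.L2boundnu}. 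This is precisely where the error term $3^{-m/2}(1+|p|^2)$ (rather than $3^{-m}$) appears.
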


\begin{proof}
We first introduce the set of vertices $\partial_{int} \cu_{m,n} \subseteq \cu_m$ which are on the boundary of one of the cubes $(z + \cu_m)$ but not on the boundary of the cube $\cu_{n}$, i.e.,
\begin{equation} \label{def.bn2}
\partial_{int} \cu_{m,n} := \left( \bigcup_{z \in  \mathcal{Z}_{m,n}} \partial \left( z + \cu_m \right) \right) \setminus \partial \cu_{n}.
\end{equation}
Note that the cardinality of this set satisfies the upper bound estimate
\begin{equation*}
\left| \partial_{int} \cu_{m,n} \right| \leq C 3^{dn-m}.
\end{equation*}

An idea to obtain~\eqref{e.2sccomparisonnu} relies on the second variation formula from calculus of variations applied to the functional $\mathcal{F}_{n,p}$: by considering the optimal coupling between the laws of $\phi$ and $\phi_{n,p}$, one can use the displacement convexity of the entropy and the uniform convexity of $V_e$ to obtain uniform convexity for the functional $\mathcal{F}_{n,p}$ and then apply the standard proof of the second variation formula for uniformly convex functional.

Unfortunately a technical problem has to be treated along the way: with the definition of the function $\phi$, one has
\begin{equation*}
\forall x \in \partial_{int} \cu_{m,n}, \, \phi(x) = 0.
\end{equation*} 
A consequence of the previous identity is that the law of $\phi$ is not absolutely continuous with respect to the Lebesgue measure on $h^1_0 (\cu_{n})$ and thus its entropy is infinite. Nevertheless, this is the only obstruction and to remedy this, the idea is to add a few additional random variables which are small and whose only purpose is to make the entropy of $\phi$ finite. We consequently introduce a random variable $X $ taking values in $h^1_0 (\cu_{n})$ and satisfying
\begin{itemize}
\item for each $x \in \partial_{int} \cu_{m,n}$, the law of $X(x)$ is uniform on $[0,1]$ and for each $x \in \cu_{n+1} \setminus \partial_{int} \cu_{m,n}$, $X(x) = 0$;
\item the $\R$-valued random variables $\left(X(x)\right)_{x \in \partial_{int} \cu_{m,n}}$ are independent;
\item the random variables $X$ and $\phi$ are independent.
\end{itemize}
We then consider the random variable $\phi' := \phi +  X$. It is a random variable taking values in the space~$h^1_0 (\cu_{n})$. Moreover, by construction, we see that this random variable is absolutely continuous with respect to the Lebesgue measure on $h^1_0 (\cu_{n})$. We denote by $\P_{\phi'}$ its law. The idea to keep in mind is that the random variable $\phi'$ is a small perturbation of the random variable $\phi$ and thanks to that it is possible to obtain estimates on $\phi$ from estimates on $\phi'$. This is carried out in Steps 3 and 4 of the proof.

\medskip

We then split the proof into 6 steps.
\begin{itemize}
\item In Step 1, we compute the entropy of $\P_{\phi'}$ and prove that
\begin{equation} \label{step12sccompnu}
H \left( \P_{\phi'} \right) = 3^{(n-m)d} H \left( \P_{m,p}  \right),
\end{equation}
where the entropy on the left-hand side is computed with respect to the Lebesgue measure on $h^1_0 (\cu_{n})$ and the entropy on the right-hand side is computed with respect to the Lebesgue measure on $h^1_0 (\cu_{m})$.
\item In Step 2, we consider the optimal coupling between the random variables $\phi'$ and $\phi_{n,p}$ and prove that, under this coupling,
\begin{multline} \label{2scnu.step2}
\E \left[ \frac1{|\cu_{n}|} \sum_{e \subseteq \cu_{n}} \left| \nabla \phi' (e) - \nabla \phi_{n,p} (e)\right|^2  \right] \\ \leq C \left( \frac{1}{|\cu_{n}|}\E \left[ \sum_{e \subseteq \cu_{n}} V_e \left( p ( e) +  \nabla \phi' (e) \right) \right] + \frac{1}{|\cu_{n}|} H \left( \P_{\phi'}  \right) - \nu (\cu_{n} , p) \right).
\end{multline}
\item In Step 3, we estimate the term on the right-hand side of~\eqref{2scnu.step2} and prove
\begin{equation} \label{2scnu.step3}
  \frac{1}{|\cu_{n}|}\E \left[ \sum_{e \subseteq \cu_{n}} V_e \left( p ( e) +  \nabla \phi' (e) \right) \right]  \leq \frac{1}{|\cu_{n}|}\E \left[ \sum_{e \subseteq \cu_{n}} V_e \left( p ( e) +  \nabla \phi (e) \right) \right]  + C 3^{-\frac m2} (1+|p|).
\end{equation}
\item In Step 4, we combine the main results of Steps 1 and 3 to obtain
\begin{equation*}
 \frac{1}{|\cu_{n}|}\E \left[ \sum_{e \subseteq \cu_{n}} V_e \left(p ( e) + \nabla \phi' (e) \right) \right] + \frac{1}{|\cu_{n}|} H \left( \P_{\phi'} \right) \leq \nu\left(\cu_n , p \right) + C 3^{-\frac m2} (1+|p|^2).
\end{equation*}
\item In Step 5, we estimate the term on the left-hand side of~\eqref{2scnu.step2}. We replace the random variable $\phi'$ by the random variable $\phi$ and show that this operation can be performed up to a small error term: we have the estimate
\begin{multline} \label{2scnu.step4}
\E \left[  \frac{1}{|\cu_{n}|} \sum_{e \subseteq \cu_{n}} \left| \nabla \phi (e) - \nabla \phi_{n,p} (e)\right|^2  \right] \\  \leq C \E \left[  \frac{1}{|\cu_{n}|} \sum_{e \subseteq \cu_{n}} \left| \nabla \phi' (e) - \nabla \phi_{n,p} (e)\right|^2  \right] + C3^{- \frac{m}{2}}(1 + |p|).
\end{multline}
We note that to compute the expectation on the left-hand side, we used the coupling between the random variables $\phi$ and $\phi_{n,p}$ which is induced by the coupling between the random variables $\phi'$ and $\phi_{n,p}$.
\item In Step 6, we combine the main results of Steps 2, 3, 4 and 5 to obtain~\eqref{e.2sccomparisonnu}.
\end{itemize}

\medskip

\textit{Step 1.} The idea to obtain~\eqref{step12sccompnu} is to use Proposition~\ref{entropycouplesum} pertaining to the entropy of a pair of random variables. To this end, we note that one has the orthogonal decomposition
\begin{equation*}
h^1_0 (\cu_{n}) = \underset{z \in \mathcal{Z}_{m,n}}{\oplus} h^1_0 (z + \cu_{m}) \overset{\perp}{\oplus} \R^{\partial_{int} \cu_{m,n}}, 
\end{equation*}
where $\R^{\partial_{int} \cu_{m,n}}$ stands for the set of functions from $\partial_{int} \cu_{m,n}$ to $\R$.

\medskip

Using the previous remark, one can apply Proposition~\ref{entropycouplesum2} with $Y:= \phi$, $Z:= X$ and consequently $Y + Z = \phi'$. This leads to
\begin{equation*}
H\left(\P_{\phi'} \right) = H\left(\P_\phi \right) + H\left( \P_X  \right),
\end{equation*}
where the entropy of the random variable $\phi'$ (resp. $\phi$ and $X$) is computed with respect to the Lebesgue measure on $h^1_0 (\cu_{n})$ (resp. $\underset{z \partial_{int} \cu_{m,n}}{\oplus} h^1_0 (z + \cu_{n}) $ and $\R^{\partial_{int} \cu_{m,n}}$).

On the one hand, since the random variables $(X(x))_{x \in \partial_{int} \cu_{m,n}}$ are independent and of law uniform on $[0,1]$, one has
\begin{equation*}
H\left(\P_X\right) = 0.
\end{equation*}

On the other hand, one also has the equality $\phi' = \sum_{z \in \partial_{int} \cu_{m,n}} \phi_z$. Using the independence of the family of random variables $\left( \phi_z \right)_{z \in \mathcal{Z}_{m,n} }$, that for each $z \in \mathcal{Z}_{m,n}$, $\phi_z(\cdot - z)$ has law $\P_{m,p}$ and Proposition~\ref{entropycouplesum2}, with $3^{(n-m)d}$ random variables instead of two, one deduces
\begin{equation*}
H\left(\P_\phi \right) = \sum_{z \in \mathcal{Z}_{m,n}} H\left(\P_{\phi_z} \right) = 3^{(n-m)d} H \left( \P_{m,p}  \right).
\end{equation*}
Combining the few previous displays yields~\eqref{step12sccompnu} and completes the proof of Step 1.

\medskip

\textit{Step 2.} Consider the optimal coupling with respect to the $L^2$ scalar product on $h^1_0 \left(\cu_{n}\right)$ between $\phi'$ and $\phi_{n,p}$ and denote by $\P_{\frac{\phi'+ \phi_{n,p}}{2}}$ the law of the random variable $\frac{\phi'+ \phi_{n,p}}{2}$ under this coupling. Using Proposition~\ref{p.disconv} about the displacement convexity of the entropy, one obtains
\begin{equation*}
H\left(\P_{\frac{\phi'+ \phi_{n,p}}{2}} \right) \leq \frac12 H\left(\P_{\phi'} \right)  + \frac12 H\left(\P_{n,p} \right) .
\end{equation*}
By the uniform convexity of $V_e$, one has
\begin{align*}
\lefteqn{\lambda \E \left[ \sum_{e \subseteq \cu_{n}} \left| \nabla \phi' (e) - \nabla \phi_{n,p} (e)\right|^2  \right]} \qquad & \\ & \leq \E \left[ \sum_{e \subseteq \cu_{n}} V \left( p (e )+ \nabla \phi'(e) \right) \right] +  \E \left[ \sum_{e \subseteq \cu_{n}} V \left( p ( e) + \nabla \phi_{n,p}(e) \right) \right] \\ & \quad - 2 \E \left[ \sum_{e \subseteq \cu_{n}} V \left( p( e) + \frac{\nabla \phi_{n,p}(e) + \nabla \phi'(e)}{2} \right) \right].
\end{align*}
By definition of $\nu$, the following equality holds
\begin{equation*}
\E \left[ \sum_{e \subseteq \cu_{n}} V \left( p \cdot e + \nabla  \phi_{n,p}(e) \right) \right] + H\left(\P_{n,p} \right) = \nu(\cu_{n},p).
\end{equation*}
Using the variational formulation for $\nu$ given in Proposition~\ref{varfornuprop}, one derives the inequality
\begin{equation*}
 \nu(\cu_{n},p)  \leq \E \left[ \sum_{e \subseteq \cu_{n}} V \left( p ( e) + \frac{\nabla \phi_{n,p}(e) + \nabla \phi'(e)}{2} \right) \right] + H\left(\P_{\frac{\phi'+ \phi_{n,p}}{2}}  \right).
\end{equation*}
Combining the few previous displays provides the inequality
\begin{multline*}
\E \left[ \sum_{e \subseteq \cu_{n}} \left| \nabla \phi' (e) - \nabla \phi_{n,p} (e)\right|^2  \right] \\ \leq C \left( \frac{1}{|\cu_{n}|}\E \left[ \sum_{e \subseteq \cu_{n}} V_e \left( p ( e) +  \nabla \phi' (e) \right) \right] + \frac{1}{|\cu_{n}|} H \left( \P_{\phi'}  \right) - \nu (\cu_{n} , p) \right).
\end{multline*}
This is precisely~\eqref{2scnu.step2} and the proof of Step 2 is complete.

\medskip

\textit{Step 3.} The main goal of this step is to prove the following estimate
\begin{equation*}
 \frac{1}{|\cu_{n}|}\E \left[ \sum_{e \subseteq \cu_{n}} V_e \left( p (e) +  \nabla \phi' (e) \right) \right]  \leq \frac{1}{|\cu_{n}|}\E \left[ \sum_{e \subseteq \cu_{n}} V_e \left( p (e) +  \nabla \phi (e) \right) \right]  + C 3^{-\frac n2} (1+|p|).
\end{equation*}
To this end, we recall that the field $\phi'$ is defined from the field $\phi$ according to the formula
\begin{equation*}
\phi' := \phi + X,
\end{equation*}
and that the random variable $X$ is supported on the vertices of $\partial_{int} \cu_{m,n}$. We denote by $B_{m,n}^{X}$ the set of bonds of $\cu_{n}$ where the gradient of $X$ is supported, i.e.,
\begin{equation*}
B_{m,n}^{X} := \left\{ (x,y) \in \B_d (\cu_{n}) \,: \, x \in \partial_{int} \cu_{m,n}  ~ \mbox{or} ~ y \in \partial_{int} \cu_{m,n} \right\}.
\end{equation*}
One can estimate the cardinality of $B_{m,n}^{X}$ according to the formula
\begin{equation} \label{e.cardbn1}
\left| B_{m,n}^{X}  \right| \leq C 3^{dn - m}.
\end{equation}
We then split the sum and use that the gradient of $X$ is supported on the set $B_{m,n}^{X}$ to obtain
\begin{align} \label{splitstep22Sccomp}
 \sum_{e \subseteq \cu_{n}} V_e \left( p (e) +  \nabla \phi' (e) \right) &= \sum_{e \subseteq \cu_{n} \setminus B_{m,n}^{X}} V_e \left( p ( e) +  \nabla \phi' (e) \right) + \sum_{e \in B_{m,n}^{X}}  V_e \left( p( e) +  \nabla \phi' (e) \right) \\
														& =  \sum_{e \subseteq \cu_{n} \setminus B_{m,n}^{X}} V_e \left( p (e )+  \nabla \phi (e) \right) + \sum_{e \in B_{m,n}^{X}}  V_e \left( p ( e) +  \nabla \phi' (e) \right). \notag
\end{align}
The second term on the right-hand side can be estimated by using the uniform convexity of $V_e$ and a Taylor expansion,
\begin{align*}
\lefteqn{ \sum_{e \in B_{m,n}^{X}}  V_e \left( p ( e) +  \nabla \phi' (e) \right) } \qquad & \\ & = \sum_{e \in B_{m,n}^{X}}  V_e \left( p( e) +  \nabla \phi (e) + \nabla X(e) \right) \\
													& \leq   \sum_{e \in B_{m,n}^{X}}  V_e \left( p ( e) +  \nabla \phi (e)  \right) + V_e' \left( p ( e) +  \nabla \phi (e)  \right) \nabla X(e) + \frac1\lambda |\nabla X(e)|^2.
\end{align*}
By definition of $X$, its gradient is bounded by $1$, and by the assumption made on the elastic potential $V_e$, one has
\begin{equation*}
\forall x \in \R, \hspace{3mm} |V_e'(x)| \leq \frac1\lambda |x|.
\end{equation*}
Using these ideas, the previous estimate can be rewritten
\begin{equation*}
\sum_{e \in B_{m,n}^{X}}  V_e \left( p ( e) +  \nabla \phi' (e) \right)  \leq \sum_{e \in B_{m,n}^{X}}  \left( V_e \left( p ( e) +  \nabla \phi (e)  \right)  +  |\nabla \phi(e)| \right) + C\left| B_{m,n}^{X} \right| (1 + |p|).
\end{equation*}
Using the estimate on the cardinality of $B_{m,n}^{X}$ given in~\eqref{e.cardbn1} and the Cauchy-Schwarz inequality, one has
\begin{multline*}
\sum_{e \in B_{m,n}^{X}}  V_e \left( p ( e) +  \nabla \phi' (e) \right) \leq \sum_{e \in B_n^{X}} V_e \left( p (e) +  \nabla \phi (e) \right) \\ + 3^{\frac{dn-m}{2}} \left( \sum_{e \in B_{m,n}^{X}}  |\nabla \phi (e)|^2\right)^\frac12 + C  3^{dn - m}  (1 + |p|). 
\end{multline*}
Taking the expectation and dividing by the volume $|\cu_{n}|$ gives
\begin{multline*}
\E \left[ \frac1{|\cu_{n}|}\sum_{e \in B_{m,n}^{X}}  V_e \left( p (e) +  \nabla \phi' (e) \right) \right] \leq \E \left[  \frac1{|\cu_{n}|} \sum_{e \in B_{m,n}^{X}} V_e \left( p (e) +  \nabla \phi (e) \right) \right] \\+ 3^{-\frac{dn +m}{2}}  \E \left[ \left( \sum_{e \in B_{m,n}^{X}}  |\nabla \phi (e)|^2\right)^\frac12\right] + C  3^{-m}  (1 + |p|).
\end{multline*}
By the Cauchy-Schwarz inequality, we further obtain
\begin{multline*}
\E \left[\frac1{|\cu_{n}|} \sum_{e \in B_{m,n}^{X}}  V_e \left( p (e) +  \nabla \phi' (e) \right) \right] \leq \E \left[ \frac1{|\cu_{n}|} \sum_{e \in B_{m,n}^{X}} V_e \left( p( e) +  \nabla \phi (e) \right) \right] \\+  3^{-\frac{dn+m}{2}}   \E \left[\sum_{e \in B_{m,n}^{X}}  |\nabla \phi (e)|^2\right]^\frac 12 + C  3^{-m}  (1 + |p|).
\end{multline*}
Combining the previous display with~\eqref{splitstep22Sccomp} gives
\begin{multline*}
\E \left[ \frac1{|\cu_{n}|} \sum_{e \subseteq \cu_{n}} V_e \left( p ( e) +  \nabla \phi' (e) \right) \right] \leq \E \left[ \frac1{|\cu_{n}|} \sum_{e \subseteq \cu_{n}} V_e \left( p ( e) +  \nabla \phi (e) \right) \right] \\+  3^{-\frac{dn+m}{2}}   \E \left[\sum_{e \in B_{m,n}^{X}}  |\nabla \phi (e)|^2\right]^\frac 12 + C  3^{-m}  (1 + |p|).
\end{multline*}
The proof of the estimate~\eqref{2scnu.step3} is almost complete: we note that by the bound~\eqref{4.L2boundnu} stated in Proposition~\ref{propofnunustar} and by the definition of the random variable $\phi$, one has the energy estimate 
\begin{equation} \label{L2boundpatch}
\E \left[ \frac{1}{|\cu_{n}|}\sum_{e \subseteq \cu_{n}}  |\nabla \phi (e)|^2\right] \leq C (1 + |p|^2).
\end{equation}
This immediately implies the desired result since the following estimate holds
\begin{equation*}
 \E \left[\sum_{e \in B_{m,n}^{X}}  |\nabla \phi (e)|^2\right]^\frac 12 \leq  \E \left[\sum_{e \subseteq \cu_{n}}  |\nabla \phi (e)|^2\right]^\frac 12.
\end{equation*}

\medskip

\textit{Step 4.} With the same proof as in Step 2, we can decompose the following sum
\begin{multline*}
\frac{1}{|\cu_{n}|}\E \left[ \sum_{e \subseteq \cu_{n}} V_e \left( p (e) +  \nabla \phi (e) \right) \right] \\ = \frac{1}{|\cu_{n}|}\E \left[ \sum_{z \in \mathcal{Z}_{m,n}} \sum_{e \subseteq \left( z +\cu_m \right)} V_e \left( p( e) +  \nabla \phi_z (e) \right) + \sum_{e \in B_{m,n}} V_e \left( p ( e) \right) \right].
\end{multline*}
Using the estimate $V(x) \leq \frac1 \lambda |x|^2$ and the bound on the cardinality of the set $B_{m,n}$
\begin{equation*}
|B_{m,n}| \leq C 3^{dn-m},
\end{equation*}
 one obtains
\begin{multline*}
\frac{1}{|\cu_{n}|}\E \left[ \sum_{e \subseteq \cu_{n}} V_e \left( p( e) +  \nabla \phi (e) \right) \right] \\ \leq \sum_{z \in \mathcal{Z}_{m,n}} \E \left[ \frac{1}{|\cu_{n}|}  \sum_{e \subseteq \left( z +\cu_m \right)} V_e \left( p (e) +  \nabla \phi_z (e) \right) \right] + C 3^{-m} (1 + |p|^2).
\end{multline*}
Combining this inequality with the main result~\eqref{2scnu.step3} of Step 3, we obtain
\begin{multline*}
 \frac{1}{|\cu_{n}|}\E \left[ \sum_{e \subseteq \cu_{n}} V_e \left( p( e) +  \nabla \phi' (e) \right) \right]  \leq \frac{1}{|\cu_{n}|} \sum_{z \in \mathcal{Z}_{m,n}} \E \left[   \sum_{e \subseteq \left( z + \cu_m \right)} V_e \left( p (e) +  \nabla \phi_z (e) \right) \right] \\
																																		+  C 3^{-m} (1 + |p|^2) + C 3^{-\frac m2} (1+|p|).
\end{multline*}
The error term can be simplified according to the formula
\begin{multline*}
\frac{1}{|\cu_{n}|}\E \left[ \sum_{e \subseteq \cu_{n}} V_e \left( p (e) +  \nabla \phi' (e) \right) \right] \\ \leq \frac{1}{|\cu_{n}|} \sum_{z \in \mathcal{Z}_{m,n}} \E \left[   \sum_{e \subseteq \left( z + \cu_m \right)} V_e \left( p \cdot e +  \nabla \phi_z (e) \right) \right] +  C 3^{-\frac m2} (1 + |p|^2).
\end{multline*}
Adding the entropy of the law $\P_{\phi'}$ and using the equality proved in Step 1, we obtain
\begin{align*}
\lefteqn{\frac{1}{|\cu_{n}|}\E \left[ \sum_{e \subseteq \cu_{n}} V_e \left( p(e) +  \nabla \phi' (e) \right) \right] + \frac{1}{|\cu_{n}|} H \left( \P_{\phi'}\right)} \qquad & \\ &
																							\leq 3^{-d(n-m)}\sum_{z \in \mathcal{Z}_{m,n}} \left( \E \left[ \frac{1}{|\cu_{m}|}  \sum_{e \subseteq \left( z+ \cu_m \right)} V_e \left( p ( e) +  \nabla \phi_z (e) \right) \right] + \frac{1}{|\cu_{m}|} H \left( \P_{m,p} \right) \right) \\ & \qquad  +  C 3^{-\frac m2} (1 + |p|^2).
\end{align*}
Since the law of $\phi_z$ in the cube $(z + \cu_m)$ is the probability measure $\P_{m,p}$, we have, for each $z \in \mathcal{Z}_{m,n}$,
\begin{equation*}
\E \left[ \frac{1}{|\cu_{m}|}  \sum_{e \subseteq z + \cu_m} V_e \left( p ( e) +  \nabla \phi_z (e) \right) \right] + \frac{1}{|\cu_{m}|} H \left( \P_{m,p}\right) = \nu(\cu_m , p).
\end{equation*}
Combining the two previous displays shows
\begin{equation*}
\frac{1}{|\cu_{n}|}\E \left[ \sum_{e \subseteq \cu_{n}} V_e \left( p ( e) +  \nabla \phi' (e) \right) \right] + \frac{1}{|\cu_{n}|} H \left( \P_{\phi'} \right) \leq  \nu(\cu_m , p) +   C 3^{-\frac m2} (1 + |p|^2)
\end{equation*}
and the proof of Step 4 is complete.

\medskip

\textit{Step 5.} We proceed as in Step 3 and use that the gradient of $X$ is supported on the set $B_{m,n}^{X}$ to decompose the sum
\begin{multline} \label{firstpartstep5.est}
 \sum_{e \subseteq \cu_{n}} \left| \nabla \phi' (e) - \nabla \phi_{n,p} (e)\right|^2 = \sum_{e \in B_{m,n}^{X}} \left| \nabla \phi (e) + \nabla X (e) - \nabla \phi_{n,p} (e)\right|^2 \\ +  \sum_{e \subseteq \cu_{n} \setminus B_{m,n}^{X}} \left| \nabla \phi (e) - \nabla \phi_{n,p} (e)\right|^2.
\end{multline}
We then expand the first term on the right-hand side
\begin{multline*}
\sum_{e \in B_{m,n}^{X}} \left| \nabla \phi (e) + \nabla X (e) - \nabla  \phi_{n,p} (e)\right|^2  \\= \sum_{e \in B_{m,n}^{X}} \left| \nabla  \phi (e) - \nabla  \phi_{n,p} (e)\right|^2 + 2 \sum_{e \in B_{m,n}^{X}} \left( \nabla  \phi (e) - \nabla  \phi_{n,p} (e)\right) \nabla X (e)  + \sum_{e \in B_{m,n}^{X}} \left|  \nabla X (e) \right|^2.
\end{multline*}
Using that the gradient of $X$ is bounded by $1$, the Cauchy-Schwarz inequality and the upper bound on the cardinality of $B_{m,n}^{X}$, one further obtains
\begin{multline*}
\sum_{e \in B_{m,n}^{X}} \left| \nabla  \phi (e) + \nabla X (e) - \nabla  \phi_{n,p} (e)\right|^2 \\ \geq  \sum_{e \in B_{m,n}^{X}} \left| \nabla  \phi (e) - \nabla  \phi_{n,p} (e)\right|^2  - C 3^{\frac{dn-m}{2}}\left( \sum_{e \in B_{m,n}^{X}} \left| \nabla  \phi (e) - \nabla  \phi_{n,p} (e)\right|^2\right)^\frac 12 - C 3^{dn-m}.
\end{multline*}
Dividing by $|\cu_{n}|$ on both sides of the previous inequality and taking the expectation gives
\begin{multline*}
\E \left[ \frac{1}{|\cu_{n}|}\sum_{e \in B_{m,n}^{X}} \left| \nabla  \phi (e) + \nabla X (e) - \nabla  \phi_{n,p} (e)\right|^2 \right]  \geq  \E \left[ \frac{1}{|\cu_{n}|} \sum_{e \in B_{m,n}^{X}} \left| \nabla  \phi (e) - \nabla  \phi_{n,p} (e)\right|^2 \right] \\  - C 3^{-\frac{dn+m}{2}} \E \left[ \left( \sum_{e \in B_{m,n}^{X}} \left| \nabla  \phi (e) - \nabla  \phi_{n,p} (e)\right|^2 \right)^\frac12 \right] - C 3^{-m}.
\end{multline*}
By the identity~\eqref{firstpartstep5.est} and the Cauchy-Schwarz inequality, one obtains
\begin{multline*}
\E \left[ \frac{1}{|\cu_{n}|}\sum_{e \subseteq \cu_{n}} \left| \nabla \phi' (e) - \nabla  \phi_{n,p} (e)\right|^2 \right] \geq  \E \left[ \frac{1}{|\cu_{n}|} \sum_{e \subseteq \cu_{n}} \left| \nabla  \phi (e) - \nabla  \phi_{n,p} (e)\right|^2 \right]  \\ - C 3^{-\frac{dn+m}{2}} \E \left[ \sum_{e \in B_{m,n}^{X}} \left| \nabla  \phi (e) - \nabla \phi_{n,p} (e)\right|^2 \right]^\frac 12- C 3^{-m}.
\end{multline*}
There remains to estimate the second term on the right-hand side of the previous equation. To do so, we recall the energy estimate already introduced in Step 4,
\begin{equation*} 
\E \left[ \frac{1}{|\cu_{n}|} \sum_{e \subseteq \cu_{n}}  |\nabla \phi (e)|^2 \right] \leq C (1 + |p|^2).
\end{equation*}
using this estimate and the inequality~\eqref{4.L2boundnu} of Proposition~\ref{propofnunustar}, one obtains
\begin{equation*}
\E \left[ \frac{1}{|\cu_{n}|} \sum_{e \subseteq \cu_{n}}  |\nabla \phi_{n,p} (e)|^2 \right] \leq C (1 + |p|^2).
\end{equation*}
Combining the three previous displays shows
\begin{equation*}
\E \left[ \frac{1}{|\cu_{n}|}\sum_{e \subseteq \cu_{n}} \left| \nabla \phi' (e) - \nabla \phi_{n,p} (e)\right|^2 \right] \geq  \E \left[ \frac{1}{|\cu_{n}|} \sum_{e \subseteq \cu_n} \left| \nabla \phi (e) - \nabla \phi_{n,p} (e)\right|^2 \right] - C 3^{-\frac{m}{2}} (1 + |p|),
\end{equation*}
and the proof of Step 5 is complete.

\medskip

\textit{Step 6.} The conclusion. First, combining the main results of Steps 2 and 4 gives,
\begin{equation*} 
\E \left[ \sum_{e \subseteq \cu_{n}} \left| \nabla \phi' (e) - \nabla \phi_{n,p} (e)\right|^2  \right] \leq C \left( \nu(\cu_m,p) - \nu (\cu_{n} , p) \right) + C 3^{- \frac m2} (1 + |p|^2).
\end{equation*}
Then by the main result of Step 5, we obtain
\begin{equation*}
\E \left[ \sum_{e \subseteq \cu_{n}} \left| \nabla \phi (e) - \nabla \phi_{n,p} (e)\right|^2  \right] \leq C \left( \nu(\cu_m,p) - \nu (\cu_{n} , p) \right) + C 3^{- \frac m2} (1 + |p|^2).
\end{equation*}
This is the estimate~\eqref{e.2sccomparisonnu} and the proof of Proposition~\ref{2sccomparisonnu} is complete.
\end{proof}

We now want to prove a version of the two-scale comparison for $\nu^*$. Similarly to what was done in Proposition~\ref{2sccomparisonnu}, we fix two integers $m,n \in \N$ such that $m<n$ and define a family of random variables $\psi_z$, for $z \in \mathcal{Z}_{m,n}$, such that
\begin{itemize}
\item for each $z \in \mathcal{Z}_{m,n}$, $\psi_z$ takes values in $\mathring h^1 (z + \cu_{n})$, is equal to $0$ in $\cu_{n} \setminus \left( z + \cu_m\right)$ and the law of $\psi_z \left( \cdot - z \right)$ is the probability measure $\P_{m,q}^*$; \smallskip
\item the random variables $\left( \psi_z\right)_{z \in \mathcal{Z}_{m,n}}$ are independent.
\end{itemize}

We also denote by $\psi' := \sum_{z \in \mathcal{Z}_{m,n}} \psi_z$ and by $\P_{\psi'}$ its law, it is a probability measure on $\mathring h^1 (\cu_{n})$.

\begin{proposition}[Two-scale comparison for $\nu^*$] \label{2sccomparison}
Given a pair of integers $n,m \in \N$ satisfying $m < n$ and $q \in \Rd$, we consider the random variables $\psi_z$ and $\psi'$ defined in the previous paragraph. Then there exist a coupling between $\psi'$ and $\psi_{n,q}$ and a constant $C := C(d , \lambda) < \infty$ such that,
\begin{multline} \label{e.2sccomparison}
\E \left[ \frac1{|\cu_{n}|}
 \sum_{z \in \mathcal{Z}_{m,n}} \sum_{e \subseteq \left( z + \cu_{m} \right)} \left| \nabla \psi_z (e) - \nabla \psi_{n,q} (e)\right|^2  \right] \\ \leq C \left( \nu^* (\cu_m , q) - \nu^* (\cu_{n} , q) \right) + C(1 + |q|^2) 3^{-m}.
\end{multline}
\end{proposition}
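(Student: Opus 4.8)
The plan is to adapt the second--variation argument behind Proposition~\ref{2sccomparisonnu}, but to run it in the \emph{reverse} direction. Gluing copies of $\psi_{m,q}$ into a test field on $\cu_n$ (as is done for $\nu$) fails here, because fields in $\mathring h^1$ do not vanish on the boundaries of the sub--cubes, so the bonds joining two sub--cubes would carry an energy of order $3^{2m}$ per unit volume. Instead I would restrict the \emph{large}--scale minimizer $\psi_{n,q}$ to each sub--cube and compare it, sub--cube by sub--cube, with $\psi_{m,q}$ through the uniform convexity of the \emph{small}--scale functional $\mathcal F^*_{m,q}$.

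Concretely, let $B_{m,n}$ be the set of bonds of $\cu_n$ joining two distinct sub--cubes $(z+\cu_m)$, so $|B_{m,n}|\le C3^{dn-m}$ and $\B_d(\cu_n)$ is the disjoint union of $B_{m,n}$ and the $\B_d(z+\cu_m)$, $z\in\mathcal{Z}_{m,n}$. One has the orthogonal decomposition $\mathring h^1(\cu_n)=\bigl(\bigoplus_{z\in\mathcal{Z}_{m,n}}\mathring h^1(z+\cu_m)\bigr)\overset{\perp}{\oplus}W$, where $W$ is the $\bigl(3^{(n-m)d}-1\bigr)$--dimensional space of mean--zero functions that are constant on each sub--cube. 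Decompose $\psi_{n,q}$ accordingly: for $z\in\mathcal{Z}_{m,n}$ let $\tilde\psi_z\in\mathring h^1(z+\cu_m)$ be the restriction of $\psi_{n,q}$ to $z+\cu_m$ recentred to mean zero, and let $\mu\in W$ collect the sub--cube averages, so that $\nabla\tilde\psi_z=\nabla\psi_{n,q}$ on $\B_d(z+\cu_m)$ and $\P^*_{n,q}$ is the joint law of $\bigl((\tilde\psi_z)_z,\mu\bigr)$ under the induced isometry. Since the Hamiltonian $\sum_e(V_e(\nabla\psi(e))-q\cdot\nabla\psi(e))$ is convex, $\P^*_{n,q}$ is log--concave, hence so are its marginals $\P_{\tilde\psi_z}$ and $\P_\mu$; in particular they are absolutely continuous, with finite entropy and finite second moment.

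Fixing $z$ and taking the optimal coupling of $\psi_z$ with the translate to $\cu_m$ of $\tilde\psi_z$, the displacement convexity of the entropy (Proposition~\ref{p.disconv}) and the uniform convexity of $V_e$ make $\mathcal F^*_{m,q}$ uniformly convex along the displacement interpolation; since $\P^*_{m,q}$ minimizes it (Proposition~\ref{varfornuprop}), the usual second--variation inequality gives, under that coupling and for a constant $c:=c(d,\lambda)>0$,
\[
c\,\E\Bigl[\sum_{e\subseteq z+\cu_m}|\nabla\psi_z(e)-\nabla\psi_{n,q}(e)|^2\Bigr]\le \E\Bigl[\sum_{e\subseteq z+\cu_m}\bigl(V_e(\nabla\psi_{n,q}(e))-q\cdot\nabla\psi_{n,q}(e)\bigr)\Bigr]+H(\P_{\tilde\psi_z})+|\cu_m|\,\nu^*(\cu_m,q).
\]
These couplings, all compatible through $\psi_{n,q}$, are merged into one via Proposition~\ref{couplinglemma}. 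Summing over $z\in\mathcal{Z}_{m,n}$, dividing by $|\cu_n|=3^{(n-m)d}|\cu_m|$, splitting $\sum_z\sum_{e\subseteq z+\cu_m}=\sum_{e\subseteq\cu_n}-\sum_{e\in B_{m,n}}$, and using both $\E[\sum_{e\subseteq\cu_n}(V_e(\nabla\psi_{n,q})-q\cdot\nabla\psi_{n,q})]+H(\P^*_{n,q})=-|\cu_n|\nu^*(\cu_n,q)$ and the pointwise bound $V_e(x)-q\cdot x\ge\lambda|x|^2-|q||x|\ge-|q|^2/(4\lambda)$ on the interface bonds, one gets
\[
c\,\E\Bigl[\tfrac1{|\cu_n|}\sum_z\sum_{e\subseteq z+\cu_m}|\nabla\psi_z(e)-\nabla\psi_{n,q}(e)|^2\Bigr]\le \bigl(\nu^*(\cu_m,q)-\nu^*(\cu_n,q)\bigr)+C(1+|q|^2)3^{-m}+\tfrac1{|\cu_n|}\Bigl(\sum_z H(\P_{\tilde\psi_z})-H(\P^*_{n,q})\Bigr).
\]

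It remains to control the entropy defect, which is the main obstacle. By the super--additivity of the entropy on the orthogonal subspaces above (Proposition~\ref{entropycouplesum2}, iterated), $H(\P^*_{n,q})\ge\sum_z H(\P_{\tilde\psi_z})+H(\P_\mu)$, so it suffices to bound $-H(\P_\mu)$ from above by $C\,3^{dn-m}(1+|q|^2)$. Now $\P_\mu$ is log--concave on $W$ and, by the Poincar\'e inequality~\eqref{e.poincWineq} and the gradient bound~\eqref{4.L2boundnustar}, $\E[\|\mu\|_{L^2(\cu_n)}^2]\le\E[\sum_{x\in\cu_n}|\psi_{n,q}(x)|^2]\le C3^{(d+2)n}(1+|q|^2)$; the Gaussian maximum--entropy inequality at fixed second moment then yields $-H(\P_\mu)\le\tfrac12(\dim W)\log\bigl(C3^{2n+dm}(1+|q|^2)\bigr)$, which is $\le C3^{dn-m}(1+|q|^2)$ once the logarithmic factor is absorbed into $3^{dn-m}$ (using, for $n-m$ large, the sharper bound $\E[|\psi_{n,q}(x)|^2]\le C(1+n)(1+|q|^2)$ from the Brascamp--Lieb comparison with the Gaussian free field). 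Keeping the coarse--grained marginal $\P_\mu$ of the log--concave measure $\P^*_{n,q}$ from being too concentrated is precisely the delicate point. Inserting this bound into the last display gives~\eqref{e.2sccomparison}, and in passing shows $\nu^*(\cu_m,q)-\nu^*(\cu_n,q)\ge -C(1+|q|^2)3^{-m}$, i.e.\ the subadditivity estimate~\eqref{4.subaddnustar}.
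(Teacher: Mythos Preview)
Your overall architecture --- the orthogonal decomposition $\mathring h^1(\cu_n)=\bigl(\bigoplus_z\mathring h^1(z+\cu_m)\bigr)\overset{\perp}{\oplus}W$, the projection of $\psi_{n,q}$ onto each sub--cube, and the second--variation argument via displacement convexity applied to the minimizer $\psi'$ of the restricted functional --- matches the paper's. The divergence, and the gap, is in how you handle the entropy defect.

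The Gaussian maximum--entropy inequality bounds $-H(\P_\mu)$ in terms of $\log\det\operatorname{Cov}(\mu)$, not $\E[\|\mu\|^2]$; a Dirac mass away from the origin has finite second moment and $-H=+\infty$. And $\E[|\psi_{n,q}(x)|^2]$ is of order $3^{2n}(1+|q|^2)$, not $O(1+n)$: the field $\psi_{n,q}$ is close to the affine function $x\mapsto\nabla_q\nu^*(\cu_n,q)\cdot x$, so $\E[\psi_{n,q}(x)]$ itself is of order $3^n(1+|q|)$. Brascamp--Lieb only controls $\var[\psi_{n,q}(x)]$, not the second moment. If you repair this and feed $\operatorname{tr}\operatorname{Cov}(\mu)$ into the maximum--entropy bound, you obtain $-H(\P_\mu)\le C(\dim W)\bigl(m+\log(1+n)\bigr)$ in $d=2$ (and $Cm\dim W$ in $d\ge3$), which after dividing by $|\cu_n|$ gives $C3^{-dm}(m+\log n)$: this still does not close to $C3^{-m}$ uniformly in $n$ when $d=2$, and in any case imports a tool (Brascamp--Lieb) that the paper is explicitly trying to do without.

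The paper avoids this entirely by \emph{not} discarding the interface energy $\sum_{e\in B_{m,n}}V_e(\nabla\psi_{n,q}(e))$. The point is that this energy is exactly what confines the $W$--component and controls its entropy. Writing $\rho'$ for the density of the marginal of $\P^*_{n,q}$ on $\bigoplus_z\mathring h^1(z+\cu_m)$, one has
\[
\rho'(\psi)=\frac{\exp\bigl(-\sum_z\sum_{e\subseteq z+\cu_m}V_e(\nabla\psi(e))\bigr)}{Z^*_q(\cu_n)}\int_H\exp\Bigl(-\sum_{e\in B_{m,n}}V_e(\nabla\psi(e)+\nabla h(e))\Bigr)\,dh,
\]
and the paper shows (Proposition~\ref{5dest.}, the only nontrivial technical step) that the last integral is bounded above by $\exp(Cm3^{d(n-m)})$ uniformly in $\psi$, by reducing it to a Gaussian integral on the $(3^{d(n-m)}-1)$--dimensional space $H$. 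Taking logarithms and integrating against $\rho'$ yields directly
\[
\E\Bigl[\sum_{e\in B_{m,n}}V_e(\nabla\psi_{n,q}(e))\Bigr]+H(\P^*_{n,q})\ \ge\ H(\P_{\psi'_{n,q}})-Cm\,3^{d(n-m)},
\]
which, combined with $H(\P_{\psi'_{n,q}})\ge\sum_z H(\P_{\tilde\psi_z})$, is precisely the entropy control you need, with an error $Cm3^{-dm}\le C3^{-m}$ that is independent of $n$. In short: the interface energy is not a nuisance to be bounded below by $-|q|^2/(4\lambda)$, it is the very quantity that pays for the entropy of the coarse field.
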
 

\begin{proof}
The first idea of the proof is to consider the following orthogonal decomposition of the space $\mathring h^1 (\cu_n)$:
\begin{equation} \label{orthdecomp2sc}
\mathring h^1 (\cu_{n}) = \underset{z \in \mathcal{Z}_{m,n}}{\oplus} \mathring h^1 \left(z + \cu_{n} \right)  \overset{\perp}{\oplus} H,
\end{equation}
where we denote by
\begin{equation*}
\mathring h^1 \left(z + \cu_{m} \right) := \left\{ \psi \in \mathring h^1 (\cu_{n}) ~:~ \psi_{| \cu_{n} \setminus \left( z + \cu_m \right)} = 0 \right\},
\end{equation*}
with a slight abuse of notation: we extend the functions of $\mathring h^1 \left(z + \cu_{m} \right)$ by $0$ outside the cube~$(z + \cu_m).$

The remaining space $H$ is the space of functions of $\mathring h^1 (\cu_{n})$ which are constant on the subcubes~$\left( z + \cu_{n} \right)_{z \in \mathcal{Z}_{m,n}}$. It is a space of dimension $3^{d(n-m)} -1$ and each function $h \in H$ can be written in the following form
\begin{equation*}
h = \sum_{z \in \mathcal{Z}_{m,n}} \lambda_z \indc_{z  + \cu_{m}},
\end{equation*}
for some real constants $\left( \lambda_z \right)_{z \in \mathcal{Z}_{m,n}}$ satisfying $\sum_{z \in \mathcal{Z}_{m,n}} \lambda_z = 0$. 

\smallskip

For $z \in \mathcal{Z}_{m,n}$, we denote by $\psi_{n,q}^z$ the orthogonal projection of the random variable $\psi_{n,q}$ on the space $ \mathring h^1 \left(z + \cu_{m}\right)$. This defines a random variable taking values in $\mathring h^1 \left(z + \cu_{m}\right)$.

\smallskip

We also let $h$ the orthogonal projection of the random variable $\psi_{n,q}$ on the space $H$. Finally, we introduce the notation 
\begin{equation*}
\psi'_{n,q} : = \sum_{z \in \mathcal{Z}_{m,n}}\psi^z_{n,q}.
\end{equation*} 
This is a random variable taking values in $\underset{z \in \mathcal{Z}_{m,n}}{\oplus} \mathring h^1 \left(z + \cu_{m}\right) \subseteq \mathring h^1 \left(\cu_{n}\right)$. Its law is a probability measure defined on the vector space $\underset{z \in \mathcal{Z}_{m,n}}{\oplus} \mathring h^1 \left(z + \cu_{m}\right)$ and is denoted by $\P_{\psi'_{n,q}}$.

\smallskip

As in the proof of the previous proposition, we introduce $B_{m,n}$ the set of edges connecting two subcubes of the form $(z + \cu_m)$, i.e.,
\begin{equation} \label{def.bn}
B_{m,n} := \left\{ (x,y) \, : \, \exists z,z' \in \mathcal{Z}_{m,n}, \, z \neq z' \, \mbox{such that} \, x \in z + \cu_m \, \mbox{and} \, y \in z ' + \cu_m \right\},
\end{equation}
so as to have the decomposition of the sum
\begin{equation} \label{sum.decomp}
\sum_{e \subseteq \cu_{n}} = \sum_{z \in \mathcal{Z}_{m,n}} \sum_{e \subseteq z + \cu_m} + \sum_{e \in B_{m,n}}.
\end{equation}
We note that for every $h \in H$, the gradient $\nabla h$ is supported on the edges of $B_{m,n}$ and
\begin{equation} \label{propmenintro}
\mbox{for each} \,z , z' \in  \mathcal{Z}_{m,n} \, \mbox{with} \, z \neq z' \, \mbox{and for each} \, e \subseteq \left( z' + \cu_m \right), \nabla \psi_{n,q}^z(e) = 0. 
\end{equation} 
This implies, for each $z \in \mathcal{Z}_{m,n}$ and each $e \subseteq (z + \cu_m)$,
\begin{equation*}
\nabla \psi'_{n,q} (e) = \nabla \psi^z_{n,q}  (e).
\end{equation*}
The same result is valid for the field $\psi'$:  for each $z \in \mathcal{Z}_{m,n}$ and each $e \subseteq (z + \cu_m)$,
\begin{equation*}
\nabla \psi' (e) = \nabla \psi_z  (e).
\end{equation*}

\medskip

We now split the proof into 5 Steps. In Steps 1 to 4, we assume $q = 0$. We then remove this additional assumption in Step 5.
\begin{itemize}
\item In Step 1, we show that the law of the random variable $\psi'$ is the minimizer of the variational problem
\begin{equation*}
\inf_{\P} ~ \E \left[ \sum_{z \in \mathcal{Z}_{m,n}} \sum_{e \subseteq \left( z + \cu_m \right)} V_e(\nabla \psi(e)) \right] + H \left( \P \right) ,
\end{equation*}
where the infimum is chosen over all the probability measures on $\underset{z \in \mathcal{Z}_{m,n}}{\oplus} \mathring h^1 \left( z + \cu_{m}\right)$ and the entropy is computed with respect to the Lebesgue measure on this space. \smallskip

\item In Step 2, we consider the optimal coupling between the random variables $\psi'$ and $\psi'_{n,0}$ and prove the following inequality,
\begin{multline*}
\E \left[ \frac1{|\cu_{n}|} \sum_{z \in \mathcal{Z}_{m,n}} \sum_{e \subseteq \left( z + \cu_m \right)} \left| \nabla \psi_z(e) - \nabla \psi_{n,0}^z(e) \right|^2  \right] \\ \leq C \left( \nu^*(\cu_n , 0)  + \E \left[   \frac1{|\cu_{n}|} \sum_{z \in \mathcal{Z}_{m,n}} \sum_{e \subseteq \left( z + \cu_n \right)} V_e \left(\nabla  \psi_{n,0}^z(e)\right) \right]
					+ \frac1{|\cu_{n}|} H \left( \P_{\psi'_{n,0}} \right)  \right).
\end{multline*}
\item In Step 3, we prove the following estimate pertaining to the random variables $\psi_{n,0}^z$,
\begin{equation} \label{2sccomp.step3}
 \sum_{z \in \mathcal{Z}_{m,n}}  \E \left[   \frac1{|\cu_{n}|} \sum_{e \subseteq \left( z + \cu_m \right)} V\left(\nabla  \psi_{n,0}^z(e)\right) \right] + \frac1{|\cu_{n}|} H \left( \P_{\psi_{n,0}^z} \right) \leq - \nu^*(\cu_{m}, 0) +Cm3^{-dm}.
\end{equation}
\item In Step 4, we complete the proof and show that there exists a coupling between the random variables $\psi_{n,0}$ and $\psi'$ such that
\begin{multline*}
\E \left[ \frac1{|\cu_{n}|} \sum_{z \in \mathcal{Z}_{m,n}} \sum_{e \subseteq \left( z + \cu_m \right)} \left| \nabla \psi' (e)- \nabla \psi_{n,0}(e) \right|^2  \right] \\ \leq C (\nu^*(\cu_m , 0) - \nu^*(\cu_{n}, 0) ) +Cm3^{-dm}.
\end{multline*}
\item In Step 5, we remove the assumption $q = 0$ and prove the more general result: for each $q \in \Rd$, there exists a coupling between the random variables $\psi'$ and $\psi_{n,q}$ such that
\begin{multline*}
\E \left[ \frac1{|\cu_{n}|} \sum_{z \in \mathcal{Z}_{m,n}} \sum_{e \subseteq \left( z +\cu_m \right)} \left| \nabla \psi_z (e)- \nabla \psi_{n,q} (e) \right|^2  \right] \\ \leq C (\nu^*(\cu_m , q) - \nu^*(\cu_{n}, q) ) + C (1 + |q|^2) 3^{-m}.
\end{multline*}
\end{itemize}

\medskip

\textit{Step 1.} By Proposition~\ref{varfornuprop}, one has
\begin{multline} \label{e.step12sccomp1}
\inf_{\P} \left( \E \left[ \sum_{z \in \mathcal{Z}_{m,n}} \sum_{e \subseteq \left( z + \cu_m \right) } V_e(\nabla \psi(e)) \right] + H \left( \P \right) \right) \\ = - \log \int_{\underset{z \in \mathcal{Z}_{m,n}}{\oplus} \mathring h^1 \left( z + \cu_{m}\right)}  \exp \left( - \sum_{z \in \mathcal{Z}_{m,n}} \sum_{e \subseteq \left( z + \cu_n\right) } V_e(\nabla \psi(e))  \right) \, d\psi,
\end{multline}
where the infimum is considered over all the probability measures on $\underset{z \in \mathcal{Z}_{m,n}}{\oplus} \mathring h^1 \left( z + \cu_{m}\right)$.

But on the one hand, one has the equality
\begin{multline} \label{e.step12sccomp2}
\int_{\underset{z \in \mathcal{Z}_{m,n}}{\oplus} \mathring h^1 \left( z + \cu_{m}\right)}  \exp \left( - \sum_{z \in \mathcal{Z}_{m,n}} \sum_{e \subseteq \left( z + \cu_m\right)} V_e(\nabla \psi(e))  \right) \, d\psi \\ = \left( \int_{ \mathring h^1 \left(\cu_m\right)}  \exp \left( -  \sum_{e \subseteq \cu_m} V_e(\nabla \psi(e))  \right) \, d\psi  \right)^{3^{d(n-m)}}.
\end{multline}
On the other hand, since by assumption the random variables $\left( \psi_z \right)_{z \in \mathcal{Z}_{m,n}}$ are independent, one has
\begin{equation*}
H \left( \P_{\psi'}\right) = \sum_{z \in \mathcal{Z}_{m,n}} H \left( \P_{\psi_z} \right) = 3^{d(n-m)} H \left( \P_{m , 0}^* \right).
\end{equation*}
As a remark, we note that the entropy of $\P_{\psi'} $ is computed with respect to the Lebesgue measure on $\oplus_{z \in \mathcal{Z}_{m,n}} \mathring h^1 \left(z + \cu_{m}\right)$, while the entropies of the laws $ \P_{\psi_z}$ and $\P_{m , 0}^*$ are computed with respect to the Lebesgue measures on $\mathring h^1 \left( z+ \cu_{m}\right)$ and on $\mathring h^1 \left(\cu_m\right)$ respectively.
The energy part of the random variable $\psi'$ can be computed explicitly and one derives
\begin{align*}
\E \left[ \sum_{z \in \mathcal{Z}_{m,n}} \sum_{e \subseteq \left( z + \cu_m\right)} V_e(\nabla \psi'(e)) \right] & = \sum_{z \in \mathcal{Z}_{m,n}}  \E \left[ \sum_{e \subseteq \left( z + \cu_m\right)} V_e(\nabla \psi_z(e)) \right] \\ & = 3^{d(n-m)} \E \left[ \sum_{e \subseteq \cu_n} V_e(\nabla \psi_{n,0}(e)) \right].
\end{align*}
Consequently
\begin{multline*}
\E \left[ \sum_{z \in \mathcal{Z}_{m,n}} \sum_{e \subseteq \left( z + \cu_m\right)} V_e(\nabla \psi'(e)) \right] + H \left( \P_{\psi'}  \right) \\ =  3^{d(n-m)} \left(  \E \left[ \sum_{e \subseteq \cu_m} V_e(\nabla \psi_{m,0}(e)) \right] + H \left( \P_{m , 0}^* \right) \right).
\end{multline*}
By definition of law $\P_{m,0}^*$, the term on the right-hand side can be explicitly computed, and one obtains
 \begin{multline*}
\E \left[ \sum_{z \in \mathcal{Z}_{m,n}} \sum_{e \subseteq \left( z + \cu_m\right)} V_e(\nabla \psi'(e)) \right] + H \left( \P_{\psi'}  \right)  \\ = -3^{d(n-m)} \log \left( \int_{ \mathring h^1 \left(\cu_m\right)}  \exp \left( -  \sum_{e \subseteq \cu_m} V_e(\nabla \psi(e))  \right) \, d\psi \right)  .
\end{multline*}
Combining the previous display with~\eqref{e.step12sccomp1} and ~\eqref{e.step12sccomp2}, we obtain
\begin{multline*}
\E \left[ \sum_{z \in \mathcal{Z}_{m,n}} \sum_{e \subseteq \left( z + \cu_m\right)} V_e(\nabla \psi'(e)) \right] + H \left( \P_{\psi'}  \right) \\= \inf_{\P} \left( \E \left[ \sum_{z \in \mathcal{Z}_{m,n}} \sum_{e \subseteq \left( z + \cu_m \right) } V_e(\nabla \psi(e)) \right] + H \left( \P \right) \right).
\end{multline*}
The proof of Step 1 is complete.

\medskip

\textit{Step 2.} The idea of this step is similar to the strategy adopted in Step 2 of Proposition~\ref{2sccomparisonnu}: one uses the uniform convexity of the elastic potential $V_e$ and the displacement convexity of the entropy to prove a uniform convexity result for the functional $\mathcal{F}_{n,0}^*$ defined in Definition~\ref{chapitre444.energyentrpyfunctional}.

First, we consider the optimal coupling between the random variables $\psi'$ and $\psi'_{n,0}$. In particular, by the displacement convexity of the entropy, the law of the random variable~$\frac{\psi' + \psi'_{n,0}}{2}$ satisfies the inequality
\begin{equation} \label{e.step22sccomp1}
H \left( \P_{\frac{\psi' + \psi'_{n,0}}{2}} \right) \leq \frac{H \left( \P_{\psi'}  \right) + H \left( \P_{\psi'_{n,0}} \right) }{2}.
\end{equation}
Moreover, by the uniform convexity of $V_e$, one has
\begin{align}\label{e.step22sccomp2}
2\lefteqn{\E \left[ \sum_{z \in \mathcal{Z}_{m,n}} \sum_{e \subseteq \left( z +\cu_m \right)} V_e\left( \frac{\nabla \psi'(e)+ \nabla \psi'_{n,0}(e)}{2} \right) \right] } \qquad & \\ & \leq \E \left[ \sum_{z \in \mathcal{Z}_{m,n}} \sum_{e \subseteq \left( z +\cu_m \right)} V_e\left( \nabla \psi'_{n,0}(e)\right) \right]
														+ \E \left[ \sum_{z \in \mathcal{Z}_{m,n}} \sum_{e \subseteq \left( z +\cu_m \right)} V_e\left( \nabla  \psi' (e)\right) \right]\notag \\
														&  \qquad- \lambda \E \left[ \sum_{z \in \mathcal{Z}_{m,n}} \sum_{e \subseteq \left( z +\cu_m \right)} \left| \nabla \psi'(e)- \nabla \psi'_{n,0} (e)\right|^2 \right]. \notag
\end{align}
We can then use Step 1 to compute
\begin{align*}
\lefteqn{\E \left[ \sum_{z \in \mathcal{Z}_{m,n}} \sum_{e \subseteq \left( z +\cu_m \right)}   V_e(\nabla \psi'(e)) \right]  + H \left( \P_{\psi'} \right)} \qquad & \\  & =  \inf_{\P} \E \left[\sum_{z \in \mathcal{Z}_{m,n}} \sum_{e \subseteq \left( z +\cu_m \right)}  V_e(\nabla \psi(e)) \right] + H \left( \P \right)  \\
																							& \leq \E \left[ \sum_{z \in \mathcal{Z}_{m,n}} \sum_{e \subseteq \left( z +\cu_m \right)}  V_e\left( \frac{\nabla \psi'(e)+ \nabla \psi'_{n,0}(e)}{2} \right) \right]+ H \left( \P_{\frac{\nabla \psi'(e)+ \nabla \psi'_{n,0}(e)}{2}} \right).
\end{align*}
One can then apply~\eqref{e.step22sccomp1} and~\eqref{e.step22sccomp2} to deduce
\begin{align*}
\lefteqn{\E \left[\sum_{z \in \mathcal{Z}_{m,n}} \sum_{e \subseteq \left( z +\cu_m \right)} V_e(\nabla \psi'(e)) \right]  + H \left( \P_{\psi'} \right) } \qquad & \\ &  \leq \frac 12 \left( \E \left[ \sum_{z \in \mathcal{Z}_{m,n}} \sum_{e \subseteq \left( z +\cu_m \right)} V_e(\nabla \psi'(e)) \right]  + H \left( \P_{\psi'}  \right) \right) \\  & + \frac 12 \left(  \E \left[ \sum_{z \in \mathcal{Z}_{m,n}} \sum_{e \subseteq \left( z +\cu_m \right)}  V_e\left( \nabla  \psi'_{n,0} (e)\right) \right]  + H \left( \P_{\psi'_{n,0}}  \right)  \right) \\ &- \frac{\lambda}{2} \E \left[ \sum_{z \in \mathcal{Z}_{m,n}} \sum_{e \subseteq \left( z +\cu_m \right)} \left| \nabla \psi'(e)- \nabla \psi'_{n,0}(e) \right|^2 \right].
\end{align*}
From Step 1, one also has
\begin{align*}
\frac1{|\cu_{n}|} \left( \E \left[ \sum_{z \in \mathcal{Z}_{m,n}} \sum_{e \subseteq \left( z +\cu_m \right)}  V_e(\nabla \psi'(e)) \right]  + H \left( \P_{\psi'}\right) \right) & = - \frac{3^{d(n-m)} |\cu_m|}{|\cu_{n}|}   \nu^* \left( \cu_{m},0 \right)  \\ & = - \nu^* \left( \cu_{m},0 \right).
\end{align*}
Combining the two previous displays and dividing by $\left| \cu_{n}\right|$ gives
\begin{multline*}
\frac{\lambda}{2} \E \left[ \frac1{|\cu_{n}|} \sum_{z \in \mathcal{Z}_{m,n}} \sum_{e \subseteq \left( z +\cu_m \right)} \left| \nabla \psi'(e)- \nabla \psi'_{n,0} (e)\right|^2 \right] \\ 
		\leq \frac12  \nu^* \left( \cu_{m},0 \right) + \frac 12 \left(  \E \left[\frac1{|\cu_{n}|}\sum_{z \in \mathcal{Z}_{m,n}} \sum_{e \subseteq \left( z +\cu_m \right)} V_e\left( \nabla  \psi'_{n,0}(e) \right) \right]  + \frac1{|\cu_{n}|} H \left( \P_{\psi'_{n,0}} \right)  \right).
\end{multline*}
This completes the proof of Step 2.

\medskip

\textit{Step 3.}  Before starting this step, we recall the notation $B_{m,n}$ for the bonds connecting two subcubes introduced in~\eqref{def.bn} and the decomposition of the sum~\eqref{sum.decomp}. This step is the most technical one, and the main difficulty is to compare the entropies of the random variables $\psi_{n,0}$ and $\psi_{n,0}'$. The only property which is known about these random variables comes from their definitions: one has
\begin{equation*}
\psi_{n,0} -\psi_{n,0}' \in H,
\end{equation*}
where $H$ is the space of functions of mean zero which are constant on the cubes $z + \cu_m$, it was first introduced in~\eqref{orthdecomp2sc}. In this situation, it is difficult to compare the two entropies but one can use the elastic energy of the random variable $\nabla \psi_{n,0}$ on the edges of $B_{m,n}$, where the gradient of $\psi_{n,0} -\psi_{n,0}'$ is supported, to obtain some information: more specifically, we prove the inequality
\begin{equation} \label{2sctechstep3}
\E \left[ \frac{1}{\left|\cu_{n} \right|}\sum_{e \in B_{m,n}} V_e(\nabla \psi_{n,0}(e)) \right] + \frac{1}{\left|\cu_{n} \right|}  H \left( \P_{n,0}^* \right) \geq   \frac{1}{\left|\cu_{n} \right|} H \left( \P_{\psi'_{n,0}} \right) - Cm3^{-dm}.
\end{equation}
The previous inequality is the main argument of this step. In the next paragraph, we explain how to obtain the main result~\eqref{2sccomp.step3} of this step, assuming that~\eqref{2sctechstep3} holds. First one deduces from the estimate~\eqref{2sctechstep3}
\begin{multline*}
\E \left[  \frac1{|\cu_{n}|} \sum_{z \in \mathcal{Z}_{m,n}} \sum_{e \subseteq \left( z +\cu_m \right)} V_e\left(\nabla  \psi_{n,0} (e)\right) \right]  + \E \left[  \frac1{|\cu_{n}|} \sum_{e \in B_{m,n}} V_e\left(\nabla \psi_{n,0}\right) \right] +  \frac1{|\cu_{n}|} H \left( \P_{n,0} \right) \\ \geq \E \left[   \frac1{|\cu_{n}|} \sum_{z \in \mathcal{Z}_{m,n}} \sum_{e \subseteq \left( z +\cu_m \right)} V_e\left(\nabla  \psi_{n,0}(e)\right) \right] + \frac1{|\cu_{n}|}  H \left( \P_{\psi'_{n,0}}  \right)  -  Cm3^{-dm}.
\end{multline*}
By the decomposition of the sum stated in~\eqref{sum.decomp}, the term on the left-hand side can be rewritten
\begin{equation*}
\E \left[\sum_{z \in \mathcal{Z}_{m,n}} \sum_{e \subseteq \left( z +\cu_m \right)} V_e\left(\nabla  \psi_{n,0}(e)\right) \right]  + \E \left[ \sum_{e \in B_{m,n}} V_e(\nabla \psi_{n,0} (e)) \right] = \E \left[ \sum_{e \subseteq \cu_{n}}  V_e\left(\nabla  \psi_{n,0} (e)\right) \right].
\end{equation*}
We then use the equality
\begin{equation*}
\E \left[ \sum_{e \subseteq \cu_{n}}  V_e\left(\nabla  \psi_{n,0} (e)\right) \right] + H \left( \P_{n,0}^*  \right) = - |\cu_{n}| \nu^*\left( \cu_{n},0\right).
\end{equation*}
Combining the few previous results and dividing by the volume term $|\cu_{n}|$ yields
\begin{equation} \label{e.almoststep3}
- \nu^*\left( \cu_{n},0\right) \geq \E \left[   \frac1{|\cu_{n}|} \sum_{z \in \mathcal{Z}_{m,n}} \sum_{e \subseteq \left( z +\cu_m \right)}  V_e\left(\nabla  \psi_{n,0}(e)\right) \right] + \frac1{|\cu_{n}|}  H \left( \P_{\psi'_{n,0}} \right)  -  Cm3^{-dm}.
\end{equation}
This is the estimate~\eqref{2sccomp.step3} up to two points:
\begin{enumerate}
\item First there should be a term $\psi_{n,0}^z$ instead of a term $\psi_{n,0}$ on the right-hand side. By definition, the random variable $\psi_{n,0}^z$ is the orthogonal projection of $\psi_{n,0}$ on the space $ \mathring h^1 \left(z + \cu_m \right)$, and using the property~\eqref{propmenintro}, one has
\begin{equation*}
 \mbox{for each} \, z \in \mathcal{Z}_{m,n} \, \mbox{and each} \, e \subseteq z + \cu_{m}, \, \nabla \psi_{n,0}(e) = \nabla \psi_{n,0}^z(e).
\end{equation*}
With this identity, the inequality~\eqref{e.almoststep3} can be rewritten
\begin{equation*}
- \nu^*\left( \cu_{n},0\right) \geq \E \left[   \frac1{|\cu_{n}|} \sum_{z \in \mathcal{Z}_{m,n}} \sum_{e \subseteq \left( z +\cu_m \right)} V_e\left(\nabla  \psi_{n,0}^z \right) \right] + \frac1{|\cu_{n}|}  H \left( \P_{\psi'_{n,0}} \right)  -  C m 3^{-dm}.
\end{equation*}
\item The second point which needs to be addressed is the entropy which is not exactly the same as in~\eqref{2sccomp.step3}. By Proposition~\ref{entropycouplesum2}, one has
\begin{equation*}
H \left( \P_{\psi'_{n,0}} \right) \geq  \sum_{z \in \mathcal{Z}_{m,n}} H \left( \P_{\psi_{n,0}^z} \right),
\end{equation*}
which provides a solution.
\end{enumerate}

\medskip

We now turn to the proof of~\eqref{2sctechstep3}. We recall the notation $Z_{0}^*(\cu_{n})$ introduced in~\eqref{partfctnustar}. We also let $\rho$ be the density associated to the law $\P_{n,0}^*$; it is defined on $\mathring h^1 \left(\cu_{n}\right)$ by
\begin{equation*}
\rho : \left\{ \begin{aligned}
\mathring h^1 \left(\cu_{n}\right) & \rightarrow \hspace{40mm} \R \\
\psi \hspace{7mm}& \mapsto \frac{1}{Z_{0}^*(\cu_{n})} \exp \left( - \sum_{e \subseteq \cu_{n}} V_e\left(\nabla \psi (e)\right) \right).
\end{aligned} \right.
\end{equation*}
Using the orthogonal decomposition~\eqref{orthdecomp2sc}, and the definition of $\psi'_{n,0}$, we can compute the density $\rho'$ of the random variable $\psi'_{n,0}$. It is defined on the space $\underset{z \in \mathcal{Z}_{m,n}}{\oplus} \mathring h^1 \left(z + \cu_{m} \right) $ according to the formula
\begin{equation*}
\rho' : \left\{ \begin{aligned}
\underset{z \in  \mathcal{Z}_{m,n}}{\oplus} \mathring h^1 \left(z + \cu_{m} \right)  & \rightarrow \hspace{40mm} \R \\
\psi \hspace{7mm}& \mapsto \frac{1}{Z_0^* \left( \cu_{n}\right) } \int_H \exp \left( - \sum_{e \subseteq \cu_{n}} V_e\left(\nabla \psi (e) + \nabla h(e)\right) \right) \, d h,
\end{aligned} \right.
\end{equation*}
where the integral is considered with respect to the Lebesgue measure on the space $H$ defined in~\eqref{orthdecomp2sc}. Using that for every $h \in H$, $\nabla h$ is supported in $B_{m,n}$, we can split the sum according to the formula~\eqref{sum.decomp} to obtain, for each $\psi \in \underset{z \in  \mathcal{Z}_{m,n}}{\oplus} \mathring h^1 \left(z + \cu_{m} \right) $,
\begin{multline} \label{def.rho''}
\rho' (\psi) =  \frac{\exp \left( - \sum_{z \in  \mathcal{Z}_{m,n}} \sum_{e \subseteq z + \cu_m} V\left(\nabla \psi(e)  \right) \right)}{Z_0^* \left( \cu_{n} \right) } \\ \times  \int_H \exp \left( - \sum_{e\in B_{m,n}} V_e\left(\nabla \psi (e) + \nabla h (e)\right) \right) d h.
\end{multline}
We now prove an upper bound for the term inside the integral and show that its logarithm is relatively small: there exists a constant $C := C(d , \lambda) < \infty$ such that, for each function $\psi \in \underset{z \in  \mathcal{Z}_{m,n}}{\oplus} \mathring h^1 \left(z + \cu_{m} \right) $,
\begin{equation} \label{suptecest}
\log  \int_H \exp \left( - \sum_{e\in B_{m,n}} V_e\left(\nabla \psi (e) + \nabla h(e)\right) \right) \, d h \leq C m 3^{d(n-m)}.
\end{equation}
The proof of this estimate is essentially technical and relies on the fact that the dimension of $H$ is equal to $3^{d(n-m)} -1 $, which justifies the form of the right-hand side. The proof is postponed to Appendix A, Proposition~\ref{5dest.}. We now show how to deduce~\eqref{2sctechstep3} from~\eqref{suptecest}. We first compute the entropy of the law $\P_{n,0}^*$. This gives
\begin{align*}
H \left( \P_{n,0}^* \right) & = \int_{\mathring{h}^1 (\cu_{n})} \rho(\psi ) \log \rho (\psi) \, d \psi \\
								& = \int_{\mathring{h}^1 (\cu_{n})} \rho(\psi ) \left( - \sum_{e \subseteq \cu_{n}} V_e \left( \nabla \psi\right) \right) \, d \psi - \log Z_0^* \left(\cu_{n} \right).
\end{align*}
Adding the term $\E \left[ \sum_{e \in B_{m,n}} V_e(\nabla \psi(e)) \right] $ and splitting the sum according to the formula~\eqref{sum.decomp} gives
\begin{multline*}
\E \left[ \frac{1}{\left|\cu_{n} \right|}\sum_{e \in B_{m,n}} V_e(\nabla \psi_{n,0}(e)) \right]  + H \left( \P_{n,0}^* \right) 
				\\ = \int_{\mathring{h}^1 (\cu_{n})} \rho(\psi ) \left( - \sum_{z \in  \mathcal{Z}_{m,n}} \sum_{e \subseteq z + \cu_m} V\left(\nabla \psi(e)  \right) \right) \, d \psi - \log Z_0^* \left(\cu_{n}\right).
\end{multline*}
We focus on the integral on the right-hand side: using the decomposition~\eqref{orthdecomp2sc}, one can apply Fubini's Theorem and first integrate over $H$ then over $\underset{z \in  \mathcal{Z}_{m,n}}{\oplus} \mathring h^1 \left(z + \cu_{m}\right)$. This gives
\begin{multline*}
\int_{\mathring{h}^1 (\cu_{n})} \rho(\psi ) \left( - \sum_{z \in  \mathcal{Z}_{m,n}}\sum_{e \subseteq \left( z + \cu_m \right)} V_e \left( \nabla \psi \right) \right) \, d \psi \\ =  \int_{\underset{z \in  \mathcal{Z}_{m,n}}{\oplus} \mathring h^1 \left(z + \cu_{m}\right)} \int_{H} \rho(\psi + h ) \left( - \sum_{z \in  \mathcal{Z}_{m,n}}\sum_{e \subseteq \left( z + \cu_m \right)} V_e \left( \nabla \psi \right) \right) \, d h \, d \psi.
\end{multline*}
Note that here we have used that the gradient of an element of $H$ is supported on $B_{m,n}$. Using the definition of $\rho'$ stated in~\eqref{def.rho''}, the previous equality can be rewritten
\begin{multline*}
\int_{\mathring{h}^1 (\cu_{n})} \rho(\psi ) \left( - \sum_{z \in  \mathcal{Z}_{m,n}}\sum_{e \subseteq \left( z + \cu_m \right)} V_e \left( \nabla \psi \right) \right) \, d \psi \\ =  \int_{\underset{z \in  \mathcal{Z}_{m,n}}{\oplus} \mathring h^1 \left(z + \cu_{m}\right)}  \rho'(\psi ) \left( - \sum_{z \in  \mathcal{Z}_{m,n}} \sum_{e \subseteq \left( z + \cu_m \right)} V_e \left( \nabla \psi \right) \right) \, d \psi.
\end{multline*}
Combining the few previous displays then gives
\begin{multline} \label{2scstep3finalest}
\E \left[ \sum_{e \in B_{m,n}} V_e(\nabla  \psi_{n,0}(e)) \right] + H \left( \P_{n,0}^* \right) \\
					= \int_{\underset{z \in  \mathcal{Z}_{m,n}}{\oplus} \mathring h^1 \left(z + \cu_{m}\right)}  \rho'(\psi ) \left( - \sum_{z \in \mathcal{Z}_{m,n}} \sum_{e \subseteq \left( z + \cu_m \right)} V_e \left( \nabla \psi \right) \right) \, d \psi - \log Z_{\cu_{n}}^*.
\end{multline}
By the definition of $\rho'$ in~\eqref{def.rho''} and the estimate~\eqref{suptecest}, one has, for each function $\psi \in \underset{z \in  \mathcal{Z}_{m,n}}{\oplus} \mathring h^1 \left(z + \cu_{m}\right)$,
\begin{equation*}
\log \rho' (\psi) \leq  - \sum_{z \in  \mathcal{Z}_{m,n}} \sum_{e \subseteq \left( z + \cu_m\right)} V\left(\nabla \psi(e) \right) \, d  \psi - \log Z_0^* \left(\cu_{n}\right) + C m 3^{d(n-m)}.
\end{equation*}
This allows to perform the following computation
\begin{align} \label{2scstep3finalest2}
H \left( \P_{\psi'_{n,0}} \right) & = \int_{\underset{z \in  \mathcal{Z}_{m,n}}{\oplus} \mathring h^1 \left(z + \cu_{m}\right) }  \rho'(\psi)  \log \rho' (\psi) \, d \psi \\
							& \leq \int_{\underset{z \in \mathcal{Z}_{m,n}}{\oplus} \mathring h^1 \left(z + \cu_{m}\right)}  \rho'(\psi) \left( - \sum_{z \in  \mathcal{Z}_{m,n}} \sum_{e \subseteq \left( z +\cu_m \right)} V\left(\nabla \psi (e)\right) \right) \, d  \psi \notag \\ & \qquad  - \log Z_0^* \left(\cu_{n}\right) + C m 3^{d(n-m)}. \notag
\end{align}
Combining~\eqref{2scstep3finalest} and~\eqref{2scstep3finalest2} gives
\begin{equation*}
H \left( \P_{\psi'_{n,0}}  \right)  \leq \E \left[ \sum_{e \in B_{m,n}} V_e(\nabla \psi_{n,0}(e)) \right] + H \left( \P_{n,0}^* \right) + C m 3^{d(n-m)}.
\end{equation*}
This is~\eqref{2sctechstep3} and the proof of Step 3 is complete.

\medskip

\textit{Step 4.} Combining the main results of Steps 2 and 3, one obtains the existence of a coupling between the random variables $\psi'$ and $\psi_{n,0}'$, such that
\begin{equation}\label{2sccompalmostfinal.est}
\E \left[ \frac1{|\cu_{n}|} \sum_{z \in  \mathcal{Z}_{m,n}} \sum_{e \subseteq \left( z + \cu_m \right)} \left| \nabla \psi_z(e) - \nabla \psi_{n,0}'(e) \right|^2  \right] \\ \leq C \left( \nu^*(\cu_m , 0) - \nu^*(\cu_{n} , 0) \right) + C m 3^{-dm}.
\end{equation} 
The objective of this step is to find a coupling between the random variables $\psi_{n,0}$ and $\psi'$, instead of $\psi'_{n,0}$, and $\psi'$. Recall that we denoted by $h$ the orthogonal projection of $\psi_{n,0}$ on the space $H$. Denote by $\P_h$ its law, it is a probability measure on $H$.

\smallskip

 By Lemma~\ref{couplinglemma}, there exists a coupling between the three probability measures $\P_{\psi'}, \P_{\psi'_{n,0}}$ and $\P_h$ such that, under this coupling, the law of $(\psi', \psi'_{n,0})$ is the optimal coupling between $\P_{\psi'}$ and  $\P_{\psi'_{n,0}}$ and the law of $(\psi'_{n,0} , h )$ is $\P_{n , 0}^*$. This provides the desired coupling between the random variables $\psi'$ and $\psi_{n,0}$: under this coupling the inequality~\eqref{2sccompalmostfinal.est} is satisfied. Step 4 is complete.

\medskip

\textit{Step 5.} We remove the assumption $q=0$. This can be achieved by applying the result obtained for $q=0$ with the tilted elastic potential
\begin{equation*}
V_{e,q}(x) := V_e(x) - q(e)  x.
\end{equation*}
This new elastic potential satisfies the same uniform ellipticity assumption as $V_e$ and one can do the same proof with $V_{e,q}$ instead of $V_e$, the difference is mostly notational: the only place where it as an impact is in the estimate~\eqref{suptecest} and it provides an additional error term which can be proved to be bounded by $C(1+|q|^2) 3^{dn-m}$. Dividing by the volume of the triadic cube $\left|\cu_n\right|$ eventually shows
\begin{multline*}
\E \left[ \frac1{|\cu_{n}|} \sum_{z \in  \mathcal{Z}_{m,n}} \sum_{e \subseteq \left( z + \cu_m \right)} \left| \nabla \psi'(e) - \nabla \psi_{n,q}(e) \right|^2  \right] \\ \leq C \left( \nu^*(\cu_m , q) - \nu^*(\cu_{n} , q) \right) +  C m 3^{-dm} + C(1+|q^2|) 3^{-m}.
\end{multline*}
This can be simplified into
\begin{multline*}
\E \left[ \frac1{|\cu_{n}|} \sum_{z \in  \mathcal{Z}_{m,n}} \sum_{e \subseteq \left( z + \cu_m \right)} \left| \nabla \psi'(e) - \nabla \psi_{n,q}(e) \right|^2  \right] \\ \leq C \left( \nu^*(\cu_m , q) - \nu^*(\cu_{n} , q) \right) + C(1+|q^2|) 3^{-m}.
\end{multline*}
The proof of Proposition~\ref{2sccomparison} is complete.
\end{proof}

\section{Quantitative convergence of the subadditive quantities} \label{section4}

The main goal of this section is to use the tools developed in Section~\ref{section3} to prove Theorem~\ref{QuantitativeconvergencetothGibbsstate}. To this end, we first introduce a notation for the subadditivity defect of the surface tensions $\nu$ and $\nu^*$.

\begin{definition}
For each $p\in \Rd$ and each $n \in \N$, we define
\begin{equation*}
\tau_n (p) := \nu \left( \cu_n, p \right) - \nu \left( \cu_{n+1}, p \right),
\end{equation*}
and, for each $q \in \Rd$,
\begin{equation*}
\tau_n^* (q) := \nu^* \left( \cu_n, q \right) - \nu^* \left( \cu_{n+1}, q \right).
\end{equation*}
\end{definition}

We also recall the following notation from the introduction: for a bounded subset $U \subseteq \Zd$ and a vector field $F : E_d (U) \rightarrow \R$, we let $\left\langle F\right\rangle_U$ be the unique vector in $\Rd$ such that, for each $p \in \Rd$,
\begin{equation*}
p \cdot \left\langle F\right\rangle_U = \frac1{|U|} \sum_{e \subseteq U} p \cdot F (e).
\end{equation*}
In the rest of this section, this is applied in the case where $U$ is a triadic cube and where $F$ is the gradient of a function. We may also refer to the quantity $\left\langle \nabla \phi \right\rangle_U$ as the slope of the function $\phi$ over the set $U$.

This section is organized as follows. We first prove, using Proposition~\ref{2sccomparison}, that the variance of the slope of the random variable $\psi_{n,q}$ over the cube $\cu_n$ converges to~$0$ as $n$ tends to infinity. More precisely, we bound the variance of the slope by the subadditivity defect $\tau_n^* (q)$, which is expected to be small as $n$ tends to infinity. This is proved in Proposition~\ref{contslope} and essentially relies of the two-scale comparison for $\nu^*$ stated in Proposition~\ref{2sccomparison}.

Once the slope of the random variable $\psi_{n,q}$ is controlled, we apply the multiscale Poincar\'e inequality, stated in Proposition~\ref{p.poincmultpoinc}, to prove that $\psi_{n,q}$ is close, in the expectation of the $L^2$-norm over the cube $\cu_n$, to an affine function. With all these tools at hand, we prove the technical estimate of this article, Proposition~\ref{lemma3.3}, thanks to a patching construction. This proposition, combined with the one-sided convex duality property proved in Proposition~\ref{convexdual}, shows that on a large scale, the functions $p \rightarrow \nu(\cu_n , p)$ and $q \rightarrow \nu^*(\cu_n,q)$ are approximately convex dual, i.e., they satisfy up to a small error
\begin{equation*}
\nu^* \left( \cu_n , q \right) \simeq \sup_{p \in \Rd} -\nu \left( \cu_n , p \right) + p \cdot q.
\end{equation*}
Once such a result is established, we turn to the proof of the quantitative convergence of the surface tension, Theorem~\ref{QuantitativeconvergencetothGibbsstate}.

\subsection{Contraction of the variance of the slope of the field $\psi_{n,q}$} We first prove the contraction of the variance of the slope of the random interface $\psi_{n,q}$. This is stated in the following proposition.

\begin{proposition}[Contraction of the slope of the field $\psi_{n,q}$] \label{contslope}
There exists a constant $C := C(d , \lambda) < \infty$ such that, for each $n \in \N$ and each $q \in \R^d$,
\begin{equation} \label{contslope.nustar}
\var \left[\left\langle \nabla \psi_{n+1,q} \right\rangle_{\cu_{n+1}} \right] \leq C (1 + |q|^2) 3^{- n} + C \sum_{m=0}^n 3^{\frac{(m-n)}{2}} \tau_m^*(q).
\end{equation}
\end{proposition}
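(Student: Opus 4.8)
The plan is to exploit the two-scale comparison for $\nu^*$ (Proposition~\ref{2sccomparison}) in a telescoping/induction-on-scales fashion to transfer the control of the subadditivity defect $\tau_m^*(q)$ into control of the slope variance. First I would record the elementary fact that the slope functional $F \mapsto \langle F \rangle_{\cu_n}$ is linear and that its value depends only on the boundary data in a weak sense; in particular, for the independent-subcube field $\psi'$ built from $\psi_{m,q}$ in the paragraph preceding Proposition~\ref{2sccomparison}, one has $\langle \nabla \psi' \rangle_{\cu_n} = \frac{1}{|\cu_n|}\sum_{z \in \mathcal{Z}_{m,n}} |\cu_m| \langle \nabla \psi_z \rangle_{z+\cu_m} + (\text{boundary term on } B_{m,n})$, where the boundary term is $O(3^{(m-n)})$ in the $L^\infty$ sense (since $|B_{m,n}| \leq C 3^{dn-m}$ and gradients of $\psi'$ on those bonds are not controlled pointwise — here one actually needs the $L^2$ energy bound~\eqref{4.L2boundnustar} and Cauchy--Schwarz to see the contribution to the average is small, of size $C(1+|q|)3^{-(n-m)/2}$). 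Consequently $\langle \nabla \psi' \rangle_{\cu_n}$ is, up to a small deterministic-in-size error, an average of $3^{d(n-m)}$ \emph{i.i.d.} copies of $\langle \nabla \psi_{m,q}\rangle_{\cu_m}$, so
\begin{equation*}
\var\left[ \langle \nabla \psi' \rangle_{\cu_n} \right] \leq 3^{-d(n-m)} \var\left[ \langle \nabla \psi_{m,q} \rangle_{\cu_m} \right] + C(1+|q|^2) 3^{-(n-m)}.
\end{equation*}

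Next I would use the coupling furnished by Proposition~\ref{2sccomparison} between $\psi'$ and $\psi_{n,q}$ together with the triangle inequality in $L^2$: writing $\langle \nabla \psi_{n,q} \rangle_{\cu_n} - \langle \nabla \psi' \rangle_{\cu_n} = \frac{1}{|\cu_n|}\sum_{z}\sum_{e \subseteq z+\cu_m}(\nabla\psi_{n,q}(e) - \nabla\psi_z(e)) + (\text{terms on } B_{m,n})$, the first piece is bounded in $L^2$ by Cauchy--Schwarz and~\eqref{e.2sccomparison} by $C(\nu^*(\cu_m,q) - \nu^*(\cu_n,q))^{1/2} + C(1+|q|)3^{-m/2}$, and the $B_{m,n}$ terms again by a Cauchy--Schwarz/energy-bound argument of size $C(1+|q|)3^{-(n-m)/2}$. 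Since $\var[X] \leq 2\var[Y] + 2\,\E[(X-Y)^2]$ for any coupling, this gives, for every pair $m < n$,
\begin{equation*}
\var\left[\langle \nabla \psi_{n,q} \rangle_{\cu_n}\right] \leq C\, 3^{-d(n-m)} \var\left[\langle \nabla \psi_{m,q} \rangle_{\cu_m}\right] + C\left(\nu^*(\cu_m,q) - \nu^*(\cu_n,q)\right) + C(1+|q|^2)3^{-(n-m)},
\end{equation*}
after also absorbing $3^{-m}$ into $3^{-(n-m)}$ when $m \leq n/2$ (and treating $m$ close to $n$ separately — there the $d(n-m)$-gain is weak but $\nu^*(\cu_m,q)-\nu^*(\cu_n,q) = \sum_{k=m}^{n-1}\tau_k^*(q)$ is already a partial sum of the defects). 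Applying this with $m = \lceil n/2 \rceil$, or better iterating it dyadically, one obtains by induction a bound of the form $\var[\langle \nabla\psi_{n,q}\rangle_{\cu_n}] \leq C(1+|q|^2)3^{-cn} + C\sum_{m \leq n}(\text{decaying weight})\,(\nu^*(\cu_m,q) - \nu^*(\cu_{m+1},q))$; rewriting $\nu^*(\cu_m,q)-\nu^*(\cu_n,q)$ as the telescoping sum $\sum_{k=m}^{n-1}\tau_k^*(q)$ and collecting the geometric weights produces exactly a bound of the shape $C(1+|q|^2)3^{-n} + C\sum_{m=0}^{n}3^{(m-n)/2}\tau_m^*(q)$ (possibly with a worse base than $3^{1/2}$, which one can then improve by optimizing the choice of intermediate scale, or simply accept since the statement has $3^{(m-n)/2}$).

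The main obstacle I anticipate is the bookkeeping of the boundary contributions on $B_{m,n}$ and $\partial_{int}\cu_{m,n}$ when relating $\langle \nabla\psi'\rangle_{\cu_n}$ to the i.i.d.\ average: the slope is a full-volume average, so although $|B_{m,n}|/|\cu_n| \leq C 3^{-(n-m)}$ is small, the gradients there are only $L^2$-controlled, not pointwise, which forces one to spend a Cauchy--Schwarz and invoke the quenched $L^2$-energy bound~\eqref{4.L2boundnustar} (and the analogous bound for $\psi'$, which holds by construction since its law restricted to each subcube is $\P_{m,q}^*$). The other delicate point is choosing how to iterate the two-scale inequality so that the final geometric weight matches $3^{(m-n)/2}$ rather than something weaker; the cleanest route is to apply the inequality once with the intermediate scale $m$ ranging over all values $0 \leq m \leq n-1$, average (or take the best) over $m$, and use that $\sum_m 3^{-d(n-m)}$ and $\sum_m 3^{-(n-m)}$ are summable so that the residual self-coupling term $\var[\langle\nabla\psi_{m,q}\rangle_{\cu_m}]$ can be absorbed after noting it is trivially bounded by $C(1+|q|^2)$ via~\eqref{4.L2boundnustar}. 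Once the structure is in place the remaining steps — Cauchy--Schwarz, the variance triangle inequality, and resummation of a geometric series — are routine.
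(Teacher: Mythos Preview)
Your strategy is the right one and matches the paper's in spirit, but there is a scaling mistake that breaks the multi-scale bookkeeping you propose. From $|B_{m,n}| \leq C3^{dn-m}$ one gets $|B_{m,n}|/|\cu_n| \leq C3^{-m}$, \emph{not} $3^{m-n}$; after Cauchy--Schwarz and the $L^2$ energy bound~\eqref{4.L2boundnustar}, the contribution of $B_{m,n}$ to the slope average is therefore of size $C(1+|q|)3^{-m/2}$, not $C(1+|q|)3^{-(n-m)/2}$. This same $3^{-m}$ is also the error in Proposition~\ref{2sccomparison}, so the additive remainder in your key inequality should be $C(1+|q|^2)3^{-m}$, with no $3^{-(n-m)}$ term to absorb it into. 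With that correction, your proposed schemes no longer deliver the claimed bound: single-step ($m=n-1$) with your $3^{-(n-m)}$ gives an $O(1)$ error that never decays under iteration; dyadic iteration picks up errors $3^{-n/2}, 3^{-n/4}, \ldots$ that deteriorate; and the one-shot ``best $m$'' trick yields a flat cutoff on the $\tau_k^*$'s rather than the geometric weights $3^{(k-n)/2}$ the statement requires.

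The paper circumvents all of this by taking only the single-step case (subcubes at scale $n$, target cube $\cu_{n+1}$) and working with the standard deviation $\sigma_n := \var^{1/2}[\langle\nabla\psi_{n,q}\rangle_{\cu_n}]$ rather than the variance. The triangle inequality (no factor of $2$) together with your three ingredients gives
\[
\sigma_{n+1} \;\leq\; 3^{-d/2}\sigma_n \;+\; C\tau_n^*(q)^{1/2} \;+\; C(1+|q|)3^{-n/2},
\]
where the first term is exact by independence, the second comes from the two-scale comparison, and the third from the $B_n$ boundary (now of relative size $3^{-n}$, since here $m=n$). Iterating this linear recursion, bounding $3^{-d(n-m)/2} \leq 3^{-(n-m)/2}$, and finally squaring with Cauchy--Schwarz on the resulting sum yields~\eqref{contslope.nustar}. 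Your outline becomes exactly this argument once you specialise to single steps and correct the $3^{-m}$ versus $3^{-(n-m)}$ confusion.
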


\begin{proof}
The proof of this inequality relies on Proposition~\ref{2sccomparison} and it is necessary to reintroduce the objects used in the statement of this proposition. This is the subject of the following paragraph.

Consider the family of random variables $\left( \psi_z \right)_{z \in \mathcal{Z}_{n,n+1}}$ and the random variable $\psi' := \sum_{z \in \mathcal{Z}_{n,n+1}} \psi_z$ which were introduced before the statement of Proposition~\ref{2sccomparison}. We recall that they satisfy the following properties:
\begin{itemize}
\item for each $z \in\mathcal{Z}_{n,n+1}$, $\psi_z$ is a random variable valued in $\mathring h^1 (\cu_{n+1})$. It is equal to $0$ outside the cube $(z + \cu_n)$ and has law $\P_{n,q}^*$ in the cube $(z + \cu_n)$;
\item the random variables $\psi_z$ are independent.
\end{itemize}
We also consider the coupling between $\psi'$ and $\psi_{n+1,q}$ which was introduced in Proposition~\ref{2sccomparison}. In particular estimate~\eqref{e.2sccomparison} holds. 

\smallskip

The main idea of the proof is the following. By Proposition~\ref{2sccomparison}, one knows that, up to an error of size $\tau_n^*(q)$, the gradient of the random interface~$\psi_{n+1,q}$ is close to the gradients of $3^d$ independent random variables: the random variables $\psi_z$. As a consequence, the slope of the random interface $\psi_{n+1,q}$ can be written, up to an error of size $\tau_n^*(q)$, as a sum of independent random variables. One can then apply a concentration inequality for the variance of independent random variables to show the contraction of the slope.

\medskip

We split the proof into 2 steps:
\begin{itemize}
\item in Step 1, we use Proposition~\ref{2sccomparison} to prove
\begin{equation} \label{contvarstep1}
\var^{\frac 12} \left[ \left\langle \nabla \psi_{n+1,q} \right\rangle_{\cu_{n+1}} \right]  \leq 3^{-\frac d2} \var^{\frac 12} \left[ \left\langle \nabla \psi_{n,q}(e) \right\rangle_{\cu_n} \right] + C \tau^*_n(q)^\frac 12 + C3^{-\frac n2} (1 + |q|) ;
\end{equation}
\item in Step 2, we iterate the inequality obtained in Step 1 to derive~\eqref{contslope.nustar}.
\end{itemize}

\medskip 

\textit{Step 1.} First we recall the definition of the set of bonds connecting two cubes of the form~$(z + \cu_n)$,
\begin{equation*}
B_n := \left\{ (x,y) \, : \, \exists z,z' \in 3^n \Zd \cap \cu_{n+1} \, \mbox{such that} \, z \neq z' , \,  x \in z + \cu_n \, \mbox{and} \, y \in z' + \cu_n \right\}
\end{equation*}
and the decomposition of the sum~\eqref{sum.decomp},
\begin{equation*}
\sum_{e \subseteq \cu_{n+1}} = \sum_{z \in \mathcal{Z}_{n}} \sum_{e \subseteq z+\cu_n} + \sum_{e \in B_n}.
\end{equation*}
The set $B_n$ is equal to the set $B_{n,n+1}$ from the previous sections, but since it depends only on one parameter, we use the shortcut notation $B_n$. From the previous decomposition of the sum, one has the estimate
\begin{multline*}
\left| \left\langle \nabla \psi_{n+1,q} \right\rangle_{\cu_{n+1}} - 3^{-d} \sum_{z \in \mathcal{Z}_{n}} \left\langle \nabla \psi_{n+1,q} - \nabla \psi_{z} \right\rangle_{z + \cu_{n}} - 3^{-d}\sum_{z \in \mathcal{Z}_{n}} \left\langle \nabla \psi_{z} \right\rangle_{z + \cu_{n}} \right| \\ \leq  \frac1{|\cu_{n+1}|} \sum_{e \in B_n} \left| \nabla \psi_{n+1,q}(e) \right|.
\end{multline*}
Taking the square-root of the variance and using the triangle inequality, one obtains
\begin{multline*}
\var^{\frac 12} \left[\left\langle \nabla \psi_{n+1,q} \right\rangle_{\cu_{n+1}}  \right] \leq \var^{\frac 12} \left[ 3^{-d} \sum_{z \in \mathcal{Z}_{n}}\left\langle \nabla \psi_{n+1} - \nabla \psi_{z} \right\rangle_{z + \cu_n} \right] \\+ \var^{\frac 12} \left[3^{-d} \sum_{z \in \mathcal{Z}_{n}}\left\langle \nabla \psi_{z} \right\rangle_{z + \cu_n}  \right] + \E \left[ \left( \frac1{|\cu_{n+1}|} \sum_{e \in B_n} \left| \nabla \psi_{n+1,q}(e) \right| \right)^2 \right]^{\frac 12}.
\end{multline*}
We then estimate the three terms on the right-hand side separately. The first term is an error term which can be estimated by Proposition~\ref{2sccomparison},
\begin{align*}
\var\left[ 3^{-d} \sum_{z \in \mathcal{Z}_{n}}\left\langle \nabla \psi_{n+1} - \nabla \psi_{z} \right\rangle_{z + \cu_n} \right]  &  \leq \E \left[   \frac1{|\cu_{n+1}|} \sum_{z \in \mathcal{Z}_{n}} \sum_{e \subseteq z+ \cu_n} \left| \nabla \psi_{n+1}(e) - \nabla \psi_{z}(e) \right|^2 \right] \\
																						& \leq C\tau_n^*(q) + C(1+|q|^2 ) 3^{-n}.
\end{align*}
To estimate the second term, we use the independence of the random variables $\psi_z$ and the concentration inequality for a sum of independent random variables,
\begin{equation*}
 \var \left[3^{-d} \sum_{z \in \mathcal{Z}_{n}}\left\langle \nabla \psi_{z} \right\rangle_{z + \cu_n}  \right] = 3^{-2d} \sum_{z \in \mathcal{Z}_{n}} \var \left[ \left\langle \nabla \psi_{z} \right\rangle_{z + \cu_n} \right].
\end{equation*}
Using that the law of $\psi_z$ on the cube $(z + \cu_n)$ is $\P_{n,q}^*$, one obtains
\begin{equation*}
 \var \left[3^{-d} \sum_{z \in \mathcal{Z}_{n}}\left\langle \nabla \psi_{z} \right\rangle_{z + \cu_n}  \right] =3^{-d} \var \left[ \left\langle \nabla \psi_{n,q} \right\rangle_{ \cu_n} \right].
\end{equation*}
The third term is also an error term and can be estimated thanks to the Cauchy-Schwarz inequality together with the bound $|B_n| \leq C 3^{-n} |\cu_{n+1}|$,
\begin{align*}
 \E \left[ \left| \frac1{|\cu_{n+1}|} \sum_{e \in B_n} \nabla \psi_{n+1}(e) \right|^2 \right] & \leq \E \left[  \frac{|B_n|}{|\cu_{n+1}|^2} \sum_{e \in B_n} \left| \nabla \psi_{n+1}(e) \right|^2 \right] \\
															& \leq C3^{-n} \E \left[  \frac{1}{|\cu_{n+1}|} \sum_{e \in B_n} \left| \nabla \psi_{n+1}(e) \right|^2 \right].
\end{align*}
By the bound on the $L^2$-norm of the gradient $\nabla \psi_{n+1}$ obtained in Proposition~\ref{propofnunustar}, one obtains
\begin{equation*}
 \E \left[ \left| \frac1{|\cu_{n+1}|} \sum_{e \in B_n} \nabla \psi_{n+1}(e) \right|^2 \right]  \leq C3^{-n} (1 + |q|^2),
\end{equation*}
for some constant $C := C(d, \lambda) < \infty$. Combining the few previous displays gives the estimate
\begin{equation*}
\var^{\frac 12} \left[ \left\langle \nabla \psi_{n+1,q} \right\rangle_{\cu_{n+1}} \right]  \leq 3^{-\frac d2} \var^{\frac 12} \left[ \left\langle \nabla \psi_{n}(e) \right\rangle_{\cu_n} \right] + C \tau^*_n(q)^\frac 12 + C3^{-\frac n2} (1 + |q|)  .
\end{equation*}

\medskip

\textit{Step 2.} Iteration and conclusion. We denote by
\begin{equation*}
\sigma_n :=  \var^{\frac 12} \left[\left\langle \nabla \psi_{n} \right\rangle_{\cu_n} \right].
\end{equation*} 
The main estimate of Step 1 can be rewritten with this new notation
\begin{equation*}
\sigma_{n+1} \leq 3^{-\frac d2}  \sigma_n + C \tau_n^*(q)^\frac 12 + C3^{-\frac n2} (1 + |q|).
\end{equation*}
 An iteration of the previous display gives
\begin{align*}
\sigma_n & \leq 3^{-\frac{dn}2} \sigma_0 + C \sum_{m=0}^{n} 3^{-\frac{d(n-m)}2} \tau_m^*(q)^\frac 12 + C(1 + |q|) \sum_{m=0}^{n} 3^{-\frac{d(n-m)}2} 3^{-\frac m2} \\
							& \leq 3^{-\frac{dn}2} \sigma_0 + C \sum_{m=0}^{n} 3^{-\frac{d(n-m)}2}  \tau_m^*(q)^\frac 12 + C(1 + |q|) 3^{-\frac{n}2}.
\end{align*}
To simplify the previous estimate, we note that by the estimate~\eqref{4.quadboundsnustar} of Proposition~\ref{propofnunustar}, one has the bound $\sigma_0 \leq C(1 + |q|)$. This implies
\begin{equation*}
\sigma_n  \leq C(1 + |q|) 3^{-\frac n2} + C \sum_{m=0}^{n} 3^{\frac{(m-n)}2} \tau_m^*(q)^\frac 12.
\end{equation*}
Squaring the previous inequality gives
\begin{equation*}
\sigma_n^2 \leq C(1 + |q|^2) 3^{-n} + C \left( \sum_{m=0}^{n} 3^{\frac{(m-n)}2} \tau_m^*(q)^\frac 12 \right)^2 \leq C(1 + |q|^2) 3^{-n} + C  \sum_{m=0}^{n} 3^{\frac{(m-n)}2} \tau_m^*(q).
\end{equation*}
The proof of Step 2 is complete.
\end{proof}

To end this section, we record another useful property of the slope of the random interface $\psi_{n,q}$: thanks to an explicit computation, one can relate the expectation of the slope of the random interface $\psi_{n,q}$ to the gradient in the $q$ variable of the dual surface tension $\nu^*$ according to the formula
\begin{equation*}
 \nabla_q \nu^* (\cu_n, q) = \E \left[ \left\langle \nabla \psi_{n,q} \right\rangle_{\cu_n} \right].
\end{equation*}
This has some interesting consequences: first one derives the bound, for each $q \in \Rd,$
\begin{align} \label{boundnustarqn}
\left| \nabla_q \nu^* (\cu_n, q) \right| & \leq \left(  \E \left[ \frac{1}{|\cu_n|} \sum_{e \subseteq \cu_n} \left| \nabla \psi_{n,q} (e) \right|^2 \right] \right)^\frac 12 \\
						& \leq C(1 + |q|). \notag
\end{align} 

Second, and for future reference, we note that the difference of the gradient of $\nu^*$ on different scales can be controlled by the subadditivity defect. This is stated in the following lemma.
\begin{lemma}
For each $m \leq n$ and each $q \in \Rd$
\begin{equation} \label{splitstep2}
\left|  \nabla_q \nu^* (\cu_m, q) -  \nabla_q \nu^* (\cu_n, q)\right|^2 \leq C \sum_{k=m}^{n} \tau_k^*(q) + C (1 + |q|^2 ) 3^{-m}.
\end{equation}
\end{lemma}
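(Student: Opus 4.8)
The statement to prove is the telescoping-type estimate
\begin{equation*}
\left|  \nabla_q \nu^* (\cu_m, q) -  \nabla_q \nu^* (\cu_n, q)\right|^2 \leq C \sum_{k=m}^{n} \tau_k^*(q) + C (1 + |q|^2 ) 3^{-m}.
\end{equation*}
The plan is to write the difference $\nabla_q \nu^*(\cu_m,q) - \nabla_q \nu^*(\cu_n,q)$ as a telescoping sum of the one-scale increments $\nabla_q \nu^*(\cu_k,q) - \nabla_q \nu^*(\cu_{k+1},q)$ for $m \le k \le n-1$, and then bound each increment by $C\,\tau_k^*(q)^{1/2} + C(1+|q|)3^{-k/2}$, so that after summing and squaring (via Cauchy–Schwarz on the geometric-type sum) we obtain the claimed bound. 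The key identity that makes each increment controllable is $\nabla_q \nu^*(\cu_n,q) = \E[\langle \nabla \psi_{n,q}\rangle_{\cu_n}]$, recorded just before the lemma; it reduces the problem to comparing the expected slopes of $\psi_{k,q}$ and $\psi_{k+1,q}$.

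For the one-scale increment, I would invoke the two-scale comparison for $\nu^*$ (Proposition~\ref{2sccomparison}) with the pair $(m,n)$ there replaced by $(k,k+1)$. This produces the family $(\psi_z)_{z\in\mathcal{Z}_{k,k+1}}$ of independent copies (each $\psi_z(\cdot - z)$ having law $\P^*_{k,q}$), the random variable $\psi' = \sum_z \psi_z$, and a coupling between $\psi'$ and $\psi_{k+1,q}$ for which
\begin{equation*}
\E\left[\frac1{|\cu_{k+1}|}\sum_{z\in\mathcal{Z}_{k,k+1}}\sum_{e\subseteq(z+\cu_k)}\left|\nabla\psi_z(e)-\nabla\psi_{k+1,q}(e)\right|^2\right] \leq C\,\tau_k^*(q) + C(1+|q|^2)3^{-k}.
\end{equation*}
Since $\langle\nabla\psi_{k,q}\rangle_{\cu_k}$ has the same law as $\langle\nabla\psi_z\rangle_{z+\cu_k}$ for each $z$, and since $3^{-d}\sum_z \langle\nabla\psi_z\rangle_{z+\cu_k}$ has expectation equal to $\E[\langle\nabla\psi_{k,q}\rangle_{\cu_k}] = \nabla_q\nu^*(\cu_k,q)$, I would estimate
\begin{equation*}
\left|\nabla_q\nu^*(\cu_k,q) - \nabla_q\nu^*(\cu_{k+1},q)\right| \leq \left|\E\left[3^{-d}\sum_z\langle\nabla\psi_z - \nabla\psi_{k+1,q}\rangle_{z+\cu_k}\right]\right| + \left|\E\left[\frac1{|\cu_{k+1}|}\sum_{e\in B_k}\nabla\psi_{k+1,q}(e)\right]\right|,
\end{equation*}
where $B_k$ is the set of bonds connecting distinct subcubes; the first term is bounded by Jensen and the two-scale comparison by $C\tau_k^*(q)^{1/2} + C(1+|q|)3^{-k/2}$, and the second by Cauchy–Schwarz together with $|B_k|\le C3^{-k}|\cu_{k+1}|$ and the uniform $L^2$ bound $\E[|\cu_{k+1}|^{-1}\sum_{e\subseteq\cu_{k+1}}|\nabla\psi_{k+1,q}|^2]\le C(1+|q|^2)$ from Proposition~\ref{propofnunustar}. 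This is essentially Step~1 of the proof of Proposition~\ref{contslope}, applied to the mean instead of to the variance.

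Summing over $k$ from $m$ to $n-1$ via the triangle inequality gives
\begin{equation*}
\left|\nabla_q\nu^*(\cu_m,q) - \nabla_q\nu^*(\cu_n,q)\right| \leq C\sum_{k=m}^{n-1}\tau_k^*(q)^{1/2} + C(1+|q|)\sum_{k=m}^{n-1}3^{-k/2},
\end{equation*}
and then squaring, using $(\sum a_k)^2 \leq (n-m)\sum a_k^2$ — or more efficiently using that $\sum_{k=m}^{n-1} 3^{-k/2} \le C 3^{-m/2}$ and Cauchy–Schwarz on the first sum against the trivial weights (since there are at most $n-m+1$ terms, one can absorb the factor, but a cleaner route is to note $\left(\sum_k \tau_k^*(q)^{1/2}\right)^2 \le (n-m+1)\sum_k \tau_k^*(q)$, and if a clean constant is desired, instead use the $3^{(m-n)/2}$-weighted trick as in Proposition~\ref{contslope}; either way the right-hand side is dominated by $C\sum_{k=m}^{n}\tau_k^*(q) + C(1+|q|^2)3^{-m}$) — yields the claim. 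The only mildly delicate point is controlling the square of the sum $\sum_k \tau_k^*(q)^{1/2}$ by $C\sum_k \tau_k^*(q)$; since the $\tau_k^*$ are nonnegative and, by the subadditivity estimate~\eqref{4.subaddnustar} together with convergence of $\nu^*(\cu_n,q)$, their partial sums are bounded, the number of relevant terms can be handled without loss, so this is the main (but minor) obstacle rather than a genuine difficulty.
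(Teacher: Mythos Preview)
Your one-step estimate
\[
\left|\nabla_q\nu^*(\cu_k,q) - \nabla_q\nu^*(\cu_{k+1},q)\right| \le C\,\tau_k^*(q)^{1/2} + C(1+|q|)\,3^{-k/2}
\]
is correct, and your derivation of it is exactly the argument of Step~1 of Proposition~\ref{contslope} applied to the mean. The gap is in the assembly. After telescoping and summing you have
\[
\left|\nabla_q\nu^*(\cu_m,q) - \nabla_q\nu^*(\cu_n,q)\right| \le C\sum_{k=m}^{n-1}\tau_k^*(q)^{1/2} + C(1+|q|)\,3^{-m/2},
\]
and when you square, Cauchy--Schwarz only yields $\bigl(\sum_k \tau_k^*(q)^{1/2}\bigr)^2 \le (n-m)\sum_k \tau_k^*(q)$. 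The extra factor $(n-m)$ cannot be removed by any of the devices you suggest: the weighted trick from Proposition~\ref{contslope} relies on a built-in contraction factor $3^{-d/2}$ that is absent here, and knowing merely that $\sum_k \tau_k^*(q)$ is bounded does not control $\sum_k \tau_k^*(q)^{1/2}$ uniformly in $n-m$. So the claimed bound does not follow from your route as written.

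The paper avoids this by applying Proposition~\ref{2sccomparison} \emph{directly} with the pair of scales $(m,n)$ rather than iterating one scale at a time. One writes, using $\nabla_q\nu^*(\cu_m,q) = \E[\langle\nabla\psi_z\rangle_{z+\cu_m}]$ for each $z\in\mathcal{Z}_{m,n}$,
\[
\left|\nabla_q\nu^*(\cu_m,q) - \nabla_q\nu^*(\cu_n,q)\right|^2
= \left|\E\Bigl[\langle\nabla\psi_{n,q}\rangle_{\cu_n} - \tfrac{1}{|\mathcal{Z}_{m,n}|}\sum_{z}\langle\nabla\psi_z\rangle_{z+\cu_m}\Bigr]\right|^2,
\]
then squares \emph{inside} the expectation via Jensen, decomposes $\sum_{e\subseteq\cu_n} = \sum_z\sum_{e\subseteq z+\cu_m} + \sum_{e\in B_{m,n}}$, and bounds the first part by the two-scale comparison (which already produces $C(\nu^*(\cu_m,q)-\nu^*(\cu_n,q)) = C\sum_{k=m}^{n-1}\tau_k^*(q)$ linearly) and the boundary part by $C(1+|q|^2)3^{-m}$. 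The point is that squaring before, rather than after, the multiscale comparison keeps the $\tau_k^*$ from appearing under a square root, so no telescoping of gradient increments is needed and no factor is lost.
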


\begin{proof}
The arguments for the proof of this lemma are essentially contained in the proof of Proposition~\ref{contslope}. The main idea is to apply Proposition~\ref{2sccomparison}. We thus consider the coupling between the random variables $\psi_{n,q}$ and $\psi_z$ introduced in this proposition. With this notation, one has 
\begin{equation} \label{Dnucomp1}
\left|  \nabla_q \nu^* (\cu_m, q) -  \nabla_q \nu^* (\cu_n, q)\right|^2 =  \left| E \left[ \left\langle \nabla \psi_{n,q} (e) \right\rangle_{\cu_n} - \frac1{| \mathcal{Z}_{m,n}|}\sum_{z \in \mathcal{Z}_{m,n}} \left\langle \nabla \psi_{z} \right\rangle_{z + \cu_m} \right] \right|^2.
\end{equation}
We reintroduce the set of bonds connecting two cubes of the form $(z + \cu_m)$,
\begin{equation*}
B_{m,n} := \left\{ e = (x,y) \subseteq \cu_n \, : \,  \exists z,z' \in 3^m \Zd \cap \cu_{n}, \, z \neq z', \, x \in z + \cu_m ~ \mbox{and} ~ y \in z' + \cu_m \right\}.
\end{equation*}
We also recall that its cardinality can be estimated according to the formula
\begin{equation*}
| B_{m,n}| \leq C 3^{-m} \left| \cu_n \right|,
\end{equation*}
and that one has the decomposition of the sum
\begin{equation*}
\sum_{e \subseteq \cu_n} = \sum_{e\in B_{m,n}} + \sum_{ z \in \mathcal{Z}_{m,n}}\sum_{e \subseteq z +\cu_m}.
\end{equation*}
Combining this with the equality~\eqref{Dnucomp1}, one obtains
\begin{multline*}
\left|  \nabla_q \nu^* (\cu_m, q) - \nabla_q \nu^* (\cu_n, q)\right|^2 \leq 2  \E \left[  \frac1{|\cu_n|}\sum_{z \in  \mathcal{Z}_{m,n}} \sum_{e \subseteq z + \cu_m} \left| \nabla \psi_{n,q} (e) -\nabla \psi_{z} (e)\right|^2 \right] 
					\\ + 2 \E \left[  \left| \frac{1}{|\cu_n|}   \sum_{e \in B_{m,n}} \nabla \psi_{n,q} (e)  \right|^2 \right].
\end{multline*}
The first term on the right-hand side can be estimated by Proposition~\ref{2sccomparison}, and the second one by the Cauchy-Schwarz inequality, similarly to what is written in Proposition~\ref{contslope}. This implies the estimate~\eqref{splitstep2}.
\end{proof}

\subsection{$L^2$ contraction of the field $\psi_{n,q}$ to an affine function} The main objective of this section is to combine the multiscale Poincar\'e inequality with the contraction of the variance of the slope of the field $\psi_{n,q}$ proved in the previous section, to obtain that the field $\psi_{n,q}$ is close in the $L^2$-norm to an affine function. The right-hand side of the estimate still depends on the subadditivity defects $\tau_n^*(q)$ which are small as $n$ tends to infinity. This is stated in the following proposition.

\begin{proposition}\label{p.ustarminimizerareflat}
There exists a constant $C := C(d, \lambda) < \infty$ such that, for every $n \in \N$ and every $q \in \Rd$,
\begin{equation} \label{nustarminimizerareflat}
\E \left[\frac1{|\cu_{n}|} \sum_{x \in \cu_{n}} \left| \psi_{n,q}(x) -  \nabla_q \nu^*(\cu_{n}, q) \cdot x  \right|^2 \right] \leq C 3^{2n} \left((1 + |q|^2) 3^{-\frac n2} + \sum_{m=0}^n 3^{\frac{(m-n)}{2}}\tau_m^*(q)\right).
\end{equation} 
\end{proposition}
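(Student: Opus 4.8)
\textbf{Proof plan for Proposition~\ref{p.ustarminimizerareflat}.}
The plan is to apply the multiscale Poincar\'e inequality~\eqref{e.multscalepoinc} to the (centered) function $u := \psi_{n,q} - \left( \psi_{n,q}\right)_{\cu_n}$ and then control each of the resulting terms by the subadditivity defects using the results of the previous section. First I would invoke~\eqref{e.multscalepoinc} with $u = \psi_{n,q}$, which, after taking expectations, bounds $\frac1{|\cu_n|}\E\sum_{x\in\cu_n}|\psi_{n,q}(x) - (\psi_{n,q})_{\cu_n}|^2$ by $C\,\E\sum_{e\subseteq\cu_n}|\nabla\psi_{n,q}(e)|^2$ plus the multiscale sum $C 3^n\sum_{k=1}^n 3^k \frac1{|\mathcal{Z}_{k,n}|}\sum_{z\in\mathcal{Z}_{k,n}}\E\left|\left\langle\nabla\psi_{n,q}\right\rangle_{z+\cu_k}\right|^2$. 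The first term is $O(1+|q|^2)$ by~\eqref{4.L2boundnustar}, but this is far too big — of order $|\cu_n|$ after multiplying back — so the real point is that the spatial averages $\left\langle\nabla\psi_{n,q}\right\rangle_{z+\cu_k}$ are much smaller than $O(1)$; they should be close to the deterministic quantity $\nabla_q\nu^*(\cu_k,q)$, with a fluctuation controlled by the two-scale comparison.

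The key estimate I would establish is that, for each scale $k\le n$,
\begin{equation*}
\frac1{|\mathcal{Z}_{k,n}|}\sum_{z\in\mathcal{Z}_{k,n}}\E\left|\left\langle\nabla\psi_{n,q}\right\rangle_{z+\cu_k} - \nabla_q\nu^*(\cu_n,q)\right|^2 \leq C\Big( (1+|q|^2)3^{-k} + \sum_{m=k}^n 3^{\frac{m-k}{2}}\tau_m^*(q) + \ldots \Big),
\end{equation*}
by splitting $\left\langle\nabla\psi_{n,q}\right\rangle_{z+\cu_k} - \nabla_q\nu^*(\cu_n,q)$ into three pieces: $\left\langle\nabla\psi_{n,q} - \nabla\psi_z\right\rangle_{z+\cu_k}$ (an error term bounded by Proposition~\ref{2sccomparison} applied at scales $k<n$), $\left\langle\nabla\psi_z\right\rangle_{z+\cu_k} - \E\left\langle\nabla\psi_z\right\rangle_{z+\cu_k}$ (a centered term whose second moment is exactly $\var\left[\left\langle\nabla\psi_{k,q}\right\rangle_{\cu_k}\right]$, controlled by Proposition~\ref{contslope}), and $\E\left\langle\nabla\psi_{k,q}\right\rangle_{\cu_k} - \nabla_q\nu^*(\cu_n,q) = \nabla_q\nu^*(\cu_k,q) - \nabla_q\nu^*(\cu_n,q)$ (a deterministic defect term bounded by~\eqref{splitstep2}). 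Here one uses the identity $\E\left\langle\nabla\psi_{k,q}\right\rangle_{\cu_k} = \nabla_q\nu^*(\cu_k,q)$ recorded before~\eqref{boundnustarqn}. All three bounds are essentially of the form $C\big((1+|q|^2)3^{-k} + \sum_{m=k}^n 3^{(m-k)/2}\tau_m^*(q)\big)$ (the $\var$ term from Proposition~\ref{contslope} has an extra summation structure starting at $m=0$, which I would need to absorb carefully, but after multiplying by $3^k$ and summing over $k$ the geometric weights still yield the stated right-hand side).

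Next I would plug this into the multiscale sum: $C 3^n\sum_{k=1}^n 3^k \cdot \big[\text{bound at scale }k\big]$, and absorb the term $|\nabla_q\nu^*(\cu_n,q)|^2$ (which is $O(1+|q|^2)$ by~\eqref{boundnustarqn}) separately, noting that $3^n\sum_k 3^k (1+|q|^2)3^{-k} = O(n 3^n(1+|q|^2))$ which is dominated by $3^{2n}(1+|q|^2)3^{-n/2}$, while $3^n\sum_{k=1}^n 3^k\sum_{m=k}^n 3^{(m-k)/2}\tau_m^*(q) = 3^n\sum_{m=1}^n\tau_m^*(q)\sum_{k=1}^m 3^{k/2+m/2} \leq C 3^n \sum_{m=1}^n 3^m\tau_m^*(q) \leq C 3^{2n}\sum_{m=0}^n 3^{m-n}\tau_m^*(q)$, which is absorbed into $C 3^{2n}\sum_{m=0}^n 3^{(m-n)/2}\tau_m^*(q)$ since $3^{m-n}\le 3^{(m-n)/2}$. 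Finally, to pass from the centered estimate (subtracting $(\psi_{n,q})_{\cu_n}$) to the estimate with the affine function $\nabla_q\nu^*(\cu_n,q)\cdot x$, I would use that the function $x\mapsto \nabla_q\nu^*(\cu_n,q)\cdot x - (\text{its mean over }\cu_n)$ differs from $\psi_{n,q} - (\psi_{n,q})_{\cu_n}$ by a function whose gradient is $\left\langle\nabla\psi_{n,q}\right\rangle$-type controlled, or more simply observe that replacing the mean $(\psi_{n,q})_{\cu_n}$ by the correct affine profile only changes things by the scale-$n$ instance of the slope fluctuation, handled exactly as the $k=n$ term above (this uses the Poincar\'e-type bound $\sum_x |l(x) - (l)_{\cu_n}|^2 \le C 3^{2n}|\cu_n| |\left\langle\nabla l\right\rangle_{\cu_n}|^2$ for affine $l$).

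\textbf{Main obstacle.} The delicate point is the bookkeeping of the geometric weights: one must verify that all the error terms — which come with different decay rates in the scale and with nested summations (particularly the $\var$ term from Proposition~\ref{contslope}, whose bound itself contains a sum $\sum_{m=0}^k 3^{(m-k)/2}\tau_m^*$) — combine, after multiplication by $3^{n+k}$ and summation over $k\le n$, into precisely $C 3^{2n}\big((1+|q|^2)3^{-n/2} + \sum_{m=0}^n 3^{(m-n)/2}\tau_m^*(q)\big)$ and not something larger. The $3^{-n/2}$ (rather than $3^{-n}$) in the deterministic part is exactly what is needed to dominate the $O(n 3^n)$ losses coming from summing $n$ scales, so no room is lost there; the one genuine subtlety is interchanging the order of summation in the double sum over $(k,m)$ and checking the resulting single sum over $m$ has the claimed weight $3^{(m-n)/2}$.
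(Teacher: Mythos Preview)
Your proposal is correct and follows essentially the same approach as the paper: apply the multiscale Poincar\'e inequality, then control the spatial averages $\langle\nabla\psi_{n,q}\rangle_{z+\cu_k} - \nabla_q\nu^*(\cu_n,q)$ by the same three-way split (two-scale comparison~\eqref{e.2sccomparison}, variance bound from Proposition~\ref{contslope}, and the slope-difference estimate~\eqref{splitstep2}), and finally carry out the double-sum bookkeeping exactly as you describe.

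One streamlining worth noting: the paper applies~\eqref{e.multscalepoinc} directly to $u(x) = \psi_{n,q}(x) - \nabla_q\nu^*(\cu_n,q)\cdot x$ rather than to $\psi_{n,q}$ alone. Since $\psi_{n,q}\in\mathring h^1(\cu_n)$ and the affine function $x\mapsto p\cdot x$ both have mean zero over the symmetric cube $\cu_n$, one has $(u)_{\cu_n}=0$, so the left-hand side of the multiscale Poincar\'e inequality is already $\frac{1}{|\cu_n|}\sum_x|\psi_{n,q}(x)-\nabla_q\nu^*(\cu_n,q)\cdot x|^2$ and the spatial averages on the right-hand side are directly $\langle\nabla\psi_{n,q}\rangle_{z+\cu_k}-\nabla_q\nu^*(\cu_n,q)$. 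This eliminates your final adjustment step entirely.
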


\begin{proof}
By the discrete version of the multiscale Poincar\'e inequality stated in Proposition~\ref{p.poincmultpoinc}, one has
\begin{multline} \label{rmultscalepoincare}
\frac1{|\cu_{n}|} \sum_{x \in \cu_{n}} \left| \psi_{n,q}(x) -  \nabla_q \nu^*(\cu_{n}, q) \cdot x \right|^2  \leq C \frac1{|\cu_{n}|} \sum_{e \subseteq \cu_{n+1}} \left| \nabla \psi_{n,q}(e) -  \nabla_q \nu^*(\cu_{n}, q) \cdot e \right|^2 
						\\ + C 3^n \sum_{m = 0}^n 3^m \left( \frac{1}{|\mathcal{Z}_{m,n}|} \sum_{z \in \mathcal{Z}_{m,n}} \left| \left\langle \nabla \psi_{n,q} \right\rangle_{z + \cu_m} -  \nabla_q \nu^*(\cu_{n}, q) \right|^2 \right).
\end{multline}
By Proposition~\ref{propofnunustar} and the inequality~\eqref{boundnustarqn}, we can bound the expectation of the first term on the right-hand side,
\begin{align*}
 \lefteqn{\E \left[ \frac1{|\cu_{n}|} \sum_{e \subseteq \cu_{n}} \left| \nabla \psi_{n,q}(e) -  \nabla_q \nu^*(\cu_{n}, q) \cdot e \right|^2  \right]} \qquad & \\ &
				\leq \E \left[ \frac1{|\cu_{n}|} \sum_{e \subseteq \cu_{n}} \left| \nabla \psi_{n,q}(e) \right|^2 + \left|  \nabla_q \nu^*(\cu_{n}, q) \right|^2  \right]  \\
				& \leq C(1 + |q|^2).
\end{align*}

We then split the proof into 2 steps:
\begin{itemize}
\item in Step 1, we estimate the expectation of the second term on the right-hand side and prove the estimate, for any integer $m \leq n$,
\begin{multline} \label{splitmultscale2}
 \frac{1}{|\mathcal{Z}_{m,n}|} \sum_{z \in \mathcal{Z}_{m,n}}  \E  \left[ \left| \left\langle \nabla \psi_{n,q} \right\rangle_{z + \cu_m} -  \nabla_q \nu^*(\cu_{n}, q) \right|^2 \right] \\ \leq C(1 + |q|^2) 3^{-m} +  C \sum_{k=0}^m 3^{\frac{(k-m)}{2}} \tau_k^*(q) + C \sum_{k=m}^n \tau_k^*(q) ;
\end{multline}
\item in Step 2, we deduce the estimate~\eqref{nustarminimizerareflat} from the previous display.
\end{itemize}

\medskip

\textit{Step 1.} To prove the inequality~\eqref{splitmultscale2}, the two main ingredients are Proposition~\ref{contslope} and Proposition~\ref{2sccomparison}. To apply Proposition~\ref{2sccomparison}, we first recall the definition of the family random variables $\psi_z$, for $z \in \mathcal{Z}_{m,n}$ introduced in its statement as well as the coupling between the random interfaces $\psi$ and $\psi_z$ which satisfies 
\begin{equation} \label{split1}
\E \left[ \frac{1}{|\cu_n|}\sum_{z \in\mathcal{Z}_{m,n}} \sum_{e \subseteq z+ \cu_{m}} \left| \nabla \psi_z (e) - \nabla \psi_{n,q} (e)\right|^2  \right] \leq C \sum_{k=m}^n \tau_k^*(q) + C(1 + |q|^2) 3^{-m}.
\end{equation}
We decompose the term in the left-hand side of~\eqref{splitmultscale2}
\begin{align*}
\lefteqn{ \frac{1}{|\mathcal{Z}_{m,n}|} \E \left[ \sum_{z \in \mathcal{Z}_{m,n}} \left| \left\langle \nabla \psi_{n,q} \right\rangle_{z + \cu_m} -  \nabla_q \nu^*(\cu_{n}, q) \right|^2 \right]}  \qquad\qquad\qquad & \\ &
					\leq \frac{3}{|\mathcal{Z}_{m,n}|} \sum_{z \in \mathcal{Z}_{m,n}}  \frac1{|\cu_m|}  \E \left[ \sum_{e \subseteq z+ \cu_m} \left| \nabla \psi_{n,q}(e) - \nabla \psi_{z}(e) \right|^2 \right] \\
					& \qquad	+\frac{3}{|\mathcal{Z}_{m,n}|} \sum_{z \in \mathcal{Z}_{m,n}}  \E \left[ \left| \left\langle \nabla \psi_{z} \right\rangle_{z + \cu_m}  - \nabla_q \nu^*(\cu_{m}, q) \right|^2 \right]	\\
					& \qquad	+3\left| \nabla_q \nu^*(\cu_{n}, q) - \nabla_q \nu^*(\cu_{m}, q) \right|^2,
\end{align*}
and estimate the three terms separately. The first and third terms term on the right-hand side are an error terms which can be estimated thanks to ~\eqref{split1} and~\eqref{splitstep2} respectively. This gives
\begin{align*}
\lefteqn{ \frac{1}{|\mathcal{Z}_{m,n}|} \E \left[ \sum_{z \in \mathcal{Z}_{m,n}} \left|\left\langle \nabla \psi_{n,q} \right\rangle_{z + \cu_m} - \nabla_q \nu^*(\cu_{n}, q)\right|^2 \right]}  \qquad\qquad\qquad & \\ &
				\leq \frac{3}{|\mathcal{Z}_{m,n}|} \sum_{z \in \mathcal{Z}_{m,n}}  \frac1{|\cu_m|} \E \left[ \sum_{e \subseteq  z+\cu_m} \left| \nabla \psi_{z}(e)  - \nabla_q \nu^*(\cu_{m}, q) \cdot e \right|^2 \right]	 \\
				& \qquad + C \sum_{k=m}^n \tau_k^*(q) + C(1 + |q|^2) 3^{-m}.
\end{align*}
Using the identity $\nabla_q \nu^*(\cu_{m}, q) :=\E \left[ \left\langle \nabla \psi_{m,q} \right\rangle_{\cu_m} \right] $ and Proposition~\ref{contslope}, one can estimate the remaining term,
\begin{align*}
 \frac{1}{|\mathcal{Z}_{m,n}|} \sum_{z \in \mathcal{Z}_{m,n}}  \E \left[ \left| \left\langle \nabla \psi_{z} \right\rangle_{z + \cu_m}  - \nabla_q \nu^*(\cu_{m}, q) \right|^2 \right]  
		& = \var \left[\left\langle \nabla \psi_{n,q} \right\rangle_{ \cu_m} \right] \\
		& \leq C (1 + |q|^2) 3^{- m} + C \sum_{k=0}^m 3^{\frac{(k-m)}{2}} \tau_k^*(q).
\end{align*}
Combining the previous displays yields
\begin{multline*}
\frac{1}{|\mathcal{Z}_{m,n}|} \sum_{z \in \mathcal{Z}_{m,n}} \E \left[  \left|\left\langle \nabla \psi_{n,q} \right\rangle_{z + \cu_m} - \nabla_q \nu^*(\cu_{n}, q) \right|^2 \right] \\ \leq   C(1 + |q|^2) 3^{-m} +  C \sum_{k=0}^m 3^{\frac{(k-m)}{2}} \tau_k^*(q) + C \sum_{k=m}^n \tau_k^*(q) .
\end{multline*}
The proof of Step 1 is complete.
\medskip

\textit{Step 2.} To ease the notation, we denote by, for each $m \in \{ 1, \ldots, n \}$,
\begin{equation*}
 X_m := \frac{1}{|\mathcal{Z}_{m,n}|} \sum_{z \in \mathcal{Z}_{m,n}} \left|\left\langle \nabla \psi_{n,q} \right\rangle_{z + \cu_m} - \nabla_q \nu^*(\cu_{n}, q) \right|^2.
\end{equation*}
The main result of Step 1 can be reformulated with this new notation
\begin{equation*}
\E \left[ X_m \right] \leq  C(1 + |q|^2) 3^{-m} +  C \sum_{k=0}^m 3^{\frac{(k-m)}{2}} \tau_k^*(q) + C \sum_{k=m}^n \tau_k^*(q) .
\end{equation*}
By the multiscale Poincar\'e inequality stated in~\eqref{rmultscalepoincare}, one has
\begin{equation*}
\frac1{|\cu_{n}|} \sum_{x \in \cu_{n}} \left| \psi_{n,q}(x) - \nabla_q \nu^*(\cu_{n}, q) \cdot x \right|^2  \leq C (1 + |q|^2)  + C 3^n \sum_{m = 0}^n 3^m  X_m.
\end{equation*}
Taking the expectation gives
\begin{align*}
\E \left[ 3^n \sum_{m = 0}^n 3^m  X_m \right] & \leq C 3^n \sum_{m = 0}^n 3^m  \left( C(1 + |q|^2) 3^{-m} +  C \sum_{k=0}^m 3^{\frac{(k-m)}{2}} \tau_k^*(q) + C \sum_{k=m}^n \tau_k^*(q)  \right) \\
													& \leq C 3^{2n} \left( (1 + |q|^2)  n 3^{-n} + \sum_{k=0}^n  3^{\frac{(k-n)}{2}} \tau_k^*(q) + \sum_{k=0}^n  3^{(k-n)} \tau_k^*(q) \right).
\end{align*}
The previous inequality can be further simplified by appealing to the estimates $n \leq C 3^{\frac n2}$ and $3^{(k-n)} \leq 3^{\frac{(k-n)}{2}}$ and we deduce
\begin{equation*}
\E \left[\frac1{|\cu_{n}|} \sum_{x \in \cu_{n}} \left| \psi_{n,q}(x) - \nabla_q \nu^*(\cu_{n}, q) \cdot x  \right|^2 \right] \leq C 3^{2n} \left((1 + |q|^2) 3^{-\frac n2} + \sum_{m=0}^n 3^{\frac{(m-n)}{2}}\tau_m^*(q)\right).
\end{equation*}
The proof of Proposition~\ref{p.ustarminimizerareflat} is complete.
\end{proof}

\subsection{Convex duality: upper bound} \label{section4.3} The objective of this section is to use the results of the previous sections, and in particular Proposition~\ref{p.ustarminimizerareflat}, to show that the two surface tensions $\nu$ and $\nu^*$ are approximately convex dual to one another. We first introduce the following notation: for each $n \in \N$, we denote by $\cu_n^+$ the triadic cube $\cu_n$ to which one has added a boundary layer of size $1$, i.e.,
\begin{equation*}
\cu_{n}^+ := \left( - \frac{3^n + 1}{2}, \frac{3^n + 1}{2} \right)^d.
\end{equation*}
It is a cube of size $3^n + 2$ and satisfies the following property
\begin{equation*}
\big( \cu_n^+ \big)^{\mathrm{o}} = \cu_n.
\end{equation*}
The statement of the next proposition can be formulated as follows: if the coefficients $\tau_n^*(q)$ are small, then for each $q \in \Rd$, there exists $p \in \Rd$ such that
\begin{equation*}
\nu \left(\cu_{2n}^+ ,p \right) + \nu^* \left(\cu_{n} , q \right) - p  \cdot q \hspace{5mm} \mbox{is small,}
\end{equation*}
and one can choose the value of $p = \nabla_q \nu^*(\cu_n, q) $ in the previous display. In a later statement, we remove the condition $\cu_{2n}^+ $ and prove that, for each $q \in \Rd$, there exists $p \in \Rd$ such that
\begin{equation*}
\nu \left(\cu_n ,p \right) + \nu^* \left(\cu_{n} , q \right) - p  \cdot q \hspace{5mm} \mbox{is small.}
\end{equation*}
Combining this result with the lower bound on the convex duality proved in Proposition~\ref{convexdual}, one obtains
\begin{equation*}
\nu^* \left(\cu_{n} , q \right) = \inf_{p \in \Rd} \left(  - \nu \left(\cu_n ,p \right)   + p  \cdot q \right)  \hspace{5mm} \mbox{up to a small error.}
\end{equation*}
The main argument in the proof of Proposition~\ref{lemma3.3} is a patching construction: we need to patch functions defined on the cubes $(z + \cu_n)$ of laws $\P_{n,q}^*$ in the (much larger) cube $\cu_{2n}^+$, to construct a random variable on the space $h^1_0 \left( \cu_{2n}^+ \right)$ whose law is then used as a test measure in the variational formulation for $\nu$ stated in~\eqref{varfornu}.

\begin{proposition} \label{lemma3.3}
There exist a constant $C := C(d , \lambda) < \infty$ and an exponent $\beta := \beta (d , \lambda) > 0$ such that for each $q \in \Rd$ and each $n \in \N$,
\begin{multline*}
\nu \left(\cu_{2n}^+ , \nabla_q \nu^*(\cu_n, q) \right) + \nu^* \left(\cu_{n} , q \right) -\nabla_q \nu^*(\cu_n, q)  \cdot q \\ \leq  C \left(( 1 + |q|^2) 3^{-\beta n} + \sum_{m=0}^n 3^{\frac{(m-n)}{2}}\tau_m^*(q)\right).
\end{multline*}
\end{proposition}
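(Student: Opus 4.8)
The plan is to use the variational formula~\eqref{varfornu} for $\nu(\cu_{2n}^+, p)$ with a carefully constructed test measure, where $p := \nabla_q \nu^*(\cu_n, q)$. The building blocks are the fields $\psi_{n,q}$: we tile $\cu_{2n}^+$ by the triadic subcubes $(z + \cu_n)$ with $z \in \mathcal{Z}_{n,2n}$ (plus the extra boundary layer that $\cu_{2n}^+$ provides), place an independent copy of $\psi_{n,q}$ in each, and subtract off the affine function $l_p(x) = p \cdot x$ inside each subcube so that the resulting field has the right average slope $p$ globally. More precisely, on $(z + \cu_n)$ I would set $\phi_z(x) := \psi_{n,q}(x - z) + p \cdot (x - z) - p \cdot x$ or rather define a field whose gradient is $\nabla \psi_{n,q} - p$ on each subcube; since by Proposition~\ref{p.ustarminimizerareflat} the field $\psi_{n,q}$ is close in $L^2$ to $x \mapsto \nabla_q \nu^*(\cu_n,q) \cdot x = p \cdot x$, these pieces nearly vanish on the boundaries of the subcubes, so they can be patched together into an element of $h^1_0(\cu_{2n}^+)$ up to a correction supported near the internal boundaries. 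The entropy of this patched measure is, up to the patching correction, $3^{dn}$ times $H(\P_{n,q}^*)$ (by independence and Proposition~\ref{entropycouplesum2}), and its energy is, up to the same kind of correction, $3^{dn}$ times the energy of $\psi_{n,q}$. Feeding this into~\eqref{varfornu} and using the definition~\eqref{expformnustar} of $\nu^*$, the bulk terms assemble into exactly $-\nu^*(\cu_n, q) + p \cdot q$, which is what we want on the right-hand side; the error is everything coming from the patching layer.

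The main work is controlling that patching error. First I would make the patching precise: define a cutoff/gluing procedure that on each internal boundary replaces the near-boundary values of the shifted field by zero, at the cost of a gradient term localized to a set $B$ of bonds of cardinality $|B| \leq C 3^{d(2n)} 3^{-n} = C 3^{d \cdot 2n - n}$ meeting the internal boundaries. As in the two-scale comparison arguments (Steps 3 and 5 of Proposition~\ref{2sccomparisonnu} and Proposition~\ref{2sccomparison}), the contribution of these bonds to the energy is bounded, via uniform convexity of $V_e$, a Taylor expansion, the bound $|V_e'(x)| \leq \tfrac1\lambda|x|$, and Cauchy–Schwarz, by $C$ times $|B|^{1/2}$ times the square root of $\E\big[\sum_{e}|\nabla \psi_{n,q}(e) - p\cdot e|^2\big]$ on these bonds, plus lower-order terms. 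The crucial point is that $\E\big[\sum_{e \subseteq z+\cu_n}|\nabla\psi_{n,q}(e) - p\cdot e|^2\big]$ is \emph{not} just $O(1+|q|^2)$: using the Poincaré inequality near the boundary together with Proposition~\ref{p.ustarminimizerareflat}, the $L^2$ flatness of $\psi_{n,q} - l_p$ over $\cu_n$ transfers to smallness of the gradient-minus-$p$ in an $L^2$ sense on a boundary layer — this is where the factor $\big((1+|q|^2)3^{-\beta n} + \sum_m 3^{(m-n)/2}\tau_m^*(q)\big)$ enters. (One may need to first average the slope $\langle \nabla\psi_{n,q}\rangle$ over the subcube and use Proposition~\ref{contslope} for the fluctuation of the slope, then use flatness for the oscillation around the affine function; this is essentially the decomposition already carried out in Step 1 of the proof of Proposition~\ref{p.ustarminimizerareflat}.) Similarly, the entropy correction from the patching layer is controlled by a "$C m 3^{d(n-m)}$''-type bound exactly as in Proposition~\ref{5dest.} / Step 3 of Proposition~\ref{2sccomparison}, which after dividing by $|\cu_{2n}^+| \sim 3^{2dn}$ contributes a negligible $O(n 3^{-dn})$.

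The remaining bookkeeping: I would also need to account for the discrepancy between the slope $p = \nabla_q\nu^*(\cu_n,q)$ we impose and the empirical slopes of the individual pieces $\psi_z$; by Proposition~\ref{contslope} (and the identity $\nabla_q\nu^*(\cu_n,q) = \E[\langle\nabla\psi_{n,q}\rangle_{\cu_n}]$) the per-cube slopes concentrate around $p$ with variance $O\big((1+|q|^2)3^{-n} + \sum_m 3^{(m-n)/2}\tau_m^*(q)\big)$, and a small affine correction on each subcube fixes the boundary matching at a cost of this same order. Assembling all the error terms — patching-energy error, slope-mismatch error, entropy-layer error — and using $3^{-dm} \leq 3^{-m}$, $n \leq C3^{\beta n}$, one obtains the claimed bound with $\beta := \beta(d,\lambda)>0$ coming from the exponent in Proposition~\ref{p.ustarminimizerareflat}. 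I expect the genuine obstacle to be the patching construction itself — making the gluing produce a bona fide element of $h^1_0(\cu_{2n}^+)$ while keeping both the energy increment and, especially, the entropy increment quantitatively small enough; once that is set up, each individual estimate is a variant of one already proved in Section~\ref{section3}.
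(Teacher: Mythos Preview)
Your overall architecture is right --- build a test measure for the variational formula~\eqref{varfornu} by tiling $\cu_{2n}^+$ with independent copies of $\psi_{n,q}-l_p$ and show the bulk reproduces $-\nu^*(\cu_n,q)+p\cdot q$ --- but the treatment of the patching error has a real gap.

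The sentence ``using the Poincar\'e inequality near the boundary \dots\ the $L^2$ flatness of $\psi_{n,q}-l_p$ transfers to smallness of the gradient-minus-$p$ in an $L^2$ sense on a boundary layer'' does not work: Poincar\'e bounds functions by gradients, not the reverse. More to the point, what you actually need is control of $\E\big[\sum_{e\in B}|\nabla\psi_{n,q}(e)|^2\big]$ on the thin bond set $B$ of cardinality $|B|\sim 3^{-n}|\cu_{2n}|$, and the global bound~\eqref{4.L2boundnustar} gives nothing here --- a priori all of the gradient mass could sit on $B$. The paper's mechanism for this step is the \emph{interior Meyers estimate} (Proposition~\ref{meyersgradphi}): it upgrades $\E[|\nabla\psi|^2]$ from $L^1$ to $L^{1+\delta}$ in the interior, and then H\"older on the thin set yields the factor $\big(|B|/|\cu_{n+1}|\big)^{\delta/(1+\delta)}\sim 3^{-\beta n}$ with $\beta=\delta/(1+\delta)$. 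Because Meyers is an interior estimate, the paper first passes from $\psi_z$ to the scale-$(n{+}1)$ field $\psi_{z,n+1}$ via the two-scale comparison~\eqref{e.2sccomparison}, so that the troublesome bonds of $B_{n,2n}$ lie inside $\tfrac34\cu_{n+1}$ where Meyers applies. This Meyers step is the source of the exponent $\beta$ and is not reproducible by the flatness/Poincar\'e route you sketch.

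The construction side is also more delicate than ``replace near-boundary values by zero.'' The paper does not cut off the fields directly; it defines $\kappa$ as the $h^1_0(\cu_{2n}^+)$-projection of the patched vector field $\mathbf f$ (i.e.\ the solution of $\Delta\kappa=\div\mathbf f$, equation~\eqref{projgradh10}), and then separately builds a comparison function $\Psi$ via a partition of unity in order to show $\nabla\kappa$ is close to $\mathbf f$ (this is where Proposition~\ref{p.ustarminimizerareflat} and Meyers are used). The entropy is controlled not by a Proposition~\ref{5dest.}-type bound but by estimating $\ln|\det L|$ for the linear operator $L$ underlying this projection: one shows that all but $O(3^{(2d-1)n})$ eigenvalues equal $1$ and the rest lie in $[c\,3^{-4n},C\,3^{4n}]$, giving~\eqref{est0detL}. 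Your proposal does not have an analogue of either the projection operator or the Meyers input, and without them the error terms do not close.
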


\begin{proof}
Fix $q \in \Rd$ and for each $p \in \Rd$, we denote by $l_p$ the linear function of slope $p$.
 To simplify the notation, we also write, for each $q \in \Rd$ and each $n \in \N$,
\begin{equation*}
\nabla \nu^*_n(q) :=  \nabla_q \nu^*(\cu_n, q)  \in \Rd.
\end{equation*}
We also recall the notation~\eqref{constantvectfield} introduced in Section~\ref{section1} which is used in the proof with the value $p = \nabla \nu^*_n(q)$.

The strategy of the proof is the following. We construct a random variable taking values in $ h^1_0 \left( \cu_{2n}^+ \right)$, denoted by $\kappa^+_{2n}$ in the proof. This random variable is essentially constructed by patching together independent random variables, which are defined on the triadic cubes $(z + \cu_n)$, for $z \in \mathcal{Z}_{n,2n}$ and whose laws are the law of $\psi_{n, q} - l_{\nabla \nu^*_n(q)}$. We then prove that the random variable $\kappa^+_{2n}$ satisfies
\begin{multline*}
\E \left[ \frac{1}{\left| \cu_{2n}^+\right|} \sum_{e \subseteq \cu_{2n}^+}  V_e \left(\nabla \nu^*_n (q) (e) +  \nabla \kappa^+_{2n} (e) \right) \right] +  \frac{1}{\left| \cu_{2n}^+\right|}  H \left( \P_{\kappa^+_{2n}}  \right) \leq - \nu^* \left(\cu_{n} , q \right) + \nabla \nu^*_{n} (q) \cdot q \\ + C \left(( 1 + |q|^2) 3^{-\beta n} + \sum_{m=0}^n 3^{\frac{(m-n)}{2}}\tau_m^*(q)\right).
\end{multline*}
We finally use $\kappa^+_{2n}$ as a test function in the variational formulation of $\nu \left(\cu_{2n}^+ , \nabla \nu^*_{n} (q) \right)$, to obtain the inequality
\begin{equation*}
\nu \left(\cu_{2n}^+ , \nabla \nu^*_{n} (q) \right) \leq \E \left[ \frac{1}{\left| \cu_{2n}^+\right|} \sum_{e \subseteq \cu_{2n}^+}  V_e \left( \nabla \nu^*_n (q)  +  \nabla \kappa^+_{2n} (e) \right) \right] +  \frac{1}{\left| \cu_{2n}^+\right|}  H \left( \P_{\kappa^+_{2n}}\right) .
\end{equation*}
Combining the two previous displays completes the proof.

\medskip

We split the proof into 4 steps:
\begin{itemize}
\item in Step 1, we construct the random variable $\kappa^+_{2n}$ taking values in $ h^1_0 \left( \cu_{2n}^+ \right)$;
\item in Step 2, we show that the entropy of $\kappa^+_{2n}$ is controlled by the entropy of $\P^*_{n,q}$. Precisely, we prove
\begin{equation} \label{mainstep2entropy}
\frac{1}{\left| \cu_{2n}^+\right|} H \left( \P_{\kappa^+_{2n}}  \right) \leq \frac{1}{\left| \cu_{n}\right|} H \left( \P^*_{n,q} \right) + C n 3^{-n},
\end{equation}
where the entropy on the left-hand side is computed with respect to the Lebesgue measure on $h^1_0 \left( \cu_{2n}^+ \right)$ and the entropy on the right-hand side is computed with respect to the Lebesgue measure on $\mathring h^1 (\cu_n)$;
\item in Steps 3 and 4, we show that the energy of the random variable $\kappa^+_{2n}$ is controlled by the energy of the field $\psi_{n,q}$. Precisely, we prove
\begin{multline} \label{mainstep2energy}
\E \left[ \frac{1}{\left| \cu_{2n}^+\right|} \sum_{e \subseteq \cu_{2n}^+}  V_e \left( \nabla \nu^*_n (q) + \nabla \kappa^+_{2n} (e) \right) \right]  \leq \E \left[ \frac{1}{\left| \cu_{n}\right|} \sum_{e \subseteq \cu_{n}}  V_e \left( \nabla \psi_{n,q} (e) \right) \right] \\+ C \left(( 1 + |q|^2) 3^{-\beta n} + \sum_{m=0}^n 3^{\frac{(m-n)}{2}}\tau_m^*(q)\right);
\end{multline}
\item in Step 5, we combine the results of Steps 3 and 4 to prove
\begin{equation*}
\nu \left(\cu_{2n}^+ , \nabla \nu^*_{n} (q) \right)  - \nu^*(\cu_n,q) + q \cdot \nabla \nu^*_n (q) \leq C \left(( 1 + |q|^2) 3^{-\beta n} + \sum_{m=0}^n 3^{\frac{(m-n)}{2}}\tau_m^*(q)\right).
\end{equation*}
\end{itemize}

\medskip

\textit{Step 1.} Denote by $h^1 (\cu_{2n})$ the set of functions from the cube $\cu_{2n}$ to $\R$. There exists a canonical bijection between the spaces $h^1 (\cu_{2n})$ and $h^1_0 (\cu_{2n}^+)$ obtained by extending the functions of $h^1 (\cu_{2n})$ to be $0$ on the boundary of the cube $\cu_{2n}^+$. We first explain the strategy to construct the random interface $\kappa_{2n}^+$. We consider a family $\left(\psi_z \right)_{z \in \mathcal{Z}_{n,2n}}$ of random variables satisfying:
\begin{itemize}
\item for each $z \in \mathcal{Z}_{n,2n}$,  $\psi_z$ is valued in $\mathring h^1 \left( z + \cu_{n} \right)$ and its law is $\P_{n,q}^*$;
\item the random variables $\psi_z$ are independent.
\end{itemize}
We recall the definition for the set of edges connecting two triadic cubes of the form~$(z + \cu_n)$ in the cube $\cu_{2n}$,
\begin{equation*}
B_{n,2n} := \left\{ e = (x,y) \subseteq \cu_{2n} \, : \,  \exists z,z' \in \mathcal{Z}_{n,2n}, \, z \neq z', \, x \in z + \cu_n \, \mbox{and} \, y \in z' + \cu_n \right\},
\end{equation*}
as well as the corresponding partition of edges of the cube $\cu_{2n}$,
\begin{equation*}
 e \subseteq \cu_{2n} ~ \implies ~ \exists z \in 3^n \Zd \cap \cu_{2n}, \, e \subseteq z +\cu_{n} \, \mbox{or} \, e \in B_{n,2n}.
\end{equation*}
We then construct a random vector field $\mathbf{f}$ defined on the edges of the cube $\cu_{2n}$ by patching together the vector fields $\nabla \psi_z - \nabla \nu^*_n(q)$ defined on the edges of the cube $(z + \cu_n)$. Precisely, the vector field $\mathbf{f}$ is defined as follows, for each edge $e \subseteq \cu_{2n}$,
\begin{equation} \label{def.mathbff}
\mathbf{f}(e) =  \left\{ \begin{aligned}
\nabla \psi_z (e) - \nabla \nu^*_n(q)(e) ~& \mbox{if}~ e \subseteq z + \cu_n, \,  \mbox{for some} \, z \in \mathcal{Z}_{n,2n}, \\
0 \hspace{6mm} & \mbox{if}~ e \in B_{n,2n}.
\end{aligned}  \right.
\end{equation}
The objective is to construct a random variable $\kappa_{2n}^+$ which belongs to the space $h^1_0 \left( \cu_{2n}^+ \right)$ and whose gradient is close to the vector field $\mathbf{f}$.

A first obstruction is that the vector field $\mathbf{f}(e)$ is not in general the gradient of a function in $h^1_0 \left(\cu_{2n}^+\right)$. To remedy this, we consider the orthogonal projection the vector field $\mathbf{f}$ on the space of gradients of functions in $h^1_0 \left(\cu_{2n}^+\right)$. This corresponds to solving the Dirichlet boundary value problem
\begin{equation} \label{projgradh10}
\left\{ \begin{aligned}
\Delta \kappa & = \div \mathbf{f} ~ \mbox{in} ~ \cu_{2n}, \\
\kappa & \in h^1_0 \left( \cu_{2n}^+ \right). 
\end{aligned}  \right.
\end{equation}

A second obstruction is that the random variable $\kappa$ defined in~\eqref{projgradh10} almost surely belongs to a strict linear subspace of $h^1_0 \left(\cu_{2n}^+\right)$, consequently its law is not absolutely continuous with respect to the Lebesgue measure on $h^1_0 \left(\cu_{2n}^+\right)$ and its entropy is infinite. To remedy this we add some independent random variables whose law uniform on $[0,1]$, as was done in Proposition~\ref{2sccomparisonnu}.

\medskip

We now give the details of the construction. We consider the orthogonal decomposition with respect to the standard $L^2$ scalar product 
\begin{equation} \label{orthdecompL}
h^1 (\cu_{2n}) = \underset{z \in \mathcal{Z}_{n,2n}}{\oplus} \mathring h^1(z + \cu_n) \,  \overset{\perp}{\oplus} H,
\end{equation}
where $H := \left(\underset{z \in \mathcal{Z}_{n,2n}}{\oplus} \mathring h^1(z + \cu_n) \right)^\perp$ is the vector space of functions which are constant on the subcubes~$\left(z + \cu_n\right)$. Its dimension is $3^{dn}$.

\smallskip

We then consider a linear operator denoted by $L$ defined on $ \underset{z \in \mathcal{Z}_{n,2n}}{\oplus} \mathring h^1(z + \cu_n) $ valued in $h^1_0 \left(\cu_{2n}^+\right)$ defined according to the following procedure. 

\medskip

For each $\psi \in \underset{z \in \mathcal{Z}_{n,2n}}{\oplus} \mathring h^1(z + \cu_n) $, we let $L(\psi)$ be the unique solution to
\begin{equation} \label{rigorousprojh10grad}
\left\{ \begin{aligned}
\Delta L(\psi) & = \div f ~ \mbox{in} ~ \cu_{2n} \\
L(\psi) & \in h^1_0 \left( \cu_{2n}^+ \right),
\end{aligned}  \right.
\end{equation}
where $f$ is the vector field defined by, for each $e \subseteq \cu_{2n}$,
\begin{equation*}
 f(e) =  \left\{ \begin{aligned}
\nabla \psi (e) ~& \mbox{if}~ e \subseteq z + \cu_n, \,  \mbox{for some} \, z \in \mathcal{Z}_{n,2n}, \\
0 \hspace{6mm} & \mbox{if}~ e \in B_{2n,n}.
\end{aligned}  \right.
\end{equation*}
We first verify that the operator $L$ is injective. To this end, we check that the kernel of $L$ is reduced to $\{ 0 \}$. Let $\psi \in \underset{z \in \mathcal{Z}_{n,2n}}{\oplus} \mathring h^1(z + \cu_n)$ such that $L(\psi) = 0$, we want to prove that $\psi = 0$. 

\smallskip

First by definition of $L$, the condition $L(\psi) = 0$ implies $\div f = 0$. The function $\div f$ can be computed explicitly and we have, for each $z \in \mathcal{Z}_{n,2n}$
\begin{equation*}
\Delta_{z + \cu_n} \psi = 0~ \mbox{in}~ (z + \cu_n),
\end{equation*}
where $\Delta_{z + \cu_n}$ is the Laplacian on the graph $(z + \cu_n)$ and is defined by, for each $x \in \cu_n$,
\begin{equation*}
\Delta_{z + \cu_n}  \psi  (x)= \sum_{y \sim x, y \in z + \cu_n} \left( \psi (y) - \psi(x)\right).
\end{equation*} 
Note that this Laplacian is different from the standard Laplacian on the cube $\cu_{2n}^+$ which is used in~\eqref{rigorousprojh10grad} and defined by, for each $x \in \cu_{2n},$
\begin{equation*}
\Delta  \psi  (x)= \sum_{y \sim x} \left( \psi (y) - \psi(x)\right).
\end{equation*}
This difference comes from the fact that $f$ was set to be $0$ on the set $B_{2n,n}$. For the Laplacian on the cube $(z + \cu_n)$, one can apply the maximum principle to obtain, for each $z \in \mathcal{Z}_{n,2n}$,
\begin{equation*}
\Delta_{z + \cu_n} \psi = 0~ \mbox{in}~ (z + \cu_n) ~\implies ~ \psi ~ \mbox{ is constant in}~ z + \cu_n.
\end{equation*}
Combining the previous remark with the assumption that the function $\psi$ belongs to the space $\underset{z \in \mathcal{Z}_{n,2n}}{\oplus} \mathring h^1(z + \cu_n) $ gives $\psi = 0$ and thus $\ker L = \{0\}$. In particular, if we denote by $\im L$ the image of $L$, one has
\begin{equation*}
\dim(\im L) = \dim \left( \underset{z \in\mathcal{Z}_{n,2n}}{\oplus} \mathring h^1(z + \cu_n) \right) = 3^{2dn} - 3^{dn}. 
\end{equation*}
We then extend $L$ into an isomorphism of $h^1 (\cu_{2n})$ into $h^1_0 \left(\cu_{2n}^+\right)$. We consider the orthogonal decomposition
\begin{equation*}
h^1 (\cu_{2n}) = \underset{z \in \mathcal{Z}_{n,2n}}{\oplus} \mathring h^1(z + \cu_n) \,  \overset{\perp}{\oplus} H,
\end{equation*}
and we let $h_1, \ldots, h_{3^{nd}}$ be an orthonormal basis of $H$. Consider now the $L^2$ orthogonal decomposition
\begin{equation} \label{orthdecompL-1}
h^1 (\cu_{2n}) = \im L \, \oplus \, (\im L)^\perp.
\end{equation}
By the injectivity of $L$, we have $\dim \, (\im L)^\perp = 3^{dn}$. We let $\tilde h_1, \ldots, \tilde h_{3^{nd}}$ be an orthonormal basis of $\left(\im L\right)^\perp$ and extend the linear operator $L$ to the space $h^1 (\cu_{2n})$ by setting
\begin{equation} \label{4.155}
L(h_i) = \tilde h_i, ~\forall i \in \{ 1 , \ldots, 3^{dn} \}.
\end{equation}
By construction, the linear mapping $L$ is an isomorphism between $h^1 (\cu_{2n})$ and $h^1_0 \left(\cu_{2n}^+\right)$.

\smallskip

We now construct the random variable $\kappa_{2n}^+$ using the operator $L$. To this end, we consider two families $\left(\psi_z \right)_{z \in \mathcal{Z}_{n,2n}}$ and $\left(X_i \right)_{i = 1 , \ldots, 3^{nd}}$ of random variables satisfying:
\begin{itemize}
\item for each $z \in \mathcal{Z}_{n,2n}$,  $\psi_z$ is valued in $\mathring h^1 \left( z + \cu_{n} \right)$ and its law is $\P_{n,q}^*$. We extend it by $0$ outside the cube $ (z + \cu_{n}) $ so that it can be seen as a random variable taking values in $h^1 \left( \cu_{2n} \right)$;
\item for each $i \in \{ 1 , \ldots, 3^{nd} \}$, $X_i$ is valued in $[0,1]$ and its law is $\mathrm{Unif}[0,1]$;
\item the random variables $\psi_z$ and $X_i$ are independent.
\end{itemize}
We also define, for each $z \in \mathcal{Z}_{n,2n}$, the random variable $\sigma_z$ taking values in $\mathring h^1 \left( \cu_{2n} \right)$ defined by subtracting the affine function of slope $\nabla \nu^*(q)$ to $\psi_z$, i.e., for each $x \in \cu_{2n}$,
\begin{equation} \label{def.sigmaz}
 \sigma_z(x) :=  \left\{ \begin{aligned}
\psi_z(x) - \nabla \nu^*_n(q) \cdot (x - z) ~& \mbox{if}~x \in  z + \cu_n, \\
0 \hspace{6mm} & \mbox{otherwise}.
\end{aligned}  \right.
\end{equation}
Let $\kappa$ and $\kappa_{2n}^+$ be the random variables valued in $h^1 \left( \cu_{2n} \right)$ defined by the formulas
\begin{equation} \label{defofkappa}
\kappa := L \left( \sum_{z \in \mathcal{Z}_{n,2n}} \sigma_z \right) \qquad \mbox{and} \qquad \kappa_{2n}^+  := L \left( \sum_{z \in \mathcal{Z}_{n,2n}} \sigma_z + \sum_{i=1}^{3^{nd}} X_i h_i \right).
\end{equation}
With this definition, it is clear that $\kappa_{2n}^+$ is a random variable valued in $h^1_0 \left(\cu_{2n}^+\right)$ and that its law is absolutely continuous with respect to the Lebesgue measure on this space. Moreover by construction of the operator $L$, the random variable $\kappa_{2n}^+$ is one of the functions of $h^1_0 \left(\cu_{2n}^+\right)$ which is the closest in the $L^2$-norm to the vector field $\mathbf{f}$ defined in~\eqref{def.mathbff}. 

\medskip

\textit{Step 2.} In this step, we compute the entropy of the random variable $\kappa_{2n}^+$.

Using the canonical bijection between $h^1 \left( \cu_{2n} \right)$ and $h^1_0 \left( \cu_{2n}^+ \right)$, one can see the operator $L$ as an automorphism of $h^1_0 \left( \cu_{2n}^+ \right)$. Combining this remark with the change of variables formula for the differential entropy, one computes the entropy of the random variable $\kappa_{2n}^+$,
\begin{equation} \label{entropiekappa.eq}
H \left( \P_{\kappa_{2n}^+}  \right) = H \left( \P_{\sum_{z \in \mathcal{Z}_{n,2n}} \sigma_z + \sum_{i=1}^{3^{nd}} X_i h_i} \right) - \ln \left| \det L \right|.
\end{equation}
We first focus on the first term on the right-hand side. By construction of $\sigma_z, X_i, h_i$ and using the formula to compute the entropy of two independent random variables given in Proposition~\ref{entropycouplesum}, one has
\begin{equation*}
H \left( \P_{\sum_{z \in \mathcal{Z}_{n,2n}} \sigma_z + \sum_{i=1}^{3^{nd}} X_i h_i}  \right) = \sum_{z \in \mathcal{Z}_{n,2n}} H \left( \P_{\sigma_z} \right) + \sum_{i = 1}^{3^{nd}} H \left( \P_{X_i}  \right).
\end{equation*} 
Using that the law of $X_i$ is uniform in $[0,1]$ and since the entropy of a random variable is translation invariant  the previous display can be further simplified
\begin{equation*}
H \left( \P_{\sum_{z \in \mathcal{Z}_{n,2n}} \sigma_z + \sum_{i=1}^{3^{nd}} X_i h_i} \right)  = \sum_{z \in \mathcal{Z}_{n,2n}}  H \left( \P_{\psi_z} \right). 
\end{equation*}
Since the law of the random interface $\psi_z$ is $\P_{n,q}^*$, one obtains
\begin{equation} \label{4.16bis}
H \left( \P_{\kappa_{2n}^+}  \right) = 3^{dn} H \left(\P_{n,q}^*\right) - \ln \left| \det L \right|.
\end{equation}
We now focus on the second term on the right-hand side of~\eqref{entropiekappa.eq}. More precisely, we prove that the logarithm of the determinant of $L$ is small compared to the volume of the cube $\cu_n$: more precisely, we show the bound
\begin{equation} \label{est0detL}
\left|  \ln \left| \det L \right| \right| \leq C 3^{(2d-1)n}n .
\end{equation} 
Combining~\eqref{4.16bis} with~\eqref{est0detL} gives the main result~\eqref{mainstep2entropy} of Step 2.

To prove~\eqref{est0detL}, note that the dimension of the vector space $h^1_0 \left( \cu_{2n}^+ \right)$ is $3^{2dn}$. Denote by $(l_1 , \ldots , l_{3^{2dn}})$ the (potentially complex) eigenvalues of $L$. Note that since $L$ is bijective, none of these eigenvalues is equal to $0$. With this notation the determinant of $L$ can be computed,
\begin{equation} \label{e.lndetL1}
 \ln \left| \det L \right| = \sum_{i = 1}^{3^{2dn}}  \ln \left| l_i \right| .
\end{equation}
To prove that the logarithm of the determinant of $L$ is small, the strategy relies on the two following ingredients:
\begin{enumerate}
\item we prove that most of the eigenvalues of $L$ are equal to $1$;
\item we prove that the remaining eigenvalues are bounded from above and below by the values $C3^{4n}$ and $c3^{-4n}$, respectively.
\end{enumerate}

We focus on the first item. To prove this result, we consider the interior of the cube~$\cu_n$:
\begin{equation*}
\cu_n^{\mathrm{o}} := \left( - \frac{3^n - 2}{2} , \frac{3^n - 2}{2} \right)^d \cap \Zd = \cu_n \setminus \partial \cu_n.
\end{equation*}
In particular, one has the inclusion $\cu_n^{\mathrm{o}} \subseteq \cu_n$ and thus the vector space $\mathring h^1 \left( \cu_n^{\mathrm{o}}\right)$ is a linear subspace of $\mathring h^1 \left(\cu_n\right)$. This implies that the space $\underset{z \in \mathcal{Z}_{n,2n}}{\oplus} \mathring h^1 \left(z + \cu_n^{\mathrm{o}}\right)$ is a linear subspace of $\underset{z \in \mathcal{Z}_{n,2n}}{\oplus} \mathring h^1 \left(z + \cu_n\right)$. The important observation is that for each $\psi \in \underset{z \in \mathcal{Z}_{n,2n}}{\oplus} \mathring h^1 \left(z + \cu_n^{\mathrm{o}}\right)$ and each edge $e \in B_{2n,n}$,
\begin{equation*}
\nabla \psi (e) = 0.
\end{equation*}
This is due to the fact that, by definition of the space $\underset{z \in \mathcal{Z}_{n,2n}}{\oplus} \mathring h^1 \left(z + \cu_n^{\mathrm{o}}\right)$, any function belonging to this space is equal to $0$ on the boundaries of the cubes $\left( z + \cu_n \right)$, for any $z \in \mathcal{Z}_{n,2n}$. Consequently the vector field $f$ defined from $\psi$ according to the formula
\begin{equation} \label{def.mathbfff}
f(e) =  \left\{ \begin{aligned}
\nabla \psi (e) ~& \mbox{if}~ e \subseteq z + \cu_n, \,  \mbox{for some} \, z \in \mathcal{Z}_{n,2n}, \\
0 \hspace{6mm} & \mbox{if}~ e \in B_{2n,n},
\end{aligned}  \right.
\end{equation}
satisfies
\begin{equation*}
f  = \nabla \psi.
\end{equation*}
Thus $L(\psi)$ is the solution of
\begin{equation*}
\left\{ \begin{aligned}
\Delta L(\psi) & = \Delta \psi ~ \mbox{in} ~ \cu_{2n}, \\
L(\psi) & \in h^1_0 \left( \cu_{2n}^+ \right). 
\end{aligned}  \right.
\end{equation*}
This implies $L(\psi) = \psi$. This proves $$\psi \in \underset{z \in \mathcal{Z}_{n,2n}}{\oplus} \mathring h^1 \left(z + \cu_n^{\mathrm{o}}\right), ~ L(\psi) = \psi.$$ Consequently, the vector space $\underset{z \in \mathcal{Z}_{n,2n}}{\oplus} \mathring h^1 \left(z + \cu_n^{\mathrm{o}}\right)$ is an eigenspace for $L$ associated to the eigenvalue~$1$, its dimension can be estimated by the following computation
\begin{align*}
\dim \left(  \underset{z \in\mathcal{Z}_{n,2n}}{\oplus} \mathring h^1 \left(z + \cu_n^{\mathrm{o}}\right) \right) & = \sum_{z \in \mathcal{Z}_{n,2n}} \dim \left( \mathring h^1 \left(z + \cu_n^{\mathrm{o}}\right)   \right)\\
																				& = 3^{dn}  \dim \left( \mathring h^1 \left(\cu_n^{\mathrm{o}}\right)   \right)\\
																				& =  3^{dn}  \left( \left| \cu_n^{\mathrm{o}} \right| -1   \right).
\end{align*}
The volume of $\cu_n^{\mathrm{o}}$ can then be estimated according to
\begin{equation*}
 \left| \cu_n^{\mathrm{o}} \right| \geq 3^{dn} - C 3^{(d-1)n}.
\end{equation*}
Combining the two previous displays gives
\begin{equation} \label{estimatedimensioneigenspace1}
\dim \left(  \underset{z \in \mathcal{Z}_{n,2n}}{\oplus} \mathring h^1 \left(z + \cu_n^{\mathrm{o}}\right) \right) \geq 3^{2dn} - C 3^{(2d -1)n}.
\end{equation}
Thus we can, without loss of generality, assume that for each $i \geq C 3^{(2d -1)n}$, $l_i = 1$. The equality~\eqref{e.lndetL1} can be rewritten
\begin{equation*}
 \ln \left| \det L \right| = \sum_{i = 1}^{ C 3^{(2d -1)n}}  \ln \left| l_i \right| .
\end{equation*}
We then use the inequalities
\begin{equation*}
\left| \left| \left| L^{-1} \right| \right| \right|^{-1} \leq \inf_{i \in \{1 , \ldots, 3^{2dn}\} } |l_i| \leq   \sup_{i \in \{1 , \ldots, 3^{2dn}\} } |l_i| \leq ||| L |||,
\end{equation*}
where $ ||| L |||$ (resp. $||| L^{-1} |||$) denotes the operator norm of $L$ (resp. $L^{-1}$) with respect to the $L^2$-norm on $h^1_0 \left( \cu_{2n}^+ \right)$. A combination of the two previous displays gives
\begin{equation} \label{est1detL}
\left| \ln \left| \det L \right|  \right| \leq C 3^{(2d -1)n} \max \left(  \ln  ||| L |||  , \ln  \left| \left| \left| L^{-1} \right| \right| \right| \right).
\end{equation}
To complete the proof, there remains to prove an estimate on the operator norms of $L$ and $L^{-1}$. Specifically, we prove,
\begin{equation} \label{est2detL}
 ||| L ||| \leq C 3^{2n} \qquad \mbox{and} \qquad  \left| \left| \left| L^{-1} \right| \right| \right| \leq C 3^{2n}.
\end{equation}
We first focus on the estimate of the operator norm of $L$. Let $\phi \in h^1_0 \left( \cu_{2n}^+ \right)$ such that $\sum_{x \in \cu_{2n}^+} \phi(x)^2 \leq 1$, one aims to prove
\begin{equation} \label{eigenvaluesnottoolarge}
\sum_{x \in \cu_{2n}^+} \left| L(\phi)(x) \right|^2 \leq C 3^{4n}.
\end{equation}
To this end, we decompose the field $\phi$ according to the orthogonal decomposition~\eqref{orthdecompL}. This gives
\begin{equation*}
\phi  = \psi + h, ~ \mbox{with} ~ \psi \in \underset{z \in \mathcal{Z}_{n,2n}}{\oplus} \mathring h^1(z + \cu_n)  ~ \mbox{and} ~ h \in H.
\end{equation*}
In particular, 
\begin{equation*}
\sum_{x \in \cu_{2n}^+} \left| \psi(x) \right|^2 + \sum_{x \in \cu_{2n}^+} \left| h(x) \right|^2  = \sum_{x \in \cu_{2n}^+} \left|\phi(x) \right|^2 \leq 1.
\end{equation*}
By definition of the operator $L$, the functions $L(\psi)$ and $L(h)$ are orthogonal in $h^1_0 \left( \cu_{2n}^+ \right)$ and 
\begin{equation*}
 \sum_{x \in \cu_{2n}^+} \left| L(h)(x) \right|^2 = \sum_{x \in \cu_{2n}^+}\left| h(x) \right|^2
\end{equation*}
From this we deduce
\begin{align*}
 \sum_{x \in \cu_{2n}^+} \left| L(\phi)(x) \right|^2  =  \sum_{x \in \cu_{2n}^+} \left| L(\psi)(x) \right|^2  + \sum_{x \in \cu_{2n}^+} \left| L(h)(x) \right|^2  & = \sum_{x \in \cu_{2n}^+} \left| L(\psi)(x) \right|^2  + \sum_{x \in \cu_{2n}^+} \left| h(x) \right|^2  \\
																& \leq   \sum_{x \in \cu_{2n}^+} \left| L(\psi)(x) \right|^2   + 1.
\end{align*}
Thus to prove~\eqref{eigenvaluesnottoolarge}, it is sufficient to prove: $$\forall \psi \in \underset{z \in \mathcal{Z}_{n,2n}}{\oplus} \mathring h^1(z + \cu_n) ~\mbox{such that}~ \sum_{x \in \cu_{2n}^+} \left| \psi(x) \right|^2  \leq 1,~
 \sum_{x \in \cu_{2n}^+} \left| L(\psi)(x) \right|^2  \leq C 3^{4n}.$$
For each $\psi \in  \underset{z \in \mathcal{Z}_{n,2n}}{\oplus} \mathring h^1(z + \cu_n)$, we know that the map $L(\psi)$ is a solution to
\begin{equation} \label{testLagitself}
\left\{ \begin{aligned}
\Delta L(\psi) & =\div f~ \mbox{in} ~ \cu_{2n}, \\
L(\psi) & \in h^1_0 \left( \cu_{2n}^+ \right),
\end{aligned}  \right.
\end{equation}
where the vector field $f $ is defined by the formula
\begin{equation*}
f(e) =  \left\{ \begin{aligned}
\nabla \psi (e) ~& \mbox{if}~ e \subseteq z + \cu_n, \,  \mbox{for some} \, z \in \mathcal{Z}_{n,2n}, \\
0 \hspace{6mm} & \mbox{if}~ e \in B_{2n,n}.
\end{aligned}  \right.
\end{equation*}
Using the function $L(\psi)$ as a test function in~\eqref{testLagitself} shows
\begin{equation*}
\sum_{e \subseteq \cu_{2n}^+} \left| \nabla L(\psi)(e) \right|^2 = \sum_{e \subseteq \cu_{2n}^+} \nabla L(\psi) (e) f(e).
\end{equation*}
By the Cauchy-Schwarz inequality, this implies
\begin{equation*}
\sum_{e \subseteq \cu_{2n}^+} \left| \nabla L(\psi)(e) \right|^2 \leq \sum_{e \subseteq \cu_{2n}^+} |f(e)|^2.
\end{equation*}
By definition of $f$, one obtains
\begin{equation} \label{Lgradientcontract}
\sum_{e \subseteq \cu_{2n}^+} \left| \nabla L(\psi)(e) \right|^2 \leq \sum_{z \in \mathcal{Z}_{n,2n}} \sum_{e \subseteq z + \cu_n} |\nabla \psi (e)|^2.
\end{equation}
Using the inequality, for each bond $e = (x,y) \subseteq \cu_{2n}$,
\begin{equation*}
\left| \nabla \psi (e) \right|^2 = \left|  \psi (x) - \psi(y)\right|^2  \leq 2 \left|  \psi (x)\right|^2 + 2 \left|  \psi(y)\right|^2,
\end{equation*}
we derive the estimate, 
\begin{equation*}
\sum_{z \in 3^n \Zd \cap \cu_{2n}} \sum_{e \subseteq z + \cu_n} |\nabla \psi (e)|^2  \leq C \sum_{x \in \cu_{2n}} \left| \psi(x) \right|^2 .
\end{equation*}
Combining the previous displays and using the assumption $\sum_{x \in \cu_{2n}} \left| \psi(x) \right|^2 \leq 1$ shows
\begin{equation*}
\sum_{e \subseteq \cu_{2n}^+} \left| \nabla L(\psi)(e) \right|^2 \leq C \sum_{x \in \cu_{2n}} \left| \psi(x) \right|^2 \leq C.
\end{equation*}
Since $L(\psi) \in h^1_0 \left( \cu_{2n}^+ \right)$, the Poincar\'e inequality implies
\begin{equation*}
\sum_{x \in \cu_{2n}^+} \left| L (\psi)(x) \right|^2 \leq 3^{4n} \sum_{e \subseteq \cu_{2n}^+} \left| \nabla L(\psi)(e) \right|^2.
\end{equation*}
Combining the two previous displays gives
\begin{equation*}
\sum_{x \in \cu_{2n}^+} \left| L (\psi)(x) \right|^2  \leq C 3^{4n}. 
\end{equation*}
This is the desired result.
\smallskip
We now turn to the bound on the operator norm of $L^{-1}$, we aim to prove
\begin{equation*}
 \left| \left| \left| L^{-1} \right| \right| \right| \leq C 3^{2n}.
\end{equation*}
To this end, let $\psi \in h^1_0 \left( \cu_{2n}^+ \right)$. We first prove
\begin{equation} \label{boundL-1.eqq}
\sum_{x \in \cu_{2n}^+} \left| \psi(x) \right|^2 \leq C 3^{4n} \sum_{x \in \cu_{2n}^+} \left| L (\psi)(x) \right|^2.
\end{equation}
Using the same idea as in the proof of the bound for the operator norm of $L$, we see that it is enough to prove~\eqref{boundL-1.eqq} under the additional assumption $\psi \in \underset{z \in \mathcal{Z}_{n,2n}}{\oplus} \mathring h^1(z + \cu_n)$. In this case, one has
\begin{equation*} 
\left\{ \begin{aligned}
\Delta L(\psi) & = \div f~ \mbox{in} ~ \cu_{2n}, \\
L(\psi) & \in h^1_0 \left( \cu_{2n}^+ \right). 
\end{aligned}  \right.
\end{equation*}
Testing this equation with the function $\psi$ gives
\begin{equation*}
\sum_{x \in \cu_{2n}} \Delta L(\psi)(x) \psi(x) =  \sum_{x \in \cu_{2n}} \div f (x) \psi(x) = \sum_{e \subseteq \cu_{2n}}  f (e) \nabla \psi(e).
\end{equation*}
We then use the definition of the function $\psi$ to get
\begin{equation*}
\sum_{e \subseteq \cu_{2n}} f (e) \nabla \psi(e) = \sum_{z \in \mathcal{Z}_{n,2n}} \sum_{e \subseteq z + \cu_{n}} \left| \nabla \psi(e) \right|^2.
\end{equation*}
Since the function $\psi$ belongs to the space $\underset{z \in \mathcal{Z}_{n,2n}}{\oplus} \mathring h^1(z + \cu_n)$, it has mean $0$ on each of the subcubes~$(z + \cu_n)$. We can thus apply the Poincar\'e inequality on each of the cubes $(z + \cu_n)$ to get, for some constant $C := C(d) < \infty$,
\begin{equation*}
 \sum_{x \in z + \cu_{n}} |\psi(x)|^2 \leq  C 3^{2n} \sum_{e \subseteq z + \cu_n} \left| \nabla \psi(e) \right|^2, \qquad \forall z \in  \mathcal{Z}_{n,2n}.
\end{equation*}
Summing the previous inequality over $z \in  \mathcal{Z}_{n,2n}$ gives
\begin{equation*}
 \sum_{x \in \cu_{2n}} |\psi(x)|^2 \leq  C 3^{2n}  \sum_{z \in  \mathcal{Z}_{n,2n}} \sum_{e \subseteq z + \cu_{n}} \left| \nabla \psi(e) \right|^2.
\end{equation*}
Combining the few previous displays gives
\begin{equation*}
 \sum_{x \in  \cu_{2n}} |\psi(x)|^2 \leq  C 3^{2n} \sum_{x \in \cu_{2n}} \Delta L(\psi)(x) \psi(x).
\end{equation*}
By the Cauchy-Schwarz inequality, one further obtains
\begin{equation*}
\sum_{x \in  \cu_{2n}} |\psi(x)|^2 \leq  C 3^{4n} \sum_{x \in \cu_{2n}} \left| \Delta L(\psi)(x) \right|^2.
\end{equation*}
But by definition of the Laplacian, one has, for each $x \in \cu_{2n}$,
\begin{equation*}
 \left| \Delta L(\psi)(x) \right|^2 =  \left| \sum_{y \sim x} \left( \psi(y) - \psi (x) \right) \right|^2 \leq C \sum_{y \sim x} |\psi(y)|^2 + C | \psi (x)|^2.  
\end{equation*}
From this we obtain
\begin{equation*}
\sum_{x \in \cu_{2n}} \left| \Delta L(\psi)(x) \right|^2 \leq C \sum_{x \in \cu_{2n}} \left|  L(\psi)(x) \right|^2
\end{equation*}
and consequently
\begin{equation*}
\sum_{x \in  \cu_{2n}} |\psi(x)|^2 \leq  C 3^{4n} \sum_{x \in \cu_{2n}} \left| L(\psi)(x) \right|^2.
\end{equation*}
This is~\eqref{boundL-1.eqq}. 
\medskip

We now complete the proof of the bound $|\ln |\det L| |$ stated in~\eqref{est0detL}. Combining~\eqref{est1detL} and~\eqref{est2detL} gives
\begin{align*}
\left| \ln \left| \det L \right|  \right| & \leq C 3^{(2d -1)n} \max \left(  \ln  ||| L |||  , \ln  \left| \left| \left| L^{-1} \right| \right| \right| \right) \\
							&\leq C 3^{(2d -1)n} \ln  \left( C 3^{2n} \right) \\ 
							& \leq C 3^{(2d -1)n} n.
\end{align*}
The proof of Step 2 is complete.

\medskip

\textit{Step 3.} The goal of this step is to show the following estimate
\begin{equation} \label{mainresultstep3sptech0}
\E \left[  \frac1{\left|\cu_{2n}^+\right|} \sum_{e \subseteq \cu_{2n}^+} \left| \mathbf{f} (e) - \nabla \kappa_{2n}^+ (e) \right|^2  \right]  \leq C \left(( 1 + |q|^2) 3^{-\beta n} + \sum_{m=0}^n 3^{\frac{(m-n)}{2}}\tau_m^*(q)\right),
\end{equation}
where we recall that $\mathbf{f}$ is the random vector field defined in~\eqref{def.mathbff}.
To prove this estimate, we proceed as follows:
\begin{itemize}
\item we first remove the additional random variable $L\left(  \sum_{i=1}^{3^{nd}} X_i h_i \right)$. Precisely we prove
\begin{equation*}
\E \left[ \frac1{\left|\cu_{2n}^+\right|} \sum_{x \subseteq \cu_{2n}^+} \left| \kappa_{2n}^+ (x) -  \kappa (x) \right|^2  \right] \leq 3^{-dn},
\end{equation*}
where the random variable $\kappa$ is defined in~\eqref{defofkappa};
\item then we construct a random function $\Psi$ taking values in $h^1_0 \left(\cu_{2n}^+ \right)$ such that
\begin{equation} \label{projcloseid.e.0}
\E \left[  \frac1{\left|\cu_{2n}^+\right|} \sum_{e \subseteq \cu_{2n}^+} \left| \mathbf{f} (e) - \nabla \Psi (e) \right|^2  \right] \leq C \left(( 1 + |q|^2) 3^{-\beta n} + \sum_{m=0}^n 3^{\frac{(m-n)}{2}}\tau_m^*(q)\right),
\end{equation}
for some small exponent $\beta := \beta(d) > 0$;
\item we deduce from~\eqref{projcloseid.e.0} that 
\begin{equation*}
\E \left[  \frac1{\left|\cu_{2n}^+\right|} \sum_{e \subseteq \cu_{2n}^+} \left| \mathbf{f} (e) - \nabla \kappa (e) \right|^2  \right] \leq C \left(( 1 + |q|^2) 3^{-\beta n} + \sum_{m=0}^n 3^{\frac{(m-n)}{2}}\tau_m^*(q)\right).
\end{equation*}
\end{itemize}

\textit{Step 1.} We first prove that, up to a small error, we can remove the additional random variable $L\left(  \sum_{i=1}^{3^{nd}} X_i h_i \right)$ which was added to $\kappa$ to obtain $\kappa_{2n}^+$. These random variables were added so that the law of $\kappa_{2n}^+$ is absolutely continuous with respect to the Lebesgue measure on $h^1_0 \left( \cu_{2n}^+ \right)$, in order not to obtain an infinite entropy. They were also chosen in a way that their role in the energy is negligible. More precisely, we prove the following statement
\begin{equation*}
\E \left[ \frac1{\left|\cu_{2n}^+\right|} \sum_{x \subseteq \cu_{2n}^+} \left| \kappa_{2n}^+ (x) -  \kappa (x) \right|^2  \right] \leq 3^{-dn}.
\end{equation*}
We first recall the definition of the operator $L$ on the linear subspace $H$ given in~\eqref{4.155}. The previous estimate is then a consequence of the following computation
\begin{align*}
\E \left[ \frac1{\left|\cu_{2n}^+\right|} \sum_{x \in \cu_{2n}^+} \left| \kappa_{2n}^+ (x) -  \kappa (x) \right|^2  \right] & = \E \left[ \frac1{\left|\cu_{2n}^+\right|} \sum_{x \in \cu_{2n}^+} \left|  \sum_{i=1}^{3^{nd}} X_i \tilde h_i (x) \right|^2  \right] \\
																										& =  \E \left[ \frac1{\left|\cu_{2n}^+\right|}\sum_{i=1}^{3^{nd}}  |X_i |^2  \right].
\end{align*}
Since the family $\tilde h_i$, for $i \in \{ 1, \ldots, 3^{nd}\}$ is orthonormal with respect to the $L^2$ scalar product in $h^1_0 \left( \cu_{2n}^+ \right)$. Since the random variables $\left( X_i \right)_{i \in \{ 1 , \ldots, 3^{nd} \}}$ are i.i.d of law uniform in $[0,1]$, one can complete the computation
\begin{equation} \label{compkappakappa+}
\E \left[ \frac1{\left|\cu_{2n}^+\right|} \sum_{x \in \cu_{2n}^+} \left| \kappa_{2n}^+ (x) -  \kappa (x) \right|^2  \right]  = \frac{3^{nd}}{3 \left|\cu_{2n}^+\right|} \leq C 3^{-dn}.
\end{equation}
Using this and the inequality, for each $e = (x,y) \subseteq \cu_{2n}$,
\begin{align*} 
\left| \nabla \left( \kappa_{2n}^+ -  \kappa  \right) (e) \right|^2 & = \left|  \left( \kappa_{2n}^+ -  \kappa  \right) (x) - \left( \kappa_{2n}^+ -  \kappa  \right) (y)\right|^2 \\ & \leq 2 \left| \left( \kappa_{2n}^+ -  \kappa  \right) (x)\right|^2 + 2 \left|  \left( \kappa_{2n}^+ -  \kappa  \right) (y)\right|^2,
\end{align*}
one derives
\begin{equation} \label{compgradkappakappa+}
\E \left[ \frac1{\left|\cu_{2n}^+\right|} \sum_{e \subseteq \cu_{2n}^+} \left| \nabla \kappa_{2n}^+ (e) - \nabla \kappa (e) \right|^2  \right] \leq C 3^{-dn}.
\end{equation}
The proof of Step 1 is complete.

\medskip

\textit{Step 2.} We now prove~\eqref{projcloseid.e.0} and construct the random variable $\Psi$. We recall the definition of the family~$\left(\psi_z \right)$, for $z \in \mathcal{Z}_{n,2n}$, and extend it to each point $z \in 3^n \Zd$ according to
\begin{enumerate}
\item[(i)] for each point $z \in 3^n \Zd$,  $\psi_z$ is a random function from $\Zd$ to $\R$ equal to $0$ outside $(z + \cu_n)$ and the law of $\psi_z \left( \cdot - z \right)$ restricted to $\cu_n$ is $\P_{n,q}^*$;
\item[(ii)] the random variables $\psi_z$ are independent.
\end{enumerate}
It is the same family as in Step 1, except that it was extended to each $z \in 3^n \Zd$ and not only for $z \in \mathcal{Z}_{n,2n}$. The reason behind this extension will become clear later in the proof.

\smallskip
We also define $$ \psi := \sum_{z \in 3^n \Zd} \psi_z.$$
\smallskip

Moreover for each $z \in \mathcal{Z}_{n,2n}$, we let $\psi_{z,n+1}$be a random variable such that
\begin{equation*}
\psi_{z,n+1} ~ \mbox{is valued in}~\mathring h^1 \left( z + \cu_{n+1} \right)~\mbox{and the law of}~\psi_{z,n+1} \left(\cdot - z \right)~\mbox{is}~\P_{n+1,q}^*.
\end{equation*}
We extend this function by $0$ outside the cube $ (z + \cu_{n+1})$ so that one can think of the random variable $\psi_{z,n+1}$ as a random function from $\Zd$ to $\R$.

The goal of the following argument is to construct a suitable coupling between the random variables $\psi_{z,n+1}$, for $z \in \mathcal{Z}_{n,2n}$.
\smallskip

For some fixed $z \in \mathcal{Z}_{n,2n}$, we apply Proposition~\ref{2sccomparison} and Proposition~\ref{couplinglemma}, with the random variables $X = \psi$, $Y = \sum_{z' \in 3^n \Zd \setminus  \left( z + \cu_{n+1} \right) } \psi_{z'}$ and $Z = \psi_{z,n+1}$; we obtain that there exists a coupling between the random variables $\psi$ and $\psi_{z,n+1}$ such that
\begin{equation} \label{2sccouploin.0.ineq}
\E \left[ \frac1{|\cu_n|} \sum_{z' \in 3^n \Zd \cap \left( z + \cu_{n+1} \right)} \sum_{e \subseteq z' + \cu_{n}} \left| \nabla \psi_{z,n+1} (e) - \nabla \psi_{z'} (e)\right|^2  \right] \leq C\tau_n^*(q) + C(1 + |q|^2) 3^{-n}.
\end{equation}
This is where we used that $\psi_{z'}$ is defined for some $z'$ outside the cube $\cu_{2n}$. Indeed for some $z \in 3^n \Zd \cap \cu_{2n} $, close to the boundary of the cube $\cu_{2n}$, the set $3^n \Zd \cap \left( z + \cu_{n+1} \right)$ is not included in the cube $\cu_{2n}$.

\medskip

Thanks to the previous argument, we have constructed, for each $z \in \mathcal{Z}_{n,2n}$, a coupling between $\psi$ and $\psi_{z,n+1}$. Let $(z_1 , \ldots, z_{3^{nd}})$ be an enumeration of the elements of $\mathcal{Z}_{n,2n}$.

\smallskip

Applying Proposition~\ref{couplinglemma} with $X = \psi$, $Y = \psi_{z_1,n+1}$ and $Z = \psi_{z_2,n+1}$ constructs a coupling between $\psi$, $ \psi_{z_1,n+1}$ and $ \psi_{z_2,n+1}$ such that~\eqref{2sccouploin.0.ineq} is satisfied.

\smallskip

We then apply Proposition~\ref{couplinglemma} a second time, with this time the random variables $X =  \left( \psi_{z_1,n+1} ,\psi_{z_2,n+1} \right) $, $Y = \psi$ and $Z  =  \psi_{z_3,n+1}$ to construct a coupling between $\psi, \psi_{z_1,n+1} ,\psi_{z_2,n+1}$ and $\psi_{z_3,n+1}$ such that~\eqref{2sccouploin.0.ineq} is satisfied.

\smallskip

Iterating this construction $3^{nd}$ times constructs a coupling between the random variables $\psi$ and $\psi_{z,n+1}$, for $z \in \mathcal{Z}_{n,2n}$, such that~\eqref{2sccouploin.0.ineq} is satisfied.

\medskip

From the previous construction, we derive a coupling between the random variables $\psi_{z,n+1}$, for $z \in \mathcal{Z}_{n,2n}$, which satisfies the estimate~\eqref{2sccouploin.0.ineq}.

\medskip

We now build the function $\Psi$ by patching together the random variables $\psi_{z,n+1}$. The argument relies on a partition of unity: we let $\chi_0 \in h^1_0 (\cu_{n})$ be a cutoff function satisfying
\begin{equation*}
0 \leq \chi_0 \leq C 3^{-dn}, \qquad \sum_{x \in \Zd} \chi_0(x) = 1, \qquad |\nabla \chi_0| \leq C 3^{-(d+1)n}, \qquad \supp \chi_0 \subseteq \frac 12 \cu_n. 
\end{equation*}
We then define, for each $z \in \Zd$
\begin{equation*}
\chi(y) := \sum_{x \in \cu_ n} \chi_0 (y - x).
\end{equation*}
Note that the function $\chi$ is supported in the cube $\frac 34 \cu_{n+1}$, satisfies $0 \leq \chi \leq 1$ and the translations of $\chi$ form a partition of unity:
\begin{equation*}
\sum_{z \in 3^n \Zd} \chi(\cdot - z ) = 1.
\end{equation*} 
Moreover, one has the bound on the gradient of $\chi$
\begin{equation} \label{boundgradientpartofunitychi}
| \nabla \chi | \leq C 3^{-n}.
\end{equation}
We next consider the cutoff function $\zeta \in h^1_0 \left( \cu_{2n}^+ \right)$ satisfying
\begin{equation} \label{defcutoozeta}
 0 \leq \zeta \leq 1, \qquad \zeta = 1 ~\mbox{on}~ \left\{ x \in \cu_{2n} ~:~ \dist(x , \partial \cu_{2n}) \geq 3^n \right\}, \qquad \left| \nabla \zeta \right| \leq C 3^{-n},
\end{equation}
which is used to remove a boundary layer in the patching construction. We also define the following discrete set
\begin{equation*}
\mathcal{Z}_{n,2n} := \left\{ z \in 3^n \Zd ~:~ z \in \mathcal{Z}_{n,2n} ~\mbox{or}~ \dist (z , \partial \cu_{2n} ) \leq 3^n \right\}.
\end{equation*}
It represents the set $\mathcal{Z}_{n,2n}$ with an additional boundary layer of size $1$ of points in $3^n \Zd$ around it. This set is useful in the proof because it satisfies the following property
\begin{equation*}
\forall y \in \cu_{2n}^+,~ \sum_{z \in \mathcal{Z}_{n,2n}^+} \chi(y - z ) = 1.
\end{equation*} 
We then define the function $\Psi$ by the formula
\begin{equation*}
\Psi(x) = \zeta(x) \sum_{z \in \mathcal{Z}_{n,2n}^+} \chi(x - z) \left( \psi_{z , n+1} (x) - \nabla \nu^*_{n+1}(q) \cdot (x - z) \right).
\end{equation*}
Now that $\Psi$ has been constructed, we prove ~\eqref{projcloseid.e.0}. The main ingredients to prove this estimate are the inequality~\eqref{2sccouploin.0.ineq}, Proposition~\ref{p.ustarminimizerareflat} and the interior Meyers estimate, Proposition~\ref{meyersgradphi} stated in Appendix~\ref{appendixB}.

We first compute the derivative of $\Psi$. An explicit computation gives, for each edge $e = (x, y) \subseteq \cu_{2n}$,
\begin{align} \label{gradientPdi}
\lefteqn{ \nabla \Psi(e) = \zeta( y) \sum_{z \in \mathcal{Z}_{n,2n}^+}  \chi(y - z) \left( \nabla \psi_{z , n+1} (e) - \nabla \nu^*_{n+1}(q)(e) \right)} \qquad & \\ & \qquad \qquad  +  \zeta( y) \sum_{z \in \mathcal{Z}_{n,2n}^+}  \nabla \chi(\cdot - z)(e)  \left(  \psi_{z , n+1} (x) - \nabla \nu^*_{n+1}(q) \cdot x \right)  \notag \\ & \qquad \qquad +  \nabla \zeta( e) \sum_{z \in \mathcal{Z}_{n,2n}^+}  \chi(x - z) \left( \psi_{z , n+1} (x) - \nabla \nu^*_{n+1}(q) \cdot x \right). \notag
\end{align}
The second and third terms in the previous display are error terms, which will be proved to be small: the interesting term is the first one. The $L^2$-norm of second term can be estimated thanks to the bound~\eqref{boundgradientpartofunitychi} on the gradient of $\chi$,
\begin{align*}
\lefteqn{ \E \left[  \frac1{\left| \cu_{2n}^+ \right|}\sum_{e = (x,y) \subseteq \cu_{2n}^+} \left|  \zeta( y) \sum_{z \in \mathcal{Z}_{n,2n}^+}  \nabla \chi(\cdot - z)(e)  \left( \psi_{z , n+1} (x) -\nabla \nu^*_{n+1}(q) \cdot x \right) \right|^2 \right]}  \qquad\qquad & \\
					& \leq C 3^{-2n} \E \left[\frac1{\left| \cu_{2n}^+ \right|} \sum_{z \in \mathcal{Z}_{n,2n}^+}  \sum_{x \in z + \cu_{n+1}}\left|  \psi_{z , n+1} (x) - \nabla \nu^*_{n+1}(q)  \cdot x \right|^2 \right] \\
					& \leq C 3^{-2n} \E \left[\frac1{\left| \cu_{n+1} \right|}  \sum_{x \in  \cu_{n+1}}\left|  \psi_{n+1,q} (x) - \nabla \nu^*_{n+1}(q)  \cdot x \right|^2 \right].
\end{align*}
We then apply Proposition~\ref{p.ustarminimizerareflat} to obtain
\begin{multline*}
\E \left[  \frac1{\left| \cu_{2n}^+ \right|}\sum_{e = (x,y) \subseteq \cu_{2n}^+} \left|  \zeta( y) \sum_{z \in \mathcal{Z}_{n,2n}^+}  \nabla \chi(\cdot - z)(e)  \left(  \psi_{z , n+1} (x) -\nabla \nu^*_{n+1}(q) \cdot x \right) \right|^2 \right] \\ \leq C \left((1 + |q|^2) 3^{-\frac n2} + \sum_{m=0}^n 3^{\frac{(m-n)}{2}}\tau_m^*(q)\right).
\end{multline*}
The third term of~\eqref{gradientPdi} can be estimated in a similar manner, using~\eqref{defcutoozeta} this time,
\begin{align*}
\lefteqn{ \E \left[  \frac1{\left| \cu_{2n}^+ \right|} \sum_{e = (x,y) \subseteq \cu_{2n}^+} \left|  \nabla \zeta( e) \sum_{z \in \mathcal{Z}_{n,2n}^+}  \chi(x - z) \left( \psi_{z , n+1} (x) -\nabla \nu^*_{n+1}(q) \cdot x \right)  \right|^2 \right]}  \qquad\qquad & \\
					& \leq C 3^{-2n} \E \left[\frac1{\left| \cu_{2n}^+ \right|} \sum_{z \in \mathcal{Z}_{n,2n}^+}  \sum_{x \in z + \cu_{n+1}}\left|  \psi_{z , n+1} (x) - \nabla \nu^*_{n+1}(q)  \cdot x \right|^2 \right] \\
					& \leq C 3^{-2n} \E \left[\frac1{\left| \cu_{n+1} \right|}  \sum_{x \in  \cu_{n+1}}\left|\psi_{n+1,q} (x) - \nabla \nu^*_{n+1}(q)  \cdot x \right|^2 \right].
\end{align*}
We then apply Proposition~\ref{p.ustarminimizerareflat} again to obtain
\begin{multline*}
 \E \left[  \frac1{\left| \cu_{2n}^+ \right|} \sum_{e = (x,y) \subseteq \cu_{2n}^+} \left|  \nabla \zeta( e) \sum_{z \in \mathcal{Z}_{n,2n}^+}  \chi(x - z) \left(  \psi_{z , n+1} (x) - \nabla \nu^*_{n+1}(q)  \cdot x \right)  \right|^2 \right] \\ \leq C \left((1 + |q|^2) 3^{-\frac n2} + \sum_{m=0}^n 3^{\frac{(m-n)}{2}}\tau_m^*(q)\right).
\end{multline*}
Combining the few previous displays then yields
\begin{multline*}
\E \left[  \frac1{\left| \cu_{2n}^+ \right|} \sum_{e = (x,y) \subseteq \cu_{2n}^+}  \left| \nabla \Psi(e) - \zeta( y) \sum_{z \in \mathcal{Z}_{n,2n}^+}  \chi(y - z) \left( \nabla \psi_{z , n+1} (e) -\nabla \nu^*_{n+1}(q) (e) \right) \right|^2 \right] \\ \leq C \left((1 + |q|^2) 3^{-\frac n2} + \sum_{m=0}^n 3^{\frac{(m-n)}{2}}\tau_m^*(q)\right).
\end{multline*}
Thus to prove~\eqref{projcloseid.e.0}, it is sufficient to show
\begin{multline} \label{proving3.23.1}
\E \left[  \frac1{\left| \cu_{2n}^+ \right|} \sum_{e = (x,y) \subseteq \cu_{2n}^+}  \left| \mathbf{f}(e) - \zeta( y) \sum_{z \in \mathcal{Z}_{n,2n}^+}  \chi(y - z) \left( \nabla \psi_{z , n+1} (e) -\nabla \nu^*_{n+1}(q)(e) \right) \right|^2 \right] \\ \leq C\tau_n^*(q) + C(1 + |q|^2) 3^{-\beta n},
\end{multline}
for some small exponent $\beta := \beta (d, \lambda) > 0$.
We simplify the previous display by removing the function~$\zeta$. Note that if we denote by
\begin{equation*}
\partial  \mathcal{Z}_{n,2n}:= \left\{ z \in \mathcal{Z}_{n,2n}^+ ~:~ \dist(z , \partial \cu_{2n}) \leq 2\cdot 3^n \right\},
\end{equation*}
then we have the following computation, using the properties of the functions $\zeta$ and $\chi$,
\begin{align*}
\lefteqn{ \E \left[  \frac1{\left| \cu_{2n}^+ \right|} \sum_{e = (x,y) \subseteq \cu_{2n}^+}  \left| \left(1- \zeta( y)\right) \sum_{z \in \mathcal{Z}_{n,2n}^+}  \chi(y - z) \left( \nabla \psi_{z , n+1} (e) -\nabla \nu^*_{n+1}(q)(e) \right) \right|^2 \right] } \qquad & \\ &
				\leq  \E \left[  \frac1{\left| \cu_{2n}^+ \right|} \sum_{e = (x,y) \subseteq \cu_{2n}^+}  \left| \sum_{z \in \partial  \mathcal{Z}_{n,2n}}  \chi(y - z) \left( \nabla \psi_{z , n+1} (e) - \nabla \nu^*_{n+1}(q)(e) \right) \right|^2 \right] \\
				& \leq  \E \left[  \frac1{\left| \cu_{2n}^+ \right|} \sum_{e = (x,y) \subseteq \cu_{2n}^+}  \sum_{z \in \partial  \mathcal{Z}_{n,2n}}  \chi(y - z) \left| \nabla \psi_{z , n+1} (e) - \nabla \nu^*_{n+1}(q)(e) \right|^2 \right].
\end{align*}
The previous display can be simplified by using that the function $\zeta$ is equal to $1$ on the set $\left\{ x \in \cu_{2n} ~:~ \dist(x , \partial \cu_{2n}) \geq 3^n \right\}$. We obtain
\begin{multline*}
 \E \left[  \frac1{\left| \cu_{2n}^+ \right|} \sum_{e = (x,y) \subseteq \cu_{2n}^+}  \left| \left(1- \zeta( y)\right) \sum_{z \in \mathcal{Z}_{n,2n}^+}  \chi(y - z) \left( \nabla \psi_{z , n+1} (e) - \nabla \nu^*_{n+1}(q)(e) \right) \right|^2 \right] \\
			 \leq  \E \left[  \frac1{\left| \cu_{2n}^+ \right|}  \sum_{z \in \partial \mathcal{Z}_{n,2n}}  \sum_{e \subseteq z + \cu_{n+1}} \left| \nabla \psi_{z , n+1} (e) - \nabla \nu^*_{n+1}(q)(e) \right|^2 \right].
\end{multline*}
Using that all the random variables $ \psi_{z , n+1}$ have the same law, which is $\P_{n+1,q}^*$, one has
\begin{multline*}
 \E \left[ \frac1{\left| \cu_{2n}^+ \right|} \sum_{e = (x,y) \subseteq \cu_{2n}^+}  \left| \left(1- \zeta( y)\right) \sum_{z \in \mathcal{Z}_{n,2n}^+}  \chi(y - z) \left( \nabla \psi_{z , n+1} (e) - \nabla \nu^*_{n+1}(q)(e) \right) \right|^2 \right] \\
			 \leq \frac{\left|  \partial  \mathcal{Z}_{n,2n} \right|}{\left| \cu_{2n}^+ \right|}  \E \left[ \sum_{e \subseteq \cu_{n+1}} \left| \nabla \psi_{n+1 , q} (e) -\nabla \nu^*_{n+1}(q)(e) \right|^2 \right].
\end{multline*}
But by the estimate~\eqref{4.L2boundnustar} of Proposition~\ref{propofnunustar}, the term on the right-hand side is bounded and one has
\begin{multline*}
 \E \left[ \frac1{\left| \cu_{2n}^+ \right|} \sum_{e = (x,y) \subseteq \cu_{2n}^+}  \left| \left(1- \zeta( y)\right) \sum_{z \in \mathcal{Z}_{n,2n}^+}  \chi(y - z) \left( \nabla \psi_{z , n+1} (e) -  \nabla \nu^*_{n+1}(q)(e) \right) \right|^2 \right] \\
			 \leq \frac{\left|  \partial  \mathcal{Z}_{n,2n} \right|\cdot |\cu_{n+1}|}{\left| \cu_{2n}^+ \right|}  C (1 + |q|^2).
\end{multline*}
One then appeals to the estimate
$
\left|  \partial \mathcal{Z}_{n,2n} \right| \leq C 3^{(d-1)n} $ and the equality $\left| \cu_{n+1} \right| = 3^{d(n+1)}
$
to obtain
\begin{multline*}
 \E \left[ \frac1{\left| \cu_{2n}^+ \right|} \sum_{e = (x,y) \subseteq \cu_{2n}^+}  \left| \left(1- \zeta( y)\right) \sum_{z \in \mathcal{Z}_{n,2n}^+}  \chi(y - z) \left( \nabla \psi_{z , n+1} (e) - \nabla \nu^*_{n+1}(q)(e)  \right) \right|^2 \right] 
			\\  \leq C 3^{-n} (1 + |q|^2).
\end{multline*}
By the previous display and~\eqref{proving3.23.1}, it is enough to prove~\eqref{projcloseid.e.0} to show
\begin{multline} \label{toshow3.245}
\E \left[  \frac1{\left| \cu_{2n}^+ \right|} \sum_{e = (x,y) \subseteq \cu_{2n}^+}  \left| \mathbf{f}(e) - \sum_{z \in \mathcal{Z}_{n,2n}^+}  \chi(y - z) \left( \nabla \psi_{z , n+1} (e) -  \nabla \nu^*_{n+1}(q)(e)  \right) \right|^2 \right] \\ \leq C\tau_n^*(q) + C(1 + |q|^2) 3^{-\beta n},
\end{multline}
for some small exponent $\beta := \beta(d, \lambda) > 0$.
We now prove this estimate. Using that the map $\chi$ is a partition of unity, we rewrite
\begin{multline*}
\E \left[  \frac1{\left| \cu_{2n}^+ \right|} \sum_{e = (x,y) \subseteq \cu_{2n}^+}  \left| \mathbf{f}(e) - \sum_{z \in \mathcal{Z}_{n,2n}^+}  \chi(y - z) \left( \nabla \psi_{z , n+1} (e) -  \nabla \nu^*_{n+1}(q)(e) \right) \right|^2 \right]  \\ = \E \left[  \frac1{\left| \cu_{2n}^+ \right|} \sum_{e = (x,y) \subseteq \cu_{2n}^+}  \left|  \sum_{z \in \mathcal{Z}_{n,2n}^+}  \chi(y - z) \left(  \mathbf{f}(e) - \nabla \psi_{z , n+1} (e) +  \nabla \nu^*_{n+1}(q)(e)\right) \right|^2 \right] .
\end{multline*}
Using that the function $\chi$ is supported in the cube $\frac 34 \cu_{n+1}$, one obtains
\begin{multline*}
\E \left[  \frac1{\left| \cu_{2n}^+ \right|} \sum_{e = (x,y) \subseteq \cu_{2n}^+}  \left|  \sum_{z \in \mathcal{Z}_{n,2n}^+}  \chi(y - z) \left(  \mathbf{f}(e) - \nabla \psi_{z , n+1} (e) +  \nabla \nu^*_{n+1}(q)(e) \right) \right|^2 \right]  \\ \leq \E \left[  \frac1{\left| \cu_{2n}^+ \right|}  \sum_{z \in \mathcal{Z}_{n,2n}^+} \sum_{e \subseteq \left( z + \frac 34 \cu_{n+1} \right) \cap \cu_{2n}}  \left|   \mathbf{f}(e) - \nabla \psi_{z , n+1} (e) +  \nabla \nu^*_{n+1}(q)(e) \right|^2 \right] .
\end{multline*}
Using the definition of the vector field $\mathbf{f}$ given in~\eqref{def.mathbfff}, and splitting the sum according to the partition of bonds,
\begin{equation*}
e \subseteq \cu_{2n} \implies e \in B_{2n,n} \quad \mbox{or} \quad \exists z \in \mathcal{Z}_{n,2n}, ~ e \subseteq z + \cu_n,
\end{equation*}
one derives
\begin{align} \label{comaroingftopatchup}
\lefteqn{ \E \left[  \frac1{\left| \cu_{2n}^+ \right|} \sum_{e = (x,y) \subseteq \cu_{2n}^+}  \left| \mathbf{f}(e) - \sum_{z \in \mathcal{Z}_{n,2n}}  \chi(y - z) \left( \nabla \psi_{z , n+1} (e) - \nabla \nu^*_{n+1} (q) ( e )\right) \right|^2 \right] \notag  } \qquad & \\ &
				\leq \E \left[  \frac1{\left| \cu_{2n}^+ \right|}  \sum_{z, z' \in \mathcal{Z}_{n,2n}, \, z \in z' + \cu_{n+1}} \sum_{e \subseteq z' +\cu_{n}}  \left|  \nabla \psi_{z'} (e) - \nabla \psi_{z,n+1} (e)  \right|^2 \right]   \\
						& \quad + \E \left[  \frac1{\left| \cu_{2n}^+ \right|} \sum_{z \in  \mathcal{Z}_{n,2n}} \sum_{e \in B_{2n,n} \cap (z + \frac 34 \cu_{n+1} )}\left| \nabla \psi_{z , n+1} (e) \right|^2 \right]  \notag \\
						& \qquad + C \left|\nabla \nu^*_{n+1} (q) - \nabla \nu^*_{n} (q) \right|^2 \notag.
\end{align}
The first term on the right-hand side is estimated thanks to~\eqref{2sccouploin.0.ineq}. This gives
\begin{align} \label{comaroingftopatchup2}
\lefteqn{ \E \left[  \frac1{\left| \cu_{2n}^+ \right|}  \sum_{z, z' \in \mathcal{Z}_{n,2n}, \, z \in z' + \cu_{n+1}} \sum_{e \subseteq z' +\cu_{n}}  \left|  \nabla \psi_{z'} (e) - \nabla \psi_{z,n+1} (e)  \right|^2 \right]} \qquad & \\ &
						= \frac1{\left| \cu_{2n}^+ \right|}  \sum_{z \in \mathcal{Z}_{n,2n}} \E \left[\sum_{z' \in 3^n \Zd \cap \left( z + \cu_{n+1} \right)} \sum_{e \subseteq z' + \cu_{n}} \left| \nabla \psi_{z,n+1} (e) - \nabla \psi_{z'} (e)\right|^2  \right] \notag \\
						& \leq C\tau_n^*(q) + C(1 + |q|^2) 3^{-n}. \notag
\end{align}
The third term can be estimated thanks to~\eqref{splitstep2},
\begin{equation*}
\left|\nabla \nu^*_{n+1} (q) - \nabla \nu^*_{n} (q) \right|^2 \leq C \tau_n^*(q) + C (1 + |q|^2) 3^{-n}.
\end{equation*}
To estimate the second term on the right-hand side of~\eqref{comaroingftopatchup}, we first use that all the random variables $\psi_{z,n+1}$ have the same law, which is $\P_{n+1,q}^*$. This gives
\begin{align*}
\lefteqn{\E \left[  \frac1{\left| \cu_{2n}^+ \right|} \sum_{z \in  \mathcal{Z}_{n,2n}} \sum_{e \in B_{2n,n} \cap (z + \frac 34 \cu_{n+1} )}\left| \nabla \psi_{z , n+1} (e) \right|^2 \right] } \qquad & \\ &
				= \frac1{\left| \cu_{2n}^+ \right|} \sum_{z \in  \mathcal{Z}_{n,2n}}  \E \left[ \sum_{e \in B_{2n,n} \cap \frac 34 \cu_{n+1} }\left| \nabla \psi_{n+1,q} (e) \right|^2 \right] \\ &
				\leq C \E \left[ \frac 1{\left| \cu_{n+1} \right|}  \sum_{e \in B_{2n,n} \cap \frac 34 \cu_{n+1} }\left| \nabla \psi_{n+1 , q} (e) \right|^2 \right].
\end{align*}
We then estimate this term by the Meyers estimate, Proposition~\ref{meyersgradphi} with $\gamma = \frac 34$. We denote by $\delta$ the exponent of Proposition~\ref{meyersgradphi} and compute
\begin{align*}
\lefteqn{ \E \left[ \frac 1{\left| \cu_{n+1} \right|}  \sum_{e \in B_{2n,n} \cap \frac 34 \cu_{n+1} }\left| \nabla \psi_{n+1 ,q} (e) \right|^2 \right] }\qquad & \\ 
					& \leq  \frac C{\left| \frac 34 \cu_{n+1} \right|}   \sum_{e \in B_{2n,n} \cap \frac 34 \cu_{n+1} }  \E \left[  \left| \nabla \psi_{n+1 ,q} (e) \right|^2 \right] \\
					& \leq \frac{C \left|B_{2n,n} \cap \frac 34 \cu_{n+1}\right|^{\frac{\delta}{1 + \delta}} }{\left| \frac 34 \cu_{n+1} \right|}  \left(\sum_{e \subseteq \frac 34 \cu_{n+1} }  \E \left[  \left| \nabla \psi_{n+1 ,q} (e) \right|^2 \right]^{1 + \delta} \right)^{\frac1{1 +\delta}} \\
					& \leq C \left( \frac{\left|B_{2n,n} \cap \frac 34 \cu_{n+1}\right|}{\left| \frac 34 \cu_{n+1} \right|}\right)^{\frac{\delta}{1 + \delta}} ( 1 + |q|^2 ).
\end{align*}
We then use that
\begin{equation*}
\frac{\left|B_{2n,n} \cap \frac 34 \cu_{n+1}\right|}{\left| \frac 34 \cu_{n+1} \right|} \leq C3^{-n},
\end{equation*}
to derive
\begin{equation*}
 \E \left[ \frac 1{\left| \cu_{n+1} \right|}  \sum_{e \in B_{2n,n} \cap \frac 34 \cu_{n+1} }\left| \nabla \psi_{n+1,q} (e) \right|^2 \right] \leq C 3^{-\frac{\delta }{1 + \delta}n} ( 1 + |q|^2 ).
\end{equation*}
Combining the few previous displays gives the following estimate for the second term on the right-hand side of~\eqref{comaroingftopatchup}
\begin{equation*}
\E \left[  \frac1{\left| \cu_{2n}^+ \right|} \sum_{z \in \mathcal{Z}_{n,2n}} \sum_{e \in B_{2n,n} \cap (z + \frac 34 \cu_{n+1} )}\left| \nabla \psi_{z , n+1} (e) \right|^2 \right]  \leq C 3^{-\frac{\delta }{1 + \delta}n} ( 1 + |q|^2 ).
\end{equation*}
Combining~\eqref{comaroingftopatchup} with~\eqref{comaroingftopatchup2}, the previous displays and setting $\beta := \frac{\delta }{1 + \delta}$ yields
\begin{multline*}
 \E \left[  \frac1{\left| \cu_{2n}^+ \right|} \sum_{e = (x,y) \subseteq \cu_{2n}^+}  \left| \mathbf{f}(e) - \sum_{z \in 3^n \Zd}  \chi(y - z) \left( \nabla \psi_{z , n+1} (e) - \nabla \nu^*_{n+1}(q) \cdot e \right) \right|^2 \right] \\ \leq C\tau_n^*(q) + C(1 + |q|^2) 3^{-\beta n}.
\end{multline*}
This is precisely~\eqref{toshow3.245}. The proof of~\eqref{projcloseid.e.0} is complete.

\medskip

\textit{Step 3.} In this step, we deduce from~\eqref{projcloseid.e.0} the estimate 
\begin{equation*}
\E \left[  \frac1{\left|\cu_{2n}^+\right|} \sum_{e \subseteq \cu_{2n}^+} \left| \mathbf{f} (e) - \nabla \kappa (e) \right|^2  \right]  \leq C \left(( 1 + |q|^2) 3^{-\beta n} + \sum_{m=0}^n 3^{\frac{(m-n)}{2}}\tau_m^*(q)\right).
\end{equation*}
We recall that $\kappa$ is defined as the solution of the problem
\begin{equation*}
\left\{ \begin{aligned}
\Delta \kappa & = \div \mathbf{f}~ \mbox{in} ~ \cu_{2n}, \\
\kappa & \in h^1_0 \left( \cu_{2n}^+ \right). 
\end{aligned}  \right.
\end{equation*}
This implies the almost sure inequality
\begin{align} \label{kappainff}
\sum_{e \subseteq \cu_{2n}^+} \left|  \mathbf{f}(e) - \nabla \kappa (e) \right|^2 & = \inf_{\kappa' \in h^1_0 \left(\cu_{2n}^+ \right)} \sum_{e \subseteq \cu_{2n}^+} \left| \mathbf{f}(e) - \nabla \kappa' (e) \right|^2 \\
							& \leq \sum_{e \subseteq \cu_{2n}^+} \left|  \mathbf{f}(e) - \nabla \Psi (e) \right|^2. \notag
\end{align}
Taking the expectation and using the inequality~\eqref{projcloseid.e.0} gives
\begin{equation*}
\E \left[  \frac1{\left|\cu_{2n}^+\right|} \sum_{e \subseteq \cu_{2n}^+} \left| \mathbf{f} (e) - \nabla \kappa (e) \right|^2  \right]  \leq C \left(( 1 + |q|^2) 3^{-\beta n} + \sum_{m=0}^n 3^{\frac{(m-n)}{2}}\tau_m^*(q)\right).
\end{equation*}
Combining the previous display with~\eqref{compkappakappa+} proves the estimate
\begin{equation} \label{mainresultstep3sptech}
\E \left[  \frac1{\left|\cu_{2n}^+\right|} \sum_{e \subseteq \cu_{2n}^+} \left| \mathbf{f} (e) - \nabla \kappa_{2n}^+ (e) \right|^2  \right]  \leq C \left(( 1 + |q|^2) 3^{-\beta n} + \sum_{m=0}^n 3^{\frac{(m-n)}{2}}\tau_m^*(q)\right).
\end{equation}

\medskip 

\textit{Step 4.} The goal of this step is to use the main result~\eqref{mainresultstep3sptech} of Step 3 to prove
\begin{multline} \label{Step4quanthomog.maintech}
\E \left[ \frac{1}{\left| \cu_{2n}^+\right|} \sum_{e \subseteq \cu_{2n}^+}  V_e \left( \nabla \nu^*_n (q) (e) + \nabla \kappa^+_{2n} (e) \right) \right]  \leq \E \left[ \frac{1}{\left| \cu_{n}\right|} \sum_{e \subseteq \cu_{n}}  V_e \left( \nabla \psi_{n,q} (e) \right) \right] \\ + C \left(( 1 + |q|^2) 3^{-\beta n} + \sum_{m=0}^n 3^{\frac{(m-n)}{2}}\tau_m^*(q)\right).
\end{multline}

We recall the definition of the random variable $\sigma_z(x) := \psi_z(x) - \nabla \nu^*_n (q) \cdot (x - z)$ introduced in~\eqref{def.sigmaz}. The proof relies on the following technical estimate, the proof of which is postponed to Appendix~\ref{appendixA}, Proposition~\ref{p.lemmaA2}.
\begin{align} \label{pp.lemmaA2}
\lefteqn{\E \left[ \frac{1}{\left| \cu_{2n}\right|} \sum_{z \in \mathcal{Z}_{n,2n}} \sum_{e \subseteq z + \cu_n}  V_e \left(  \nabla \nu^*_n (q) (e) + \nabla \kappa (e) \right) \right]} \qquad & \\ &  \leq  \E \left[  \frac{1}{\left| \cu_{2n}\right|} \sum_{z \in \mathcal{Z}_{n,2n}} \sum_{e \subseteq z + \cu_n}  V_e \left( \nabla \psi_z (e) \right)  \right] + C (1 + |q|^2) 3^{-\frac n2}\notag  \\ & \qquad + C \E \left[ \frac{1}{\left| \cu_{2n}\right|} \sum_{z \in \mathcal{Z}_{n,2n}} \sum_{e \subseteq z + \cu_n} \left| \nabla \kappa (e) - \nabla \sigma_z (e) \right|^2 \right]. \notag
\end{align}
We now show how to deduce~\eqref{Step4quanthomog.maintech} from the previous inequality. First, since all the random variables $\psi_z$ have the same law, one can simplify the first term on the right-hand side,
\begin{align*}
\E \left[  \frac{1}{\left| \cu_{2n}\right|} \sum_{z \in \mathcal{Z}_{n,2n}} \sum_{e \subseteq z + \cu_n}  V_e \left( \nabla \psi_z (e) \right)  \right]
			& =  \frac{|\mathcal{Z}_{n,2n}|}{\left| \cu_{2n}\right|} \E \left[ \sum_{e \subseteq \cu_n}  V_e \left( \nabla \psi_{n,q} (e) \right)  \right] \\
			& =   \E \left[ \frac1{\left| \cu_n\right|} \sum_{e \subseteq \cu_n}  V_e \left( \nabla \psi_{n,q} (e) \right)  \right].
\end{align*}
We now estimate the last term on the right-hand side of~\eqref{pp.lemmaA2}. One has
\begin{align} \label{exacsamecompkappapsiz}
\lefteqn{\E \left[ \frac{1}{\left| \cu_{2n}\right|} \sum_{z \in \mathcal{Z}_{n,2n}} \sum_{e \subseteq z + \cu_n} \left| \nabla \sigma_z (e) - \nabla \kappa (e)  \right|^2 \right] } \qquad & \\ & \leq  \E \left[  \frac1{\left|\cu_{2n}\right|} \sum_{e \subseteq \cu_{2n}^+} \left| \mathbf{f} (e) - \nabla \kappa (e) \right|^2  \right] \notag \\ &
		  \leq \E \left[  \frac1{\left|\cu_{2n}^+\right|} \sum_{e \subseteq \cu_{2n}^+} \left| \mathbf{f} (e) - \nabla \kappa (e) \right|^2  \right] \notag \\
		 & \quad+ \left(  \frac{\left|\cu_{2n}\right| - \left|\cu_{2n}^+\right| }{\left|\cu_{2n}^+\right|\cdot\left|\cu_{2n}\right|} \right)  \E \left[  \sum_{e \subseteq \cu_{2n}^+} \left| \mathbf{f} (e) - \nabla \kappa (e) \right|^2  \right] . \notag
\end{align}
We first estimate the second term on the right-hand side of the previous display. Note that
\begin{equation*}
\frac{\left|\cu_{2n}\right| - \left|\cu_{2n}^+\right| }{\left|\cu_{2n}^+\right|\cdot\left|\cu_{2n}\right|} \leq 3^{-2n} \frac{1}{\left|\cu_{2n}\right|},
\end{equation*}
and using the estimate~\eqref{kappainff}, one has
\begin{align*}
 \E \left[   \frac1{\left|\cu_{2n}\right|} \sum_{e \subseteq \cu_{2n}^+} \left| \mathbf{f} (e) - \nabla \kappa (e) \right|^2  \right] &  \leq  \E \left[   \frac1{\left|\cu_{2n}\right|} \sum_{e \subseteq \cu_{2n}^+} \left| \mathbf{f} (e) \right|^2  \right]  \\
				& \leq \E \left[  \frac{1}{\left|\cu_{2n}\right|} \sum_{z \in \mathcal{Z}_{n,2n}}\sum_{e \subseteq z + \cu_n} \left| \nabla \sigma_z (e) \right|^2  \right] \\		
				& \leq C \E \left[  \frac{1}{\left|\cu_n\right|} \sum_{e \subseteq  \cu_n} \left| \nabla \psi_{n,q} (e) \right|^2  \right].
\end{align*}
We can then bound the last term on the right-hand side thanks to Proposition~\ref{propofnunustar}. This gives
\begin{equation*}
 \E \left[   \frac1{\left|\cu_{2n}\right|} \sum_{e \subseteq \cu_{2n}^+} \left| \mathbf{f} (e) - \nabla \kappa (e) \right|^2  \right]  \leq C \E \left[  \frac{1}{\left|\cu_n\right|} \sum_{e \subseteq  \cu_n} \left| \nabla \psi_{n,q} (e) \right|^2  \right]  \leq C(1+|q|^2).
\end{equation*}
Combining the few previous displays shows
\begin{multline*}
\E \left[ \frac{1}{\left| \cu_{2n}\right|} \sum_{z \in \mathcal{Z}_{n,2n}} \sum_{e \subseteq z + \cu_n} \left|  \nabla \sigma_z (e) - \nabla \kappa (e) \right|^2 \right] \leq \E \left[  \frac1{\left|\cu_{2n}^+\right|} \sum_{e \subseteq \cu_{2n}^+} \left| \mathbf{f} (e) - \nabla \kappa (e) \right|^2  \right]  \\ + C 3^{-2n} (1 + |q|^2).
\end{multline*}
We then use~\eqref{mainresultstep3sptech0} to deduce the estimate
\begin{equation*}
\E \left[ \frac{1}{\left| \cu_{2n}\right|} \sum_{z \in \mathcal{Z}_{n,2n}} \sum_{e \subseteq z + \cu_n} \left| \nabla \kappa (e) - \nabla \sigma_z (e) \right|^2 \right]  \leq C \left(( 1 + |q|^2) 3^{-\beta n} + \sum_{m=0}^n 3^{\frac{(m-n)}{2}}\tau_m^*(q)\right).
\end{equation*}
Combining this estimate with~\eqref{pp.lemmaA2} shows
\begin{align*}
\lefteqn{ \E \left[ \frac{1}{\left| \cu_{2n}\right|} \sum_{z \in \mathcal{Z}_{n,2n}} \sum_{e \subseteq z + \cu_n}  V_e \left( \nabla \nu^*_n (q) (e)  + \nabla \kappa (e) \right) \right] } \qquad & \\ & \leq  \E \left[  \frac{1}{\left| \cu_{2n}\right|} \sum_{z \in \mathcal{Z}_{n,2n}} \sum_{e \subseteq z + \cu_n}  V_e \left( \nabla \psi_z (e) \right)  \right] \\  &\qquad + C \left(( 1 + |q|^2) 3^{-\beta n} + \sum_{m=0}^n 3^{\frac{(m-n)}{2}}\tau_m^*(q)\right).
\end{align*}
To complete the proof of~\eqref{Step4quanthomog.maintech}, it is thus sufficient to prove
\begin{align} \label{of(0)proof}
  \lefteqn{\E \left[ \frac{1}{\left| \cu_{2n}^+\right|} \sum_{e \subseteq \cu_{2n}^+}  V_e \left( \nabla \nu^*_n (q) (e) +  \nabla \kappa^+_{2n} (e) \right) \right]  } \qquad & \\ & \leq \E \left[ \frac{1}{\left| \cu_{2n}\right|} \sum_{z \in \mathcal{Z}_{n,2n}} \sum_{e \subseteq z + \cu_n}  V_e \left( \nabla \nu^*_n (q) (e)  + \nabla \kappa (e) \right) \right] \notag
					\\ & \qquad +  C \left(( 1 + |q|^2) 3^{-\beta n} + \sum_{m=0}^n 3^{\frac{(m-n)}{2}}\tau_m^*(q)\right), \notag
\end{align}
for some constant $C := C(d,\lambda) < \infty$ and some exponent $\beta := \beta(d , \lambda) > 0$. To this end, we prove the two following inequalities:
\begin{enumerate}
\item we first prove the inequality
\begin{align} \label{of(1)proof}
\lefteqn{  \E \left[ \frac{1}{\left| \cu_{2n}\right|} \sum_{e \subseteq \cu_{2n}^+}  V_e \left( \nabla \nu^*_n (q) (e) +  \nabla \kappa (e) \right) \right] } \qquad & \\ & \leq \E \left[ \frac{1}{\left| \cu_{2n}\right|} \sum_{z \in\mathcal{Z}_{n,2n}} \sum_{e \subseteq z + \cu_n}  V_e \left( \nabla \nu^*_n (q) (e)  + \nabla \kappa (e) \right) \right] \notag \\ & +  C \left(( 1 + |q|^2) 3^{-\beta n} + \sum_{m=0}^n 3^{\frac{(m-n)}{2}}\tau_m^*(q)\right); \notag
\end{align}
\item we then prove
\begin{multline} \label{{of(2)proof}}
\E \left[ \frac{1}{\left| \cu_{2n}^+\right|} \sum_{e \subseteq \cu_{2n}^+}  V_e \left( \nabla \nu^*_n (q) (e)  + \nabla \kappa^+_{2n} (e) \right) \right] \\ \leq  \E \left[ \frac{1}{\left| \cu_{2n}\right|} \sum_{e \subseteq \cu_{2n}^+}  V_e \left( \nabla \nu^*_n (q) (e)  + \nabla \kappa (e) \right) \right]  +  C 3^{-\frac d2 n} (1 + |q|) .
\end{multline}
\end{enumerate}

\medskip

Proof of~\eqref{of(1)proof}.  We define $B_{n,2n}^+$ to be the set of edges of $\cu_{2n}^+$ which do not belong to a cube of the form $(z + \cu_n)$, for $z \in \mathcal{Z}_{n,2n}$, i.e.,
\begin{equation*}
B_{n,2n}^+ := \left\{ e \subseteq \cu_{2n}^+ ~:~ \forall z \in \mathcal{Z}_{n,2n}, \, e \not\subseteq z + \cu_n \right\}.
\end{equation*}
This set has been defined to have the following decomposition of the sum
\begin{equation*}
\sum_{e \subseteq \cu_{2n}^+} = \sum_{z \in \mathcal{Z}_{n,2n}} \sum_{e \subseteq z + \cu_n} + \sum_{e \in B_{n,2n}^+}.
\end{equation*}
Note also that the set $B_{n,2n}^+$ is almost equal to the set $B_{n,2n}$, the only difference is that we added the bonds which belong to the cube $\cu_{2n}^+$ but not to the cube $\cu_{2n}$, which is a small boundary layer of bonds. Additionally, one has the estimate on the cardinality of~$B_{n,2n}^+$,
\begin{equation*}
\left| B_{n,2n}^+ \right| \leq C 3^{-n} \left| \cu_{2n} \right|.
\end{equation*}
We first prove the estimate
\begin{equation*}
 \E \left[ \frac{1}{\left| \cu_{2n}\right|} \sum_{e \in B_{2n,n}^+}  V_e \left(  \nabla \nu^*_n (q) (e) + \nabla \kappa (e) \right) \right] \leq   C \left(( 1 + |q|^2) 3^{-\beta n} + \sum_{m=0}^n 3^{\frac{(m-n)}{2}}\tau_m^*(q)\right).
\end{equation*}
We first use that, for each $x \in \R$, one has the inequality $V_e(x) \leq \frac 1 \lambda x^2$. This shows
\begin{align*}
\lefteqn{ \E \left[ \frac{1}{\left| \cu_{2n}\right|} \sum_{e \in B_{2n,n}^+}  V_e \left(  \nabla \nu^*_n (q) (e) + \nabla \kappa (e) \right) \right]} \qquad & \\ & \leq C \E \left[ \frac{1}{\left| \cu_{2n}\right|} \sum_{e \in B_{2n,n}^+}  |\nabla \kappa (e)|^2 \right] + C \frac{\left|B_{2n,n}^+\right|}{\left| \cu_{2n} \right|} \left|\nabla \nu^*_n (q)\right|^2  \\ &
\leq C \E \left[ \frac{1}{\left| \cu_{2n}\right|} \sum_{e \in B_{2n,n}^+}  |\nabla \kappa (e)|^2 \right] + C (1 + |q|^2 ) 3^{-2n}.
\end{align*}
For each bond $e$ in the set $B_{2n,n}^+$, one has the equality $\mathbf{f}(e) =0$. This implies
\begin{equation*}
 \E \left[ \frac{1}{\left| \cu_{2n}\right|} \sum_{e \in B_{2n,n}^+}  |\nabla \kappa (e)|^2 \right]  \leq  \E \left[  \frac1{\left|\cu_{2n}\right|} \sum_{e \subseteq \cu_{2n}^+} \left| \mathbf{f} (e) - \nabla \kappa (e) \right|^2  \right].
\end{equation*}
Using the exact same computation as in~\eqref{exacsamecompkappapsiz}, one obtains
\begin{equation*}
\E \left[  \frac1{\left|\cu_{2n}\right|} \sum_{e \subseteq \cu_{2n}^+} \left| \mathbf{f} (e) - \nabla \kappa (e) \right|^2  \right] \leq  C \left(( 1 + |q|^2) 3^{-\beta n} + \sum_{m=0}^n 3^{\frac{(m-n)}{2}}\tau_m^*(q)\right).
\end{equation*}
Combining the few previous displays shows
\begin{equation*}
 \E \left[ \frac{1}{\left| \cu_{2n}\right|} \sum_{e \in B_{2n,n}^+}  V_e \left(  \nabla \nu^*_n (q) (e)  + \nabla \kappa (e) \right) \right] \leq  C \left(( 1 + |q|^2) 3^{-\beta n} + \sum_{m=0}^n 3^{\frac{(m-n)}{2}}\tau_m^*(q)\right),
\end{equation*}
and consequently
\begin{align*}
\lefteqn{\E \left[ \frac{1}{\left| \cu_{2n}\right|} \sum_{e \subseteq \cu_{2n}^+}  V_e \left(  \nabla \nu^*_n (q) (e)  + \nabla \kappa (e) \right) \right] } \qquad & \\ & = \E \left[ \frac{1}{\left| \cu_{2n}\right|} \sum_{z \in \mathcal{Z}_{n,2n}} \sum_{e \subseteq z + \cu_n}   V_e \left(  \nabla \nu^*_n (q) (e)  + \nabla \kappa (e) \right) \right] \\ &
				\quad + \E \left[ \frac{1}{\left| \cu_{2n}\right|} \sum_{e \in B_{2n,n}^+} V_e \left(  \nabla \nu^*_n (q) (e)  + \nabla \kappa (e) \right) \right] \\
				& \leq  \E \left[ \frac{1}{\left| \cu_{2n}\right|} \sum_{z \in \mathcal{Z}_{n,2n}} \sum_{e \subseteq z + \cu_n}   V_e \left(  \nabla \nu^*_n (q) (e)  + \nabla \kappa (e) \right) \right] \\ & \quad + C \left(( 1 + |q|^2) 3^{-\beta n} + \sum_{m=0}^n 3^{\frac{(m-n)}{2}}\tau_m^*(q)\right).
\end{align*}
This is~\eqref{of(1)proof}.

\medskip

Proof of~\eqref{{of(2)proof}}. The main tool is the estimate~\eqref{compgradkappakappa+}, which we recall
\begin{equation*}
\E \left[ \frac1{\left|\cu_{2n}^+\right|} \sum_{e \subseteq \cu_{2n}^+} \left| \nabla \kappa_{2n}^+ (e) - \nabla \kappa (e) \right|^2  \right] \leq C 3^{-dn}.
\end{equation*}
Using this inequality and a Taylor expansion, together with the assumption $V_e'' \leq \frac 1\lambda$, one obtains
\begin{align*}
\lefteqn{\E \left[ \frac{1}{\left| \cu_{2n}\right|} \sum_{e \subseteq \cu_{2n}^+}  V_e \left( \nabla \nu^*_n (q) (e) + \nabla \kappa_{2n}^+ (e) \right) \right] } \qquad & \\ & \leq \E \left[ \frac{1}{\left| \cu_{2n}\right|} \sum_{e \subseteq \cu_{2n}^+}  V_e \left( \nabla \nu^*_n (q) (e)  + \nabla \kappa_{2n} \right) \right] \\ & \qquad
+ \E \left[ \frac{1}{\left| \cu_{2n}\right|} \sum_{e \subseteq \cu_{2n}^+}  V_e' \left( \nabla \nu^*_n (q) (e) + \nabla \kappa (e) \right)  \left( \nabla \kappa_{2n}^+ (e) - \nabla \kappa (e)  \right) \right] \\ & \qquad
+ \frac1{2\lambda} \E \left[ \frac{1}{\left| \cu_{2n}\right|} \sum_{e \subseteq \cu_{2n}^+}  \left| \nabla \kappa_{2n}^+ (e) - \nabla \kappa (e) \right|^2 \right].
\end{align*} 
First combining the two previous displays, one has
\begin{multline} \label{displaywithC3-dn}
\E \left[ \frac{1}{\left| \cu_{2n}\right|} \sum_{e \subseteq \cu_{2n}^+}  V_e \left( \nabla \nu^*_n (q) (e) + \nabla \kappa_{2n}^+ (e) \right) \right]  \leq \E \left[ \frac{1}{\left| \cu_{2n}\right|} \sum_{e \subseteq \cu_{2n}^+}  V_e \left( \nabla \nu^*_n (q) (e) +  \nabla \kappa_{2n} \right) \right] \\ + C 3^{-dn} + \E \left[ \frac{1}{\left| \cu_{2n}\right|} \sum_{e \subseteq \cu_{2n}^+}  V_e' \left( \nabla \nu^*_n (q) (e) + \nabla \kappa (e) \right)  \left( \nabla \kappa_{2n}^+ (e) - \nabla \kappa (e)  \right) \right],
\end{multline} 
so that there only remains to study the last term on the right-hand side of the previous display. This is achieved thanks to the Cauchy-Schwarz inequality
\begin{align*}
\lefteqn{ \left| \E \left[ \frac{1}{\left| \cu_{2n}\right|} \sum_{e \subseteq \cu_{2n}^+}  V_e' \left( \nabla \nu^*_n (q) (e) + \nabla \kappa (e) \right)  \left( \nabla \kappa_{2n}^+ (e) - \nabla \kappa (e)  \right) \right] \right|} \qquad &  \\
			& \leq  \E \left[ \frac{1}{\left| \cu_{2n}\right|} \sum_{e \subseteq \cu_{2n}^+} \left| V_e' \left( \nabla \nu^*_n (q) (e) + \nabla \kappa (e) \right) \right|^2 \right]^{\frac 12}  \E \left[ \frac{1}{\left| \cu_{2n}\right|} \sum_{e \subseteq \cu_{2n}^+}  \left| \nabla \kappa_{2n}^+ (e) - \nabla \kappa (e)  \right|^2 \right]^{\frac 12}  \\
			& \leq C 3^{-\frac d2 n}  \E \left[ \frac{1}{\left| \cu_{2n}\right|} \sum_{e \subseteq \cu_{2n}^+} \left| V_e' \left( \nabla \nu^*_n (q) (e) + \nabla \kappa (e) \right) \right|^2 \right]^{\frac 12} .
\end{align*}
We then use that, for each $x \in \R$, $| V_e' \left( x \right)| \leq \frac 1\lambda |x|$ to obtain
\begin{equation*}
\E \left[ \frac{1}{\left| \cu_{2n}\right|} \sum_{e \subseteq \cu_{2n}^+} \left| V_e' \left( \nabla \nu^*_n (q) (e) +  \nabla \kappa (e) \right) \right|^2 \right] \leq  C \left|\nabla \nu^*_n (q) \right|^2 + C \E \left[ \frac{1}{\left| \cu_{2n}\right|} \sum_{e \subseteq \cu_{2n}^+} \left|  \nabla \kappa (e) \right|^2 \right] 
\end{equation*}
and by the definition of $\kappa$ given in~\eqref{defofkappa}, we have
\begin{align*}
\sum_{e \subseteq \cu_{2n}^+} \left|  \nabla \kappa (e) \right|^2  &\leq \sum_{e \subseteq \cu_{2n}^+} \left| \mathbf{f} (e) \right|^2 \\
												& \leq \sum_{z \in \mathcal{Z}_{n,2n}} \sum_{e \subseteq z + \cu_{n}} \left| \nabla \psi_z (e) \right|^2.
\end{align*}
Taking the expectation and using the estimate~\eqref{4.L2boundnustar} of Proposition~\ref{propofnunustar}, one derives
\begin{align*}
\E \left[ \frac 1{|\cu_{2n}|}\sum_{e \subseteq \cu_{2n}^+} \left|  \nabla \kappa (e) \right|^2 \right]&  \leq \E \left[  \frac 1{|\cu_{2n}|} \sum_{z \in \mathcal{Z}_{n,2n}} \sum_{e \subseteq z + \cu_{n}} \left| \nabla \psi_z (e) \right|^2 \right] \\
						& \leq \E \left[  \frac 1{|\cu_n|}  \sum_{e \subseteq  \cu_{n}} \left| \nabla \psi_{n,q} (e) \right|^2 \right] \\
						& \leq C (1 + |q|^2).
\end{align*}
Combining the few previous displays and the bound $\left|\nabla \nu^*_n (q)\right|^2 \leq C (1 + |q|^2) $ proved in~\eqref{boundnustarqn} gives
\begin{equation*}
\left| \E \left[ \frac{1}{\left| \cu_{2n}\right|} \sum_{e \subseteq \cu_{2n}^+}  V_e' \left( \nabla \nu^*_n (q) +  \nabla \kappa (e) \right)  \left( \nabla \kappa_{2n}^+ (e) - \nabla \kappa (e)  \right) \right] \right| \leq C 3^{-\frac d2 n} (1 + |q|).
\end{equation*}
Combining this with the estimate~\eqref{displaywithC3-dn} gives
\begin{multline*}
\E \left[ \frac{1}{\left| \cu_{2n}\right|} \sum_{e \subseteq \cu_{2n}^+}  V_e \left( \nabla \nu^*_n (q) +  \nabla \kappa_{2n}^+ (e) \right) \right]  \\ \leq \E \left[ \frac{1}{\left| \cu_{2n}\right|} \sum_{e \subseteq \cu_{2n}^+}  V_e \left( \nabla \nu^*_n (q) +\nabla \kappa_{2n} \right) \right]  + C (1 + |q|) 3^{-\frac d2 n}.
\end{multline*}
We complete the proof of~\eqref{{of(2)proof}} by noting that $V_e$ is positive and that $|\cu_{2n}| \leq |\cu_{2n}^+|$. This implies
\begin{align*}
\E \left[ \frac{1}{\left| \cu_{2n}^+\right|} \sum_{e \subseteq \cu_{2n}^+}  V_e \left( \nabla \nu^*_n (q) +  \nabla \kappa^+_{2n} (e) \right) \right] & \leq \E \left[ \frac{1}{\left| \cu_{2n}\right|} \sum_{e \subseteq \cu_{2n}^+}  V_e \left( \nabla \nu^*_n (q) + \nabla \kappa^+_{2n} (e) \right) \right] \\
					&  \leq   \E \left[ \frac{1}{\left| \cu_{2n}\right|} \sum_{e \subseteq \cu_{2n}^+}  V_e \left( \nabla \nu^*_n (q) + \nabla \kappa (e) \right) \right] \\ & \qquad+  C 3^{-\frac d2 n} (1 + |q|).
\end{align*}
This completes the proof of~\eqref{{of(2)proof}}.

\smallskip

We can now conclude this step. Combining~\eqref{of(1)proof} and~\eqref{{of(2)proof}} implies~\eqref{of(0)proof} and thus completes the proof of~\eqref{Step4quanthomog.maintech}.

\medskip

\textit{Step 5.} The conclusion. Combining the main results~\eqref{mainstep2entropy} of Step 2 and~\eqref{mainstep2energy} of Steps 3 and 4, one obtains
\begin{align*}
\lefteqn{\E \left[ \frac{1}{\left| \cu_{2n}^+\right|} \sum_{e \subseteq \cu_{2n}^+}  V_e \left( \nabla \nu^*_n (q) + \nabla \kappa^+_{2n} (e) \right) \right]  +  \frac{1}{\left| \cu_{2n}^+\right|} H \left( \P_{\kappa^+_{2n}}  \right)} \qquad &
						\\ & \leq \E \left[ \frac{1}{\left| \cu_{n}\right|} \sum_{e \subseteq \cu_{n}}  V_e \left( \nabla \psi_{n,q} (e) \right) \right]   + \frac{1}{\left| \cu_{n}\right|} H \left( \P^*_{n,q}  \right) \\ & \qquad +  C \left(( 1 + |q|^2) 3^{-\beta n} + \sum_{m=0}^n 3^{\frac{(m-n)}{2}}\tau_m^*(q)\right).
\end{align*}
But one knows that
\begin{align*}
\nu \left(\cu_{2n}^+ , \nabla \nu^*_{n} (q) \right) & = \inf_{\P \in \mathcal{P}\left( h^1_0 \left( \cu_{2n}^+ \right) \right) } \E \left[ \frac1{|\cu_{2n}^+|} \sum_{e \subseteq \cu_{2n}^+} V_e \left(\nabla \nu^*_{n} (q)(e) + \nabla \phi (e)  \right) \right] + \frac1{|\cu_{2n}^+|}  H\left(\P \right) \\
									& \leq \E \left[ \frac{1}{\left| \cu_{2n}^+\right|} \sum_{e \subseteq \cu_{2n}^+}  V_e \left( \nabla \nu^*_{n} (q)(e) + \nabla \kappa^+_{2n} (e) \right) \right]  +  \frac{1}{\left| \cu_{2n}^+\right|} H \left( \P_{\kappa^+_{2n}} \right).
\end{align*}
Moreover, by the definition of $\P_{n,q}^*$ and the equality $\nabla \nu^*_n (q) = \E \left[ \frac{1}{|\cu_n|} \sum_{e \subseteq \cu_n} \nabla \psi_{n,q} (e) \right]$, one has
\begin{equation*}
\nu^*(\cu_n,q) =  - \E \left[ \frac{1}{\left| \cu_{n}\right|} \sum_{e \subseteq \cu_{n}}  V_e \left( \nabla \psi_{n,q} (e) \right) \right] +  q \cdot  \nabla \nu^*_n (q) -  \frac{1}{\left| \cu_{n}\right|} H \left( \P^*_{n,q}  \right) .
\end{equation*}
Combining the three previous displays shows
\begin{equation*}
\nu \left(\cu_{2n}^+ , \nabla \nu^*_{n} (q) \right)  + \nu^*(\cu_n,q) - q \cdot \nabla \nu^*_n (q) \leq C \left(( 1 + |q|^2) 3^{-\beta n} + \sum_{m=0}^n 3^{\frac{(m-n)}{2}}\tau_m^*(q)\right).
\end{equation*}
The proof of Proposition~\ref{lemma3.3} is complete.
\end{proof}

\subsection{Quantitative convergence of the partitions functions} \label{section4.4gradphi} Now that Proposition~\ref{lemma3.3} is proved, we can deduce the main result of the article. The theorem is recalled below.

\begin{reptheorem}{QuantitativeconvergencetothGibbsstate}[Quantitative convergence to the Gibbs state]
There exist a constant $C := C(d , \lambda) < \infty$ and an exponent $\alpha := \alpha(d , \lambda) > 0$ such that, for each $p,q \in \Rd$,
\begin{equation*}
\left| \nu (\cu_n, p) - \bar \nu (p) \right| \leq C 3^{-\alpha n} (1 + |p|^2)
\end{equation*}
and 
\begin{equation*}
\left| \nu^* (\cu_n, q) - \bar \nu^* (q) \right| \leq C 3^{-\alpha n} (1 + |q|^2).
\end{equation*}
\end{reptheorem}

Before starting the proof, we mention that the argument only relies on the properties of the surface tensions $\nu$ and $\nu^*$ stated in Proposition~\ref{propofnunustar} together with upper bound for the convex duality given in Proposition~\ref{lemma3.3}. In particular, we do not use any specific properties of the gradient field model in the rest of this section.

\begin{proof}
From Proposition~\ref{def+propofarnu}, we know that, for each $p,q \in \Rd$, the two sequences $\left( \nu(\cu_n,p)\right)_{n \in \N}$ and $\left( \nu^*(\cu_n,p) \right)_{n \in \N}$ converge and that, for each $p,q \in \Rd$,
\begin{equation} \label{e.step1quanthomog}
\bar \nu (p) +  \bar \nu^* (q) \geq p \cdot q.
\end{equation}
We split the proof into 5 steps.
\begin{itemize}
\item In Step 1, the objective is to remove the $\cu_{2n}^+$ condition which appears in Proposition~\ref{lemma3.3}: the idea is to appeal to the subadditivity of the surface tension $\nu$ to prove the estimate, for each $p \in \Rd$,
\begin{equation} \label{e.step2quanthomog}
\nu (\cu_{3n} , p ) \leq \nu (\cu_{2n}^+ , p ) + C 3^{-n} (1 + |p|^2).
\end{equation}
\item In Step 2, we show that, for each $n \in \N$ and each $q \in \Rd$,
\begin{equation*}
|\nu^* (\cu_{n},q) - \bar \nu^* (q) | \leq  C \left(( 1 + |q|^2) 3^{-\beta n} + \sum_{m=0}^n 3^{\frac{(m-n)}{2}}\tau_m^*(q)\right).
\end{equation*}
\item In Step 3, we deduce that there exists an exponent $\alpha := \alpha (d , \lambda) > 0$ such that
\begin{equation} \label{e.step4quanthomog}
| \nu^* (\cu_{n},q) - \bar \nu^* (q) | \leq  C 3^{-\alpha n}.
\end{equation}
\item In Step 4, we show that the limiting surface tensions $\bar \nu$ and $\bar \nu^*$ are dual convex to one another: one has the equality, for each $q \in \Rd$,
\begin{equation} \label{e.step5quanthomog}
\bar \nu^*(q) = \sup_{p \in \Rd} - \bar \nu (p)  + p \cdot q.
\end{equation}
\item In Step 5, we show that  there exists an exponent $\alpha := \alpha (d , \lambda) > 0$ such that,
\begin{equation} \label{e.step6quanthomog}
| \nu (\cu_{n},q) - \bar \nu (q) | \leq  C 3^{-\alpha n}.
\end{equation}
\end{itemize}

\medskip

\textit{Step 1.} The main idea of this step is to consider a cube $\cu \subseteq \Zd$ satisfying the two following properties:
\begin{enumerate}
\item the cube $\cu$ is included in $\cu_{3n}$ and is almost as large as $\cu_{3n}$ in the sense that it satisfies the volume estimate
\begin{equation} \label{compcu3ncu}
|\cu_{3n}| \leq |\cu| + C 3^{3dn} \times 3^{-n}.
\end{equation}
\item the cube $\cu$ can be decomposed as a disjoint union of cubes of the same size as the cube $\cu_{2n}^+$, i.e., a union of disjoint translated of the cube $\cu_{2n}^+$. 
\end{enumerate}
More precisely, the cube $\cu$ can be constructed as follows: we denote by $\mathcal{Z}$ the set
\begin{equation*}
\mathcal{Z} := \left\{ z \in (3^{2n} + 2) \Zd ~:~ z + \cu_{2n}^+ \subseteq \cu_{3n} \right\}
\end{equation*}
and then define
\begin{equation*}
\cu = \bigcup_{z \in \mathcal{Z}} (z + \cu_{2n}^+).
\end{equation*}
With this definition, it is clear that the cube $\cu$ satisfies the two properties (1) and (2). Following the proof of the subadditivity of the finite volume surface tension $\nu$ given by Funaki and Spohn in~\cite{FS97}, on obtains the estimate
\begin{equation} \label{2012}
\nu \left( \cu_{3n} , p \right) \leq  \frac{|\cu_{3n} \setminus \cu |}{|\cu_{3n}|}\nu \left( \cu_{3n} \setminus \cu , p \right)  + \sum_{z \in \mathcal{Z}} \frac{\left|z + \cu_{2n}^+\right|}{|\cu_{3n}|} \nu (z + \cu_{2n}^+, p) + C 3^{-2n} (1 + |p|^2).
\end{equation}
Using Proposition~\ref{upperboundnugeneralsubset}, proved in Appendix~\ref{appendixA}, one knows that $\nu \left( \cu_{3n} \setminus \cu , p \right)$ is bounded by $C(1 + |p|^2)$, thus by the inequality~\eqref{compcu3ncu}, one can estimate
\begin{equation*}
\frac{|\cu_{3n} \setminus \cu |}{|\cu_{3n}|}\nu \left( \cu_{3n} \setminus \cu , p \right)  \leq C 3^{-n} (1 + |p|^2).
\end{equation*}
From the estimate~\eqref{2012} and the previous display, one obtains
\begin{equation*}
\nu \left( \cu_{3n} , p \right) \leq \sum_{z \in \mathcal{Z}} \frac{\left|z + \cu_{2n}^+\right|}{|\cu_{3n}|} \nu (z + \cu_{2n}^+, p) + C 3^{-2n} (1 + |p|^2).
\end{equation*}
But, one has the equality, for each $z \in \mathcal{Z}$, $\nu (z + \cu_{2n}^+, p) = \nu (\cu_{2n}^+, p) $. This implies
\begin{equation*}
\sum_{z \in \mathcal{Z}} \frac{\left|z + \cu_{2n}^+\right|}{|\cu_{3n}|} \nu (z + \cu_{2n}^+, p) = \frac{\left| \cu_{3n} \setminus \cu\right|}{|\cu_{3n}|} \nu (\cu_{2n}^+, p).
\end{equation*}
Using Proposition~\ref{p.quadbound}, we have $\nu (\cu_{2n}^+, p) \geq - C + \lambda |p|^2$. Combining this bound with~\eqref{compcu3ncu}, the previous display can be refined
\begin{equation*}
\frac{\left| \cu_{3n} \setminus \cu\right|}{|\cu_{3n}|} \nu (\cu_{2n}^+, p) \leq \nu (\cu_{2n}^+, p) + C (1 + |p|^2) 3^{-n}.
\end{equation*}
Combining the few previous displays shows
\begin{equation*}
\nu \left( \cu_{3n} , p \right) \leq \nu ( \cu_{2n}^+, p) + C 3^{-n} (1 + |p|^2),
\end{equation*}
which is the desired result. The proof of Step 1 is complete.

\medskip

\textit{Step 2.} First, by the formula, for each $q \in \Rd,$
\begin{equation*}
\nabla_q \nu^* \left( \cu_{n} , q \right) = \E \left[ \frac1{|\cu_{n}|} \sum_{e \subseteq \cu_n} \nabla \psi_{n,q}(e) \right]
\end{equation*}
and by the estimate~\eqref{4.L2boundnustar} of Proposition~\ref{propofnunustar}, one has
\begin{equation*}
\left| \nabla_q \nu^* \left( \cu_{n} , q \right) \right| \leq C (1 + |q|).
\end{equation*}
As a consequence, by the main result~\eqref{e.step2quanthomog} of Step 1, one has
\begin{equation*}
\nu \left( \cu_{3n} , \nabla_q \nu^* \left( \cu_{n} , q \right) \right) \leq \nu ( \cu_{2n}^+, \nabla_q  \nu^* \left(\cu_{n} , q \right)) + C 3^{-n} (1 + |q|^2).
\end{equation*}
Combining the previous display with Proposition~\ref{lemma3.3}, one obtains
\begin{equation} \label{reallemma3.3}
\nu \left( \cu_{3n} , \nabla_q \nu^* \left(\cu_{n} , q \right) \right) + \nu^*(\cu_n ,q) + q \cdot \nabla_q \nu^* (\cu_n, q) \leq C \left(( 1 + |q|^2) 3^{-\beta n} + \sum_{m=0}^n 3^{\frac{(m-n)}{2}}\tau_m^*(q)\right).
\end{equation}
Moreover using the inequality~\eqref{e.step1quanthomog} applied with $p = \nabla_q \nu^* \left( \cu_{n} , q \right)$ and $q$ gives
\begin{equation*}
0 \leq \bar \nu (\nabla_q  \nu^* \left( \cu_{n} , q \right)) +  \bar \nu^* (q) - \nabla_q  \nu^* \left( \cu_{n}, q \right) \cdot q.
\end{equation*} 
A combination of the two previous displays gives
\begin{multline} \label{quantdiffnubarnuetnustar}
\left( \nu \left( \cu_{3n} , \nabla_q \nu^* \left( \cu_{n} , q \right) \right) - \bar \nu (\nabla_q \nu^* \left(\cu_{n}, q \right))  \right) + \left( \nu^*(\cu_n,q) - \bar \nu^* (q) \right) \\ \leq C \left(( 1 + |q|^2) 3^{-\beta n} + \sum_{m=0}^n 3^{\frac{(m-n)}{2}}\tau_m^*(q)\right).
\end{multline}
By the subadditivity for the surface tension $\nu$ stated in Proposition~\ref{propofnunustar}, there exists a constant $C := C(d, \lambda) < \infty$ (in particular larger than the one appearing in the proposition) such the sequence $n \rightarrow \nu \left( \cu_n,p \right) + C (1 + |p|^2) 3^{-n}$ is decreasing. As a consequence, for each $n \in \N$ and each $p\in \Rd$,
\begin{equation*}
 \nu \left( \cu_{n} , p\right)  \geq \bar \nu (p)  - C (1 + |p|^2) 3^{-n}.
\end{equation*}
This implies, for each $q \in \Rd$,
\begin{equation*}
\nu \left( \cu_{3n} , \nabla_q \nu^* \left( \cu_{n} , q \right) \right)  - \bar \nu (\nabla_q \nu^* \left(\cu_{n}, q \right)) \geq - C (1 + |q|^2) 3^{-3n}.
\end{equation*}
Combining the previous inequality with~\eqref{quantdiffnubarnuetnustar} shows
\begin{equation} \label{mainresultstep3}
\nu^*(\cu_n,q) - \bar \nu^* (q) \leq C \left(( 1 + |q|^2) 3^{-\beta n} + \sum_{m=0}^n 3^{\frac{(m-n)}{2}}\tau_m^*(q)\right).
\end{equation}
The proof of Step 2 is complete.

\medskip

\textit{Step 3.} Let $C := C(d , \lambda) < \infty$ be a constant large enough so that the sequence $\nu^*(\cu_n, q) + C (1+|q|^2)3^{-n}$ is decreasing. To shorten the notation, we denote by, for $q \in \Rd$,
\begin{equation} \label{definitionofF_n}
F_n(q) := \nu^*(\cu_n, q) + C (1+|q|^2)3^{-n} -  \bar \nu^* (q),
\end{equation}
so that the sequence $F_n(q)$ is decreasing and tends to $0$ as $n$ tends to infinity. Moreover, one has the following inequality
\begin{equation*}
\tau_n^*(q) \leq  F_n(q) - F_{n+1}(q) .
\end{equation*}
We can rewrite the main result~\eqref{mainresultstep3} of Step 3 with this notation
\begin{equation} \label{Fncontracts}
F_n(q) \leq C \left(( 1 + |q|^2) 3^{-\beta n} + \sum_{m=0}^n 3^{\frac{(m-n)}{2}}\left( F_m(q) - F_{m+1}(q) \right)\right).
\end{equation}
We then define
\begin{equation*}
\tilde F_n (q):= 3^{-\frac{n}{4}} \sum_{m=0}^{n} 3^{\frac m4}  F_m(q).
\end{equation*}
We next show that there exist two constants $\theta := \theta(d, \lambda) \in (0,1)$, $ C:=C(d , \lambda) < \infty$ and an exponent $\beta := \beta(d, \lambda) > 0$ such that, for every $n \in \N$,
\begin{equation} \label{2.60}
\tilde F_{n+1}(q) \leq \theta \tilde F_n(q) + C 3^{-\beta n}.
\end{equation}
Using the inequality $F_0(q) \leq C (1 + |q|^2)$, one has
\begin{equation} \label{Fn(q)-tildeFn+1(q)}
\tilde F_n(q) - \tilde F_{n+1}(q) \geq 3^{-\frac{n}{4}} \sum_{m=0}^{n} 3^{\frac m4}  ( F_m(q) - F_{m+1}(q) ) - C (1 + |q|^2)3^{-\frac n4} .
\end{equation}
Since $\left( F_n(q) \right)_{n \in \N}$ is a decreasing sequence, we deduce from the previous display that, for each $n \in \N$,
\begin{equation*}
\tilde F_{n+1}(q) \leq \tilde F_n(q)  + C (1 + |q|^2)3^{-\frac n4}.
\end{equation*}
Using the inequality~\eqref{Fncontracts} and reducing the size of the exponent exponent $\beta$ if necessary, one deduces
\begin{align*}
\tilde{F}_{n+1}(q) & \leq \tilde F_n(q)  + C (1 + |q|^2)3^{-\frac n4} \\ & \leq 3^{-\frac{n}{4}} \sum_{m=0}^{n} 3^{\frac m4}  F_m(q)  + C (1 + |q|^2)3^{-\frac n4} \\
& \leq C 3^{-\frac{n}{4}} \sum_{m=0}^{n} 3^{\frac m4}   \left( \left(( 1 + |q|^2) 3^{-\beta m} + \sum_{k=0}^m 3^{\frac{(k-m)}{2}}\left( F_k(q) - F_{k+1}(q) \right)\right) \right) \\ & \qquad + C (1 + |q|^2)3^{- \frac{n}{4}} \\
& \leq C 3^{-\frac{n}{4}} \sum_{m = 0}^n  \sum_{k=0}^m 3^{- \frac{m}{4}}  3^{\frac k2} \left( F_k(q) - F_{k+1}(q) \right) + C(1 + |q|^2) 3^{- \beta n} \\
& \leq C 3^{-\frac{n}{4}} \sum_{k = 0}^n  \sum_{m=k}^n 3^{- \frac{m}{4}}  3^{\frac k2} \left( F_k(q) - F_{k+1}(q) \right) + C(1 + |q|^2) 3^{- \beta  n} \\
& \leq C 3^{-\frac{n}{4} } \sum_{k = 0}^m  3^{\frac{k}{4}} \left( F_k(q) - F_{k+1}(q) \right) + C(1 + |q|^2) 3^{- \beta  n} .
\end{align*} 
Comparing the previous display with~\eqref{Fn(q)-tildeFn+1(q)} gives
\begin{equation*}
\tilde{F}_{n+1}(q) \leq C \left(  \tilde F_n(q) - \tilde F_{n+1}(q) \right) +  C(1 + |q|^2) 3^{- \beta  n}.
\end{equation*}
A rearrangement of this inequality gives~\eqref{2.60}. An iteration of~\eqref{2.60} yields
\begin{equation*}
\tilde F_n(q) \leq \theta^n \tilde F_0 + C (1 + |q|^2) \sum_{k =0}^n \theta^k 3^{- \beta  (n -k)}.
\end{equation*} 
By making $\theta$ closer to $1$ if necessary, one has
\begin{equation*}
\sum_{k =0}^n \theta^k 3^{- \beta  (n -k)} \leq C \theta^n.
\end{equation*}
Combining the few previous displays shows
\begin{equation*}
\tilde F_n(q) \leq C(1 + |q|^2) \theta^n.
\end{equation*}
Setting $\alpha := -\frac{\ln \theta}{\ln 3}$ so that $\theta = 3^{-\alpha}$ gives the bound $\tilde F_n(q) \leq C 3^{-\alpha n}$. By the definition of $\tilde F_n(q)$, one has the inequality
\begin{equation*}
F_n(q) \leq \tilde F_n(q),
\end{equation*}
and thus
\begin{equation*}
F_n(q) \leq C (1 + |q|^2) 3^{-\alpha n}.
\end{equation*}
We conclude the proof by noting that $F_n(q)$ was defined so that it is decreasing and tends to $0$. In particular, it is positive. By the explicit formula~\eqref{definitionofF_n} for $F_n$ and the previous display, one obtains
\begin{equation*}
- C (1+|q|^2)3^{-n} \leq \nu^*(\cu_n, q)  -  \bar \nu^* (q) \leq C (1 + |q|^2) 3^{-\alpha n},
\end{equation*}
for some constant $C := C(d , \lambda) < \infty$ and exponent $\alpha := \alpha (d, \lambda) > 0$. By reducing the size of the exponent $\alpha$, one eventually obtains
\begin{equation*}
\left| \nu^*(\cu_n, q)  -  \bar \nu^* (q) \right| \leq  C (1 + |q|^2) 3^{-\alpha n}.
\end{equation*}
The proof of Step 3 is complete.

\medskip

\textit{Step 4.} First note that, by~\eqref{e.step1quanthomog}, for each $p,q \in \Rd$
\begin{equation} \label{3.51}
0 \leq \bar \nu (p) +  \bar \nu^* (q) - p \cdot q.
\end{equation}
This implies
\begin{equation*}
\bar \nu^* (q) \geq \sup_{p \in \Rd} - \bar \nu (p) + p \cdot q.
\end{equation*}
The main idea of this step is to use Proposition~\ref{lemma3.3} to show the two following results:
\begin{enumerate}
\item For each $q \in \Rd$, the sequence $\nabla_q \nu^*(\cu_n, q)$ converges as $n$ tends to infinity. We denote its limit by $\bar P(q)$. Moreover one has that the following quantitative estimate
\begin{equation} \label{mainresultstep4inside}
\left| \nabla_q \nu^*(\cu_n, q) - \bar P(q) \right| \leq C (1 + |q|) 3^{-\alpha n}.
\end{equation}
\begin{remark}
We would like to say that the limit is equal to $\nabla_q \bar \nu^* (q) $ but at this point of the argument, one only knows that the function $q \rightarrow \bar \nu^*(q)$ is convex and in particular we do not know that it is differentiable everywhere. We will prove later that $\bar \nu^* (q)$ is in fact $C^1(\R)$ and this will imply $\bar P (q) = \nabla_q \bar \nu^* (q) $.
\end{remark}
\item We deduce from (1) that, for each $q \in \Rd$, one has the following quantitative convergence estimate
\begin{equation} \label{mainresultstep4outside}
\left| \nu(\cu_{3n}, \nabla_q \nu^*(\cu_n, q) ) - \bar \nu \left( \bar P(q)\right) \right| \leq C (1  + |q|^2 ) 3^{- \alpha n}.
\end{equation}
\end{enumerate}

\medskip

We first prove (1). From the main result of the previous step~\eqref{e.step5quanthomog}, we deduce that, for each $q \in \Rd$,
\begin{equation*}
\tau_n^*(q) = \nu^* \left( \cu_n , q \right) - \nu^* \left( \cu_{n+1} , q \right) \leq C (1 + |q|^2 ) 3^{-\alpha n}.
\end{equation*}
Combining this result with~\eqref{splitstep2} gives, for each $q \in \Rd$,
\begin{equation*}
\left|  \nabla_q \nu^* (\cu_{n+1}, q) -  \nabla_q \nu^* (\cu_n, q)\right| \leq C (1 + |q|^2 ) 3^{- \alpha n}.
\end{equation*}
The previous display implies that the sequence $\nabla_q \nu^* (\cu_{n+1}, q)$ converges for each $q \in \Rd$ together with the quantitative rate of convergence stated in~\eqref{mainresultstep4inside}. We denote by $\bar P (q)$ its limit and the proof of (1) is complete.

\medskip

We now prove (2). We first use the triangle inequality to estimate the left-hand side of~\eqref{mainresultstep4outside}
\begin{multline*}
\left| \nu(\cu_{3n}, \nabla_q \nu^*(\cu_n, q) ) - \bar \nu \left(\bar P(q) \right)\right|  \leq \left| \nu(\cu_{3n}, \nabla_q \nu^*(\cu_n, q) ) - \nu\left(\cu_{3n}, \bar P(q) \right)\right| \\ + \left|\nu\left(\cu_{3n}, \bar P(q) \right)  - \bar \nu \left(\bar P(q) \right)\right| .
\end{multline*}
Since one has the bound, for each $n \in \N$, $\left| \nabla_q \nu^*(\cu_n, q) \right| \leq C ( 1 + |q|)$, one obtains by taking the limit $n$ tends to infinity,
\begin{equation} \label{boundbarp}
\left| \bar P (q) \right| \leq C ( 1 + |q|).
\end{equation}
Combining the previous bound with~\eqref{e.step4quanthomog} gives
\begin{equation*}
 \left|\nu(\cu_{3n}, \bar P(q) )  - \bar \nu \left(\bar P(q) \right)\right| \leq C (1 + |q|^2 ) 3^{- \alpha n}.
\end{equation*}
To prove~\eqref{mainresultstep4outside}, it is sufficient to prove
\begin{equation*}
\left| \nu(\cu_{3n}, \nabla_q \nu^*(\cu_n, q) ) - \nu\left(\cu_{3n}, \bar P(q) \right) \right| \leq C (1 + |q|^2) 3^{- \alpha n}.
\end{equation*}
To this end, we first prove the following property: for each $p,p' \in \Rd$, and each $n \in \N$,
\begin{equation*}
| \nu(\cu_{n},p ) - \nu(\cu_{n}, p' )| \leq C (|p| + |p'| ) |p-p'|.
\end{equation*}
The idea to prove the previous inequality is to compute the gradient of $p \mapsto  \nu(\cu_{n},p )$. A straightforward computation gives
\begin{equation*}
\left| \nabla_p \nu(\cu_{n},p )  \right| \leq \E \left[ \frac1{|\cu_n|} \sum_{e \in \cu_n} \left| V_e'( p \cdot e + \nabla \phi_{n,p}(e)) \right| \right].
\end{equation*}
Using the bound, for each $x \in \R$ $ V_e'(x) \leq \frac 1\lambda |x|$ and the Jensen inequality, we obtain
\begin{align*}
|\nabla_p \nu(\cu_{n},p ) | & \leq |p| + \E \left[ \frac1{|\cu_n|} \sum_{e \in \cu_n} \left|\nabla \phi_{n,p}(e)\right| \right] \\
				& \leq |p| + \E \left[ \frac1{|\cu_n|} \sum_{e \in \cu_n} \left|\nabla \phi_{n,p}(e)\right|^2 \right]^\frac 12.
\end{align*}
We then apply the estimate~\eqref{4.L2boundnu} Proposition~\ref{propofnunustar} to derive the bound
\begin{equation} \label{Dnubounded}
|\nabla_p \nu(\cu_{n},p ) | \leq C (1 + |p|).
\end{equation}
This implies that, for each $n \in \N$ and each $p,p'\in \Rd$, $\nu(\cu_n, \cdot)$ is $C (1 + |p| + |p'|)$-Lipschitz in the ball $B(0 , |p|+ |p'|)$. Since both $p$ and $p'$ belongs to $B(0 , |p|+ |p'|)$, one has
\begin{equation} \label{3.66}
| \nu(\cu_{n},p ) - \nu(\cu_{n}, p' )| \leq C (1 +|p| + |p'| ) |p-p'|.
\end{equation}
This is the desired result. Applying the previous estimate with $p = \nabla_p \nu^*(\cu_n, q)$ and $p' =  \bar P(q)$ gives
\begin{equation*}
\left| \nu(\cu_{3n}, \nabla_p \nu^*(\cu_n, q) ) - \nu(\cu_{3n}, \bar P(q) ) \right| \leq C ( 1+ |\nabla_q \nu^*(\cu_n, q)| + |\bar P(q)| ) |\nabla_q \nu^*(\cu_n, q) - \bar P(q)|.
\end{equation*}
By~\eqref{boundbarp}, for each $q \in \Rd$,
\begin{equation*}
\left|\bar P (q)\right|  \leq C (1 + |q|).
\end{equation*}
Combining the three previous displays with~\eqref{mainresultstep4inside} gives
\begin{equation} \label{3.67}
\left| \nu(\cu_{3n}, \nabla_p \nu^*(\cu_n, q) ) - \nu(\cu_{3n}, \bar P(q) ) \right| \leq C (1 + |q|^2) 3^{- \alpha n}
\end{equation}
and completes the proof of (2).

\medskip

We now prove the main result~\eqref{e.step5quanthomog} of Step 4. By~\eqref{reallemma3.3} and the main result~\eqref{e.step4quanthomog} of Step 3, one has, for each $q \in \Rd$,
\begin{equation*}
\nu \left(\cu_{3n}, \nabla_q \nu^*\left( \cu_{n}, q\right) \right) + \nu^* \left(\cu_{n} , q \right) -\nabla_q \nu^*\left( \cu_{n}, q\right) \cdot q \leq C (1 + |q|^2) 3^{-\alpha n}.
\end{equation*} 
By~\eqref{mainresultstep4inside} and~\eqref{mainresultstep4outside}, one also has the convergence
\begin{equation*}
\nu \left(\cu_{3n}, \nabla_q \nu^* \left( \cu_{n}, q\right) \right) + \nu^* \left(\cu_{n} , q \right) -\nabla_q \nu^* \left( \cu_{n} ,q \right) \cdot q \underset{n \rightarrow \infty}{\longrightarrow} \bar \nu \left(\bar P (q) \right) + \bar \nu^* \left(  q \right) - \bar P (q) \cdot q.
\end{equation*}
A combination of the two previous displays gives
\begin{equation*}
\bar \nu \left(\bar P (q) \right) + \bar \nu^* \left( q \right) - \bar P (q) \cdot q \leq 0.
\end{equation*}
Together with~\eqref{3.51}, the previous estimate gives 
\begin{equation*}
\bar \nu \left(\bar P (q) \right) + \bar \nu^* \left(q \right) - \bar P (q) \cdot q = 0,
\end{equation*}
and thus
\begin{equation*}
\bar \nu^*(q) = \sup_{p \in \Rd} - \bar \nu (p)  + p \cdot q.
\end{equation*}
This is precisely~\eqref{e.step5quanthomog} and the proof of Step 4 is complete.

\medskip

\textit{Step 5.} The main result~\eqref{e.step5quanthomog} of Step 4 asserts that $\bar \nu^*$ is the Legendre-Fenchel transform of $\bar \nu$. But by Proposition~\ref{def+propofarnu}, one knows that for each $p_1,p_2 \in \Rd$,
\begin{equation*}
\frac 1C |p_0 - p_1 |^2 \leq \frac 12 \bar \nu(\cu_n, p_0) + \frac 12 \bar \nu(\cu_n, p_1) -   \bar \nu \left(\cu_n, \frac{p_0 + p_1}{2}\right) \leq C  |p_0 - p_1 |^2.
\end{equation*}
With the two previous properties, one deduces that $\bar \nu^*$ is also uniformly convex. As a consequence, it is in the space $C^{1,1} (\Rd)$ and one has the following equalities, for each $p,q \in \Rd$,
\begin{equation} \label{unifconvofbarnustar}
\nabla_p \bar \nu \left( \nabla_q \bar \nu^* (q) \right) = q , ~\nabla_q \bar \nu^*\left( \nabla_p \bar \nu  (q) \right) = q, ~\mbox{and}~ \bar P (q) = \nabla_q \bar \nu^* (q). 
\end{equation}
We are now ready to prove~\eqref{e.step6quanthomog}. We start from~\eqref{quantdiffnubarnuetnustar}, which reads, for each $q \in \Rd$,
\begin{multline} \label{quantdiffnubarnuetnustarbis}
\left( \nu \left( \cu_{3n} , \nabla_q \nu^* \left( \cu_{n} , q \right) \right) - \bar \nu (\nabla_q \nu^* \left(\cu_{n}, q \right))  \right) + \left( \nu^*(\cu_n,q) - \bar \nu^* (q) \right) \\ \leq C \left(( 1 + |q|^2) 3^{-\beta n} + \sum_{m=0}^n 3^{\frac{(m-n)}{2}}\tau_m^*(q)\right).
\end{multline}
We then apply the estimate~\eqref{e.step4quanthomog} which allows to estimate most of the terms in the previous display. Precisely, one has the inequalities
\begin{equation*}
 \nu^*(\cu_n,q) - \bar \nu^* (q) \leq C (1+ |q|^2) 3^{-\alpha n}
\end{equation*}
and
\begin{equation*}
\sum_{m=0}^n 3^{\frac{(m-n)}{2}}\tau_m^*(q)  \leq C (1+ |q|^2) 3^{-\alpha n}.
\end{equation*}
With these estimates, the inequality~\eqref{quantdiffnubarnuetnustarbis} becomes
\begin{equation*}
\nu \left( \cu_{3n} , \nabla_q \nu^* \left( \cu_{n} , q \right)\right) - \bar \nu (\nabla_q \nu^* \left(\cu_{n}, q \right))   \leq C (1+ |q|^2) 3^{-\alpha n}.
\end{equation*}
Then by~\eqref{3.67}, one has
\begin{equation*}
\left| \nu(\cu_{3n}, \nabla_q \nu^*(\cu_n, q) ) - \nu(\cu_{3n}, \bar P(q) ) \right| \leq C (1 + |q|^2) 3^{- \alpha n}.
\end{equation*}
Then by sending $n$ to infinity in~\eqref{3.66}, one obtains, for each $p,p' \in \Rd$,
\begin{equation}\label{Linftboundgradbarnu}
| \bar \nu(p ) - \bar \nu(p' )| \leq C (|p| + |p'| ) |p-p'|.
\end{equation}
With the same proof as the one which gives the bound~\eqref{3.67}, one obtains
\begin{equation*}
\left| \bar \nu( \nabla_q \nu^*(\cu_n, q) ) - \bar \nu( \bar P(q) ) \right| \leq C (1 + |q|^2) 3^{- \alpha n}.
\end{equation*}
Combining the few previous displays shows, for each $q \in \Rd$,
\begin{equation*}
\nu \left( \cu_{3n} , \bar P(q) \right) - \bar \nu \left( \bar P(q) \right) \leq C (1 + |q|^2) 3^{- \alpha n}.
\end{equation*}
Applying the previous inequality with $q = \nabla_p \bar \nu (p)$ gives, thanks to~\eqref{unifconvofbarnustar},
\begin{equation*}
\nu \left( \cu_{3n} ,p \right) - \bar \nu \left( p \right) \leq C (1 + |\nabla_p \bar \nu (p)|^2) 3^{- \alpha n}.
\end{equation*}
We then simplify the term on the right-hand side. Thanks to~\eqref{Linftboundgradbarnu}, one obtains the bound on the gradient of $\bar \nu$, for each $p \in \Rd$,
\begin{equation*}
\left| \nabla_p  \bar \nu \left( p \right) \right| \leq C ( 1 +|p|).
\end{equation*}
A combination of the two previous displays gives, for each $n \in \N$ and each $p \in \Rd$,
\begin{equation*}
\nu \left( \cu_{3n} ,p \right) - \bar \nu \left( p \right) \leq C (1 + |p|^2) 3^{- \alpha n}.
\end{equation*}
We now want to remove the $3n$ term on the left-hand side. To this end, we use the subadditivity of $\nu$ stated in Proposition~\ref{propofnunustar}, to obtain, for each $p\in \Rd$ and each $n \in \N$,
\begin{align*}
\nu \left( \cu_{3n+ 2} ,p \right) - \bar \nu \left( p \right) & \leq \nu \left( \cu_{3n+ 1} ,p \right) - \bar \nu \left( p \right) + C (1+|p|^2) 3^{-n} \\ & \leq \nu \left( \cu_{3n} ,p \right) - \bar \nu \left( p \right) + C (1+|p|^2) 3^{-n} \\
																										& \leq  C (1 + |p|^2) 3^{- \alpha n}.
\end{align*}
From the previous display and by reducing the size of the exponent $\alpha$, one obtains for each $n \in \N$ and each $p \in \Rd$,
\begin{equation*}
\nu \left( \cu_n ,p \right) - \bar \nu \left( p \right) \leq C (1 + |p|^2) 3^{- \alpha n}.
\end{equation*}
The proof of~\eqref{e.step6quanthomog} is almost complete, there only remains to prove a lower bound for $\nu \left( \cu_n ,p \right) - \bar \nu \left( p \right)$. But one knows that there exists a constant $C:= C(d, \lambda) < \infty$ such that the sequence $\nu \left( \cu_n ,p \right)  + C (1+|p|^2) 3^{-n}$ is decreasing and converges to $\bar \nu \left( p \right)$. This implies in particular that, for each $n \in \N$ and each $p \in \Rd$,
\begin{equation*}
\nu \left( \cu_n ,p \right) - \bar \nu \left( p \right) \geq  - C (1 + |p|^2) 3^{- n}
\end{equation*}
and provides the lower bound. A combination of the two previous displays shows
\begin{equation*}
| \nu \left( \cu_n ,p \right) - \bar \nu \left( p \right)  | \leq C (1 + |p|^2) 3^{- \alpha n}
\end{equation*}
and completes the proof of Step 5 and of Theorem~\ref{QuantitativeconvergencetothGibbsstate}.
\end{proof}

\subsection{Quantitative contraction of the fields $\phi_{n,p}$ and $\psi_{n,q}$ to affine functions} Now that Theorem~\ref{QuantitativeconvergencetothGibbsstate} is proved, we deduce the $L^2$ estimate on the random variables $\phi_{n,p}$ and $\psi_{n,q}$ stated in Theorem~\ref{conhvergencetoaffine}. The theorem is recalled below.
\begin{reptheorem}{conhvergencetoaffine}[$L^2$ contraction of the Gibbs measure]
There exist a constant $C := C(d , \lambda) < \infty$ and an exponent $\alpha := \alpha (d, \lambda) > 0$ such that for each $n \in \N$, $p, q \in \Rd$,
\begin{multline*}
\frac{1}{\left(\diam \cu_n\right)^2} \E \left[  \frac1{\left| \cu_n \right|} \sum_{x \in \cu_n} \left(  \left| \phi_{n,p}(x) \right|^2 + \left| \psi_{n,q}(x) -  \nabla_q \bar \nu^* (q ) \cdot x \right|^2 \right) \right] \\ \leq C 3^{ - \alpha n} \left( 1 + |p|^2 + |q|^2 \right).
\end{multline*}
\end{reptheorem}

\begin{proof}
We first prove the estimate for the random variable $\psi_{n,q}$, i.e.,
\begin{equation} \label{psiisclosetoaffinefinal1342}
\E \left[  \frac1{\left| \cu_n \right|} \sum_{x \in \cu_n} \left| \psi_{n,q}(x) -  \nabla_q \bar \nu^* (q ) \cdot x \right|^2 \right] \leq C 3^{n (2 - \alpha)} \left( 1 +|q|^2 \right).
\end{equation}
Indeed in that case all the tools have already been developed and this allows for a short proof. First by Theorem~\ref{QuantitativeconvergencetothGibbsstate}, one knows that, for each $q \in \Rd$,
\begin{equation*}
\tau_n^*(q) \leq 3^{-\alpha n } (1 + |q|^2 ).
\end{equation*}
Using the previous display together with Proposition~\ref{p.ustarminimizerareflat}, we obtain
\begin{equation*}
\E \left[\frac1{|\cu_{n}|} \sum_{x \in \cu_{n}} \left| \psi_{n,q}(x) -  \nabla_q \nu^*(\cu_{n}, q) \cdot x  \right|^2 \right] \leq C 3^{(2-\alpha )n} (1 + |q|^2).
\end{equation*} 
But by~\eqref{mainresultstep4inside} and~\eqref{unifconvofbarnustar}, one also has
\begin{equation*}
\left| \nabla_q \nu^*(\cu_{n}, q) -  \nabla_q \bar \nu^*( q) \right| \leq C 3^{-\alpha n} (1 + |q|^2 ).
\end{equation*}
A combination of the two previous displays gives~\eqref{psiisclosetoaffinefinal1342} and completes the proof.
\medskip

We now want to prove the estimate with the random variable $\phi_{n,p}$, i.e.,
\begin{equation*}
\E \left[  \frac1{\left| \cu_n \right|} \sum_{x \in \cu_n} \left| \phi_{n,p}(x) - p \cdot x \right|^2 \right] \leq C 3^{(2 - \alpha)n} \left( 1 + |p|^2 \right).
\end{equation*}
The proof follows the same lines as the proof of~\eqref{psiisclosetoaffinefinal1342} except that we have not proved an equivalent version of Proposition~\ref{p.ustarminimizerareflat}. The proof of this statement is split into 2 steps.
\begin{itemize}
\item In Step 1, we show that, for each $m\in \N$ with $m\leq n$,
\begin{equation*}
\frac{1}{|\mathcal{Z}_{m,n}|} \sum_{z \in \mathcal{Z}_{m,n}} \E \left[ \left| \left\langle \nabla \phi_{n,p} \right\rangle_{z + \cu_m} \right|^2 \right] \leq  C ( 1 + |p|^2) 3^{-\alpha m}.
\end{equation*}
\item In Step 2, we deduce from the previous step and the multiscale Poincar\'e inequality
\begin{equation*}
\E \left[ \frac{1}{\left| \cu_n\right|} \sum_{x \in \cu_n} \left| \phi_{n,p} \right|^2 \right] \leq  C ( 1 + |p|^2) 3^{(2-\alpha) n}.
\end{equation*}
\end{itemize}
\medskip

\textit{Step 1.} Consider the random variable $\phi = \sum_{z \in \mathcal{Z}_{m,n}} \phi_z$ introduced in Proposition~\ref{2sccomparisonnu} as well as the coupling between $\phi$ and $\phi_{n,p}$ introduced in the same proposition. The following estimate holds
\begin{equation*}
\frac{1}{|\cu_{n}|}\E \left[  \sum_{e \subseteq \cu_{n}} \left| \nabla \phi(e) - \nabla \phi_{n,p} (e)\right|^2  \right] \leq C \left( \nu (\cu_m , q) - \nu (\cu_{n} , q) \right) + C 3^{-\frac m2} (1 + |p|^2).
\end{equation*} 
Using Theorem~\ref{QuantitativeconvergencetothGibbsstate}, the previous estimate can be refined and one obtains
\begin{equation*}
\frac{1}{|\cu_{n}|}\E \left[  \sum_{e \subseteq \cu_{n}} \left| \nabla \phi(e) - \nabla \phi_{n,p} (e)\right|^2  \right] \leq C ( 1 + |p|^2 ) 3^{- \alpha m}.
\end{equation*}
Using this inequality, one has
\begin{align*}
\lefteqn{\frac{1}{|\mathcal{Z}_{m,n}|} \sum_{z \in \mathcal{Z}_{m,n}} \E \left[ \left| \left\langle \nabla \phi_{n,p} \right\rangle_{z + \cu_m} \right|^2 \right] } \qquad & \\ &\leq \frac{1}{|\mathcal{Z}_{m,n}|}  \sum_{z \in \mathcal{Z}_{m,n}} \E \left[ \left| \left\langle \nabla \phi \right\rangle_{z + \cu_m} \right|^2 \right] + \frac1{|\cu_n|}\E \left[ \sum_{e \subseteq \cu_n} \left|\nabla \phi(e)  - \nabla \phi_{n,p}(e) \right|^2 \right] \\
				& \leq \frac{1}{|\mathcal{Z}_{m,n}|}  \sum_{z \in \mathcal{Z}_{m,n}} \E \left[ \left| \left\langle \nabla \phi \right\rangle_{z + \cu_m} \right|^2 \right] +  C ( 1 + |p|^2) 3^{-\alpha m}.
\end{align*}
We then note that, for each $z \in \mathcal{Z}_{m,n}$, one has the equality $\left\langle \nabla \phi \right\rangle_{z + \cu_m} = \left\langle \nabla \phi_z \right\rangle_{z + \cu_m} $ and that, since $\phi_z \in h^1_0 (z + \cu_m)$, one has $\left\langle \nabla \phi_z \right\rangle_{z + \cu_m} = 0$. Consequently, the previous display can be simplified and one obtains
\begin{equation*}
\frac{1}{|\mathcal{Z}_{m,n}|} \sum_{z \in \mathcal{Z}_{m,n}} \E \left[ \left| \left\langle \nabla \phi_{n,p} \right\rangle_{z + \cu_m} \right|^2 \right] \leq  C ( 1 + |p|^2) 3^{-\alpha m}.
\end{equation*} 

\medskip

\textit{Step 2.} We now apply the multiscale Poincar\'e inequality stated in Proposition~\ref{p.poincmultpoinc} for functions in $h^1_0 (\cu_n)$. This gives
\begin{equation*}
\frac{1}{\left| \cu_n\right|} \sum_{x \in \cu_n} \left| \phi_{n,p}(x) \right|^2  \leq C \sum_{e \subseteq \cu_n} \left| \nabla \phi_{n,p} (e) \right|^2 + C 3^n\sum_{m = 1}^n 3^m \left( \frac{1}{\left| \mathcal{Z}_{m,n} \right|} \sum_{y \in \mathcal{Z}_{m,n}} \left|\left\langle \nabla \phi_{n,p} \right\rangle_{z + \cu_m} \right|^2 \right).
\end{equation*}
Taking the expectation and using Proposition~\ref{p.quadbound} to estimate the first term on the right-hand side and the main result of Step 1 to estimate the second term gives
\begin{align*}
\E \left[ \frac{1}{\left| \cu_n\right|} \sum_{x \in \cu_n} \left| \phi_{n,p}(x) \right|^2 \right] &\leq C ( 1 + |p|^2) + C (1 + |p|^2) 3^n \sum_{m = 1}^n 3^m 3^{- \alpha m} \\
					& \leq  C ( 1 + |p|^2) 3^{(2-\alpha) n}.
\end{align*}
This is the desired result. The proof of Theorem~\ref{conhvergencetoaffine} is complete.
\end{proof}

\appendix

\section{Technical estimates} \label{appendixA}

Before stating the first proposition of this appendix, we recall that the space $H$ mentioned in the following proposition is the space of functions of $\mathring h^1 (\cu_{n})$ which are constant on the cubes $(z +\cu_{m})$, for $z \in \mathcal{Z}_{m,n}$. It is a space of dimension $3^{d(n-m)} -1$ and each function $h \in H$ can be written in the following form
\begin{equation*}
h = \sum_{z \in \mathcal{Z}_{m,n}} \lambda_z \indc_{z  + \cu_{m}},
\end{equation*}
for some constants $\left( \lambda_z \right)_{z \in \mathcal{Z}_{m,n}}$ satisfying $\sum_{z \in \mathcal{Z}_{m,n}} \lambda_z = 0$.

\begin{proposition}\label{5dest.}
There exists a constant $C := C(d , \lambda) < \infty$ such that the following estimate holds, for each $\psi \in \mathring h^1 (\cu_n),$
\begin{equation*}
\log \int_H \exp \left( - \sum_{e\in B_{m,n}} V_e\left(\nabla \psi (e) + \nabla h(e)\right) \right) \, d h \leq C m 3^{d(n-m)}.
\end{equation*}
\end{proposition}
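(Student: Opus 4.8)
The goal is to bound $\log \int_H \exp(- \sum_{e \in B_{m,n}} V_e(\nabla \psi(e) + \nabla h(e))) \, dh$ by $C m 3^{d(n-m)}$, uniformly in $\psi \in \mathring h^1(\cu_n)$. The plan is to split the integral over $H$ into a region where $\|h\|_{\infty}$ (equivalently $\max_z |\lambda_z|$) is at most some threshold $T$, and a region where it exceeds $T$, and to choose $T$ polynomially in $3^{n-m}$ (say $T \sim 3^{C(n-m)}$) so that the logarithm of $T$ contributes the desired $m 3^{d(n-m)}$ factor after accounting for the $(3^{d(n-m)}-1)$-dimensional volume of the bounded region. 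On the bounded region I would simply use $V_e \geq 0$ to discard the exponential, so that the contribution is at most the Lebesgue volume of $\{h \in H : \|h\|_\infty \leq T\}$, which is bounded by $(2T)^{3^{d(n-m)}-1}$; taking the logarithm gives $\leq 3^{d(n-m)} \log(2T) \leq C (n-m) 3^{d(n-m)} \leq C m 3^{d(n-m)}$ — wait, the bound should be in terms of $m$, so more carefully one writes $(n-m) \leq n$ and absorbs; but the stated estimate has $m$, so I would choose $T$ so that $\log T \lesssim m$, i.e.\ $T = 3^{m}$ roughly, and then show the tail where $\|h\|_\infty > 3^m$ is negligible because $V_e$ is uniformly convex, hence grows quadratically.

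More precisely, on the unbounded region I would exploit the uniform convexity lower bound $V_e(x) \geq \lambda |x|^2 - (\text{linear in } \psi)$, or more robustly $V_e(\nabla\psi(e) + \nabla h(e)) \geq \tfrac{\lambda}{2} |\nabla h(e)|^2 - C|\nabla \psi(e)|^2$, summed over $e \in B_{m,n}$. The key point is that the quadratic form $h \mapsto \sum_{e \in B_{m,n}} |\nabla h(e)|^2$ restricted to $H$ is coercive: since the functions in $H$ are piecewise constant on the subcubes and $B_{m,n}$ consists exactly of the edges joining distinct subcubes, $\sum_{e \in B_{m,n}} |\nabla h(e)|^2$ controls a discrete Dirichlet energy on the "quotient graph" whose vertices are the $3^{d(n-m)}$ subcubes (with the number of edges between adjacent subcubes being $\sim 3^{(d-1)m}$), and by the Poincaré inequality on that connected graph one gets $\sum_{e \in B_{m,n}} |\nabla h(e)|^2 \geq c\, 3^{(d-1)m} 3^{-2(n-m)} \sum_z |\lambda_z|^2 \geq c' \|h\|_\infty^2$ (up to explicit powers of $3$). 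This gives Gaussian-type decay in the tail, so $\int_{\|h\|_\infty > T} \exp(-\sum V_e) \, dh$ is bounded by $\exp(C \sum_{e \in B_{m,n}} |\nabla\psi(e)|^2) \cdot \exp(-c'' T^2 \cdot (\text{something})) \times (\text{volume factor})$, which is exponentially small in $T^2$ and hence $\leq 1$ for $T$ a suitable power of $3^{n-m}$. The only subtlety is the $\sum_{e\in B_{m,n}} |\nabla \psi(e)|^2$ term appearing from the cross term; but this is not an issue because we only need an upper bound for the integral, and on the tail region this term is dominated by the quadratic term in $h$ once $|\nabla h|$ is large — alternatively complete the square: $V_e(\nabla\psi + \nabla h) \geq \tfrac{\lambda}{2}|\nabla\psi + \nabla h|^2 \geq \tfrac{\lambda}{4}|\nabla h|^2 - \tfrac{\lambda}{2}|\nabla\psi|^2$ only helps if $\nabla\psi$ is controlled, which it need not be. So instead I would split based on $|\nabla h(e)|$ vs $|\nabla \psi(e)|$ edgewise, or more cleanly use that $V_e(y) \geq \lambda|y|^2$ (from the normalization $V_e(0)=0$ and uniform convexity) directly, giving $\sum_{e \in B_{m,n}} V_e(\nabla\psi + \nabla h) \geq \lambda \sum_{e\in B_{m,n}}|\nabla\psi(e) + \nabla h(e)|^2$, and then the map $h \mapsto \sum_{e \in B_{m,n}}|\nabla\psi(e)+\nabla h(e)|^2$ is a shifted coercive quadratic on $H$, so its sublevel sets have volume bounded by that of a ball of radius $\sim (\text{value})^{1/2} \times (\text{coercivity constant})^{-1/2}$, independent of the shift $\psi$.

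Assembling: write $\int_H \exp(-\sum_{e\in B_{m,n}} V_e) \, dh \leq \int_H \exp(-\lambda Q_\psi(h)) \, dh$ where $Q_\psi(h) = \sum_{e\in B_{m,n}}|\nabla\psi(e)+\nabla h(e)|^2$, a positive semidefinite quadratic in $h$ which is in fact positive definite on $H$ with smallest eigenvalue $\mu \geq c\, 3^{(d-1)m - 2(n-m)}$ (by the quotient-graph Poincaré inequality sketched above) and which attains its minimum $\geq 0$ at some point. A Gaussian integral estimate then gives $\int_H \exp(-\lambda Q_\psi(h)) \, dh \leq (\pi/\lambda)^{(3^{d(n-m)}-1)/2} (\det' )^{-1/2} \leq \mu^{-(3^{d(n-m)}-1)/2} (\pi/\lambda)^{(3^{d(n-m)}-1)/2}$, so taking logarithms, $\log \int_H (\cdots) \leq \tfrac{1}{2}(3^{d(n-m)}-1)\log(\pi/(\lambda\mu)) \leq C (3^{d(n-m)}) \cdot ((n-m)+m) \leq C m\, 3^{d(n-m)}$ after absorbing $(n-m) \leq n$ and noting — hmm, here one genuinely needs the bound in $m$ rather than $n-m$; this is fine in the application since the proposition is invoked only through $m \leq n$, but to get literally $m\,3^{d(n-m)}$ one uses $\log(1/\mu) \leq C((n-m) + m\log 3) \leq C m 3^{?}$... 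I would double-check the exact exponent bookkeeping, but morally $\log(1/\mu) = O(m+n) = O(m)$ when $m$ and $n$ are comparable, and when $m \ll n$ the factor $3^{d(n-m)}$ dominates anyway so a cleaner statement with $n$ in place of $m$ would also suffice for the application in Step 3 of Proposition~\ref{2sccomparison}. \textbf{The main obstacle} is establishing the coercivity constant $\mu$ of $Q_\psi$ on $H$ with the correct (polynomial in $3^n$) dependence: this requires a Poincaré-type inequality on the graph of subcubes connected through the edge-bundles $B_{m,n}$, tracking how the $\sim 3^{(d-1)m}$ edges between neighboring subcubes and the $\sim 3^{n-m}$ diameter of the subcube-graph enter. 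Everything else (discarding $V_e \geq 0$, the Gaussian integral bound, taking logarithms) is routine.
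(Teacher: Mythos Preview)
Your approach is structurally the same as the paper's: reduce via $V_e(x)\geq \lambda x^2$ to a shifted Gaussian on $H$, identify $H$ with $\mathring h^1(\cu_{n-m})$ via the quotient graph, and observe that the $\psi$-shift is harmless because the minimum of the quadratic is nonnegative (the paper phrases this as an orthogonal projection plus translation invariance, but it is the same point).

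The gap is at your very last step. Bounding $\det Q$ from below by $\mu^{\dim H}$ with $\mu$ the smallest eigenvalue only yields
\[
\log\int_H e^{-\lambda Q_\psi(h)}\,dh \;\le\; \tfrac12\,3^{d(n-m)}\log\!\frac{C}{\mu}\;\le\; C\,(m+2(n-m))\,3^{d(n-m)}\;=\;C\,n\,3^{d(n-m)},
\]
since (with the $L^2(\cu_n)$ normalization of Lebesgue measure on $H$) one has $\mu\sim 3^{-m}\cdot 3^{-2(n-m)}$. This does \emph{not} give the stated $Cm\,3^{d(n-m)}$, and your hedge ``$\log(1/\mu)=O(m)$'' is simply false when $m\ll n$. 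You are right that the weaker bound $Cn\,3^{d(n-m)}$ still feeds through Proposition~\ref{2sccomparison} and the multiscale Poincar\'e argument (the extra factor of $n$ is swallowed by a spare $3^{n/2}$), but it does not prove the proposition as stated.

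The fix is exactly what the paper does, and it amounts to \emph{not} using the smallest-eigenvalue bound on the determinant. After the quotient identification the integral becomes
\[
\int_{\mathring h^1(\cu_{n-m})}\exp\Bigl(-\lambda\,3^{-m}\sum_{e\subseteq\cu_{n-m}}|\nabla g(e)|^2\Bigr)\,dg
\;=\;\Bigl(\tfrac{3^m}{\lambda}\Bigr)^{(3^{d(n-m)}-1)/2}\int_{\mathring h^1(\cu_{n-m})}e^{-\sum_e|\nabla g|^2}\,dg.
\]
The prefactor contributes exactly the $Cm\,3^{d(n-m)}$ after taking logarithms. For the remaining integral one does \emph{not} bound eigenvalue-by-eigenvalue; instead one recognises it as $\exp\bigl(|\cu_{n-m}|\,\nu^*(\cu_{n-m},0)\bigr)$ for the Gaussian potential $V_e(x)=x^2$, and invokes the quadratic bound on $\nu^*$ (Proposition~\ref{p.quadbound}) to get $\log(\,\cdot\,)\le C\,3^{d(n-m)}$. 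The point is that most eigenvalues of the graph Laplacian on $\cu_{n-m}$ are of order one, so the log-determinant is $O(|\cu_{n-m}|)$ rather than $O((n-m)\,|\cu_{n-m}|)$; the partition-function viewpoint captures this for free. Replace your last inequality by this and you recover the claimed bound verbatim.
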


\begin{proof}
By the assumptions made on the elastic potential $V_e$, one has, for each $x \in \R$,
\begin{equation*}
V_e(x) \geq \lambda x^2.
\end{equation*}
This gives the following estimate, for each $\psi \in \underset{z \in \mathcal{Z}_{m,n}}{\oplus} \mathring h^1 \left(z + \cu_{m}\right)$,
\begin{equation} \label{est.e.H.small}
 - \sum_{e\in B_{m,n}} V_e\left(\nabla \psi (e) + \nabla h(e)\right) \leq - \sum_{e\in B_{m,n}}  \lambda \left(\nabla \psi (e) + \nabla h(e)\right)^2 .
\end{equation}
For the rest of the proof, we introduce the following notation: for $z,z' \in \mathcal{Z}_{m,n}$, we write
\begin{equation*}
z \sim z' \, \mbox{if and only if} \, |z - z'|_1 = 3^m.
\end{equation*}
That is to say, we write $z \sim z'$ if and only if $z$ and $z'$ are neighbors in the rescaled lattice $3^m \Zd$. With this notation, the set $B_{m,n}$ can be partitioned according to
\begin{equation*}
B_{m,n} = \bigcup_{z,z' \in \mathcal{Z}_{m,n}, \, z \sim z'} F_{z,z'},
\end{equation*}
where we introduce the notation
\begin{equation*}
F_{z,z'} := \left\{ e = (x,y) \subseteq \cu_{n+1} \, : \, x \in z + \cu_n ~ \mbox{and}~ y \in z' + \cu_n \right\}.
\end{equation*}
With this notation, the right-hand side of~\eqref{est.e.H.small} can be rewritten
\begin{equation*}
 \sum_{e\in B_{m,n}}  \left(\nabla \psi (e) + \nabla h (e)\right)^2  =  \sum_{z,z' \in \mathcal{Z}_{m,n}, \, z \sim z'} \sum_{e\in F_{z,z'}}  \left(\nabla \psi (e) + \lambda_{z'} - \lambda_z \right)^2.
\end{equation*}
Expanding the square gives, for each $z , z' \in \mathcal{Z}_{m,n}$ satisfying $z \sim z'$,
\begin{align*}
\sum_{e\in F_{z,z'}}  \left(\nabla \psi (e) + \lambda_z' - \lambda_z \right)^2 &= \sum_{e\in F_{z,z'}}  |\nabla \psi (e)|^2 + 2 \nabla \psi (e) \left(\lambda_{z'} - \lambda_z \right) +  |\lambda_{z'} - \lambda_z|^2 \\
										& = |F_{z,z'}| \left( \lambda_{z'} - \lambda_z + \frac1{|F_{z,z'}|}  \sum_{e\in F_{z,z'}} \nabla \psi (e)  \right)^2  \\ & \quad -  \frac1{|F_{z,z'}|}  \left| \sum_{e\in F_{z,z'}} \nabla \psi (e) \right|^2 +  \sum_{e\in F_{z,z'}} |\nabla \psi (e)|^2 .
\end{align*}
But one has
\begin{equation*}
 -  \frac1{|F_{z,z'}|}  \left| \sum_{e\in F_{z,z'}} \nabla \psi (e) \right|^2 + \sum_{e\in F_{z,z'}} |\nabla \psi (e)|^2 \geq 0,
\end{equation*}
thus one obtains
\begin{equation*}
\sum_{e\in F_{z,z'}}  \left(\nabla \psi (e) + \lambda_z' - \lambda_z \right)^2 \geq  |F_{z,z'}| \left( \lambda_{z'} - \lambda_z + \frac1{|F_{z,z'}|}  \sum_{e\in F_{z,z'}} \nabla \psi (e)  \right)^2.
\end{equation*}
Note that the cardinality $|F_{z,z'}|$ is the same for every pair of points $z,z' \in \mathcal{Z}_{m,n}$ such that $z \sim z'$. It is indeed the cardinality of a face of the cube $\cu_m$ and is equal to $3^{(d-1)m}$. This cardinality is denoted by $|F_m|$ in the rest of the proof.

\smallskip

The next step of the proof is to construct an isometry between the spaces $H$ and $\mathring h^1 (\cu_{n-m})$. To do so, we note that the following equality holds
\begin{equation*}
\mathcal{Z}_{m,n} = 3^m \cu_{n-m}.
\end{equation*}
In particular if $z \in \mathcal{Z}_{m,n}$, then $z / 3^m \in \cu_{n-m}$. From this one obtains that the existence of an isometry between the spaces $H$ and $\mathring h^1 \left(\cu_{n-m}\right)$ given by
\begin{equation} \label{def.iso.phi}
\Phi : \left\{ \begin{array}{ccccc}
 H & \to & \mathring h^1 \left(\cu_{n-m}\right) \\
 h : =  \sum_{z \in \mathcal{Z}_{m,n}} \lambda_z \indc_{\{ z  + \cu_{m}\}} & \mapsto & \Phi(h) = 3^{\frac{dm}{2}} \sum_{z \in \mathcal{Z}_{m,n}} \lambda_z  \delta_{z/ 3^m},\\
\end{array} \right.
\end{equation}
where $\delta_z$ is the function defined by $\delta_z\left(z'\right) = 1$ if $z = z'$ and $\delta_z\left(z'\right) = 0$ otherwise. The scalar $ 3^{\frac{dm}{2}} $ is here to ensure that 
$$\sum_{x \in \cu_{n}} h(x)^2 = \sum_{z \in \mathcal{Z}_{m,n}} 3^{dm} |\lambda_z|^2$$
is equal to 
$$ \sum_{x \in \cu_{x \in \cu_{n-m}}}  \Phi(h)(x)^2  = \sum_{z \in \mathcal{Z}_{m,n}}  \left| 3^{\frac{dm}{2}} \lambda_z\right|^2 .$$
We also denote by $X_{ \psi}$ the vector field defined on the edges of $\cu_{n-m}$ by
\begin{equation*}
X_{\psi} \left(\frac z{3^m} , \frac{z'}{3^m} \right) =  \frac1{|F_m|}  \sum_{e\in F_{z,z'}} \nabla \psi (e) .
\end{equation*}
Performing the change of variables by the isometry $\Phi$ shows
\begin{multline*}
\int_H \exp \left(  - \sum_{e\in B_{m,n}}  \lambda \left(\nabla \psi (e) + \nabla h(e)\right)^2 \right) \, d h
\\ \leq
\int_{\mathring h^1 (\cu_{n-m})} \exp \left(  - \sum_{e\subseteq \cu_{n-m}}  \lambda |F_m| \left(3^{-\frac{dm}2} \nabla h(e) + X_{\psi}(e) \right)^2 \right) \, d h.
\end{multline*}
Using the equality $|F_m| = 3^{(d-1)m}$, one obtains
\begin{multline} \label{est.techn25}
\int_H \exp \left(  - \sum_{e\in B_{m,n}}  \lambda \left(\nabla \psi (e) + \nabla h(e)\right)^2 \right) \, d h
\\ \leq
\int_{\mathring h^1 (\cu_{n-m})} \exp \left(  - \sum_{e\subseteq \cu_{m,n}}  \lambda  3^{(d-1)m} \left( 3^{-\frac{dm}2} \nabla h(e) -   X_{\psi}(e) \right)^2 \right) \, d h.
\end{multline}
We denote by $V(\cu_{n-m})$ the space of vector fields of the cube $\cu_{n-m}$ and equip it with the standard $L^2$ scalar product. The idea is then to consider the following orthogonal decomposition
\begin{equation*}
V(\cu_{n-m}) = \nabla \mathring h^1 (\cu_{n-m}) \overset{\perp}{\oplus} \left( \nabla \mathring h^1 (\cu_{n-m}) \right)^\perp,
\end{equation*}
so that the vector field $X_{\psi}$ can be decomposed according to the formula
\begin{equation*}
X_{\psi} = \nabla h_{\psi} + X_{\psi}^\perp,
\end{equation*}
where $h_{\psi} \in \mathring h^1 (\cu_{n-m})$ and $ X_{\psi}^\perp \in  \left( \nabla \mathring h^1 (\cu_{n-m}) \right)^\perp$. Using this decomposition, one has
\begin{align*}
 \sum_{e\subseteq \cu_{n-m}} \left( 3^{-\frac{dm}2}\nabla h(e) -  X_{\psi}(e) \right)^2 & =  \sum_{e\subseteq \cu_{n-m}} \left(  3^{-\frac{dm}2} \nabla h(e) -  \nabla h_{\psi}  \right)^2 + \sum_{e\subseteq \cu_{n-m}} \left(  X_{\psi}^\perp  \right)^2 \\
									& \geq \sum_{e\subseteq \cu_{n-m}} \left(  3^{-\frac{dm}2} \nabla h(e) - \nabla h_{\psi}  \right)^2 .
\end{align*}
Using the previous inequality, the estimate~\eqref{est.techn25} becomes
\begin{multline*}
\int_H \exp \left(  - \sum_{e\in B_{m,n}}  \lambda \left(\nabla \psi (e) + \nabla h(e)\right)^2 \right) \, d h \\
 \leq 
\int_{\mathring h^1 (\cu_{n-m})} \exp \left(  - \sum_{e\subseteq \cu_{n-m}}  \lambda 3^{(d-1)m} \left( 3^{-\frac{dm}2}\nabla h(e) -   \nabla h_{\psi}  \right)^2 \right) \, d h.
\end{multline*}
We then use the translation invariance of the Lebesgue measure to prove
\begin{multline*}
\int_{\mathring h^1 (\cu_{n-m})} \exp \left(  - \sum_{e\subseteq \cu_{n-m}}  \lambda 3^{(d-1)m} \left(  3^{-\frac{dm}2}\nabla h(e) -  \nabla h_{\psi}  \right)^2 \right) \, d h \\ = \int_{\mathring h^1 (\cu_{n-m})} \exp \left(  - \sum_{e\subseteq \cu_{n-m}}  \lambda  3^{-m} \nabla h(e)^2 \right) \, d h  .
\end{multline*}
Combining the two previous displays yields
\begin{multline*}
\int_H \exp \left(  - \sum_{e\in B_{m,n}}  \lambda \left(\nabla \psi (e) + \nabla h(e)\right)^2 \right) \, dh
 \\ \leq 
\int_{\mathring h^1 (\cu_{n-m})} \exp \left(  - \sum_{e\subseteq \cu_{n-m}}  \lambda 3^{-m}\nabla h(e)^2 \right) \, d h.
\end{multline*}
We then perform a change of variables to obtain
\begin{multline*}
\int_H \exp \left(  - \sum_{e\in B_{m,n}}  \lambda \left(\nabla \psi (e) + \nabla h(e)\right)^2 \right) \, d h \\
 \leq 
 \left( \frac{3^m}{\lambda}\right)^{\frac{\left( 3^{d(n-m)} - 1 \right) }{2}}  \int_{\mathring h^1 (\cu_{n-m})} \exp \left(  - \sum_{e\subseteq \cu_{n-m}} \nabla h(e)^2 \right) \, d h,
\end{multline*}
since $\dim \mathring h^1(\cu_{n-m}) = 3^{d(n-m)} -1$. Taking the logarithm and applying Proposition~\ref{p.quadbound}, one deduces
\begin{align*}
\log \int_H \exp \left(  - \sum_{e\in B_{m,n}}  \lambda \left(\nabla \psi (e) + \nabla h(e)\right)^2 \right) \, d h & \leq C m \left( 3^{d(n-m)} - 1 \right) + C |\cu_{n-m}|  \\
																					& \leq C m 3^{d(n-m)}.
\end{align*}
This completes the proof of Proposition~\ref{5dest.}.
\end{proof}

We now turn to the proof of the second technical lemma of the appendix. This lemma and the notation are used in Step 4 of the proof of Proposition~\ref{lemma3.3}.

\begin{proposition} \label{p.lemmaA2}
There exists a constant $C := C(d , \lambda) < \infty$ such that, for each $n \in \N$,
\begin{align} \label{e.lemmaA2}
\lefteqn{\E \left[ \frac{1}{\left| \cu_{2n}\right|} \sum_{z \in \mathcal{Z}_{n,2n}} \sum_{e \subseteq z + \cu_n}  V_e \left(  \nabla \nu^*_n (q) (e) + \nabla \kappa (e) \right) \right]} \qquad & \\ & \leq  \E \left[  \frac{1}{\left| \cu_{2n}\right|} \sum_{z \in \mathcal{Z}_{n,2n}} \sum_{e \subseteq z + \cu_n}  V_e \left( \nabla \psi_z (e) \right)  \right] + C (1 + |q|^2) 3^{-\frac n2} \notag  \\ & \qquad + C \E \left[ \frac{1}{\left| \cu_{2n}\right|} \sum_{z \in \mathcal{Z}_{n,2n}} \sum_{e \subseteq z + \cu_n} \left| \nabla \kappa (e) - \nabla \sigma_z (e) \right|^2 \right]. \notag
\end{align}
\end{proposition}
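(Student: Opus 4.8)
The inequality \eqref{e.lemmaA2} compares, cube by cube, the energy of the \emph{affine-plus-projection} field $\nabla\nu^*_n(q)(e)+\nabla\kappa(e)$ with the energy of $\nabla\psi_z(e)$, up to the $L^2$-distance between $\nabla\kappa$ and $\nabla\sigma_z$ and an error of order $3^{-n/2}$. The natural route is a second-order Taylor expansion of each $V_e$ around the point $\nabla\psi_z(e)$. Recall from \eqref{def.sigmaz} that $\sigma_z=\psi_z-l_{\nabla\nu^*_n(q)}$ on $z+\cu_n$, so that on every edge $e\subseteq z+\cu_n$ one has the exact identity $\nabla\nu^*_n(q)(e)+\nabla\sigma_z(e)=\nabla\psi_z(e)$. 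Therefore, writing $\nabla\nu^*_n(q)(e)+\nabla\kappa(e)=\nabla\psi_z(e)+\bigl(\nabla\kappa(e)-\nabla\sigma_z(e)\bigr)$ and expanding, for each such edge,
\begin{equation*}
V_e\bigl(\nabla\nu^*_n(q)(e)+\nabla\kappa(e)\bigr)\le V_e\bigl(\nabla\psi_z(e)\bigr)+V_e'\bigl(\nabla\psi_z(e)\bigr)\bigl(\nabla\kappa(e)-\nabla\sigma_z(e)\bigr)+\frac{1}{2\lambda}\bigl|\nabla\kappa(e)-\nabla\sigma_z(e)\bigr|^2,
\end{equation*}
using the bound $V_e''\le 1/\lambda$. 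Summing over $z\in\mathcal{Z}_{n,2n}$ and $e\subseteq z+\cu_n$, dividing by $|\cu_{2n}|$ and taking expectations, the quadratic term is already of the form appearing on the right-hand side of \eqref{e.lemmaA2}, and the first term is exactly the first term on the right-hand side. It remains to control the cross term
\begin{equation*}
\mathrm{Cross}:=\E\left[\frac{1}{|\cu_{2n}|}\sum_{z\in\mathcal{Z}_{n,2n}}\sum_{e\subseteq z+\cu_n}V_e'\bigl(\nabla\psi_z(e)\bigr)\bigl(\nabla\kappa(e)-\nabla\sigma_z(e)\bigr)\right],
\end{equation*}
and show it is bounded by $C(1+|q|^2)3^{-n/2}$ plus a term absorbable into the quadratic term.

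\textbf{Key steps for the cross term.} First I would apply Cauchy--Schwarz within each cube,
\begin{equation*}
|\mathrm{Cross}|\le \E\left[\frac{1}{|\cu_{2n}|}\sum_{z}\sum_{e\subseteq z+\cu_n}\bigl|V_e'(\nabla\psi_z(e))\bigr|^2\right]^{1/2}\E\left[\frac{1}{|\cu_{2n}|}\sum_{z}\sum_{e\subseteq z+\cu_n}\bigl|\nabla\kappa(e)-\nabla\sigma_z(e)\bigr|^2\right]^{1/2}.
\end{equation*}
Using $|V_e'(x)|\le|x|/\lambda$, the law of $\psi_z(\cdot-z)$ being $\P^*_{n,q}$, and the $L^2$ bound \eqref{4.L2boundnustar} of Proposition~\ref{propofnunustar}, the first factor is $\le C(1+|q|)$; call the square of the second factor $E_n$. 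This gives $|\mathrm{Cross}|\le C(1+|q|)E_n^{1/2}$, which is \emph{not} of the desired form directly. The fix is the standard Young's inequality trick: since $\kappa$ is the $L^2$-orthogonal projection of $\mathbf{f}$ onto gradients (by \eqref{projgradh10}), one has the a priori bound $E_n\le C\,\E\bigl[\frac1{|\cu_{2n}|}\sum_z\sum_{e\subseteq z+\cu_n}(|\nabla\sigma_z(e)|^2+|\nabla\kappa(e)|^2)\bigr]\le C(1+|q|^2)$ — but this only recovers $|\mathrm{Cross}|\le C(1+|q|^2)$, too weak. The correct accounting is simply to keep the cross term as is and note that the right-hand side of \eqref{e.lemmaA2} already \emph{contains} a full copy of $C\,E_n$ (the last term, with a constant $C$ that we are free to choose large); so by Young's inequality $C(1+|q|)E_n^{1/2}\le \tfrac12 C' E_n + C''(1+|q|^2)$ for suitable constants, where the $\tfrac12 C'E_n$ part is absorbed into the last term of \eqref{e.lemmaA2} and the $C''(1+|q|^2)$ part needs to be shown $\le C(1+|q|^2)3^{-n/2}$ — which it is not. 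So the genuinely needed observation is sharper: one must exploit that $V_e'(\nabla\psi_z(e))$ on the edges interior to $z+\cu_n$ is, \emph{after averaging}, close to the macroscopic slope-conjugate $q$ (since $\psi_z$ minimizes the tilted energy and its gradient has mean $\nabla\nu^*_n(q)$, the discrete Euler--Lagrange equation forces $\operatorname{div}V_e'(\nabla\psi_z)=0$ inside the cube), and $\nabla\kappa-\nabla\sigma_z$ is a discrete gradient of a function vanishing on $\partial(z+\cu_n)$ up to boundary layer terms. Summation by parts then turns the cross term into a boundary term on $\partial(z+\cu_n)\cup B_{2n,n}$, whose cardinality is $O(3^{(d-1)n})$ per cube, giving the gain $3^{-n}$; combined with a Cauchy--Schwarz and the Meyers estimate (Proposition~\ref{meyersgradphi}) to handle the higher integrability on the boundary layer, this produces the $3^{-n/2}$ (or better) rate.

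\textbf{Main obstacle.} The delicate point is precisely this summation-by-parts / Euler--Lagrange argument for the cross term: $\psi_z$ satisfies the nonlinear equation $\operatorname{div}V_e'(\nabla\psi_z)=0$ only in the interior $z+\inte\cu_n$, and $\nabla\kappa-\nabla\sigma_z$ fails to be the gradient of an $h^1_0$ function on $z+\cu_n$ because $\kappa\in h^1_0(\cu_{2n}^+)$ (global boundary condition) while $\sigma_z$ lives on the subcube; matching these up produces boundary-layer terms on $\partial(z+\cu_n)$ and on $B_{2n,n}$ that must be estimated with the Caccioppoli and Meyers inequalities of Appendix~\ref{appendixB} to beat the trivial $O(1)$ bound. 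I expect the bookkeeping of which terms are genuine boundary contributions (gaining a factor $3^{-n}$ from $|B_{2n,n}\cap(z+\cu_n)|/|\cu_n|$) versus which get absorbed into the last term of \eqref{e.lemmaA2} to be the technically heaviest part; once the cross term is dispatched, the Taylor expansion and the remaining algebra are routine, and one recovers \eqref{e.lemmaA2} with $C$ depending only on $d,\lambda$.
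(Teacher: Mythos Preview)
Your Taylor expansion is a natural opening move, and you correctly identify the cross term as the obstacle. However, your proposed resolution contains a genuine conceptual error: you write that ``the discrete Euler--Lagrange equation forces $\operatorname{div}V_e'(\nabla\psi_z)=0$ inside the cube.'' This is false. The field $\psi_z$ is a \emph{random sample} from the Gibbs measure $\P^*_{n,q}$, not the deterministic minimizer of the energy; it does not satisfy any pointwise PDE. (Compare with the Caccioppoli proof in Appendix~\ref{appendixB}: the analogue of Euler--Lagrange there is obtained by perturbing the \emph{law} and always carries an extra entropy/Jacobian term $\tfrac{d}{dt}\ln\det L_t$.) Consequently your summation-by-parts plan, and the boundary-layer/Meyers accounting built on it, has no foundation: the ``interior equation'' you want to integrate against $\nabla\kappa-\nabla\sigma_z$ simply does not exist, and the cross term does not reduce to a boundary contribution.

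The paper takes a different route that avoids first-order terms altogether. It introduces the reflected field $\xi:=2\sigma-P\kappa$ (so that $\sigma=\tfrac12(\xi+P\kappa)$, with $P$ the projection onto $\oplus_z\mathring h^1(z+\cu_n)$), and applies the \emph{upper} uniform-convexity bound at the midpoint to get, on each edge $e\subseteq z+\cu_n$,
\[
2V_e(\nabla\psi_z)\ \ge\ V_e(\nabla\nu^*_n(q)+\nabla\xi)+V_e(\nabla\nu^*_n(q)+\nabla\kappa)-C|\nabla\sigma_z-\nabla\kappa|^2.
\]
The $V_e(\nabla\nu^*_n(q)+\nabla\kappa)$ term is what we want; the $V_e(\nabla\nu^*_n(q)+\nabla\xi)$ term is then compared back to $V_e(\nabla\psi_z)$ using the variational characterization of the \emph{law} $\P_\psi$ (minimizer of energy plus entropy among measures on $\oplus_z\mathring h^1(z+\cu_n)$). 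Since $\xi=(2\mathrm{Id}-P\circ L)\sigma$ is a deterministic linear image of $\sigma$, the entropy difference is exactly $-\ln|\det(2\mathrm{Id}-P\circ L)|$; the paper shows all but $O(3^{(2d-1)n})$ eigenvalues of $P\circ L$ equal $1$ and the rest lie in $\{|l|\le1\}$, so this determinant contributes only $C3^{-n}$ after normalization. The residual linear term $q\cdot\nabla P\kappa$ is handled by a genuine boundary estimate of size $3^{-n/2}(1+|q|^2)$. In short, the decay $3^{-n/2}$ comes from an \emph{entropy/determinant} argument at the level of laws, not from a pointwise PDE for the random field.
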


\begin{proof}
Denote by $\sigma$ the random variable taking values in $\oplus_{z \in \mathcal{Z}_{n,2n}} \mathring h^1 (z + \cu_{n})$ given by
\begin{equation*}
\sigma = \sum_{z \in \mathcal{Z}_{n,2n}} \sigma_z.
\end{equation*}
We also recall that the random variable $\sigma_z$ is defined by the identity
\begin{equation*}
\forall z \in \mathcal{Z}_{n,2n}, ~\forall x \in (z + \cu_n), ~\sigma_z (x) = \psi_z (x) - \nabla \nu^*_n (q)  \cdot x.
\end{equation*}
Let $P$ be the orthogonal projection from $h^1 (\cu_{2n})$ to $ \oplus_{z \in \mathcal{Z}_{n,2n}} \mathring h^1 (z + \cu_{n})$. Note the operator $P$ satisfies the following property
\begin{equation} \label{usefulpropertyofP}
\forall g \in h^1 (\cu_{2n}), \, \forall z \in \mathcal{Z}_{n,2n}, \, \forall e \subseteq (z + \cu_n), \, \nabla Pg(e) = \nabla g(e).
\end{equation}
Denote by $\xi$ the random variable taking values in $\oplus_{z \in \mathcal{Z}_{n,2n}} \mathring h^1 (z + \cu_{n})$, defined according to the formula
\begin{equation*}
\xi := 2 \sigma - P \kappa,
\end{equation*}
so that $\sigma=\frac{\xi + P \kappa}2 $. Using the uniform convexity of the elastic potential $V_e$, one has
\begin{align*}
\lefteqn{2 \E \left[ \frac{1}{\left| \cu_{2n}\right|} \sum_{z \in \mathcal{Z}_{n,2n}} \sum_{e \subseteq z + \cu_n}  V_e \left( \nabla \psi_z (e) \right) \right]} \qquad & \\ & \geq \E \left[ \frac{1}{\left| \cu_{2n}\right|} \sum_{z \in \mathcal{Z}_{n,2n}} \sum_{e \subseteq z + \cu_n}  V_e \left(\nabla \nu^*_n (q) (e)  + \nabla \xi (e) \right) \right] & \\
		 & \quad + \E \left[ \frac{1}{\left| \cu_{2n}\right|} \sum_{z \in \mathcal{Z}_{n,2n}} \sum_{e \subseteq z + \cu_n}  V_e \left( \nabla \nu^*_n (q) (e)  + \nabla P\kappa (e) \right) \right] \\
		& \quad - C \E \left[  \sum_{z \in \mathcal{Z}_{n,2n}} \sum_{e \subseteq z + \cu_n} | \nabla \sigma_z (e)  - \nabla  P \kappa(e)|^2 \right].
\end{align*}
We then use~\eqref{usefulpropertyofP} to get
\begin{align}\label{compenergypsixi}
\lefteqn{2 \E \left[ \frac{1}{\left| \cu_{2n}\right|} \sum_{z \in \mathcal{Z}_{n,2n}} \sum_{e \subseteq z + \cu_n}  V_e \left( \nabla \psi_z (e) \right) \right] } \qquad & \\ & \geq \E \left[ \frac{1}{\left| \cu_{2n}\right|} \sum_{z \in \mathcal{Z}_{n,2n}} \sum_{e \subseteq z + \cu_n}  V_e \left( \nabla \nu^*_n (q) (e)  + \nabla \xi (e) \right) \right]  \notag \\ 
& \quad+ \E \left[ \frac{1}{\left| \cu_{2n}\right|} \sum_{z \in \mathcal{Z}_{n,2n}} \sum_{e \subseteq z + \cu_n}  V_e \left( \nabla \nu^*_n (q) (e)  + \nabla \kappa (e) \right) \right]  \notag \\ 
& \quad - C  \E \left[ \sum_{z \in \mathcal{Z}_{n,2n}} \sum_{e \subseteq z + \cu_n} | \nabla \psi (e) - \nabla  \kappa(e)|^2 \right]. \notag
\end{align} 
Note that the random variable $\sum_{z \in \mathcal{Z}_{n,2n}} \psi_z$ is the minimizer of the problem
\begin{equation*}
\inf_{\P } \, \E \left[ \sum_{z \in \mathcal{Z}_{n,2n}} \sum_{e \subseteq z + \cu_n} \left( V_e(\nabla \psi') - q \cdot \nabla \psi'(e) \right) \right]+ H\left( \P \right),
\end{equation*}
where the infimum is taken over all the probability measures on $ \oplus_{z \in \mathcal{Z}_{n,2n}} \mathring h^1 (z + \cu_{n})$ and $\psi'$ is a random variable of law $\P$. In particular, using the translation invariance of the entropy gives
\begin{multline} \label{entropyxipsior0}
\E \left[ \frac{1}{\left| \cu_{2n}\right|} \sum_{z \in \mathcal{Z}_{n,2n}} \sum_{e \subseteq z + \cu_n}   V_e \left( \nabla \nu^*_n (q) (e) + \nabla \xi (e) \right)  - q \cdot \left( \nabla \nu^*_n (q) + \nabla \xi \right) (e)   \right] + \frac{1}{\left| \cu_{2n}\right|} H \left( \P_{\xi}  \right) \\ 
		\geq  \E \left[ \frac{1}{\left| \cu_{2n}\right|} \sum_{z \in \mathcal{Z}_{n,2n}} \sum_{e \subseteq z + \cu_n}  V_e \left( \nabla \psi_z (e) \right) - q \cdot  \nabla \psi_z (e) \right]  +  \frac{1}{\left| \cu_{n}\right|} H \left(  \P_{n,q}^* \right).
\end{multline} 
We first simplify a slightly the previous display by removing the linear terms in the left and right-hand side. Using that $ \nabla \nu^*_n (q)  = \E \left[ \left\langle \nabla \psi_{n,q} \right\rangle_{\cu_{n}} \right]$, we obtain
\begin{equation*}
 \E \left[  \frac{1}{\left| \cu_{2n}\right|} \sum_{z \in \mathcal{Z}_{n,2n}} \sum_{e \subseteq z + \cu_n}  q \cdot  \nabla \psi_z (e) \right]   = q \cdot \nabla \nu^*_n (q),
\end{equation*}
and, using the definition of $\sigma$,
\begin{multline*}
\E \left[ \frac{1}{\left| \cu_{2n}\right|} \sum_{z \in \mathcal{Z}_{n,2n}} \sum_{e \subseteq z + \cu_n} q \cdot \left( \nabla \nu^*_n (q) + \nabla \xi \right) (e)   \right] = q \cdot \nabla \nu^*_n (q) \\ + \E \left[ \frac{1}{\left| \cu_{2n}\right|} \sum_{z \in \mathcal{Z}_{n,2n}} \sum_{e \subseteq z + \cu_n} q \cdot \nabla P \kappa (e)  \right].
\end{multline*}
We denote by $B_{n,2n}^+$ the set of bonds of the cube $\cu_{2n}^+$ which do not belong to a cube of the form $(z + \cu_n)$, for $z \in \mathcal{Z}_{n,2n}$, i.e.,
\begin{equation*}
B_{n,2n}^+ := \left\{ e \subseteq \cu_{2n}^+ ~:~ \forall z \in \mathcal{Z}_{n,2n}, \, e \not\subseteq z + \cu_n \right\}.
\end{equation*}
Using this set, one can decompose the sum
\begin{equation*}
\sum_{e \subseteq \cu_{2n}^+} = \sum_{z \in \mathcal{Z}_{n,2n}} \sum_{e \subseteq z + \cu_n} + \sum_{e \in B_{n,2n}^+}.
\end{equation*}
Note also that the set $B_{n,2n}^+$ is almost equal to the set $B_{n,2n}$, the only difference is that we added the bonds which belong to $\cu_{2n}^+$ but not $\cu_{2n}$, which is a small boundary layer of bonds. Additionally, one has the estimate on the cardinality of $B_{n,2n}^+$,
\begin{equation*}
\left| B_{n,2n}^+ \right| \leq C 3^{-n} \left| \cu_{2n} \right|.
\end{equation*}
Using the decomposition of the sum, the fact that $\kappa \in h^1_0 \left( \cu_{2n}^+ \right)$ and the property~\eqref{usefulpropertyofP}, one obtains,
\begin{align*}
\E \left[ \frac{1}{\left| \cu_{2n}\right|} \sum_{z \in \mathcal{Z}_{n,2n}} \sum_{e \subseteq z + \cu_n} q \cdot \left( \nabla P \kappa \right) (e)  \right] & = \E \left[ \frac{1}{\left| \cu_{2n}\right|} \sum_{z \in \mathcal{Z}_{n,2n}} \sum_{e \subseteq z + \cu_n}  q \cdot \nabla  \kappa  (e)  \right] \\
																											& = \E \left[ \frac{1}{\left| \cu_{2n}\right|} \sum_{e \in B_{n,2n}^+} q \cdot \nabla  \kappa  (e)  \right].
\end{align*}
We then apply the Cauchy-Schwarz inequality as well as the definition of $\kappa$ given in~\eqref{defofkappa} to obtain
\begin{align*}
\E \left[ \frac{1}{\left| \cu_{2n}\right|} \sum_{e \in B_{n,2n}^+} q \cdot \nabla  \kappa  (e)  \right] & \leq |q| \left( \frac{\left| B_{n,2n}^+ \right|}{\left| \cu_{2n}\right|} \right)^\frac 12 \left( \E \left[ \frac{1}{\left| \cu_{2n}\right|} \sum_{e \in B_{n,2n}^+}  \left|  \nabla  \kappa (e) \right|^2  \right]\right)^\frac 12 \\
									& \leq C |q|  3^{-\frac n2} \left( \E \left[ \frac{1}{\left| \cu_{2n}\right|} \sum_{e \subseteq \cu_{2n}^+}  \left|  \nabla  \kappa (e) \right|^2  \right]\right)^\frac 12 \\
									& \leq C |q|  3^{-\frac n2} \left( \E \left[ \frac{1}{\left| \cu_{2n}\right|} \sum_{e \subseteq \cu_{2n}^+}  \left|  \mathbf{f} (e) \right|^2  \right]\right)^\frac 12 \\
									& \leq C |q| 3^{-\frac n2} ( 1 + |q|).
\end{align*}
But using the definition of $\mathbf{f}$ given in~\eqref{def.mathbff} together with the estimate~\eqref{4.L2boundnustar} of Proposition~\ref{propofnunustar}, one has
\begin{equation*}
\left( \E \left[ \frac{1}{\left| \cu_{2n}\right|} \sum_{e \subseteq \cu_{2n}^+}  \left|  \mathbf{f} (e) \right|^2  \right]\right)^\frac 12 \leq C (1 + |q|).
\end{equation*}
A combination of the previous displays gives
\begin{equation*}
\left| \E \left[ \frac{1}{\left| \cu_{2n}\right|} \sum_{z \in \mathcal{Z}_{n,2n}} \sum_{e \subseteq z + \cu_n} q \cdot \nabla P \kappa (e)  \right] \right| \leq C 3^{-\frac n2} (1 + |q|^2).
\end{equation*}
Combining the previous estimate with the inequality~\eqref{entropyxipsior0}, one obtains the simplified display
\begin{multline} \label{realentropyxipsior0}
\E \left[ \frac{1}{\left| \cu_{2n}\right|} \sum_{z \in \mathcal{Z}_{n,2n}} \sum_{e \subseteq z + \cu_n}   V_e \left( \nabla \nu^*_n (q) (e) + \nabla \xi (e) \right)   \right] + \frac{1}{\left| \cu_{2n}\right|} H \left( \P_{\xi}  \right) \\ 
		\geq  \E \left[ \frac{1}{\left| \cu_{2n}\right|} \sum_{z \in \mathcal{Z}_{n,2n}} \sum_{e \subseteq z + \cu_n}  V_e \left( \nabla \psi_z (e) \right)  \right]  +  \frac{1}{\left| \cu_{2n}\right|} H \left( \P_{\psi}  \right) - C 3^{-\frac n2} (1 + |q|^2).
\end{multline}
We now show the following estimate comparing the entropies of $\P_{\xi}$ and $\P_{\psi}$,
\begin{equation} \label{entropyxipsior}
\frac{1}{\left| \cu_{2n}\right|} H \left( \P_{\xi}\right) \leq  \frac{1}{\left| \cu_{n}\right|} H \left( \P_{n,q}^*  \right) + C 3^{-n}.
\end{equation}
First we recall the definition of the linear operator $L$ given in~\eqref{rigorousprojh10grad} and that the random variable $\kappa$ is defined by 
\begin{equation*}
 \kappa := L \left( \sum_{z \in \mathcal{Z}_{n,2n}} \sigma_z \right).
\end{equation*}
We consequently have
\begin{equation*}
\xi = (2 \mathrm{Id} - P \circ L ) \psi,
\end{equation*}
where $2 \mathrm{Id} - P \circ L$ is seen as a linear operator from $ \oplus_{z \in \mathcal{Z}_{n,2n}} \mathring h^1 (z + \cu_{n})$ into itself. Using the change of variables formula for the entropy, one obtains
\begin{equation} \label{entropyxipsi}
\frac{1}{\left| \cu_{2n}\right|} H \left( \P_{\xi} \right) =  \frac{1}{\left| \cu_{n}\right|} H \left(  \P_{n,q}^*\right)  - \ln | \det (2 \mathrm{Id} - P \circ L)|.
\end{equation}
Since the dimension of the vector space $ \oplus_{z \in \mathcal{Z}_{n,2n}} \mathring h^1 (z + \cu_{n})$ is $3^{2dn} - 3^{dn}$, we denote by $l_1, \ldots , l_{3^{2dn} - 3^{dn}}$ the eigenvalues (potentially complex and with repetition) of the operator $P \circ L$. We thus have
\begin{equation*}
 \ln | \det (2 \mathrm{Id} - P \circ L)| = \sum_{i= 0}^{3^{2dn} - 3^{dn}} \ln | 2 - l_i|.
\end{equation*}
We now prove the two following statements on $l_i$:
\begin{enumerate}
\item for each $i \in \{ 1 , \ldots,  3^{2dn} - 3^{dn}\}$, $|l_i| \leq 1$;
\item there exists a constant $C := C(d) < \infty$ such that at least $ 3^{2dn} -  C 3^{dn}$ eigenvalues $l_i$ are equal to $1$.
\end{enumerate}
To prove the first fact, note that, by~\eqref{usefulpropertyofP}, for each $\psi \in \oplus_{z \in \mathcal{Z}_{n,2n}} \mathring h^1 (z + \cu_{n})$,
\begin{equation*}
\sum_{z \in \mathcal{Z}_{n,2n}} \sum_{e \subseteq z + \cu_n} \left| \nabla P \circ L (\psi)(e) \right|^2 = \sum_{z \in \mathcal{Z}_{n,2n}} \sum_{e \subseteq z + \cu_n} \left| \nabla L (\psi)(e) \right|^2.
\end{equation*}
Moreover by~\eqref{Lgradientcontract}, one has
\begin{equation*}
\sum_{e \subseteq \cu_{2n}^+} \left| \nabla L(\psi)(e) \right|^2 \leq \sum_{z \in \mathcal{Z}_{n,2n}} \sum_{e \subseteq z + \cu_n} |\nabla \psi (e)|^2.
\end{equation*}
Since one clearly has
\begin{equation*}
\sum_{z \in \mathcal{Z}_{n,2n}} \sum_{e \subseteq z + \cu_n} \left| \nabla L (\psi)(e) \right|^2 \leq \sum_{e \subseteq \cu_{2n}^+} \left| \nabla L(\psi)(e) \right|^2,
\end{equation*}
one obtains
\begin{equation*}
\sum_{z \in \mathcal{Z}_{n,2n}} \sum_{e \subseteq z + \cu_n} \left| \nabla P \circ L (\psi)(e) \right|^2 \leq  \sum_{z \in \mathcal{Z}_{n,2n}} \sum_{e \subseteq z + \cu_n} |\nabla \psi (e)|^2.
\end{equation*}
We consider an eigenvalue $l_i$ of $P \circ L$ and an eigenvector $\psi_i$ (which may be complex) associated to this eigenvalue. Then one has
\begin{equation*}
|l_i|^2 \sum_{z \in \mathcal{Z}_{n,2n}} \sum_{e \subseteq z + \cu_n} \left| \nabla \psi_i (e) \right|^2 \leq  \sum_{z \in \mathcal{Z}_{n,2n}} \sum_{e \subseteq z + \cu_n} |\nabla \psi_i (e)|^2,
\end{equation*}
which implies $|l_i| \leq 1$ as soon as $\sum_{z \in \mathcal{Z}_{n,2n}} \sum_{e \subseteq z + \cu_n} |\nabla \psi_i (e)|^2$ is not equal $0$, but since $\psi_i \in \oplus_{z \in \mathcal{Z}_{n,2n}} \mathring h^1 (z + \cu_{n})$, one also has
\begin{equation*}
\sum_{z \in \mathcal{Z}_{n,2n}} \sum_{e \subseteq z + \cu_n} |\nabla \psi_i (e)|^2 = 0 \Longleftrightarrow \psi_i =0.
\end{equation*}
This completes the proof of the first item.
\smallskip

We now prove the second item. To this end we proceed exactly as in Step 1 of Proposition~\ref{lemma3.3}, where it is proved proved that if one considers the interior $\cu_n^{\mathrm{o}}$,
\begin{equation*}
\cu_n^{\mathrm{o}} := \left( - \frac{3^n - 2}{2} , \frac{3^n - 2}{2} \right)^d \cap \Zd = \cu_n \setminus \partial \cu_n,
\end{equation*}
then for each $\psi \in \oplus_{z \in \mathcal{Z}_{n,2n}} \mathring h^1 \left(z + \cu_{n}^{\mathrm{o}} \right)$ one has
\begin{equation*}
L(\psi) = \psi.
\end{equation*}
Moreover, from~\eqref{estimatedimensioneigenspace1} one has the estimate on the dimension of $\oplus_{z \in \mathcal{Z}_{n,2n}} \mathring h^1 \left(z + \cu_{n}^- \right)$,
\begin{equation*}
\dim \left(  \underset{z \in \mathcal{Z}_{n,2n}}{\oplus} \mathring h^1 \left(z + \cu_n^-\right) \right) \geq 3^{2dn} - C 3^{(2d -1)n}.
\end{equation*}
This implies that among the $l_i$, at least $3^{2dn} - C 3^{(2d -1)n}$ of them are equal to $1$. Without loss of generality, we can thus assume that for each $i \in \{1 , \ldots , 3^{2dn} - C 3^{(2d -1)n} \}$, $l_i =1$.

\smallskip

Combining (1) and (2), we obtain
\begin{align*}
 \frac{1}{|\cu_{2n}|} \left| \ln | \det (2 \mathrm{Id} - P \circ L)| \right| & \leq \sum_{i= 0}^{3^{2dn} - 3^{dn}} \left| \ln | 2 - l_i| \right| \\
											& \leq  \sum_{i= 3^{2dn} - C 3^{(2d -1)n}}^{3^{2dn} - 3^{dn}}  \left| \ln | 2 - l_i| \right|\\
											& \leq C  3^{(2d -1)n}.
\end{align*}
Thus
\begin{equation*}
 \frac{1}{|\cu_{2n}|} \left| \ln | \det (2 \mathrm{Id} - P \circ L)| \right| \leq C 3^{-n}.
\end{equation*}
Combining this estimate with~\eqref{entropyxipsi} gives
\begin{equation*}
\frac{1}{\left| \cu_{2n}\right|} H \left( \P_{\xi}  \right) \leq   \frac{1}{\left| \cu_{n}\right|} H \left(  \P_{n,q}^* \right)  + C3^{-n}.
\end{equation*}
This is precisely~\eqref{entropyxipsior}.

\medskip

\noindent We then combine~\eqref{realentropyxipsior0} and~\eqref{entropyxipsior} to obtain
 \begin{multline*}
\E \left[ \frac{1}{\left| \cu_{2n}\right|} \sum_{z \in \mathcal{Z}_{n,2n}} \sum_{e \subseteq z + \cu_n}  V_e \left( \nabla \xi (e) \right) \right]
		\geq  \E \left[ \frac{1}{\left| \cu_{2n}\right|} \sum_{z \in \mathcal{Z}_{n,2n}} \sum_{e \subseteq z + \cu_n}  V_e \left( \nabla \psi_z (e) \right) \right] \\ -C (1 + |q|^2) 3^{-\frac n2}.
\end{multline*}
Using this inequality together with~\eqref{compenergypsixi} gives
\begin{multline*}
 \E \left[ \frac{1}{\left| \cu_{2n}\right|} \sum_{z \in \mathcal{Z}_{n,2n}} \sum_{e \subseteq z + \cu_n}  V_e \left( \nabla \psi (e) \right) \right]  \geq    \E \left[ \frac{1}{\left| \cu_{2n}\right|} \sum_{z \in \mathcal{Z}_{n,2n}} \sum_{e \subseteq z + \cu_n}  V_e \left( \nabla \kappa (e) \right) \right] \\  - C (1 + |q|^2) 3^{-\frac n2}   - C  \E \left[ \sum_{z \in \mathcal{Z}_{n,2n}} \sum_{e \subseteq z + \cu_n} | \nabla \psi (e) - \nabla  \kappa(e)|^2 \right].
\end{multline*}
This is~\eqref{e.lemmaA2} and thus the proof of Lemma~\ref{p.lemmaA2} is complete.
\end{proof}

We then prove the last lemma of this appendix. It gives a quadratic upper bound for the value of $\nu (U,p)$ for any bounded domain $U \subseteq \Zd$.

\begin{proposition} \label{upperboundnugeneralsubset}
There exists a constant $C := C(d,\lambda) < \infty$ such that for each bounded domain $U \subseteq \Zd$ and each $p \in \Rd$,
\begin{equation*}
\nu \left( U , p \right) \leq C (1 + |p|^2 ).
\end{equation*}
\end{proposition}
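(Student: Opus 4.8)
The statement asserts a quadratic upper bound $\nu(U,p) \le C(1 + |p|^2)$ for an arbitrary bounded subset $U \subseteq \Zd$. The natural approach is to exhibit a cheap test measure in the variational formula~\eqref{Jformula} (or, equivalently, to directly estimate the partition function $Z_p(U)$ from below). Recall that
\begin{equation*}
\nu(U,p) = -\frac{1}{|U|} \ln Z_p(U) = -\frac{1}{|U|}\ln \int_{h^1_0(U)} \exp\left(-\sum_{e \subseteq U} V_e(p(e) + \nabla\phi(e))\right)\, d\phi,
\end{equation*}
so it suffices to bound $Z_p(U)$ from below by something like $\exp(-C|U|(1+|p|^2))$. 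The simplest choice is $\phi \equiv 0$, but since $h^1_0(U)$ is a positive-dimensional space we cannot evaluate the integral at a point; instead I would restrict the integration to a small box around the origin, say $\{\phi : |\phi(x)| \le 1 \text{ for all } x \in \inte U\}$, which has Lebesgue measure $1$ on $h^1_0(U)$. On this set, every gradient satisfies $|\nabla\phi(e)| \le 2$, hence by the bound $V_e(x) \le \frac{1}{\lambda}|x|^2$ (a consequence of the normalization assumption $V_i(0)=0$ together with uniform convexity) we get $V_e(p(e)+\nabla\phi(e)) \le \frac{1}{\lambda}(|p|+2)^2 \le \frac{C}{\lambda}(1+|p|^2)$ for each bond. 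Since the number of bonds in $U$ is at most $d|U|$, the integrand is bounded below by $\exp(-C(1+|p|^2)|U|)$ on this unit-measure set, which gives $Z_p(U) \ge \exp(-C(1+|p|^2)|U|)$ and therefore $\nu(U,p) \le C(1+|p|^2)$.

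The only point requiring a tiny bit of care is the measure-theoretic bookkeeping: the set $\{\phi \in h^1_0(U) : |\phi(x)| \le 1/2 \ \forall x \in \inte U\}$ is a cube of side length $1$ in each of the $\dim h^1_0(U) = |U \setminus \partial U|$ coordinates, so its $\Leb_{h^1_0(U)}$-measure is exactly $1$ (or some explicit constant, depending on how one normalizes the box), and on it $|\nabla\phi(e)| \le 1$ for every edge, so $|p(e) + \nabla\phi(e)| \le |p| + 1$. This is entirely elementary. One should also observe that the edges involved are exactly the bonds $\B_d(U)$, whose cardinality is bounded by $d|U|$, so dividing $\ln Z_p(U)$ by $|U|$ absorbs this factor into the constant $C = C(d,\lambda)$.

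I do not anticipate a genuine obstacle here; the statement is soft and the proof is a one-line application of the variational principle with the trivial test function. The "hard part," if any, is merely making sure the constant depends only on $d$ and $\lambda$ and not on the geometry of $U$ — but since the estimate $V_e(x) \le \lambda^{-1}|x|^2$ is uniform in $e$ and the bond count is controlled by $d|U|$ for any $U$, this is automatic. An alternative, essentially equivalent route is to use the formula~\eqref{Jformula} directly: take $\P$ to be the uniform probability measure on the unit box $\{|\phi(x)| \le 1/2\} \subseteq h^1_0(U)$, which has entropy $H(\P) = 0$, and bound its energy term $\E[\sum_e V_e(p(e)+\nabla\phi(e))] \le C(1+|p|^2)|U|$ by the same gradient estimate; this yields $\nu(U,p) \le \frac{1}{|U|}(C(1+|p|^2)|U| + 0) = C(1+|p|^2)$ immediately.
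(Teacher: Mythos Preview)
Your proposal is correct and is essentially identical to the paper's own proof: the paper also takes the uniform probability measure on a unit box in $h^1_0(U)$ (there with coordinates i.i.d.\ uniform on $[0,1]$ rather than $[-\tfrac12,\tfrac12]$), notes that its entropy vanishes, and bounds the energy term via $V_e(x)\le \lambda^{-1}|x|^2$ together with $|\nabla\phi(e)|\le 1$. Your direct lower bound on $Z_p(U)$ is simply the unwrapped form of this variational argument.
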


\begin{remark}
This statement is a more general version of the upper bound for $\nu$ than the one given in Proposition~\ref{p.quadbound}, since it is valid for any bounded domain  $U \subseteq \Zd$, nevertheless the argument presented here does not give a lower bound as the one we computed in the case of cubes. It also does not give bounds on $\nu^*$, this is why this statement is presented in the appendix.
\end{remark}

\begin{proof}
Consider a random variable $X$, taking values in $h^1_0 (U)$ whose law is defined by
\begin{itemize}
\item for each $x \in U$, the law of $X(x)$ is uniform in $[0,1]$
\item the random variables $X(x)$, for $x \in U$ are independent.
\end{itemize}
Using that the entropy of the law uniform in $[0,1]$ is equal to $0$ together with Proposition~\ref{entropycouplesum}, one obtains
\begin{equation*}
H \left( \P_X \right) = 0.
\end{equation*}
Then by Proposition~\ref{varfornuprop}, one has the following computation
\begin{align*}
\nu (U , p) & \leq \E \left[ \frac{1}{|U|} \sum_{e \subseteq U} V_e \left( p(e) + \nabla X(e) \right) \right] +  \frac{1}{|U|} H \left( \P_X \right) \\
		& \leq \E \left[ \frac{1}{|U|} \sum_{e \subseteq U} V_e \left( p(e) + \nabla X(e) \right) \right] .
\end{align*}
We then use the bound $V_e (x) \leq \frac 1\lambda |x|^2$ combined with the estimate $\left|\nabla X(e)\right| \leq 1$ for each bond $e \subseteq U$ to obtain
\begin{equation*}
\E \left[ \frac{1}{|U|} \sum_{e \subseteq U} V_e \left( p(e) + \nabla X(e) \right) \right]  \leq C \left( 1 + |p|^2 \right).
\end{equation*}
A combination of the two previous displays completes the proof of the proposition.
\end{proof}

\section{Functional inequalities} \label{appendixB}

The goal of this second appendix is to prove some classic inequalities from the theory of elliptic equations in the setting of the $\nabla \phi$ model. These inequalities are proved with the random variable $\psi_{n,q}$ associated to the law $\P^*_{n,q}$ because it is needed in the proof of Theorem~\ref{conhvergencetoaffine}, nevertheless similar statements, with similar proofs, are available for the random variable $\phi_{n,p}$ associated to the law $\P_{n,p}$.

\begin{proposition}[Interior Caccioppoli inequality] \label{e.caccioppoli}
There exists a constant $C := C(d , \lambda) < \infty$ such that for every integer $n \geq 1$, every $x \in \cu_n$ and every $r \geq 1$ such that $B(x , 2r ) \subseteq \cu_n$,
\begin{equation*}
\E \left[ \sum_{e \subseteq B(x,r)} \left| \nabla \psi_{n,q}(e) \right|^2 \right] \leq  \frac C{r^2}\E \left[ \sum_{x \in B(x , 2r)} \left|  \psi_{n,q}(y) - (\psi_{n,q})_{B(x , 2r)}\right|^2   \right] + C r^d.
\end{equation*}
\end{proposition}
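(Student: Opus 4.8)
The plan is to adapt the classical Caccioppoli argument to the Gibbs measure $\P^*_{n,q}$. The classical deterministic Caccioppoli inequality for a divergence-form elliptic equation works by testing the equation against $\eta^2 u$, where $\eta$ is a cutoff function equal to $1$ on $B(x,r)$ and vanishing outside $B(x,2r)$. Here there is no a priori PDE satisfied by $\psi_{n,q}$ pathwise; instead, $\psi_{n,q}$ is a random variable whose law minimizes the convex functional $\mathcal{F}^*_{n,q}$ (cf.\ Definition~\ref{chapitre444.energyentrpyfunctional} and Proposition~\ref{varfornuprop}). The strategy is therefore to exploit this minimality by comparing $\P^*_{n,q}$ with the law of a suitably perturbed random variable, obtaining an inequality ``in expectation'' that plays the role of the Euler--Lagrange equation.

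First I would fix a Lipschitz cutoff $\eta : \cu_n \to [0,1]$ with $\eta \equiv 1$ on $B(x,r)$, $\supp \eta \subseteq B(x,2r)$, and $|\nabla \eta| \le C/r$. For a small parameter $t \in \R$, consider the perturbed random variable $\psi_t := \psi_{n,q} - t \, \eta^2 \big( \psi_{n,q} - (\psi_{n,q})_{B(x,2r)} \big)$; subtracting the mean keeps the perturbation in $\mathring h^1(\cu_n)$ (up to an affine adjustment of the mean, which is harmless since $\nu^*$ and the functional are defined on mean-zero functions, and one can recenter). The map $\psi_{n,q} \mapsto \psi_t$ is an affine (indeed, for small $t$, invertible linear plus translation) change of variables, so the change-of-variables formula for the entropy (Proposition~\ref{transandchofvar}) controls $H(\P_{\psi_t})$ in terms of $H(\P^*_{n,q})$ up to a term $-\ln|\det(\mathrm{Id} - tM)|$, where $M$ is the linear operator $\phi \mapsto \eta^2(\phi - (\phi)_{B(x,2r)})$; this determinant term is $O(t \operatorname{tr} M) + O(t^2)$ and $\operatorname{tr} M \le C |B(x,2r)| \le C r^d$. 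Then, using $\mathcal{F}^*_{n,q}(\P^*_{n,q}) \le \mathcal{F}^*_{n,q}(\P_{\psi_t})$, expanding $V_e$ to second order (using $\lambda \le V_e'' \le 1/\lambda$), dividing by $t$ and letting $t \to 0^+$ gives the first-order stationarity relation
\begin{equation*}
\E \left[ \sum_{e \subseteq \cu_n} \big( V_e'(\nabla \psi_{n,q}(e)) - q(e) \big) \, \nabla \big( \eta^2 (\psi_{n,q} - (\psi_{n,q})_{B(x,2r)}) \big)(e) \right] = \E\left[ \operatorname{tr} M_{\psi_{n,q}} \right],
\end{equation*}
where the right-hand trace term is $O(r^d)$. (One can also get this directly by differentiating the explicit density of $\P^*_{n,q}$ and integrating by parts; the perturbation argument is just a clean way to organize it.)

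Next I would run the usual Caccioppoli manipulation on this identity. Using the Leibniz rule $\nabla(\eta^2 w)(e) = \eta^2(\bar e)\nabla w(e) + w\,\nabla(\eta^2)(e)$ with $w = \psi_{n,q} - (\psi_{n,q})_{B(x,2r)}$, together with $V_e'(s) = V_e'(0) + \int_0^1 V_e''(\tau s)\,d\tau\, s$ and the bounds $\lambda \le V_e'' \le 1/\lambda$, one lower-bounds the ``good'' term by $\lambda \,\eta^2 |\nabla w|^2 = \lambda\,\eta^2 |\nabla \psi_{n,q}|^2$ and absorbs the cross terms $\eta |\nabla \psi_{n,q}| \cdot |\nabla\eta|\,|w| $ and the linear $q$-contributions by Young's inequality, paying a factor $|\nabla \eta|^2 |w|^2 \le C r^{-2}|w|^2$ and a constant times $|q|^2 \cdot |B(x,2r)|$-type term. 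Since $|q|^2 r^d \le C(1+|q|^2)r^d$ is acceptable (the stated right-hand side only has $+Cr^d$, so I would note that $C$ may be taken to depend on $\lambda$ only after bounding $|q|$-terms crudely — actually the cleanest route is to first reduce to $q=0$ by the shift $V_{e,q}(s)=V_e(s)-q(e)s$ as done elsewhere in the paper, e.g.\ Step 5 of Proposition~\ref{2sccomparison}, so that the only residual $q$-free linear term disappears and one is left with exactly $Cr^d$). Taking expectations and using $\eta \equiv 1$ on $B(x,r)$ yields
\begin{equation*}
\E \left[ \sum_{e \subseteq B(x,r)} |\nabla \psi_{n,q}(e)|^2 \right] \le \frac{C}{r^2} \E\left[ \sum_{y \in B(x,2r)} |\psi_{n,q}(y) - (\psi_{n,q})_{B(x,2r)}|^2 \right] + C r^d ,
\end{equation*}
which is the claim.

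\textbf{Main obstacle.} The principal subtlety is the entropy/determinant bookkeeping under the nonlinear-but-affine change of variables $\psi_{n,q} \mapsto \psi_t$: one must check that the perturbation stays inside the correct finite-dimensional space (mean zero, or $h^1_0$), that the linear part $\mathrm{Id} - tM$ is invertible for small $t$, and that $\frac{d}{dt}\big|_{t=0}\big(-\ln|\det(\mathrm{Id}-tM)|\big) = -\operatorname{tr} M$ contributes only an $O(r^d)$ term rather than something of order $|B(x,2r)|$ times a large constant — this is exactly the source of the additive $+Cr^d$ on the right-hand side and is the analogue of the ``$\Delta$ of the test function'' Itô-type correction familiar from the Helffer--Sj\"ostrand picture. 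Everything else (Leibniz rule, Young's inequality, Taylor expansion of $V_e$) is routine. I would also remark that reducing to $q=0$ at the outset, as the paper does repeatedly, streamlines the linear terms and is what makes the right-hand side come out as stated with a constant depending only on $(d,\lambda)$.
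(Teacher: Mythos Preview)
Your proposal is correct and follows essentially the same route as the paper: perturb $\psi_{n,q}$ by a linear map $\psi \mapsto \psi + t\,\eta(\psi-(\psi)_{B(x,2r)})$ (the paper uses $\eta$ rather than $\eta^2$, a cosmetic difference), use $L_t(\psi_{n,q})$ as a competitor in the variational formula for $\nu^*$, compute the entropy shift via the change-of-variables formula, differentiate at $t=0$ to obtain the first-order identity with the Jacobian correction $\tfrac{d}{dt}\big|_{t=0}\ln\det L_t=\operatorname{tr}L_0'=O(r^d)$, and then run the usual Caccioppoli absorption. One simplification in the paper worth noting: because the perturbation $\eta(\psi-(\psi)_{B(x,2r)})$ has compact support in $\cu_n$, its gradient has zero spatial average, so $\langle\nabla L_t(\psi_{n,q})\rangle_{\cu_n}=\langle\nabla\psi_{n,q}\rangle_{\cu_n}$ and the linear $q$-terms cancel exactly --- there is no need to reduce to $q=0$ or to worry about a $|q|^2 r^d$ contribution.
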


\begin{proof}
Let $\eta$ be a cutoff function defined on the discrete lattice $\cu_n$, taking values in $\R$ and satisfying
\begin{equation*}
\indc_{B(x, r)} \leq \eta \leq \indc_{B(x , 2r)} ~\mbox{and} ~ \forall e = (x,y) \subseteq \cu_n, \, |\nabla \eta(e) |^2 \leq C r^{-2} \left( \eta(x) + \eta(y)\right).
\end{equation*}
For $t \geq 0$, we denote by $L_t$ the following linear operator
\begin{equation*}
L_t := \left\{ \begin{array}{lcl}
     \mathring h^1 (\cu_n) &\rightarrow &\mathring h^1 (\cu_n) \\
     \psi &\mapsto & \psi +t\eta  \left(\psi - ( \psi )_{B(x , 2r)} \right) - \left( \psi +t\eta  \left( \psi - \left( \psi \right)_{B(x, 2r)} \right) \right)_{\cu_n} .
  \end{array} \right.
\end{equation*}
As a remark, we note that the last term on the right-hand side can be rewritten
\begin{equation*}
 \left(\psi +t\eta  \left( \psi - \left( \psi \right)_{B(x, 2r)} \right) \right)_{\cu_n} = t  \left( \eta  \left( \psi - \left( \psi \right)_{B(x, 2r)} \right) \right)_{\cu_n},
\end{equation*}
since the function $\psi$ belongs to the space $\mathring h^1 (\cu_n)$. We note that $L_0$ is the identity of $h^1 (\cu_n)$. We now show the following inequality which estimates the distance between $L_t$ and the identity of $\mathring h^1 (\cu_n)$, in the $L^2$ operator norm:
\begin{equation*}
\forall \psi \in \mathring h^1 (\cu_n), \, \sum_{x \in \cu_n} \left| \psi(x) - L_t(\psi)(x) \right|^2 \leq  |t|^2\sum_{x \in \cu_n}  \left|\psi(x)  \right|^2.
\end{equation*}
This is a consequence of the computation
\begin{align*}
 \sum_{x \in \cu_n} \left| \psi(x) - L_t(\psi)(x) \right|^2 & \leq  \sum_{x \in \cu_n} \left| t\eta(x)  \left( \psi(x) - ( \psi )_{B(x , 2r)} \right) \right|^2 \\
					&  \leq |t|^2 \sum_{x \in B(x,2r)} \left|  \psi(x) - ( \psi )_{B(x , 2r)} \right|^2 \\
					& \leq |t|^2\sum_{x \in B(x,2r)} \left|\psi(x) \right|^2 .
\end{align*} 
This implies in particular that for each $t \in (-1,1)$, the operator $L_t$ is bijective.
We also note that by definition of the operator $L_t$,
\begin{equation} \label{useproj}
\forall  \psi \in \mathring h^1 (\cu_n), \,  \forall e \subseteq \cu_n, \, \nabla L_t( \psi) (e) = \nabla \psi(e) +t \nabla \left(\eta  \left( \psi - ( \psi )_{B(x , 2r)} \right) \right) (e) .
\end{equation}
We fix a vector $q \in \R$ and use the random variable $L_t\left( \psi_{n,q} \right)$ as a test random variable in the variational formulation for $\nu^*$ stated in Proposition~\ref{varfornuprop}. This yields
\begin{multline*}
\E \left[ - \sum_{e \subseteq \cu_n} \left( V_e \left( \nabla \psi_{n,q} (e) \right)  - q \cdot \nabla \psi_{n,q}(e) \right) \right] - H(\P_{n , q}^*) \\ \geq \E \left[ - \sum_{e \subseteq \cu_n} \left(V_e \left( \nabla L_t \left(\psi_{n,q}\right) (e) \right) -  q \cdot \nabla L_t \left(\psi_{n,q}\right) (e)  \right) \right] - H\left(\P_{L_t (\psi_{n,q})} \right).
\end{multline*}
First note that, since $\eta$ is supported in $B(x,2r) \subseteq \cu_n$,
\begin{equation*}
\left\langle \nabla L_t \left(\psi_{n,q}\right) \right\rangle_{\cu_n} = \left\langle \nabla \psi_{n,q} \right\rangle_{\cu_n},
\end{equation*}
consequently,
\begin{equation*}
\E \left[  \sum_{e \subseteq \cu_n} q \cdot \nabla \psi_{n,q}(e)  \right] =  \E \left[ \sum_{e \subseteq \cu_n}   q \cdot \nabla L_t \left(\psi_{n,q}\right) (e)  \right].
\end{equation*}
Thus one can simplify the previous display
\begin{equation*}
\E \left[ - \sum_{e \subseteq \cu_n} V_e \left( \nabla \psi_{n,q} (e) \right)  \right] - H(\P_{n , q}^*) \\ \geq \E \left[ - \sum_{e \subseteq \cu_n} V_e \left( \nabla L_t \left(\psi_{n,q}\right) (e) \right) \right] - H\left(\P_{L_t (\psi_{n,q})} \right).
\end{equation*}
By Proposition~\ref{transandchofvar}, we compute the entropy
\begin{equation*}
H\left(\P_{L_t (\psi_{n,q})} \right) = H(\P_{n , q}^*) - \ln \det L_t.
\end{equation*}
Using the previous display and the formula for $L_t$, one obtains, for each $t \in (-1 , 1)$,
\begin{equation*}
\E \left[ \sum_{e \subseteq \cu_n} V_e \left(  \nabla \psi_{n,q}(e) +t \nabla \left(\eta  \left( \psi_{n,q} - ( \psi_{n,q} )_{B(x , 2r)} \right) \right) (e) \right) -  V_e \left( \nabla \psi_{n,q} (e) \right)  \right] -  \ln \det L_t \geq 0.
\end{equation*}
It is clear that the function $t \rightarrow \ln \det L_t$ is smooth for $t \in (-1,1)$. In particular, dividing the previous display by $t$ and sending $t$ to $0$ gives
\begin{equation} \label{almost.caccioppolinustar}
\E \left[ \sum_{e \subseteq \cu_n} V_e' \left( \nabla \psi_{n,q} )(e) \right)  \nabla \left(\eta  \left( \psi_{n,q} - ( \psi_{n,q} )_{B(x , 2r)} \right) \right) (e) \right] -  \frac{d}{d t}_{ | t = 0} \ln \det L_t  = 0.
\end{equation}
We first deal with the term coming from the entropy. By the chain rule, one has the formula
\begin{equation*}
\frac{d}{d t}_{ | t = 0} \ln \det L_t = \tr L_0',
\end{equation*}
where $L_0'$ denote the derivative of the operator $L_t$ at $t=0$, it is given by the explicit formula
\begin{equation*}
L_0' := \left\{ \begin{array}{lcl}
     \mathring h^1 (\cu_n) &\rightarrow &\mathring h^1 (\cu_n) \\
     \psi &\mapsto & \eta  \left( \psi - ( \psi )_{B(x , 2r)} \right) - \left( \eta  \left( \psi - \left( \psi \right)_{B(x, 2r)} \right) \right)_{\cu_n} .
  \end{array} \right.
\end{equation*}
In particular, for each $\psi \in \mathring h^1 (\cu_n)$,
\begin{align*}
\sum_{x \in \cu_n} \left| L_0' (\psi)(x) \right|^2 & \leq \sum_{x \in \cu_n} \left| \eta(x) \left( \psi(x) - ( \psi )_{B(x , 2r)} \right) \right|^2 \\
										& \leq  \sum_{x \in B(x,2r)}  \left| \psi(x) - ( \psi )_{B(x , 2r)} \right|^2 \\
										& \leq \sum_{x \in B(x,2r)}  \left| \psi(x) \right|^2.
\end{align*}
This implies that every function $\psi$ supported in $ \cu_n \setminus B(x,2r)$ is in the kernel of $L_0'$ and thus one has
\begin{equation*}
\dim \ker L_0' \geq |\cu_n| - C r^d.
\end{equation*}
Combining the two previous displays shows
\begin{equation} \label{est.trace.Lo'}
\left| \tr L_0' \right| \leq \dim \mathring h^1 (\cu_n ) -  \dim \ker L_0'  \leq C r^d.
\end{equation}
We now turn to the first term on the right-hand side of~\eqref{almost.caccioppolinustar}. To simplify the notation in the following computation, we set
\begin{equation*}
\chi_{n,q} := \psi_{n,q} - ( \psi_{n,q} )_{B(x , 2r)}
\end{equation*}
and compute
\begin{align*}
\lefteqn{ \sum_{e \subseteq B(x,2r)} V_e' \left( \nabla \chi_{n,q} )(e) \right)  \nabla (\eta \chi_{n,q})(e) }\qquad & \\&
 = \sum_{x,y\in B(x,2r), x\sim y} \left( \eta(x)\chi_{n,q}(x) - \eta(y)\chi_{n,q}(y) \right) V_{(x,y)}' \left( \chi_{n,q}(x) - \chi_{n,q}(x) \right) \\
& = \sum_{x,y\in B(x,2r), x\sim y} \eta(x) \left(\chi_{n,q}(x) - \chi_{n,q}(y) \right)  V_{(x,y)}' \left( \chi_{n,q}(x) -\chi_{n,q}(x) \right) \\
& \qquad + \sum_{x,y\in B(x,2r), x\sim y}\chi_{n,q}(y) \left( \eta(x) - \eta(y) \right)  V_{(x,y)}' \left(\chi_{n,q}(x) -\chi_{n,q}(x) \right).
\end{align*}
Using the uniform convexity of $V_e$ and~\eqref{almost.caccioppolinustar}, one obtains, by taking the expectation,
\begin{align*}
\lefteqn{
\lambda \E \left[ \sum_{x,y\in B(x,2r), x\sim y} \eta(x) \left(\chi_{n,q}(x) -\chi_{n,q}(y) \right)^2  \right]
} \qquad & \\
& \leq  \E \left[ \sum_{x,y\in B(x,2r), x\sim y} \eta(x) \left( \chi_{n,q}(x) -\chi_{n,q}(y) \right)  V_{(x,y)}' \left( \chi_{n,q}(x) - \chi_{n,q}(x) \right)  \right] \\
& \leq   \E \left[ \sum_{x,y\in B(x,2r), x\sim y}  |\chi_{n,q}(y) | \left| \eta(x) - \eta(y) \right|  \left|V_{(x,y)}' \left( \chi_{n,q}(x) - \chi_{n,q}(x) \right)\right| \right] + |\tr L_0'|.
\end{align*}
We then use the bound $V_e'( x) \leq \lambda  |x|$. This yields
\begin{align*}
\lefteqn{
\lambda  \E \left[ \sum_{x,y\in B(x,2r), x\sim y} \eta(x) \left(\chi_{n,q}(x) - \chi_{n,q}(y) \right)^2 \right]
} \qquad & \\
& \leq C  \E \left[ \sum_{x,y\in B(x,2r), x\sim y} \frac{\left|\eta(x) - \eta(y)\right|^2}{\eta(x)+ \eta(y)} \left|  \chi_{n,q}(y)\right|^2 \right] \\
& \qquad  +  \E \left[ \frac\lambda4 \sum_{x,y\in B(x,2r), x\sim y} \left( \eta(x) + \eta(y) \right) \left( \chi_{n,q}(x) -  \chi_{n,q}(y) \right)^2  \right]+  |\tr L_0'| \\
& \leq 3^{-2n} \E \left[  \sum_{x,y\in B(x,2r), x\sim y}  \left|  \chi_{n,q}(y)\right|^2   \right]+ \frac\lambda2  \E \left[ \sum_{x,y\in U, x\sim y}  \eta(x) \left|  \chi_{n,q}(x) - \chi_{n,q}(y)\right|^2 \right] \\ & \qquad+ |\tr L_0'|.
\end{align*}
Absorbing the second term on the right back into the left-hand side and using the estimate~\eqref{est.trace.Lo'} gives,
\begin{equation*}
\E \left[ \sum_{x,y\in B(x,2r), x\sim y} \eta(x) \left(\chi_{n,q}(x) - \chi_{n,q}(y) \right)^2 \right] \leq C r^{-2} \E \left[  \sum_{x}  \left|  \chi_{n,q}(x)\right|^2   \right] + C r^d.
\end{equation*}
Now we replace the term $\chi_{n,q}$ by the expression $\psi_{n,q} - ( \psi_{n,q} )_{B(x , 2r)}$ to obtain
\begin{equation*}
\E \left[ \sum_{e \subseteq B(x,r)} \left| \nabla \psi_{n,q}(e) \right|^2 \right] \leq C r^{-2} \E \left[  \sum_{e \subseteq B(x,2r)}  \left|  \psi_{n,q}(e) - ( \psi_{n,q} )_{B(x , 2r)} \right|^2   \right] + C r^d.
\end{equation*}
This is the desired result.
\end{proof}

The next statement one wishes to obtain is a reverse H\"older inequality for the random variable $\psi_{n,q}$. It is obtained by combining the Caccioppoli inequality proved in the previous proposition with the Sobolev inequality recalled below.

\begin{proposition}[Sobolev inequality on $\Zd$] \label{discretesobolev}
There exists a constant $C := C(d ) < \infty$ such that for each $x \in \Zd$, each $r\geq 1$, each exponent $s \in \left(\frac d{d-1}, \infty \right)$ and each function $f : B(x, r) \rightarrow \R$ satisfying
\begin{equation*}
\sum_{x \in B(x,r)} f(x) =0,
\end{equation*}
one has the estimate 
\begin{equation*}
\left( \sum_{x \in B(x,r)} |f(x)|^s \right)^{\frac 1s}\leq C \left( \sum_{e \subseteq B(x,r)} |\nabla f(e)|^{s_\star} \right)^{\frac 1{s_\star}},
\end{equation*}
where $s_\star$ is the Sobolev conjugate defined from $s$ by the formula
\begin{equation*}
s_\star := \frac{sd}{s+d}.
\end{equation*}
\end{proposition}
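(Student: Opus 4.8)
The statement to be established is the discrete Sobolev inequality on balls of $\Zd$: for a function $f$ on $B(x,r)$ with zero mean, the $\ell^s$ norm of $f$ is controlled by the $\ell^{s_\star}$ norm of $\nabla f$, with $s_\star = sd/(s+d)$ the Sobolev conjugate exponent. The plan is to follow exactly the strategy used for the other functional inequalities in this appendix, namely to transfer the corresponding continuous inequality on $\Rd$ to the discrete setting by a suitable extension procedure. First I would recall the continuous Gagliardo--Nirenberg--Sobolev inequality on a ball $B(x,r) \subseteq \Rd$: for $\tilde f$ with $\int_{B(x,r)} \tilde f = 0$ (or after subtracting its mean), one has $\|\tilde f\|_{L^s(B(x,r))} \leq C \|\nabla \tilde f\|_{L^{s_\star}(B(x,r))}$, where the constant depends only on $d$ (and is uniform in $r$ by scaling, since $s_\star < s$ and $\frac1{s_\star} - \frac1s = \frac1d$ makes the inequality scale-invariant). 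This is standard and can be cited.

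\textbf{Key steps.} The construction mirrors the proof of Proposition~\ref{p.poincmultpoinc}. Given $f : B(x,r) \cap \Zd \to \R$, I would first extend it to a function on $\Rd$ that is piecewise constant on the unit cubes $z + (-\tfrac12,\tfrac12)^d$ centered at the lattice points $z$, and then mollify by convolving with a smooth bump supported in the ball $B_{1/2}$, producing a smooth function $\tilde f$ on a slightly enlarged ball $B(x, r + 1)$ (or one works on $B(x,r)$ after a harmless adjustment of the radius, which costs only a constant factor since $r \geq 1$). This extension satisfies $\tilde f(z) = f(z)$ for each lattice point $z$, and the pointwise gradient bound
\begin{equation*}
\sup_{z + (-\frac12,\frac12)^d} \left| \nabla \tilde f (y) \right| \leq C \sum_{w \sim z} \left| f(w) - f(z) \right|,
\end{equation*}
as in the cited proof. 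Next I would subtract the continuous mean of $\tilde f$ (which is comparable to the discrete mean of $f$, hence close to zero, but in any case subtracting a constant changes nothing on the right-hand side), apply the continuous Sobolev inequality to $\tilde f - (\tilde f)$, and finally discretize both sides: on the left, $\sum_{z \in B(x,r)} |f(z)|^s \leq C \int |\tilde f|^s$ up to the mean-adjustment, using that $f$ agrees with $\tilde f$ at lattice points and controlling the mean difference by the gradient via a discrete Poincar\'e inequality if needed; on the right, $\int |\nabla \tilde f|^{s_\star} \leq C \sum_{z} \big(\sum_{w\sim z}|f(w)-f(z)|\big)^{s_\star} \leq C \sum_{e \subseteq B(x,r)} |\nabla f(e)|^{s_\star}$, where the last step uses that the number of neighbors of each vertex is bounded by $2d$ and applies the elementary inequality $(\sum_{i=1}^{2d} a_i)^{s_\star} \leq C_d \sum_i a_i^{s_\star}$.

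\textbf{Main obstacle.} The genuinely delicate point is the handling of the mean-zero condition through the extension: the discrete mean of $f$ over $B(x,r) \cap \Zd$ is zero, but the continuous mean of the mollified extension $\tilde f$ over the Euclidean ball need not vanish, and controlling the discrepancy uniformly in $r$ requires a short argument (e.g.\ comparing Riemann sums, or noting that the discrepancy is bounded by $C r^{-d}\|\nabla \tilde f\|_{L^1}$ and absorbing it). A secondary technical nuisance is that the condition $s > d/(d-1)$ is exactly what guarantees $s_\star > 1$, so that the Sobolev inequality is available and the discrete $\ell^{s_\star}$-to-integral comparison via Jensen goes through; at the endpoint $s_\star = 1$ one would need the slightly different $BV$-Sobolev statement, but the hypothesis excludes this. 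Apart from these points the argument is routine and entirely parallel to the Poincar\'e case already treated, so I would state it briefly, citing Proposition~\ref{p.poincmultpoinc} for the extension machinery and a standard reference for the continuous inequality, and leave the verification of the comparison estimates to the reader.
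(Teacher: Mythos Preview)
Your approach is correct and matches the paper's, which does not give a detailed proof at all: it simply states that the inequality ``can be deduced from the continuous Sobolev inequality (on $\Rd$) by an interpolation argument.'' Your proposal spells out exactly this reduction via the piecewise-constant-plus-mollification extension already used in Proposition~\ref{p.poincmultpoinc}, so you are in fact supplying more detail than the paper does.
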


This inequality can be deduced from the continuous Sobolev inequality (on $\Rd$) by an interpolation argument. From the Sobolev inequality and the Cacioppoli inequality, we deduce the following reverse H\"older inequality.
\begin{proposition}[Reverse H\"older inequality for $\P_{n,q}^*$] \label{revholdPnqstar}
There exists a constant $C := C(d , \lambda) < \infty$ such that for every integer $n \geq 1$, every $x \in \cu_n$, every $r \geq 1$ such that $B(x , 2r ) \subseteq \cu_n$, and every $q \in \Rd$,
\begin{equation*}
\E \left[ \frac{1}{|B(x,r)|} \sum_{e \subseteq B(x,r)} \left| \nabla \psi_{n,q}(e) \right|^2 \right] \leq  C\left( \frac{1}{|B(x,2r)|} \sum_{e \subseteq B(x,2r)} \E \left[ \left| \nabla \psi_{n,q}(e) \right|^2  \right]^\frac{d}{d+2} \right)^{\frac{d+2}{d}} + C.
\end{equation*}
\end{proposition}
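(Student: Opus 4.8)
The plan is to combine the interior Caccioppoli inequality of Proposition~\ref{e.caccioppoli} with the discrete Sobolev inequality of Proposition~\ref{discretesobolev} in the standard Gehring-type fashion, taking care that all estimates are taken in expectation. Fix $x \in \cu_n$ and $r \geq 1$ with $B(x,2r) \subseteq \cu_n$, and write $\chi := \psi_{n,q} - (\psi_{n,q})_{B(x,2r)}$, a random function of mean zero on $B(x,2r)$. The first step is to apply the Caccioppoli inequality, which gives
\begin{equation*}
\E \left[ \sum_{e \subseteq B(x,r)} |\nabla \psi_{n,q}(e)|^2 \right] \leq \frac{C}{r^2} \E \left[ \sum_{y \in B(x,2r)} |\chi(y)|^2 \right] + C r^d.
\end{equation*}
The second step is to apply the Sobolev inequality of Proposition~\ref{discretesobolev} with the exponent $s = 2$ (legitimate since $2 > d/(d-1)$ for $d \geq 2$) to the mean-zero function $\chi$ on $B(x,2r)$, obtaining
\begin{equation*}
\left( \sum_{y \in B(x,2r)} |\chi(y)|^2 \right)^{1/2} \leq C \left( \sum_{e \subseteq B(x,2r)} |\nabla \psi_{n,q}(e)|^{2_\star} \right)^{1/2_\star},
\end{equation*}
where $2_\star = 2d/(d+2)$. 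Squaring and dividing by $r^2$, and noting that $|B(x,2r)|^{2/2_\star - 1 - 2/d} = |B(x,2r)|^0$ up to a multiplicative constant (this is the exact scaling that makes $r^{-2}$ cancel against the volume factors, since $2/2_\star = 1 + 2/d$), we get
\begin{equation*}
\frac{1}{r^2} \E \left[ \sum_{y \in B(x,2r)} |\chi(y)|^2 \right] \leq C |B(x,2r)| \left( \frac{1}{|B(x,2r)|} \sum_{e \subseteq B(x,2r)} \E\left[ |\nabla \psi_{n,q}(e)|^2 \right]^{2_\star/2} \right)^{2/2_\star},
\end{equation*}
where we also used Jensen's inequality to pull the expectation inside the $\ell^{2_\star/2}$ norm over edges (since $2_\star/2 = d/(d+2) < 1$, the function $t \mapsto t^{2_\star/2}$ is concave and $\E[|\nabla\psi|^2]^{2_\star/2} \geq \E[|\nabla\psi|^{2_\star}]$ — one must check the direction of this carefully, or instead apply the expectation after raising to the power $2_\star$ and use $\E[\sum (\cdot)^{2_\star/2}]$ directly, then Jensen on the outer sum).

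The third step is purely bookkeeping: combine the three displays above, divide through by $|B(x,r)| \asymp r^d$, and observe that the additive term $C r^d / |B(x,r)|$ is bounded by a constant $C$; rewriting $2/2_\star = (d+2)/d$ and $2_\star/2 = d/(d+2)$ then yields exactly the claimed inequality
\begin{equation*}
\E \left[ \frac{1}{|B(x,r)|} \sum_{e \subseteq B(x,r)} |\nabla \psi_{n,q}(e)|^2 \right] \leq C \left( \frac{1}{|B(x,2r)|} \sum_{e \subseteq B(x,2r)} \E\left[ |\nabla \psi_{n,q}(e)|^2 \right]^{\frac{d}{d+2}} \right)^{\frac{d+2}{d}} + C.
\end{equation*}

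\textbf{Main obstacle.} The routine parts are the two inequalities being cited; the delicate point is the interchange of the expectation with the (sub-linear) power $t \mapsto t^{d/(d+2)}$ at the level of individual edges versus the sum over edges. One wants $\E[|\nabla\psi_{n,q}(e)|^2]^{d/(d+2)}$ on the right, not $\E[|\nabla\psi_{n,q}(e)|^{2d/(d+2)}]$, so the argument must apply the expectation \emph{before} taking the Sobolev-conjugate power on each edge and then invoke Jensen's inequality for the concave function on the normalized counting measure over the edge set $\B_d(B(x,2r))$ — this is the step that forces the precise placement of the normalizing volume factors and is where an off-by-one in the exponent arithmetic would creep in. A secondary but harmless point is verifying that $s=2$ is an admissible Sobolev exponent for all $d \geq 2$, which holds since $d/(d-1) \leq 2$ exactly when $d \geq 2$.
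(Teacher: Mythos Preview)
Your outline --- Caccioppoli, then pathwise Sobolev with $s=2$, then a Jensen step to move the expectation inside, then divide by $|B(x,r)|$ --- is exactly the paper's route, and your exponent arithmetic in the third step is correct. The one place where your argument is not yet complete is precisely the step you yourself flag as the obstacle.

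After Caccioppoli and pathwise Sobolev you need
\[
\E\Bigl[\Bigl(\sum_{e} |\nabla\psi_{n,q}(e)|^{2_\star}\Bigr)^{2/2_\star}\Bigr]
\;\leq\;
\Bigl(\sum_{e} \E\bigl[|\nabla\psi_{n,q}(e)|^{2}\bigr]^{2_\star/2}\Bigr)^{2/2_\star}.
\]
Neither of your two suggested mechanisms gives this. The edge-wise concave Jensen $\E[|\nabla\psi|^{2}]^{2_\star/2}\geq \E[|\nabla\psi|^{2_\star}]$ only bounds $\sum_e \E[|\nabla\psi|^{2_\star}]$, and then the \emph{outer} exponent $2/2_\star=(d+2)/d>1$ is convex, so scalar Jensen there points the wrong way: $\E[X^{(d+2)/d}]\geq (\E X)^{(d+2)/d}$. ``Jensen on the normalized counting measure over edges'' has the same defect, since it again separates the edge sum from the outer power.

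The paper handles this in one stroke by observing that the \emph{joint} map
\[
F(x_1,\dots,x_N)=\Bigl(\sum_{i=1}^N x_i^{\,d/(d+2)}\Bigr)^{(d+2)/d}
\]
is concave on $\R^N_+$ (this is positive $1$-homogeneity together with the reverse Minkowski inequality for exponent $d/(d+2)<1$). Jensen's inequality applied to the $\R^N_+$-valued random variable $\bigl(|\nabla\psi_{n,q}(e)|^2\bigr)_{e}$ then yields the displayed bound directly. Replace your Jensen discussion with this multivariate concavity argument and your proof is complete and coincides with the paper's.
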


\begin{proof}
Fix $q \in \Rd$,  an integer $n \geq 1$, and a ball $B(x,r)$ with $x \in \cu_n$ such that $B(x , 2r) \subseteq \cu_n$. By Proposition~\ref{e.caccioppoli}, one has the inequality
\begin{equation*}
\E \left[ \sum_{e \subseteq B(x,r)} \left| \nabla \psi_{n,q}(e) \right|^2 \right] \leq  \frac C{r^2}\E \left[ \sum_{x \in B(x , 2r)} \left|  \psi_{n,q}(y) - (\psi_{n,q})_{B(x , 2r)}\right|^2   \right] + C r^d.
\end{equation*}
We then apply Proposition~\ref{discretesobolev} with $s = 2$ and $s_\star = \frac{2d}{d+2}$ and obtain
\begin{equation} \label{almost.rev.hold}
\E \left[ \sum_{e \subseteq B(x,r)} \left| \nabla \psi_{n,q}(e) \right|^2 \right]  \leq \frac C{r^2} \left( \sum_{e \subseteq B(x,r)} \E \left[ \left| \nabla \psi_{n,q}(e) \right|^2  \right]^\frac{d}{d+2} \right)^{\frac{d+2}{d}}+ C r^d.
\end{equation}
Then for each $N \in \N$, we introduce the following notation for the half space
\begin{equation*}
\R^N_+ := \left\{ (x_1 , \ldots, x_N ) \in  \R^N \, : \, x_1 \geq 0, \ldots, x_N \geq 0 \right\}.
\end{equation*}
Note that the mapping
\begin{equation*}
F : = \left\{  \begin{array}{lcl}
    \R^N_+ &\rightarrow & \R \\
     (x_1 , \cdots, x_N) &\mapsto & \left( \sum_{e \subseteq B(x,r)} \left| x_i \right|^\frac{d}{d+2} \right)^{\frac{d+2}{d}}
  \end{array} \right.
\end{equation*}
is concave. We then let $N$ be the number of bonds of the cube $\cu_n$ and apply Jensen's inequality to the random variable $\left( \left| \nabla \psi_{n,q}(e) \right|^2 \right)_{e \subseteq \R^N}$, which is valued in $\R^N_+$, to obtain
\begin{equation*}
 \E \left[\left( \sum_{e \subseteq B(x,r)} \left| \nabla \psi_{n,q}(e) \right|^\frac{2d}{d+2} \right)^{\frac{d+2}{d}} \right] \leq \left( \sum_{e \subseteq B(x,r)} \E \left[ \left| \nabla \psi_{n,q}(e) \right|^2  \right]^\frac{d}{d+2} \right)^{\frac{d+2}{d}}.
\end{equation*}
Combining this estimate with~\eqref{almost.rev.hold} and dividing by $r^d$ completes the proof of~Proposition~\ref{revholdPnqstar}.
\end{proof}

We then combine the previous estimate with the discrete version of the Gehring's Lemma, which is stated in the following proposition. The continuous version of this result can be found in~\cite{Giu}.

\begin{proposition}[Discrete Gehring's Lemma] \label{discGehrLemma}
Fix $q < 1$, $K \geq 1$ and $R>0$. Suppose that we are given two (discrete) functions $f,g : B(0,R) \rightarrow \R$, and that $f$ satisfies the following reverse H\"older inequality, for each $z \in \Zd$ and each $r \geq 1$ such that $B(z,2r) \subseteq B(0,R)$,
\begin{equation*}
\frac{1}{|B(x,r)|} \sum_{x \in B(x ,r)} |f(x)| \leq K \left(\frac{1}{|B(x,2r)|} \sum_{x \in B(x ,2r)} |f(x)|^q \right)^\frac1q+\frac{K}{|B(x,2r)|} \sum_{x \in B(x ,2r)} |g(x)|,
\end{equation*}
then there exist an exponent $\delta := \delta(q , K ,d) >0$ and a constant $C := C(q , K ,d ) < \infty $ such that 
\begin{multline*}
\left( \frac{1}{\left|B\left(x,\frac R2\right)\right|} \sum_{x \in B\left(x,\frac R2\right)} |f(x)|^{1+\delta} \right)^\frac1{1 + \delta} \leq C \left(\frac{1}{|B(x,R)|} \sum_{x \in B(x ,R)} |f(x)|  \right) \\ +  C\left(\frac{1}{|B(x,R)|} \sum_{x \in B(x ,R)} |g(x)|^{1+\delta} \right)^\frac1{1 + \delta} .
\end{multline*}
\end{proposition}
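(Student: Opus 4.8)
The statement to prove is the discrete Gehring's lemma, Proposition~\ref{discGehrLemma}. The plan is to mimic the classical continuous proof (as in Giusti~\cite{Giu}), replacing continuous balls and cubes by their lattice analogues, and checking that the combinatorial/geometric machinery — a Calder\'on--Zygmund-type stopping-time decomposition on dyadic (or triadic) cubes, together with a good-$\lambda$ inequality — goes through with only cosmetic changes in the discrete setting. The three ingredients I would assemble first are: (i) a discrete Calder\'on--Zygmund covering/stopping-time lemma, which says that if the average of $|f|$ over $B(x,R)$ is at most some level $t$, then for each $t' > t$ the superlevel set $\{|f| > t'\}$ (intersected with a slightly smaller ball) is contained in a disjoint union of cubes $Q_i$ on which the average of $|f|$ lies between $t'$ and $C_d t'$; (ii) a discrete Lebesgue differentiation/maximal-function estimate, which in the lattice setting is essentially the statement that a function dominated by its local averages at all admissible scales is controlled pointwise up to a dimensional constant (here scales are bounded below by $1$, which actually makes the differentiation theorem trivial away from the smallest scale); and (iii) the reverse H\"older hypothesis itself, which is used on each stopping cube $Q_i$ to upgrade the lower bound on $\fint_{Q_i} |f|$ to a lower bound involving $\fint_{2 Q_i} |f|^q$ and the $g$-term.

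The key steps, in order, would be as follows. First I would fix a base ball $B(0,R)$ and normalize; introduce the local maximal function $M_R f (x) := \sup \{ \fint_{B(y,r)} |f| : x \in B(y,r), \, B(y,2r) \subseteq B(0,R), \, r \geq 1 \}$ and similarly $M_R g$. Second, for a threshold $t$ exceeding the global average $a := \fint_{B(0,R)}|f|$ by a fixed dimensional factor, I would run the stopping-time argument on triadic subcubes of $B(0,R)$: select the maximal cubes $Q_i$ on which $\fint_{Q_i}|f| > t$; by maximality the parent $\widehat Q_i$ satisfies $\fint_{\widehat Q_i}|f| \le t$, hence $t < \fint_{Q_i}|f| \le 3^d t$. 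Third, on a region where additionally $|g|$ is small (say $M_R g \le \varepsilon t$), I would invoke the reverse H\"older inequality on $Q_i$ (with $2 Q_i \subseteq B(0,R)$, which is where the slightly smaller ball $B(x,R/2)$ in the conclusion comes from) to get $t \le \fint_{Q_i}|f| \le K (\fint_{2Q_i}|f|^q)^{1/q} + K \varepsilon t$, and then split $\fint_{2Q_i}|f|^q$ into the part where $|f| \le \eta t$ and the part where $|f| > \eta t$; choosing $\eta$ small (depending on $q$, $K$, $d$) the first part is absorbed, yielding $\int_{2Q_i \cap \{|f| > \eta t\}} |f|^q \gtrsim t^q |Q_i|$. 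Summing over the disjoint $Q_i$ gives the good-$\lambda$ / distribution-function inequality
\begin{equation*}
\sum_{x \in B(x,R/2), \, |f(x)| > C t} |f(x)| \;\le\; C \sum_{x \in B(0,R), \, |f(x)| > \eta t} t^{1-q} |f(x)|^q \;+\; C \sum_{x \in B(0,R), \, |g(x)| > \varepsilon t} |g(x)|,
\end{equation*}
valid for all $t \ge C a$. Fourth, I would multiply this inequality by $t^{\delta - 1}$, integrate in $t$ over $(Ca, \infty)$, use Fubini (discrete summation order exchange) to convert the $t$-integrals into powers of $|f|$ and $|g|$ — here one needs $\delta$ small enough that $\int_{|f|}^\infty t^{\delta - q}\,dt$ converges after the $t^{1-q}$ weight, i.e. $\delta < q$ suitably — and reabsorb the resulting $\int |f|^{1+\delta}$ term on the left using that its coefficient can be made $< 1$ by shrinking $\delta$. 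A standard truncation argument (first proving the estimate with $\min(|f|, N)$ in place of $|f|$ and letting $N \to \infty$ by monotone convergence) handles the a priori finiteness issue. This yields the claimed $L^{1+\delta}$ bound, with the $L^1$ average of $|f|$ over $B(0,R)$ and the $L^{1+\delta}$ average of $g$ on the right.

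The main obstacle I expect is not analytic but bookkeeping: making the discrete Calder\'on--Zygmund stopping-time decomposition precise. On the lattice the cubes $B(x,2r) \subseteq B(0,R)$ with $r \geq 1$ do not form a clean nested dyadic family reaching down to points, so one must either work with triadic cubes $\cu_k$ (which \emph{are} nested, by the remark early in the paper that two triadic cubes are disjoint or one contains the other) and accept comparability constants between $\cu_k$ and balls, or set up the covering by a Vitali-type argument. The smallest scale $r = 1$ needs separate attention: there the ``average over $B(x,1)$'' is comparable to $|f(x)|$ up to the constant $|B(0,1)|$, so the maximal function dominates $|f|$ pointwise trivially, which is actually a simplification over the continuous case. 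The other place care is needed is the dimensional constant in the passage from $Q_i$ to its triadic parent and to $2Q_i$ — one must verify that $2Q_i$ (a ball of twice the radius, or the triadic cube three times larger) is still inside $B(0,R)$ when $Q_i$ meets $B(0,R/2)$, which forces the radius reduction from $R$ to $R/2$ in the statement and dictates how deep the stopping-time recursion may go. Once these geometric comparisons are fixed with explicit (dimension-only) constants, the rest is the classical iteration and presents no new difficulty.
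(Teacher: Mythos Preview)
The paper does not actually prove Proposition~\ref{discGehrLemma}: it merely states the discrete Gehring lemma and refers to~\cite{Giu} for the continuous version, then immediately uses it to derive the interior Meyers estimate. Your proposal, which adapts the classical continuous argument (Calder\'on--Zygmund stopping time on nested cubes, good-$\lambda$ inequality, integration of the distribution function against $t^{\delta-1}$, and reabsorption for small $\delta$) to the lattice setting, is precisely the natural route and is correct in outline; the paper is implicitly relying on exactly this kind of transposition being routine. Your remarks about using triadic cubes for the nested decomposition and about the smallest scale $r=1$ trivializing the pointwise maximal-function domination are apt and identify the only places where the discrete argument differs from the continuous one.
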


From this estimate, one obtains the following version of the interior Meyers estimate for the $\nabla \phi$ model. The idea is to combine the reverse H\"older inequality and the Gehring's Lemma to improve the integrability of the expectation of the field $\psi_{n,q}$ (seen as a function from the triadic cube $\cu_n$ to $\R$) from $L^2$ to $L^{2+\delta}$.
\begin{proposition}[Interior Meyers estimate for $\P_{n,q}^*$] \label{meyersgradphi}
For each $\gamma \in (0,1 ]$ and each $n \in \N$, denote by~$\gamma \cu_n$ the cube
\begin{equation*}
\gamma \cu_n := \left( - \frac{\gamma3^n}{2},  \frac{\gamma 3^n}{2}  \right)^d \cap \Zd.
\end{equation*}
Fix $q \in \Rd$. For each $\gamma \in (0,1)$, each $n \in \N$, there exist an exponent $\delta := \delta (d , \lambda ) > 0$ and a constant $C := C(d , \lambda, \gamma) < \infty$ such that
\begin{equation*}
\left( \frac{1}{\left|\gamma \cu_n \right|} \sum_{e \subseteq  \gamma \cu_n} \E \left[|\nabla \psi_{n,q}(e)|^2\right]^{1+ \delta} \right)^\frac1{1+\delta} \leq  \frac{C}{|\cu_n|} \sum_{e \subseteq \cu_n} \E \left[|\nabla \psi_{n,q}(e)|^2\right]  + C.
\end{equation*}
\end{proposition}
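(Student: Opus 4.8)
The plan is to apply the Gehring self-improvement machinery (Proposition~\ref{discGehrLemma}) to the right choice of functions, with the reverse H\"older input supplied by Proposition~\ref{revholdPnqstar}. Set $R := 3^n$ so that $B(0,R/2)$ comfortably contains $\gamma\cu_n$ for every fixed $\gamma \in (0,1)$; actually it is cleaner to choose $R$ slightly larger than $3^n$ (say $R := 2\cdot 3^n$) so that $\cu_n \subseteq B(0,R/2)$ and $\gamma\cu_n$ sits well inside. Define, for $x \in \Zd$,
\begin{equation*}
f(x) := \sum_{\overrightarrow{e} : \, \underline{e} = x, \, e \subseteq \cu_n} \E\bigl[ |\nabla \psi_{n,q}(e)|^2 \bigr],
\qquad g(x) := 1,
\end{equation*}
i.e.\ $f(x)$ collects the expected squared gradients on edges incident to $x$ inside $\cu_n$ (set $f(x) = 0$ for $x \notin \cu_n$). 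The point is that $f$ and $\sum_e \E[|\nabla\psi_{n,q}(e)|^2]$ restricted to any ball are comparable up to a factor $C(d)$ (each edge is counted a bounded number of times), so the reverse H\"older inequality of Proposition~\ref{revholdPnqstar}, which reads
\begin{equation*}
\frac{1}{|B(x,r)|}\sum_{e \subseteq B(x,r)} \E\bigl[|\nabla\psi_{n,q}(e)|^2\bigr] \leq C\left( \frac{1}{|B(x,2r)|}\sum_{e \subseteq B(x,2r)} \E\bigl[|\nabla\psi_{n,q}(e)|^2\bigr]^{\frac{d}{d+2}} \right)^{\frac{d+2}{d}} + C,
\end{equation*}
translates (after this bounded-overlap comparison) into exactly the hypothesis of Proposition~\ref{discGehrLemma} with $q := \frac{d}{d+2} < 1$, $K := K(d,\lambda)$, and the constant term $C$ absorbed into the $\frac{K}{|B(x,2r)|}\sum |g|$ term since $g \equiv 1$ has average $1$.

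First I would verify that Proposition~\ref{revholdPnqstar} is applicable at every relevant scale: it requires $B(x,2r) \subseteq \cu_n$, so I need $r$ ranging over $1 \leq r \lesssim \dist(x, \partial\cu_n)$, which is fine for $x$ in a fixed proportion of $\cu_n$ but degenerates near the boundary. This is why Gehring must be run on a ball $B(0,R)$ strictly inside (in the sense of a bounded-proportion shrink) the region where the reverse H\"older estimate holds, and why the conclusion is stated for $\gamma\cu_n$ with $\gamma < 1$ rather than for $\cu_n$ itself: the improved-integrability estimate is genuinely an \emph{interior} estimate. Concretely, pick $\gamma' \in (\gamma, 1)$; the reverse H\"older inequality holds on all balls $B(x,2r) \subseteq \cu_n$, in particular whenever $B(x,2r) \subseteq \gamma'\cu_n$ after translating coordinates, and then Proposition~\ref{discGehrLemma} applied on $B(0, \gamma' 3^n)$ (with $R := \gamma' 3^n$) upgrades $f \in L^1$ on that ball to $f \in L^{1+\delta}$ on $B(0, \gamma' 3^n /2) \supseteq \gamma\cu_n$ for $\gamma'$ chosen so that $\gamma \leq \gamma'/2$. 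The output inequality of Gehring's lemma is
\begin{equation*}
\left( \frac{1}{|B(0,\gamma'3^n/2)|}\sum_{x} |f(x)|^{1+\delta} \right)^{\frac{1}{1+\delta}} \leq \frac{C}{|B(0,\gamma'3^n)|}\sum_{x} |f(x)| + C,
\end{equation*}
and undoing the comparison between $f$ and the edge sums, together with $|B(0,\gamma'3^n)| \asymp_\gamma |\cu_n|$ and the inclusion $\cu_n \subseteq B(0,\gamma'3^n)$ (choosing $\gamma'$ close enough to $1$, or equivalently enlarging the outer ball to contain $\cu_n$), yields exactly
\begin{equation*}
\left( \frac{1}{|\gamma\cu_n|}\sum_{e \subseteq \gamma\cu_n} \E\bigl[|\nabla\psi_{n,q}(e)|^2\bigr]^{1+\delta} \right)^{\frac{1}{1+\delta}} \leq \frac{C}{|\cu_n|}\sum_{e \subseteq \cu_n} \E\bigl[|\nabla\psi_{n,q}(e)|^2\bigr] + C.
\end{equation*}

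The main obstacle, and the only genuinely delicate bookkeeping, is the boundary/geometry matching: Proposition~\ref{revholdPnqstar} needs $B(x,2r)$ to fit inside $\cu_n$, Proposition~\ref{discGehrLemma} is stated on Euclidean balls $B(0,R)$, and the target region is a smaller cube $\gamma\cu_n$; one has to choose the radii so that (i) every ball on which the reverse H\"older hypothesis is invoked genuinely lies in $\cu_n$, (ii) the Gehring ball $B(0,R)$ contains $\cu_n$ (or at least $\gamma'\cu_n$ with $\gamma' > \gamma$) so that the right-hand side can be bounded by the average over $\cu_n$ as claimed, and (iii) the smaller Gehring ball $B(0,R/2)$ still contains $\gamma\cu_n$. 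This is purely a matter of picking $R \asymp 3^n$ and an intermediate $\gamma' \in (\gamma,1)$ appropriately, plus the elementary bounded-overlap comparison between vertex sums $\sum_x f(x)$ and edge sums $\sum_e \E[|\nabla\psi_{n,q}(e)|^2]$; both the exponent $\delta$ and all constants then depend only on $d$, $\lambda$ (and $\gamma$ through the ratio of ball radii), exactly as asserted. I would also remark in passing that the constant term $C$ throughout comes from the additive $+Cr^d$ in the Caccioppoli inequality (Proposition~\ref{e.caccioppoli}), which is harmless since it corresponds to a bounded contribution per unit volume.
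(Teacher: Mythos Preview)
Your overall strategy is the same as the paper's: feed the reverse H\"older inequality of Proposition~\ref{revholdPnqstar} (with exponent $q=\tfrac{d}{d+2}$ and $g\equiv 1$) into Gehring's lemma, applied to the vertex function $f(x)\approx\sum_{e\ni x}\E[|\nabla\psi_{n,q}(e)|^{2}]$. That part is fine.

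The gap is in the final geometric step. Your condition (ii) is stated in the wrong direction: to bound $\tfrac{1}{|B(0,R)|}\sum_{B(0,R)}f$ by $\tfrac{C}{|\cu_n|}\sum_{\cu_n}f$ you want $B(0,R)\subseteq\cu_n$ (so the numerator only shrinks) together with $|B(0,R)|\asymp|\cu_n|$, not $\cu_n\subseteq B(0,R)$. More seriously, the single-ball scheme you describe cannot work for $\gamma$ close to $1$: Proposition~\ref{discGehrLemma} has a built-in factor-of-two shrinkage, so you simultaneously need $B(0,R)\subseteq\cu_n$ (for the reverse-H\"older hypothesis to apply on every sub-ball) and $\gamma\cu_n\subseteq B(0,R/2)$. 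With cubes and balls both of side $\asymp 3^n$ this forces roughly $\gamma\le\tfrac12$; your own choice ``$\gamma\le\gamma'/2$ with $\gamma'<1$'' already encodes that restriction. So the argument as written only proves the proposition for $\gamma<\tfrac12$.

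The paper handles this with a covering argument rather than a single ball: cover $\gamma\cu_n$ by finitely many balls $B(x_i,R)$ with $R\asymp(1-\gamma)3^n$ chosen so that each $B(x_i,2R)\subseteq\cu_n$; the number of such balls is bounded by a constant depending only on $d$ and $\gamma$. Apply Gehring on each $B(x_i,2R)$ to obtain the $L^{1+\delta}$ bound on $B(x_i,R)$, then sum. Since the right-hand side on each ball is controlled by $\tfrac{1}{|\cu_n|}\sum_{\cu_n}f$ (as $B(x_i,2R)\subseteq\cu_n$ and $|B(x_i,2R)|\asymp_\gamma|\cu_n|$), the summed estimate gives exactly the stated conclusion with $C=C(d,\lambda,\gamma)$. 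This replaces your single-ball step and works for every $\gamma\in(0,1)$.
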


\begin{proof}
The main idea of the proof is to apply the Gehring's Lemma, Proposition~\ref{discGehrLemma}, with the following choice of functions
\begin{equation*}
\forall x \in \cu_n, \, f(x) := \E \left[  \sum_{y \in \cu_n, x \sim y} \left| \psi_{n,q} (y) \right|^2 \right]~ \mbox{and}~ g(x) =1.
\end{equation*}
By Proposition~\ref{revholdPnqstar}, one has the following reverse H\"older inequality: there exists a constant $C := C(d, \lambda) < \infty$ such that for each $z \in \Zd$ and each $r \geq 1$ satisfying $B(z,2r) \subseteq \cu_n$,
\begin{multline*}
 \frac{1}{|B(x,r)|} \sum_{x \in B(x ,r)} |f(x)| \\ \leq C \left(\frac{1}{|B(x,2r)|} \sum_{x \in B(x ,2r)} |f(x)|^\frac{d+2}{d} \right)^\frac{d}{d+2} +\frac{C}{|B(x,2r)|} \sum_{x \in B(x ,2r)} |g(x)|.
\end{multline*}
Applying Proposition~\ref{discGehrLemma}, there exist an exponent $\delta := \delta(d , \lambda ) > 0$ and a constant $C := C(d , \lambda) < \infty$, such that for each point $z \in \cu_n$, and each radius $R \geq 1$ satisfying $B(x, 2R) \subseteq \cu_n$,
\begin{multline*}
\left( \frac{1}{\left|B\left(x, R\right)\right|} \sum_{x \in B\left(x , R \right)} |f(x)|^{1+ \delta} \right)^\frac1{1+\delta} \leq \frac{C}{|B(x,2R)|} \sum_{x \in B(x ,2R)} |f(x)| \\+ C \left(  \left(\frac{1}{|B(x,2R)|} \sum_{x \in B(x ,2R)} |g(x)|^{1+ \delta} \right)^\frac1{1+ \delta} \right).
\end{multline*}
The previous display can be rewritten
\begin{equation*}
\left( \frac{1}{\left|B\left(x, R\right)\right|} \sum_{e \subseteq  B\left(x , R \right)} \E [|\nabla \psi_{n,q}(x)|^2]^{1+ \delta} \right)^\frac1{1+\delta} \leq  \frac{C}{|B(x,2R)|} \sum_{x \in B(x ,2R)} \E [|\nabla \psi_{n,q}(x)|^2]  + C.
\end{equation*}
We then conclude that, for each $\gamma \in [0,1)$, the cube $\gamma \cu_{n}$ can be covered by finitely many balls of the form $B(x , R)$ such that $B(x , 2R) $ is included in $\cu_n$. The cardinality of this covering family can be bounded from above by a constant depending only on $d$ and $\gamma$. This implies that, for each $\gamma \in (0,1)$, there exist an exponent $\delta := \delta(d , \lambda) > 0$ and a constant $C := C(d , \lambda, \gamma) < \infty$ such that
\begin{equation*}
\left( \frac{1}{\left|\gamma \cu_n \right|} \sum_{e \subseteq  \gamma \cu_n} \E [|\nabla \psi_{n,q}(x)|^2]^{1+ \delta} \right)^\frac1{1+\delta} \leq  \frac{C}{|\cu_n|} \sum_{x \in \cu_n} \E [|\nabla \psi_{n,q}(x)|^2]  + C.
\end{equation*}
\end{proof}

\begin{remark}
Combining the Meyers estimate with~\eqref{4.L2boundnustar} of Proposition~\ref{propofnunustar}, one obtains
\begin{equation*}
\left( \frac{1}{\left|\gamma \cu_n \right|} \sum_{e \subseteq  \gamma \cu_n} \E \left[|\nabla \psi_{n,q}(e)|^2\right]^{1+ \delta} \right)^\frac1{1+\delta} \leq  C (1 + |q|^2).
\end{equation*}
\end{remark}

\small
\bibliographystyle{abbrv}
\bibliography{holes}

\newcommand{\noop}[1]{} \def\cprime{$'$}
\begin{thebibliography}{10}

\bibitem{ACC90}
K.~Alexander, J.~T. Chayes, and L.~Chayes.
\newblock The {W}ulff construction and asymptotics of the finite cluster
  distribution for two-dimensional {B}ernoulli percolation.
\newblock {\em Comm. Math. Phys.}, 131(1):1--50, 1990.

\bibitem{AD2}
S.~Armstrong and P.~Dario.
\newblock Elliptic regularity and quantitative homogenization on percolation
  clusters.
\newblock {\em Comm. Pure Appl. Math.}, 71(9):1717--1849, 2018.

\bibitem{AFK18}
S.~Armstrong, S.~Ferguson, and T.~Kuusi.
\newblock Homogenization, linearization and large-scale regularity for
  nonlinear elliptic equations.
\newblock {\em arXiv preprint arXiv:1805.00467}, 2018.

\bibitem{AKM1}
S.~Armstrong, T.~Kuusi, and J.-C. Mourrat.
\newblock Mesoscopic higher regularity and subadditivity in elliptic
  homogenization.
\newblock {\em Comm. Math. Phys.}, 347(2):315--361, 2016.

\bibitem{AKM2}
S.~Armstrong, T.~Kuusi, and J.-C. Mourrat.
\newblock The additive structure of elliptic homogenization.
\newblock {\em Invent. Math.}, 208(3):999--1154, 2017.

\bibitem{armstrong2017quantitative}
S.~Armstrong, T.~Kuusi, and J.-C. Mourrat.
\newblock {\em Quantitative stochastic homogenization and large-scale
  regularity}, volume 352 of {\em Grundlehren der mathematischen
  Wissenschaften}.
\newblock Springer International Publishing, 2019.

\bibitem{AM}
S.~Armstrong and J.-C. Mourrat.
\newblock Lipschitz regularity for elliptic equations with random coefficients.
\newblock {\em Arch. Ration. Mech. Anal.}, 219(1):255--348, 2016.

\bibitem{AS}
S.~Armstrong and C.~K. Smart.
\newblock Quantitative stochastic homogenization of convex integral
  functionals.
\newblock {\em Ann. Sci. \'Ec. Norm. Sup\'er. (4)}, 49(2):423--481, 2016.

\bibitem{bella2018liouville}
P.~Bella, B.~Fehrman, and F.~Otto.
\newblock A {L}iouville theorem for elliptic systems with degenerate ergodic
  coefficients.
\newblock {\em Ann. Appl. Probab.}, 28(3):1379--1422, 2018.

\bibitem{BAD96}
G.~Ben~Arous and J.-D. Deuschel.
\newblock The construction of the {$(d+1)$}-dimensional {G}aussian droplet.
\newblock {\em Comm. Math. Phys.}, 179(2):467--488, 1996.

\bibitem{bil}
P.~Billingsley.
\newblock {\em Convergence of probability measures}.
\newblock Wiley Series in Probability and Statistics. John Wiley \& Sons, Inc.,
  New York, second edition, 1999.

\bibitem{BS}
M.~Biskup and H.~Spohn.
\newblock Scaling limit for a class of gradient fields with nonconvex
  potentials.
\newblock {\em Ann. Probab.}, 39(1):224--251, 2011.

\bibitem{Bo99}
T.~Bodineau.
\newblock The {W}ulff construction in three and more dimensions.
\newblock {\em Comm. Math. Phys.}, 207(1):197--229, 1999.

\bibitem{Ce00}
R.~Cerf.
\newblock Large deviations for three dimensional supercritical percolation.
\newblock {\em Ast\'{e}risque}, (267):vi+177, 2000.

\bibitem{CePi00}
R.~Cerf and A.~Pisztora.
\newblock On the {W}ulff crystal in the {I}sing model.
\newblock {\em Ann. Probab.}, 28(3):947--1017, 2000.

\bibitem{CD}
C.~Cotar and J.-D. Deuschel.
\newblock Decay of covariances, uniqueness of ergodic component and scaling
  limit for a class of {$\nabla\phi$} systems with non-convex potential.
\newblock {\em Ann. Inst. Henri Poincar\'{e} Probab. Stat.}, 48(3):819--853,
  2012.

\bibitem{CDM}
C.~Cotar, J.-D. Deuschel, and S.~M\"{u}ller.
\newblock Strict convexity of the free energy for a class of non-convex
  gradient models.
\newblock {\em Comm. Math. Phys.}, 286(1):359--376, 2009.

\bibitem{DM1}
G.~Dal~Maso and L.~Modica.
\newblock Nonlinear stochastic homogenization.
\newblock {\em Ann. Mat. Pura Appl. (4)}, 144:347--389, 1986.

\bibitem{DM2}
G.~Dal~Maso and L.~Modica.
\newblock Nonlinear stochastic homogenization and ergodic theory.
\newblock {\em J. Reine Angew. Math.}, 368:28--42, 1986.

\bibitem{DGI00}
J.-D. Deuschel, G.~Giacomin, and D.~Ioffe.
\newblock Large deviations and concentration properties for {$\nabla\phi$}
  interface models.
\newblock {\em Probab. Theory Related Fields}, 117(1):49--111, 2000.

\bibitem{DKS92}
R.~Dobrushin, R.~Koteck{\`y}, and S.~Shlosman.
\newblock {\em Wulff construction: a global shape from local interaction},
  volume 104.
\newblock American Mathematical Society Providence, 1992.

\bibitem{Fu05}
T.~Funaki.
\newblock Stochastic interface models.
\newblock In {\em Lectures on probability theory and statistics}, volume 1869
  of {\em Lecture Notes in Math.}, pages 103--274. Springer, Berlin, 2005.

\bibitem{FS04}
T.~Funaki and H.~Sakagawa.
\newblock Large deviations for {$\nabla\phi$} interface model and derivation of
  free boundary problems.
\newblock In {\em Stochastic analysis on large scale interacting systems},
  volume~39 of {\em Adv. Stud. Pure Math.}, pages 173--211. Math. Soc. Japan,
  Tokyo, 2004.

\bibitem{FS97}
T.~Funaki and H.~Spohn.
\newblock Motion by mean curvature from the {G}inzburg-{L}andau {$\nabla \phi$}
  interface model.
\newblock {\em Comm. Math. Phys.}, 185(1):1--36, 1997.

\bibitem{GOS01}
G.~Giacomin, S.~Olla, and H.~Spohn.
\newblock Equilibrium fluctuations for {$\nabla\phi$} interface model.
\newblock {\em Ann. Probab.}, 29(3):1138--1172, 2001.

\bibitem{GHV18}
A.~Giunti, R.~H\"{o}fer, and J.~Vel\'{a}zquez.
\newblock Homogenization for the {P}oisson equation in randomly perforated
  domains under minimal assumptions on the size of the holes.
\newblock {\em Comm. Partial Differential Equations}, 43(9):1377--1412, 2018.

\bibitem{GiMo18}
A.~Giunti and J.-C. Mourrat.
\newblock Quantitative homogenization of degenerate random environments.
\newblock {\em Ann. Inst. Henri Poincar\'{e} Probab. Stat.}, 54(1):22--50,
  2018.

\bibitem{Giu}
E.~Giusti.
\newblock {\em Direct methods in the calculus of variations}.
\newblock World Scientific Publishing Co., Inc., River Edge, NJ, 2003.

\bibitem{GNO}
A.~Gloria, S.~Neukamm, and F.~Otto.
\newblock Quantification of ergodicity in stochastic homogenization: optimal
  bounds via spectral gap on {G}lauber dynamics.
\newblock {\em Invent. Math.}, 199(2):455--515, 2015.

\bibitem{GO1}
A.~Gloria and F.~Otto.
\newblock An optimal variance estimate in stochastic homogenization of discrete
  elliptic equations.
\newblock {\em Ann. Probab.}, 39(3):779--856, 2011.

\bibitem{GO2}
A.~Gloria and F.~Otto.
\newblock An optimal error estimate in stochastic homogenization of discrete
  elliptic equations.
\newblock {\em Ann. Appl. Probab.}, 22(1):1--28, 2012.

\bibitem{helsjo}
B.~Helffer and J.~Sj{\"o}strand.
\newblock On the correlation for {K}ac-like models in the convex case.
\newblock {\em J. Statist. Phys.}, 74(1-2):349--409, 1994.

\bibitem{Io94}
D.~Ioffe.
\newblock Large deviations for the {$2$}{D} {I}sing model: a lower bound
  without cluster expansions.
\newblock {\em J. Statist. Phys.}, 74(1-2):411--432, 1994.

\bibitem{Io95}
D.~Ioffe.
\newblock Exact large deviation bounds up to {$T_c$} for the {I}sing model in
  two dimensions.
\newblock {\em Probab. Theory Related Fields}, 102(3):313--330, 1995.

\bibitem{IS98}
D.~Ioffe and R.~Schonmann.
\newblock Dobrushin-{K}oteck\'{y}-{S}hlosman theorem up to the critical
  temperature.
\newblock {\em Comm. Math. Phys.}, 199(1):117--167, 1998.

\bibitem{Koz79}
S.~M. Kozlov.
\newblock The averaging of random operators.
\newblock {\em Mat. Sb. (N.S.)}, 109(151)(2):188--202, 327, 1979.

\bibitem{LNO}
A.~Lamacz, S.~Neukamm, and F.~Otto.
\newblock Moment bounds for the corrector in stochastic homogenization of a
  percolation model.
\newblock {\em Electron. J. Probab.}, 20:no. 106, 30, 2015.

\bibitem{MC}
R.~J. McCann.
\newblock Existence and uniqueness of monotone measure-preserving maps.
\newblock {\em Duke Math. J.}, 80(2):309--323, 1995.

\bibitem{Mi}
J.~Miller.
\newblock Fluctuations for the {G}inzburg-{L}andau {$\nabla\phi$} interface
  model on a bounded domain.
\newblock {\em Comm. Math. Phys.}, 308(3):591--639, 2011.

\bibitem{NS2}
A.~Naddaf and T.~Spencer.
\newblock On homogenization and scaling limit of some gradient perturbations of
  a massless free field.
\newblock {\em Comm. Math. Phys.}, 183(1):55--84, 1997.

\bibitem{PV1}
G.~C. Papanicolaou and S.~R.~S. Varadhan.
\newblock Boundary value problems with rapidly oscillating random coefficients.
\newblock In {\em Random fields, {V}ol. {I}, {II} ({E}sztergom, 1979)},
  volume~27 of {\em Colloq. Math. Soc. J\'anos Bolyai}, pages 835--873.
  North-Holland, Amsterdam, 1981.

\bibitem{PfVe97}
C.-E. Pfister and Y.~Velenik.
\newblock Large deviations and continuum limit in the {$2$}{D} {I}sing model.
\newblock {\em Probab. Theory Related Fields}, 109(4):435--506, 1997.

\bibitem{SS95}
R.~H. Schonmann and S.~B. Shlosman.
\newblock Complete analyticity for {$2$}{D} {I}sing completed.
\newblock {\em Comm. Math. Phys.}, 170(2):453--482, 1995.

\bibitem{Vi}
C.~Villani.
\newblock {\em Topics in optimal transportation}, volume~58 of {\em Graduate
  Studies in Mathematics}.
\newblock American Mathematical Society, Providence, RI, 2003.

\bibitem{wul1901}
G.~Wulff.
\newblock Zur frage der geschwindigkeit des wachstums und der auflosung der
  kristallfl\"{a}chen.
\newblock {\em Z. Kristallogr}, 34:449--530, 1901.

\bibitem{Y22}
V.~V. Yurinski\u{\i}.
\newblock On a {D}irichlet problem with random coefficients.
\newblock In {\em Stochastic differential systems ({P}roc. {IFIP}-{WG} 7/1
  {W}orking {C}onf., {V}ilnius, 1978)}, volume~25 of {\em Lecture Notes in
  Control and Information Sci.}, pages 344--353. Springer, Berlin-New York,
  1980.

\end{thebibliography}

\end{document}